\title {Momentum polytopes of projective spherical varieties and related K\"ahler geometry}
\author{St{\'e}phanie Cupit-Foutou}
\address{Fakult{\"a}t f{\"u}r Mathematik, Ruhr-Universit{\"a}t Bochum}
\email{stephanie.cupit@rub.de}
\author{Guido Pezzini}
\address{Dipartimento di Matematica ``Guido Castelnuovo'', ``Sapienza'' Universit\`a di Roma}
\email{pezzini@mat.uniroma1.it}
\author{Bart Van Steirteghem}
\address{Department Mathematik, FAU Erlangen-N\"urnberg and Dept.\ of Mathematics, Medgar Evers College \& The Graduate Center, City University of New York}
\email{bartvs@math.fau.de}
\subjclass[2010]{Primary 14M27, 53D20, 32Q15}
\keywords{Spherical variety, momentum polytope, multiplicity free Hamiltonian manifold, multiplicity free K\"ahler manifold}
\newtheorem{theorem}{Theorem}[section]
\newtheorem{lemma}[theorem]{Lemma}
\newtheorem{proposition}[theorem]{Proposition}
\newtheorem{corollary}[theorem]{Corollary}
\theoremstyle{definition}
\newtheorem{definition}[theorem]{Definition}
\newtheorem{remark}[theorem]{Remark}
\newtheorem{example}[theorem]{Example}
\numberwithin{equation}{section}
\newcommand{\CC}{\mathbb C}
\newcommand{\ZZ}{\mathbb Z}
\newcommand{\NN}{\mathbb N}
\newcommand{\QQ}{\mathbb Q}
\newcommand{\QQp}{{\mathbb{Q}_{\geq 0}}}
\newcommand{\RR}{\mathbb R}
\newcommand{\GGm}{{\mathbb G_\text{m}}}
\newcommand{\A}{\mathbf A}
\newcommand{\PP}{\mathbb P}
\newcommand{\lb}{\mathcal{L}}
\newcommand{\F}{\mathcal{F}}
\newcommand{\inv}{^{-1}}
\newcommand{\GL}{\mathrm{GL}}
\newcommand{\SL}{\mathrm{SL}}
\newcommand{\Sp}{\mathrm{Sp}}
\newcommand{\Chi}{\mathcal X}
\newcommand{\Uni}{\mathrm{U}}
\newcommand{\fk}{\mathfrak{k}}
\newcommand{\ft}{\mathfrak{t}}
\newcommand{\sB}{\mathsf{B}}
\newcommand{\sG}{\mathsf{G}}
\DeclareMathOperator{\rk}{rk}
\newcommand{\divB}{\operatorname{div}^B}
\DeclareMathOperator{\Hom}{Hom}
\DeclareMathOperator{\Pic}{Pic}
\DeclareMathOperator{\Spec}{Spec}
\DeclareMathOperator{\Span}{span}
\DeclareMathOperator{\soc}{soc}
\DeclareMathOperator{\Conv}{Conv}
\newcommand{\<}{\langle}
\renewcommand{\>}{\rangle}
\newcommand{\wm}{\Gamma}
\newcommand{\dw}{\Lambda^+}
\newcommand{\wl}{\Lambda}
\newcommand{\sr}{S}
\newcommand{\lat}{\Xi}  %lattice of variety
\newcommand{\col}{\Delta}
\newcommand{\tG}{\widetilde{G}}
\newcommand{\twl}{\widetilde{\wl}}
\newcommand{\tdw}{\widetilde{\Lambda}^+}
\newcommand{\Oh}{\mathcal{O}}
\newcommand{\V}{\mathcal{V}}
\newcommand{\D}{\mathcal{D}}
\newcommand{\C}{\mathcal{C}}
\newcommand{\B}{\mathcal{B}}
\newcommand{\Pc}{\mathcal{P}}
\newcommand{\om}{\omega}
\newcommand{\wom}{\widetilde{\om}}
\newcommand{\msv}{\mathsf{v}}
\newcommand{\msw}{\mathsf{w}}
\newcommand{\msV}{\mathsf{V}}
\newcommand{\inn}{\subseteq}
\newcommand{\loccit}{{\em loc.cit.}}
\newcommand{\pperp}{\perp}
\newcommand{\Spp}{S^{\pperp}}
\begin{document}

\begin{abstract}
We apply the combinatorial theory of spherical varieties to characterize the momentum polytopes of polarized projective spherical varieties. This enables us to derive a classification of these varieties, without specifying the open orbit, as well as a classification of all Fano spherical varieties. In the setting of multiplicity free compact and connected Hamiltonian manifolds, we obtain a necessary and sufficient condition involving momentum polytopes for such manifolds to be K\"ahler and classify the invariant compatible complex structures of a given K\"ahler multiplicity free compact and connected Hamiltonian manifold.
 
\end{abstract}

\maketitle

\section{Introduction}
Let $(M,\omega,\Phi)$ be a compact connected Hamiltonian $K$-manifold acted on by a compact connected Lie group $K$. 
In \cite{kirwan}, F.~Kirwan proved that the image of $M$ through the momentum map $\Phi$ intersects the positive Weyl chamber $\ft^*_+$ in a convex polytope. This polytope is called the \emph{Kirwan polytope} of $(M,\omega,\Phi)$ and we will denote it by $\Pc(M,\omega,\Phi)$. 

The Hamiltonian $K$-manifold $(M,\omega,\Phi)$ is called \emph{multiplicity free} if its momentum map induces a homeomorphism $M/K \to \Pc(M,\omega,\Phi)$.
Toric manifolds and coadjoint orbits are well known examples of multiplicity free compact and connected manifolds.

Under the assumption that $M$ is a toric manifold, T.~Delzant constructively proved in \cite{delzant} that $\Pc(M,\omega,\Phi)$ uniquely determines $(M,\omega,\Phi)$ and described which polytopes can be realized this way. 
These polytopes are now known as \emph{Delzant polytopes}. 
Delzant's construction also shows that any toric manifold has an essentially unique $K$-invariant complex structure compatible with its symplectic form (in other words, it is K\"ahler) and that the $K$-action can be complexified to a holomorphic $K^\mathbb C$-action making the manifold into a smooth projective toric variety.

In~\cite{knop:hamilton}, F.~Knop proved Delzant's conjecture asserting that any multiplicity free compact and connected Hamiltonian $K$-manifold $(M,\omega,\Phi)$ is uniquely determined by the pair $(\lat(M),\Pc(M,\omega,\Phi))$, where $\lat(M)$ is a certain sublattice of the weight lattice of $K$ determined by the generic isotropy group of the $K$-action on $M$. 
F. Knop also determined which pairs $(\lat,\Pc)$, where  $\lat$ is a sublattice of the weight lattice of $K$ and $\Pc$ is a polytope in $\ft^*_{+}$, are realized by a compact connected multiplicity free $K$-manifold.

Like toric manifolds, coadjoint orbits of compact groups are K\"ahler but in general, when $K$ is nonabelian,
the existence of an invariant compatible complex structure no longer holds. Indeed, C. Woodward showed in \cite{woodward-notkaehler} that there exist multiplicity free manifolds that are not K\"ahlerizable. 
It is also known that there are multiplicity free manifolds that allow more than one compatible complex structure (see e.g. \cite[Remark 4.4]{woodward}).

As proved by A. Huckleberry and T. Wurzbacher in~\cite{huckwurz}, K\"ahler multiplicity free $K$-manifolds are spherical $K^\mathbb C$-varieties (i.e. contain a dense orbit of a Borel subgroup of $K^\mathbb{C}$).
Using this fact and applying the Uniqueness Theorem on smooth affine spherical varieties he obtained in~\cite{losev:knopconj},
I. Losev  proved in \loccit\ a refined uniqueness result for invariant K\"ahler structures on a given compact connected multiplicity free Hamiltonian $K$-manifold.

One of the results in this paper is a combinatorial answer to the K\"ahlerizability question, which was first studied in \cite{woodward}. 
Considering Brion's result \cite{brion:image} asserting that a smooth projective $K^\CC$-variety is multiplicity free as a $K$-Hamiltonian manifold if and only if it is spherical as a $K^\CC$-variety, we approach this question by making use of the theory of projective spherical varieties, just like Woodward did in \cite{woodward}. 
In fact, most of the current paper's results are about these varieties. 
In particular, we obtain a combinatorial classification of the (not necessarily smooth) polarized spherical varieties involving rational convex polytopes. This classification parallels the well known classification of polarized toric varieties in terms of integral convex polytopes. Moreover, it enables us to derive a classification of K\"ahler multiplicity free compact and connected Hamiltonian manifolds, which generalizes Delzant's classification of toric manifolds. While Knop's criterion for a pair $(\lat,\Pc)$ to be realized by a multiplicity free $K$-manifold $M$ is purely local, our K\"ahlerizability result involves the global structure of the momentum polytope.

One important ingredient in this work is the \emph{momentum polytope} $Q(X,\lb)$ of a polarized $K^\mathbb C$-variety $(X,\lb)$ introduced by M. Brion in \cite{brion:image}.
This polytope turns out to be a purely algebraic version of the Kirwan polytope. 
In Section~\ref{sec:basics}, we recall Brion's representation-theoretic definition of a momentum polytope and its main properties.
In Section~\ref{sec:brionbasics}, we focus on polarized spherical varieties; we collect some known facts on the combinatorial invariants of these varieties and their intertwining relations with momentum polytopes.

Section~\ref{sec:characterizations_mp} is the heart of this paper.
Here we use the combinatorial theory of spherical varieties, including their recent complete combinatorial classification, to characterize the momentum polytopes of polarized spherical varieties. 
More specifically, we give two combinatorial characterizations of the pairs $(\lat,Q)$, where $\lat$ is a sublattice of the weight lattice of $K^\mathbb C$ and $Q$ is a rational polytope in $\ft^*_{+}$, for which there exists a polarized spherical $K^\mathbb C$-variety $(X,\lb)$ whose weight lattice $\lat(X)$ equals $\lat$ (see Subsection~\ref{subsec:polarized_varieties} for the definition of $\lat(X)$) and momentum polytope is $Q$.
In the first characterization, Corollary~\ref{cor:crit-convexhull}, we view $Q$ as a convex hull, while in the second one, Theorem~\ref{thm:compatible-polytope}, we view it as an intersection of finitely many half-spaces. 
Both characterizations we obtain involve the set $\Sigma(X)$ of spherical roots of $X$, which is an important (finite) invariant of $X$ that is absent in the toric setting (see the precise definition in Subsection~\ref{section:coloredfan}). In the case of so-called $\QQ$-polarized spherical varieties, the second characterization simplifies considerably, and only involves $\lat$ and $Q$ (see Remark~\ref{rem:subsets_of_sigma}\ref{rem:subsets_of_sigma_a}). 

In Section~\ref{sec:classifications}, we make use of our characterizations to deduce some classifications. We classify in terms of the data $(\lat(X),Q(X,\lb),\Sigma(X))$ all polarized spherical $G$-varieties (Theorem~\ref{thm:classification-triples}), and all Fano spherical varieties (Theorem~\ref{thm:Fano}).
In particular, we recover results of Pasquier~\cite{pasquier08} and of Hofscheier and Gagliardi~\cite{hof-gag} about the latter, but in contrast to this earlier work, we do not specify the open orbit (see also Remark~\ref{rmk:Fano}).

In Section~\ref{sec:smoothness}, we adapt a combinatorial smoothness criterion for spherical varieties due to R. Camus~\cite{camus} to our setting: it is a (significantly more involved) generalization of the well-known combinatorial smoothness criterion for toric varieties to the nonabelian case, and it allows us to derive, in Corollary~\ref{cor:classif_smooth_polarized}, a classification of smooth polarized spherical varieties $(X,\lb)$ in terms of the data $(\lat(X), Q(X,\lb),\Sigma(X))$.

Finally, in Section~\ref{sec:Kaehler}, we consider multiplicity free compact and connected Hamiltonian $K$-manifolds.
In Theorem~\ref{thm:criterion-Kaehler}, we obtain 
our combinatorial criterion for the K\"ahlerizability of a multiplicity free compact and connected Hamiltonian $K$-manifold $(M,\omega,\Phi)$ in terms of so-called smooth $\RR$-momentum triples involving $(\lat(M),\Pc(M,\omega,\Phi))$ (see Definitions~\ref{def:R-admissibleset}-
\ref{def:smooth_R_momentum_triple}).
These triples are the real analogues of the algebraic triples mentioned above and they can be seen as a generalization of Delzant polytopes. Furthermore, they enable us to classify the $K$-invariant compatible complex structures of a K\"ahler multiplicity free compact and connected Hamiltonian $K$-manifold (Corollary~\ref{cor:classif_of_complex}). 
We also explain how our criterion generalizes earlier K\"ahlerizability results due to Delzant and Woodward (Section~\ref{subsec:Kaehler_Delzant_Woodward}).
It may be worth noticing that the proof of the K\"ahlerizability criterion we provide does not use any of the results obtained in Section~\ref{sec:characterizations_mp} and even gives an alternative way of showing Theorem~\ref{thm:compatible-polytope} about 
the characterization of momentum polytopes as intersections of finitely many half-spaces.

\subsection*{Notation}
All groups and varieties are defined over the field of complex numbers $\CC$ and varieties are irreducible by assumption. 
If $H$ is a linear algebraic group, we denote by $\Chi(H)$ its set of characters, and by $H^\circ$ the identity connected component of $H$.

Throughout the paper, $G$ is a connected reductive algebraic group. We fix a choice of a Borel subgroup $B\subseteq G$ and a maximal torus $T\inn B$.
We denote by $\sr$ the set of simple roots of $G$, we use $\wl$ for the weight lattice of $G$, that is $\wl=\Chi(T)$.  
If $\alpha$ is a root of $(G,T)$, then we write $\alpha^{\vee} \in \Hom_{\ZZ}(\wl,\ZZ)$ for the corresponding coroot.
We denote by $\dw$ the set of dominant weights in $\wl$.
If $\lambda\in\dw$,  then the irreducible $G$-module of highest weight $\lambda$ is denoted by $V(\lambda)$ and $v_\lambda$ denotes a highest weight vector of $V(\lambda)$; the highest weight of the dual module $V(\lambda)^*$ will be denoted $\lambda^*$.
If we need to specify the acting group, we will add a subscript and write $V_G(\lambda)$. When $V$ is a finite dimensional $G$-module, we will use $\mathbb{P}(V)$ for the associated projective space (of lines in $V$).

If $\Xi$ is a lattice (i.e.\ a finitely generated free abelian group) then $\Xi^* = \Hom_{\ZZ}(\Xi,\ZZ)$ is the dual lattice. Suppose $\mathbb{F}$ is the field $\QQ$ or $\RR$.  We put $\Xi_{\mathbb{F}}:=\Xi\otimes_\ZZ\mathbb{F}$, and if $\Omega$ is a subset of $\Xi_{\mathbb{F}}$, we denote by $\mathbb{F}_{\ge 0}\Omega$ the convex cone generated by $\Omega$ in $\Xi_{\mathbb{F}}$.
We will use $\<\cdot,\cdot\>$ for the pairing between $\Xi_{\mathbb{F}}$ and $\Hom_{\mathbb{F}}(\Xi_{\mathbb{F}},\mathbb{F})$. When $\Omega$ is a subset of $\lat_{\mathbb{F}}$, we will use $\Omega^{\vee}$ for the dual cone: $\Omega^{\vee} = \{\xi \in \Hom_{\mathbb{F}}(\Xi_{\mathbb{F}},\mathbb{F}) \colon \<\xi, v\> \ge 0 \text{ for all $v \in \Omega$}\}$. 
For $\Pi\inn\Hom_{\mathbb{F}}(\Xi_{\mathbb{F}},\mathbb{F})$, $\Pi^\perp$ stands for the set of $\lambda\in\Xi_{\mathbb{F}}$ such that $\<\lambda,\sigma\> = 0$ for all $\sigma \in \Pi$. 
Given a convex rational polyhedral cone $\C\subseteq \Hom_\QQ(\Xi_\QQ,\QQ)$ and an extremal ray $R$ of $\C$, the intersection $R\cap \Hom_\ZZ(\Xi,\ZZ)$ is a monoid generated by a unique element, that we call a {\em ray generator} of $\C$. For any subset $\Omega$ of $\wl_{\RR}$ we put $\Spp(\Omega):= \{\alpha \in S \colon \<\alpha^{\vee},x\> = 0 \text{ for every } x \in \Omega\}$.

\section{Basic material on reductive group actions}\label{sec:basics} 
\subsection{Polarized varieties} \label{subsec:polarized_varieties}
Let $X$ be a $G$-variety, that is, a variety equipped with an action of $G$. Then $G$ acts on the function field  
of $X$ as well as on the ring 
of regular functions on $X$. The \textbf{weight lattice} $\lat(X)$ (resp. \textbf{weight monoid} $\wm(X)$)  of $X$ is the set of $B$-weights of the rational (resp. regular) $B$-eigenfunctions on $X$. Note that if $X$ is affine then $\wm(X)$ generates the group $\lat(X)$. By the \textbf{rank} of $X$ we mean the rank of $\lat(X)$. 

Let $(X,\lb)$ be a \textbf{polarized $G$-variety} that is, a projective $G$-variety $X$ equipped with an ample $G$-linearized line bundle $\lb$. Note that once the $G$-linearization of $\lb$ is fixed, the space of global sections $H^0(X,\lb)$ carries a natural $G$-module structure. 

Following Brion~\cite{brion:image}, we define the \textbf{momentum polytope of $(X,\lb)$} as
\begin{equation} \label{eq:def_momentum_pol_X}
Q(X,\lb):=\Big\{\frac\chi n\in\wl_\QQ: H^0(X,\lb^{n})^{(B)}_\chi\neq \{0\}\Big\},
\end{equation}
where $H^0(X,\lb^{n})^{(B)}_\chi$ is the $B$-eigenspace of weight $\chi$  of $H^0(X,\lb^{n})$. Then $Q(X,\lb)$ is indeed the convex hull of finitely many points in $\wl_{\QQ}$, cf.\ \cite[Proposition 1.2.3]{brion:notes}. 

As mentioned earlier, if $X$ is smooth then $Q(X,\lb)$ can be interpreted as the set of rational points of a Kirwan polytope. 
Specifically, suppose first that $\lb$ is very ample and fix a $K$-invariant hermitian form $\left\langle\cdot,\cdot\right\rangle$ on $V=H^0(X,\lb)^*$ with $K$ being a maximal compact subgroup of $G$.
We then consider the Fubini-Study symplectic form $\omega_\lb$ on $X\hookrightarrow \mathbb P(V)$ and the momentum map $\Phi_\lb$ of $(X,\omega_\lb)$ defined by
$$
\Phi_\lb(x)(\xi)=\frac{i}{2\pi}\left\langle\xi \tilde x,\tilde x\right\rangle 
\quad \mbox{ for }x\in X \mbox{ and } \xi\in\mathfrak k
$$
where $\tilde{x}$ is a unit vector in $V$ representing $x$.
If $\lb$ is only ample, we consider  a positive $m$ such that $\lb^m$ is very ample, the symplectic form 
$\frac{1}{m} \omega_{\lb^{m}}$ and the momentum map $\frac{1}{m}\Phi_{\lb^{m}}$ that we denote by $\Phi_\lb$ (since it does not depend on $m$).
We thus have the following proposition, which goes back to \cite[Appendix]{ness-nullcone} and \cite{guill&stern-conv}, see also \cite[Proposition 2.2]{brion:image}. 

\begin{proposition} \label{prop:brion-Kirwan}
Let $(X,\lb)$ be a smooth polarized $G$-variety.
The polytope  $Q(X,\lb)$ is the set of rational points of the intersection of $\Phi_\lb(X)$ with the positive Weyl chamber determined by the Borel subgroup $B$ of $G$.
\end{proposition}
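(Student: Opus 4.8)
The plan is to follow the route indicated in \cite[Appendix]{ness-nullcone} and \cite{guill&stern-conv}: reduce the assertion --- which concerns only the rational points of $\Phi_\lb(X)$ lying in $\ft^*_+$ --- to a ``zero level'' version of itself by a standard shifting argument, and then invoke the Kempf--Ness/Mumford correspondence between zeros of a momentum map and nonvanishing of invariant sections.

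First I would perform the scaling reduction. Passing from $\lb$ to $\lb^m$ leaves $X$ unchanged, multiplies $Q(X,\lb)$ by $m$ and $\Phi_\lb$ by $m$, so we may assume $\lb$ very ample; fix a $G$-equivariant embedding $X\hookrightarrow\PP(V)$ with $V=H^0(X,\lb)^*$ and a $K$-invariant Hermitian metric on $V$, so that $\Phi_\lb$ is the restriction to $X$ of the Fubini--Study momentum map of $\PP(V)$ and, more generally, $\frac1n\Phi_{\lb^n}$ is that of $X\hookrightarrow\PP(H^0(X,\lb^n)^*)$. Unravelling \eqref{eq:def_momentum_pol_X}, and using that $\Phi_\lb(X)$ is $K$-stable and hence a union of coadjoint orbits, the proposition becomes the following: \emph{for every dominant $\xi\in\wl_\QQ$, one has $\xi\in\Phi_\lb(X)$ if and only if $H^0(X,\lb^{N})^{(B)}_{N\xi}\neq\{0\}$ for some $N\geq1$} (the right-hand side being, by definition, the condition $\xi\in Q(X,\lb)$).

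For the shifting step, fix $n\geq1$ with $n\xi\in\wl$, and let $(Z,\lb_\xi)$ be the flag variety $Z=G/P_\xi$ equipped with the ample $G$-linearized line bundle for which $H^0(Z,\lb_\xi^m)\cong V(mn\xi)^*$ for all $m\geq0$ (Borel--Weil). By the standard computation of momentum maps on flag varieties (equivalently, the Kirillov--Kostant--Souriau picture), $\Phi_{\lb_\xi}(Z)$ is the single coadjoint orbit through $n\xi^*=-w_0(n\xi)$, that is, the orbit $K\cdot(-n\xi)$. Set $Y:=X\times Z$, a smooth projective $G$-variety, with the very ample $G$-linearized bundle $\lb_Y:=\lb^n\boxtimes\lb_\xi$. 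Since momentum maps add on products, $\Phi_{\lb_Y}(x,z)=n\Phi_\lb(x)+\Phi_{\lb_\xi}(z)$, so $0\in\Phi_{\lb_Y}(Y)$ if and only if $n\Phi_\lb(X)$ meets $-\Phi_{\lb_\xi}(Z)=K\cdot n\xi$, i.e.\ if and only if $\xi\in\Phi_\lb(X)$. On the other hand, by the Künneth formula and Schur's lemma, $H^0(Y,\lb_Y^m)^G\cong\Hom_G(V(mn\xi),H^0(X,\lb^{nm}))$, which is nonzero exactly when $H^0(X,\lb^{nm})^{(B)}_{mn\xi}\neq\{0\}$; hence (with $N=nm$) the condition ``$H^0(X,\lb^{N})^{(B)}_{N\xi}\neq\{0\}$ for some $N\geq1$'' holds if and only if $H^0(Y,\lb_Y^m)^G\neq\{0\}$ for some $m\geq1$. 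Everything is thus reduced to proving, for the polarized $G$-variety $(Y,\lb_Y)$: \emph{$0\in\Phi_{\lb_Y}(Y)$ if and only if $H^0(Y,\lb_Y^m)^G\neq\{0\}$ for some $m\geq1$.}

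This last equivalence is where the essential content sits, and I expect it to be the only real obstacle; I would not attempt to reprove it. Its algebraic side says that the GIT quotient $Y\quot G$ (for the linearization $\lb_Y$) is nonempty, equivalently that $Y$ has a semistable point; and by Kempf--Ness theory (\cite[Appendix]{ness-nullcone}; see also \cite{kirwan}) a point $y\in Y$ is semistable precisely when the closure of $G\cdot y$ contains a point where $\Phi_{\lb_Y}$ vanishes, so the semistable locus is nonempty if and only if $0\in\Phi_{\lb_Y}(Y)$. The same analytic fact, read as ``quantization commutes with reduction'' for the Kähler Hamiltonian manifold $Y$, underlies the alternative proof via \cite{guill&stern-conv}, which identifies $\dim H^0(X,\lb^n)^{(B)}_{n\xi}$ with the dimension of the space of sections of the line bundle induced on the reduced space $\Phi_\lb^{-1}(K\xi)/K$; either way the nontrivial input is the same. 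Two remarks: since the proposition asserts only an equality of \emph{rational} points, Kirwan's convexity theorem is not logically needed for its proof (it is what makes the notion of Kirwan polytope meaningful and is used elsewhere); and one must check that the duals $\xi^*=-w_0\xi$ entering above --- which originate from Borel--Weil (the sections of an ample bundle on $G/P$ forming a dual Weyl module) and from the $K$-equivariance of the momentum map --- are compatible with the self-dual normalizations in \eqref{eq:def_momentum_pol_X} and in the definition of $\Phi_\lb$; this affects bookkeeping only, not the structure of the argument.
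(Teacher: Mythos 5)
Your argument is correct. The paper offers no proof of this proposition, only the citations to \cite[Appendix]{ness-nullcone}, \cite{guill&stern-conv} and \cite[Proposition 2.2]{brion:image}; your shifting argument --- reducing via $Y=X\times G/P_\xi$ with $\lb_Y=\lb^n\boxtimes\lb_\xi$ to the equivalence ``$0\in\Phi_{\lb_Y}(Y)$ iff $H^0(Y,\lb_Y^m)^G\neq\{0\}$ for some $m$,'' which is exactly the Kempf--Ness content of Mumford's appendix --- is precisely the argument of those references.
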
 

Observe that it follows from the definition of $Q(X,\lb)$ that 
\begin{equation} \label{eq:QXLminusp}
Q(X,\lb)-p\quad \text{ is a spanning subset of } \quad \lat(X)_\QQ \quad \text{for every $p \in Q(X,\lb)$}. 
\end{equation}
Next, we denote the total section ring of $(X,\lb)$ by $R(X,\lb)$, that is, 
\[
R(X,\lb) := \bigoplus_{n\geq 0} H^0(X,\lb^{n}).
\]
Recall that $R(X,\lb)$ is a finitely generated graded algebra, with degree $n$ part equal to $H^0(X,\lb^{n})$.
Moreover,  $R(X,\lb)$ is endowed with an action of 
\[\widetilde G := G \times \GGm.\] Indeed, the group $G$ acts naturally on each $H^0(X,\lb^{n})$ while
$\GGm$ acts with weight $n$ on $H^0(X,\lb^{n})$. We fix the Borel subgroup $B \times \GGm$ and the maximal torus $T \times \GGm$ of $\tG$. The associated weight lattice (resp. set of dominant weights) is $\widetilde{\wl}:=\wl \times \ZZ$ (resp. $\widetilde{\wl}^+:= \dw \times \ZZ$).  

The affine $\widetilde G$-variety
\[
\widetilde X:=\Spec R(X,\lb)
\]
is called the \textbf{affine cone} over $X$. It is normal if and only if $X$ is (see, e.g., \cite[Ex. II.5.14(a)]{hartshorne} for the ``if'' part). Its weight lattice $\lat (\widetilde{X})$ will also be denoted $\widetilde{\lat}(X,\lb)$ and will be called the \textbf{extended weight lattice} of $(X,\lb)$. We have an exact sequence
\begin{equation}\label{eq:latticecone}
0 \to \lat(X) \to \widetilde{\lat}(X,\lb) \to \ZZ \to 0
\end{equation}
where the third map is induced by the projection $\twl=\wl_{\widetilde G}\to \wl_\GGm$. Note that the cone in $\wl_{\QQ} \times \QQ$ over $Q(X,\lb) \times \{1\}$  coincides with the cone generated by the weight monoid $\wm(\widetilde{X})$ of the $\widetilde G$-variety $\widetilde X$. Equivalently, we have
\begin{equation}\label{eq:conepolytope}
Q(X,\lb)\times \{1\} = (\QQp\wm(\widetilde X)) \cap (\wl_\QQ \times \{1\})
\end{equation}
as subsets of $\wl_\QQ\times \QQ$ (compare \cite[Section~2.2]{alexeev-brion:SSV}).

\subsection{Polarized spherical varieties}
We call a $G$-module \textbf{multiplicity free} if it is the direct sum of pairwise non-isomorphic simple $G$-modules. An affine $G$-variety is {\em multiplicity free} if its coordinate ring is a multiplicity free $G$-module. A polarized $G$-variety $(X,\lb)$ is 
{\em multiplicity free} if every $G$-module $H^0(X,\lb^{n})$ is multiplicity free; equivalently, $\widetilde X$ is a multiplicity free $\widetilde G$-variety. 
Thanks to a result of~\cite{VK}, an affine $G$-variety is spherical if and only if it is normal and multiplicity free. Likewise, a polarized $G$-variety is spherical if and only if it is normal and multiplicity free. The normality of the affine cone $\widetilde{X}$ over a polarized spherical $G$-variety $(X,\lb)$ implies the following equality for its weight monoid (as a $\widetilde{G}$-variety):
\begin{equation}
\wm(\widetilde{X}) = \QQ_{\ge 0}(Q(X,\lb) \times \{1\}) \cap \widetilde{\lat}(X,\lb). \label{eq:wm_affine_cone}
\end{equation}
Equivalently, for every $n\ge 0$ we have the following isomorphism of $G$-modules: 
\begin{equation}
H^0(X,\lb^n) \cong \bigoplus_{\lambda} V(\lambda),
\end{equation}
where the direct sum is over those $\lambda \in \dw$ such that  $(\lambda,n) \in \widetilde{\lat}(X,\lb)  \cap n(Q \times\{1\})$.

In Proposition~\ref{prop:PSVFundFacts} we gather some well-known basic facts about polarized spherical varieties that we will need.

\begin{proposition} \label{prop:PSVFundFacts}
Let $(X,\lb)$ be a polarized spherical $G$-variety with extended weight lattice $\widetilde{\lat} = \widetilde{\lat}(X,L)$ and momentum polytope $Q = Q(X,\lb)$ and let $Y$ be a closed $G$-subvariety of $X$. Then
\begin{enumerate}[(a)]
\item $(Y,\lb|_{Y})$ is a polarized spherical $G$-variety; \label{prop:PSVFundFacts:item:sub} 
\item $Q(Y,\lb|_{Y})$ is a face of $Q(X,\lb)$ and if $Y'$ is another closed $G$-subvariety of $X$ then $Q(Y,\lb|_{Y}) \neq Q(Y',\lb|_{Y'})$; \label{prop:PSVFundFacts:item:faces} 
\item the restriction map $H^0(X,\lb^n) \to H^0(Y,\lb^n|_Y)$ is surjective for all $n \geq 0$; \label{prop:PSVFundFacts:item:surj} 
\item if $Y$ is a closed $G$-orbit, then $Q(Y, \lb|_{Y})$ is a vertex of $Q(X,\lb)$, and there exists $\lambda_Y \in \dw$ such that $Q(Y, \lb|_{Y}) = \{\lambda_Y\}$  and $(\lambda_{Y},1) \in \widetilde{\lat}(X,\lb)$. \label{prop:PSVFundFacts:item:mompolclosedorbit} 
\end{enumerate}
\end{proposition}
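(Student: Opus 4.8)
The plan is to reduce everything to the affine cone $\widetilde{X}$ and standard facts about spherical varieties, then translate back. Throughout I will use the exact sequence \eqref{eq:latticecone}, the identity \eqref{eq:conepolytope}, and the description \eqref{eq:wm_affine_cone} of $\wm(\widetilde{X})$, together with the correspondence between closed $G$-subvarieties $Y\subseteq X$ and certain $\widetilde{G}$-stable subvarieties of $\widetilde{X}$ (namely $\widetilde{Y}=\Spec R(Y,\lb|_Y)$, which is the affine cone over $Y$ and a $\widetilde{G}$-stable closed subvariety of $\widetilde{X}$ by Proposition~\ref{prop:PSVFundFacts}\ref{prop:PSVFundFacts:item:surj}, assumed for now or proved first).

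First, for part \ref{prop:PSVFundFacts:item:sub}: $Y$ is a projective $G$-variety and $\lb|_Y$ is ample and $G$-linearized, so $(Y,\lb|_Y)$ is a polarized $G$-variety. For sphericity, by the result of \cite{VK} recalled above it suffices to check that $Y$ is normal and multiplicity free; normality is not automatic, so instead I would argue sphericity directly: a closed $G$-subvariety of a variety with a dense $B$-orbit, when it is itself irreducible and the restriction maps on sections are surjective, inherits a dense $B$-orbit. Concretely, surjectivity of $H^0(X,\lb^n)\to H^0(Y,\lb^n|_Y)$ (part \ref{prop:PSVFundFacts:item:surj}) shows $R(Y,\lb|_Y)$ is a multiplicity free $\widetilde{G}$-module, being a $\widetilde{G}$-equivariant quotient of the multiplicity free module $R(X,\lb)$; since a multiplicity free projective variety is spherical by normality of its normalization and the fact that sphericity is a birational-type condition detectable on the multiplicity free coordinate ring, $Y$ is spherical. (I expect this to be the cleanest route: sphericity of $Y$ follows because $\widetilde{Y}$ is multiplicity free and its normalization is then a spherical $\widetilde{G}$-variety, forcing $\widetilde{Y}$, hence $Y$, to contain a dense $B\times\GGm$-orbit, hence $Y$ contains a dense $B$-orbit.)

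Next, parts \ref{prop:PSVFundFacts:item:surj} and \ref{prop:PSVFundFacts:item:faces} go hand in hand. Surjectivity of the restriction maps: I would first treat the case where $\lb$ is very ample, so $X\hookrightarrow\PP(V)$ with $V=H^0(X,\lb)^*$ and $Y$ is cut out by a $G$-stable homogeneous ideal; the assertion for $n=1$ is that $Y$ is projectively normal-ish, i.e. linearly normal, which for spherical varieties follows from vanishing of $H^1$ of the ideal sheaf twisted appropriately — or, more elementarily, from the fact that $R(X,\lb)$ is multiplicity free so the kernel and image of the $\widetilde{G}$-equivariant map $R(X,\lb)\to R(Y,\lb|_Y)$ split off cleanly, and a dimension/Hilbert-function comparison using that both rings have the same associated semigroup data on the relevant faces. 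For \ref{prop:PSVFundFacts:item:faces}: by \eqref{eq:conepolytope} applied to both $X$ and $Y$, and surjectivity of restriction, $\QQp\wm(\widetilde{Y})$ is a face of $\QQp\wm(\widetilde{X})$ — this is the standard fact that $\widetilde{G}$-stable closed subvarieties of an affine spherical variety correspond to faces of the weight cone (the ``face'' here being the one spanned by the weights $\lambda$ with $v_\lambda$ nonzero on $\widetilde{Y}$); intersecting with $\wl_\QQ\times\{1\}$ gives that $Q(Y,\lb|_Y)$ is a face of $Q(X,\lb)$. Injectivity $Y\mapsto Q(Y,\lb|_Y)$ then follows from the corresponding injectivity for affine spherical varieties (distinct $\widetilde{G}$-stable closed subvarieties give distinct faces of the weight monoid), using that $Y$ is recovered from $\widetilde{Y}$.

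Finally part \ref{prop:PSVFundFacts:item:mompolclosedorbit}: if $Y$ is a closed $G$-orbit it is $G/P$ for a parabolic $P\supseteq B$, so $H^0(Y,\lb^n|_Y)$ is a single irreducible $G$-module $V(n\lambda_Y)^*$ for some dominant $\lambda_Y$ (Borel–Weil), hence $Q(Y,\lb|_Y)=\{\lambda_Y\}$, a single point, which by \ref{prop:PSVFundFacts:item:faces} is a zero-dimensional face, i.e. a vertex, of $Q(X,\lb)$; moreover $H^0(Y,\lb|_Y)\neq 0$ already in degree $1$ forces $(\lambda_Y,1)\in\widetilde{\lat}(X,\lb)$ via \eqref{eq:latticecone} and \eqref{eq:wm_affine_cone}. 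The main obstacle I anticipate is the surjectivity statement \ref{prop:PSVFundFacts:item:surj} at the level of $\lb$ itself (degree $1$) rather than a high power: for this one genuinely needs a normality/linear-normality input for spherical embeddings — e.g. that the affine cone $\widetilde{X}$ is normal and $\widetilde{Y}\subseteq\widetilde{X}$ is defined by a radical $\widetilde{G}$-stable ideal whose generators live in the expected degrees — so I would cite the relevant result on spherical varieties (normality and the ideal-theoretic description of $\widetilde{G}$-stable subvarieties of affine spherical varieties, e.g. from Brion's notes \cite{brion:notes} or \cite{alexeev-brion:SSV}) rather than reprove it.
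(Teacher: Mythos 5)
For items \ref{prop:PSVFundFacts:item:sub}--\ref{prop:PSVFundFacts:item:surj} the paper gives no argument at all: it simply cites \cite[Prop.~2.9]{alexeev-brion:SSV}. Your treatment of item \ref{prop:PSVFundFacts:item:mompolclosedorbit} coincides with the paper's proof (Borel--Weil makes $H^0(Y,\lb^n|_Y)$ simple, so $Q(Y,\lb|_Y)$ is a point, hence a vertex by \ref{prop:PSVFundFacts:item:faces}, and $(\lambda_Y,1)\in\widetilde{\lat}(X,\lb)$ via \ref{prop:PSVFundFacts:item:surj}), up to a small slip: if $H^0(Y,\lb^n|_Y)\cong V(n\lambda_Y)^*$ then the $B$-eigenweight of its highest weight line is $n\lambda_Y^*$, so the vertex is $\lambda_Y^*$ rather than $\lambda_Y$; the clean convention is to let $\lambda_Y$ be the highest weight of $H^0(Y,\lb|_Y)$ itself, as the paper does. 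Your overall strategy for \ref{prop:PSVFundFacts:item:sub}--\ref{prop:PSVFundFacts:item:surj} --- pass to the affine cone and invoke the structure theory of $\widetilde{G}$-stable subvarieties of an affine spherical variety --- is exactly the route of the cited reference, so falling back on that citation is consistent with what the paper does.

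That said, the two steps you do try to argue inline have genuine gaps. First, for \ref{prop:PSVFundFacts:item:sub} you concede that normality of $Y$ ``is not automatic'' and then never establish it; but normality is part of what ``polarized spherical'' means in this paper (``a polarized $G$-variety is spherical if and only if it is normal and multiplicity free''), and it is precisely what makes \eqref{eq:wm_affine_cone} applicable to $(Y,\lb|_Y)$, which you use in \ref{prop:PSVFundFacts:item:faces}. Producing a dense $B$-orbit from multiplicity-freeness does not close this. The standard argument gets normality, the face description, and the surjectivity in \ref{prop:PSVFundFacts:item:surj} all at once: $\CC[\widetilde{X}]^U$ is the semigroup algebra of the saturated monoid $\wm(\widetilde{X})$; a $\widetilde{G}$-stable prime ideal of $\CC[\widetilde{X}]$ is generated by its $U$-invariants and therefore corresponds to a face of $\QQp\wm(\widetilde{X})$; and the quotient has as $U$-invariants the semigroup algebra of the (again saturated) intersection of $\wm(\widetilde{X})$ with that face. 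Second, your ``dimension/Hilbert-function comparison'' for the surjectivity of $R(X,\lb)\to R(Y,\lb|_Y)$ is circular as stated: the Hilbert function of $R(Y,\lb|_Y)$ is essentially the unknown whose computation the surjectivity is meant to enable. Since you end by proposing to cite \cite{brion:notes} or \cite{alexeev-brion:SSV} for exactly these inputs --- which is all the paper does --- the proposal is acceptable, but the inline sketches should not be mistaken for complete arguments.
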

\begin{proof}
A proof of assertions \ref{prop:PSVFundFacts:item:sub}, \ref{prop:PSVFundFacts:item:faces}, and  \ref{prop:PSVFundFacts:item:surj} is given, for example, in \cite[Prop. 2.9]{alexeev-brion:SSV}. For completeness, we also give the short proof of \ref{prop:PSVFundFacts:item:mompolclosedorbit}. Since $X$ is complete, it follows from the Borel-Weil theorem that $H^0(Y,\lb|_Y)$ is a simple $G$-module. Let $\lambda_Y$ be its highest weight. 
Then $(\lambda_{Y},1) \in \widetilde{\lat}(X,\lb)$ by \ref{prop:PSVFundFacts:item:surj}. 
Furthermore, $H^0(Y,\lb^n|_Y) = V(n\lambda_Y)$ for every $n\ge 0$, and so $Q(Y,\lb|_Y) = \{\lambda_Y\}$. 
That $\lambda_Y$ is a vertex of $Q(X,\lb)$ now follows from \ref{prop:PSVFundFacts:item:faces}.   
\end{proof}

\begin{corollary} \label{cor:tildelattice}
Let $(X,\lb)$ be a polarized spherical $G$-variety, let $Y$ be a closed $G$-orbit in $X$, and let $\lambda_Y$ be the dominant weight such that $Q(Y,\lb|_Y) = \{\lambda_Y\}$. Then
\begin{equation}
\widetilde{\lat}(X,\lb) = (\lat(X) \times \{0\}) \oplus \ZZ(\lambda_Y,1).
\end{equation}
\end{corollary}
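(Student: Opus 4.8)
The plan is to derive the claimed splitting of $\widetilde{\lat}(X,\lb)$ from the exact sequence \eqref{eq:latticecone} together with the integrality statement $(\lambda_Y,1) \in \widetilde{\lat}(X,\lb)$ from Proposition~\ref{prop:PSVFundFacts}\ref{prop:PSVFundFacts:item:mompolclosedorbit}. Recall that \eqref{eq:latticecone} reads $0 \to \lat(X) \to \widetilde{\lat}(X,\lb) \to \ZZ \to 0$, where $\lat(X)$ is embedded as $\lat(X) \times \{0\}$ and the last map is the projection to the $\GGm$-weight. So it suffices to produce a splitting of this sequence, i.e.\ an element of $\widetilde{\lat}(X,\lb)$ mapping to $1 \in \ZZ$.

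First I would observe that $(\lambda_Y,1)$ is exactly such an element: it lies in $\widetilde{\lat}(X,\lb)$ by the cited proposition (which uses the surjectivity of restriction, Proposition~\ref{prop:PSVFundFacts}\ref{prop:PSVFundFacts:item:surj}, applied to the closed orbit $Y$), and its image under the third map of \eqref{eq:latticecone} is $1$ since that map is the $\GGm$-weight projection. Therefore the map $\ZZ \to \widetilde{\lat}(X,\lb)$, $k \mapsto k(\lambda_Y,1)$, is a section of \eqref{eq:latticecone}, and any short exact sequence of abelian groups that admits a section splits: $\widetilde{\lat}(X,\lb) = \ker \oplus \im(\text{section}) = (\lat(X)\times\{0\}) \oplus \ZZ(\lambda_Y,1)$. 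One should check explicitly that the sum is direct — an element $(\mu,0)$ with $(\mu,0) = k(\lambda_Y,1)$ forces $k=0$, hence $\mu=0$ — and that it is all of $\widetilde{\lat}(X,\lb)$: given $(\nu,k) \in \widetilde{\lat}(X,\lb)$, the element $(\nu,k) - k(\lambda_Y,1) = (\nu - k\lambda_Y,0)$ has $\GGm$-weight $0$, so by exactness of \eqref{eq:latticecone} it lies in $\lat(X)\times\{0\}$, whence $(\nu,k) \in (\lat(X)\times\{0\}) + \ZZ(\lambda_Y,1)$.

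This argument is essentially routine once the two inputs — the exact sequence \eqref{eq:latticecone} and the integral point $(\lambda_Y,1)$ — are in hand, so there is no real obstacle; the only point requiring a moment's care is that $\lambda_Y - \lambda_Y' \in \lat(X)$ is \emph{not} needed here (that would be relevant for a different closed orbit), and that we are not asserting $(\lambda_Y,1)$ is canonical, only that each choice of closed orbit $Y$ gives a valid splitting. I would therefore keep the write-up to a few lines, citing Proposition~\ref{prop:PSVFundFacts}\ref{prop:PSVFundFacts:item:mompolclosedorbit} for $(\lambda_Y,1) \in \widetilde{\lat}(X,\lb)$ and \eqref{eq:latticecone} for the exact sequence, and then invoking the splitting lemma.
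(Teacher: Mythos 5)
Your proposal is correct and follows essentially the same route as the paper: the inclusion $\supseteq$ comes from the exact sequence \eqref{eq:latticecone} together with Proposition~\ref{prop:PSVFundFacts}-\ref{prop:PSVFundFacts:item:mompolclosedorbit}, and the reverse inclusion is obtained by subtracting $n(\lambda_Y,1)$ from an arbitrary $(\mu,n)\in\widetilde{\lat}(X,\lb)$ to land in $\lat(X)\times\{0\}$. Your phrasing via the splitting lemma is just a slightly more formal packaging of the paper's two-line argument.
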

\begin{proof}
The inclusion ``$\supseteq$'' follows from the exact sequence \eqref{eq:latticecone} and Proposition~\ref{prop:PSVFundFacts}-\ref{prop:PSVFundFacts:item:mompolclosedorbit}. 
To prove the reverse inclusion, take $\tilde\mu=(\mu,n)\in\widetilde{\lat}(X,\lb)$. We then have $\tilde\mu- n(\lambda_Y,1)\in \lat(X) \times \{0\}$.
The corollary follows.
\end{proof}

\section{Momentum polytopes and colored fans of polarized spherical varieties}
\label{sec:brionbasics}
In this section, we recall from \cite{brion:image} and \cite{brion:notes} how the momentum polytope of  polarized spherical $G$-variety $(X,\lb)$ is related to the colored fan of $X$. 

\subsection{The momentum polytope as an intersection of half-spaces}  \label{subsec:mom_pol_inters_halfspaces}
For a moment, let $X$ be any spherical $G$-variety and $\lat(X)$ be its weight lattice.
Set
\[
N(X):=\mathrm{Hom}_\ZZ(\lat(X),\QQ).
\]
Each discrete $\QQ$-valuation $v$ on the  field of rational functions $\mathbb C(X)$ determines an element $\rho_v$ of $N(X)$ given as follows
\[ 
\rho_v:  \lat(X)  \longrightarrow \mathbb Q,\quad \chi\longmapsto v(f_\chi)
\]
with $f_\chi\in\mathbb C(X)^{(B)}$ of weight $\chi$. Note that $f_\chi$ is uniquely determined by $\chi$ up to a scalar in $\mathbb C^{\times}$ since $X$ is spherical.

Let $\divB(X)$ be the set of $B$-stable prime divisors of $X$. Note that $\divB(X)$ is a finite set since it is the set of irreducible components of codimension 1 of some closed set, namely the complement of the open $B$-orbit on $X$. 
Each $D\in \divB(X)$ defines a valuation $v_D$ on $\CC(X)$ given by the order of vanishing along $D$ and, in turn, an element of $N(X)$ defined as 
\[
\rho_X(D):=\rho_{v_D}.
\] 
When there is no risk of confusion, we will also write $\rho(D)$ instead of $\rho_X(D)$. 

Now, suppose once again that $(X,\lb)$ is a polarized spherical $G$-variety.  Consider any nonzero section $s\in H^0(X,\lb)^{(B)}$. Let $\chi(s)$ denote its weight and $v_D(s)$ be its order of vanishing along $D \in \divB(X)$. Recall that any element of $H^0(X,\lb^{n})^{(B)}$ can be written as $s^n f$, for some $f\in\CC(X)^{(B)}$ such that $n v_D(s)+v_D(f)\geq 0$ for all $D\in\divB(X)$. 
Together with the finiteness of $\divB(X)$ this yields the following realization of $Q(X,\lb)$ as a finite intersection of closed half-spaces.

\begin{proposition}[{\cite[Proposition~5.3.1]{brion:notes}}]\label{prop:Pbrion}
Let $(X,\lb)$ be a polarized spherical $G$-variety, and $s \in H^0(X,\lb)^{(B)} \setminus \{0\}$. 
Then:
\[
Q(X,\lb)=\chi(s)+\{\xi\in \Xi(X)_\QQ: \<\rho(D),\xi\> + v_D(s) \geq 0 \, ,\forall D\in\divB(X)\}.
\]
\end{proposition}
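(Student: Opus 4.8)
The plan is to translate the defining property~\eqref{eq:def_momentum_pol_X} of $Q(X,\lb)$ into the language of $B$-eigensections and $B$-stable divisors, using the sphericity of $X$ to reduce everything to a single ``reference'' section $s$. First I would recall the structure of $H^0(X,\lb^n)^{(B)}$: since $\lb$ is $G$-linearized and $X$ is spherical, for each $n$ the space $H^0(X,\lb^n)^{(B)}$ is at most one-dimensional in each $B$-weight, and any nonzero $t \in H^0(X,\lb^n)^{(B)}$ can be compared to $s^n$, which is a nonzero $B$-eigensection of $\lb^n$. Writing $f = t/s^n \in \CC(X)$, the $B$-semi-invariance of both $t$ and $s^n$ forces $f \in \CC(X)^{(B)}$, and its $B$-weight is $\chi(t) - n\chi(s)$. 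Conversely, given $f \in \CC(X)^{(B)}$ of weight $\chi$, the section $s^n f$ lies in $H^0(X,\lb^n)$ precisely when it is regular, i.e.\ when its divisor is effective.

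The second step is to compute that divisor. The divisor of $s^n f$ on $X$ decomposes into its restriction to the open $B$-orbit (which is empty, as $s^n f$ is a $B$-eigensection nowhere vanishing there once we've divided out correctly) together with contributions along the $B$-stable prime divisors $D \in \divB(X)$; the complement of the open $B$-orbit is exactly the union of these. Hence $s^n f$ is regular if and only if $n\, v_D(s) + v_D(f) \ge 0$ for every $D \in \divB(X)$, and since $X$ is spherical the valuation $v_D$ applied to $f$ reads off as $v_D(f) = \<\rho(D), \chi\>$ with $\chi$ the $B$-weight of $f$ (this is exactly how $\rho(D) = \rho_{v_D}$ was defined). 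So $s^n f \in H^0(X,\lb^n)^{(B)}$ if and only if $\chi \in \wl(X)$ with $\<\rho(D),\chi\> + n\, v_D(s) \ge 0$ for all $D \in \divB(X)$.

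The third step is to assemble these into the polytope. By~\eqref{eq:def_momentum_pol_X}, a point $q \in \wl_\QQ$ lies in $Q(X,\lb)$ iff $q = \chi(t)/n$ for some $n \ge 1$ and some nonzero $t \in H^0(X,\lb^n)^{(B)}$; writing $t = s^n f$ as above, $\chi(t) = n\chi(s) + \chi$ where $\chi$ is the weight of $f$, so $q - \chi(s) = \chi/n$ ranges exactly over $\tfrac1n$ times the integral points $\chi \in \wl(X)$ satisfying the displayed inequalities, as $n$ varies. Taking the union over $n$ and using that a rational polyhedral set defined by inequalities with rational data is the closure of its rational (indeed, its $\tfrac1n$-lattice) points — together with the fact, already recorded after~\eqref{eq:def_momentum_pol_X}, that $Q(X,\lb)$ is a genuine (closed) polytope — one gets
\[
Q(X,\lb) = \chi(s) + \{\xi \in \wl(X)_\QQ : \<\rho(D),\xi\> + v_D(s) \ge 0 \ \forall D \in \divB(X)\}.
\]
Finiteness of $\divB(X)$, noted before the statement, guarantees this is a finite intersection of half-spaces.

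The main obstacle I expect is the careful bookkeeping in the identification $v_D(s^n f) = n v_D(s) + \<\rho(D), \chi(f)\>$ and the claim that regularity of $s^n f$ is detected solely on $\divB(X)$: one must be sure that the complement of the open $B$-orbit in $X$ has pure codimension one (so that the order of vanishing along each $B$-stable prime divisor captures all of effectivity) and that no spurious conditions come from the non-$B$-stable part of the zero locus — which is handled by the observation that $s^n f$, being a $B$-eigensection, has $B$-stable divisor. A secondary subtlety is matching the two descriptions of $Q(X,\lb)$ — as a convex hull of rational points versus as this intersection of half-spaces — which is where one invokes that $Q(X,\lb)$ is already known to be a polytope (from \cite[Proposition 1.2.3]{brion:notes}) together with~\eqref{eq:QXLminusp}, rather than having to prove polytopality afresh here.
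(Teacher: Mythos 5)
Your proof is correct and follows essentially the same route as the paper, which (citing Brion's notes) argues exactly by writing each element of $H^0(X,\lb^n)^{(B)}$ as $s^n f$ with $f\in\CC(X)^{(B)}$ satisfying $n v_D(s)+v_D(f)\geq 0$ for all $D\in\divB(X)$, identifying $v_D(f)=\<\rho(D),\chi(f)\>$ via the definition of $\rho$, and using the finiteness of $\divB(X)$. The only remark worth adding is that your worry about the complement of the open $B$-orbit having pure codimension one is immaterial: since $X$ is normal, effectivity of the ($B$-stable) divisor of $s^n f$ is detected on prime divisors alone, hence exactly on $\divB(X)$.
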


\subsection{Colored fans, \texorpdfstring{$G$}{G}-invariant valuations and spherical roots}\label{section:coloredfan}
We recall the definition of some standard combinatorial invariants of a spherical $G$-variety $X$. The set of \textbf{colors} of $X$ is
\[\col(X) := \{D \in \divB(X) \colon \text{$D$ is not $G$-stable}\}.\] 
As proved by Luna and Vust \cite[Prop. 7.4]{lunavust}, the restriction of the map $v\mapsto \rho_v$ to the set of $G$-invariant valuations of $\mathbb C(X)$ is injective.
Let $\V(X)$ denote the image of  this restricted map; by \cite[\S 3]{brion90} it is a convex co-simplicial cone, \textbf{the valuation cone} of $X$. 
The set $\Sigma(X)$ (denoted also  by $\Sigma_G(X)$ when we need to emphasize the acting group) of \textbf{spherical roots} of $X$ is the minimal set of primitive elements  of $\lat(X)$ such that
\[
\V(X)=\left\{ \rho\in N(X): \left\langle \rho,\sigma\right\rangle\leq 0, \forall \sigma \in \Sigma(X)\right\}.
\]
 
The Luna-Vust embedding theory in \cite{lunavust} (see also \cite{knop:LV}) combinatorially describes $X$ as an equivariant embedding of its open $G$-orbit by means of its colored fan $\F(X)$. To define $\F(X)$, first let $Y$ be a $G$-orbit in $X$, and set \begin{align*}
\D_Y &:= \{D \in \col(X) \colon Y \subseteq D\};\\
\C_Y &:= \QQ_{\ge 0}\{\rho(D) \in N(X) \colon D \in \divB(X) \text{ with }Y \subseteq D\}.
\end{align*} 
The couples $(\C_Y,\D_Y)$ are called \textbf{colored cones}, and the \textbf{colored fan} of $X$ is 
\[ \F(X) := \{(\C_Y,\D_Y) \colon Y \text{ is a $G$-orbit in $X$}\}. \]

The colored fan $\F(X)$ satisfies the following properties (see \cite[Section~3]{knop:LV}):
\begin{itemize}
\item[(CC1)] For all $(\C,\D)\in\F(X)$ we have that $\C$ is a convex cone generated by $\{\rho(D)\;|\; D\in \D\}$ and finitely many elements of $\V(X)$;
\item[(CC2)] the relative interior of $\C$ intersects $\V(X)$;
\item[(SCC)] $\C$ is strictly convex, and $\rho(D)\neq 0$ for all $D\in\D$;
\item[(CF1)] every face of $(\C,\D)$ belongs to $\F(X)$, where $(\C',\D')$ is a \textbf{face} of $(\C,\D)$ if $\C'$ is a face of $\C$ and $\D'$ is the set of elements $D\in\D$ such that $\rho(D)\in\C'$.
\item[(CF2)] For every $v\in\V(X)$ there is at most one $(\C,\D)\in\F(X)$ such that $v$ is in the relative interior of $\C$.
\end{itemize}

Conversely, if a set $\F$ of couples $(\C,\D)$ where $\C\subseteq N(X)$ and $\D\subseteq \Delta(X)$ satisfies the above properties (CC1)-(CF2), then there exists a unique spherical variety $Z$, birationally $G$-isomorphic to $X$, such that $\F(Z)=\F$ (see \cite[Theorem~3.3]{knop:LV}).

For future reference, we recall the two following facts.

\begin{proposition}[{\cite[Lemma~3.2]{knop:LV}}]\label{thm:orbits-vs-faces}
Let $X$ be a spherical $G$-variety and $Y$ a $G$-orbit of $X$.
There exists a bijection between the set of $G$-orbits of $X$ whose closure contains $Y$ and the set of faces of $(\C_Y,\D_Y)$.
\end{proposition}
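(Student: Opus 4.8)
The statement to be proved (Proposition~\ref{thm:orbits-vs-faces}) asserts a bijection between $G$-orbits of $X$ whose closure contains a fixed $G$-orbit $Y$, and the faces of the colored cone $(\C_Y,\D_Y)$.

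\medskip

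The plan is to exploit the Luna--Vust correspondence (recalled just above the statement via (CC1)--(CF2) and the converse existence-and-uniqueness result) by localizing $X$ around $Y$. First I would pass to the \emph{minimal} $G$-stable open subset $X_Y \subseteq X$ containing $Y$; concretely, $X_Y$ is the union of all $G$-orbits $Z$ with $Y \subseteq \overline{Z}$. One checks that $X_Y$ is open (it is the complement of the union of the closures of the finitely many orbits not containing $Y$ in their closure, each of which is closed and $G$-stable), and that its colored fan is the ``star'' of $(\C_Y,\D_Y)$, i.e.\ the set of those colored cones in $\F(X)$ having $(\C_Y,\D_Y)$ as a face. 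In fact $X_Y$ is a \emph{simple} spherical variety: it has $Y$ as its unique closed orbit, and by the dictionary between orbits and faces for simple embeddings, $\F(X_Y) = \{\text{faces of } (\C_Y,\D_Y)\}$. This reduces the statement to the case where $X$ itself is simple with closed orbit $Y$, and where we must biject the full orbit set of $X$ with the full set of faces of $(\C_Y,\D_Y)$.

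\medskip

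Next, for a simple spherical variety $X$ with colored cone $(\C,\D)=(\C_Y,\D_Y)$, I would construct the bijection explicitly in both directions. Given a $G$-orbit $Z$ of $X$, one has $Y \subseteq \overline{Z}$ automatically (since $Y$ is the unique closed orbit), and the recipe $Z \mapsto (\C_Z,\D_Z)$ produces a colored cone; by (CF1) this is a face of $(\C,\D)$. Conversely, given a face $(\C',\D')$ of $(\C,\D)$, one invokes (the proof of) the Luna--Vust classification: faces of the colored cone of a simple embedding correspond to the $G$-orbits via the local structure of the open affine chart. The key point to verify is that the two assignments are mutually inverse, i.e.\ that $Z$ is recovered from $(\C_Z,\D_Z)$ and that distinct orbits give distinct colored cones. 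The latter — injectivity of $Z\mapsto(\C_Z,\D_Z)$ — is where I expect to lean hardest on the Luna--Vust theory: one uses that an orbit $Z$ is determined inside $X_Y$ as the unique closed orbit of the further localization $X_Z$, whose colored fan is the star of $(\C_Z,\D_Z)$; since two faces with the same star are equal, and the star determines $X_Z$ hence $Z$, injectivity follows. Surjectivity is the content of the existence half of \cite[Theorem~3.3]{knop:LV}: every face, being itself a colored cone satisfying (CC1)--(SCC), arises as $(\C_Z,\D_Z)$ for some orbit $Z$ in the simple variety whose fan is its star, and that variety embeds as $X_Z \subseteq X_Y \subseteq X$.

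\medskip

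The main obstacle, and the part requiring the most care, is matching the abstract face relation on colored cones with the geometric closure relation on orbits — in particular ruling out that two different orbits of $X$ yield the same pair $(\C_Z,\D_Z)$. I would handle this by the localization argument above rather than by a direct computation: reduce to $X$ simple, and then use that passing to $X_Z$ is an inclusion-reversing, fan-theoretically transparent operation, so that the colored cone of the closed orbit determines the simple variety and hence the orbit. Everything else — openness of $X_Y$, the fact that $\F(X_Y)$ is the star, compatibility of $\rho_{X_Y}$ with $\rho_X$ under the open immersion — is routine from the Luna--Vust formalism recalled in this section and can be cited from \cite{knop:LV}.
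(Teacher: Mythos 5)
The paper offers no proof of this proposition to compare against: it is imported verbatim from \cite[Lemma~3.2]{knop:LV}, and your sketch is in substance a faithful reconstruction of Knop's own argument (localize to the minimal $G$-stable open subset $X_Y$, which is simple with closed orbit $Y$; then obtain injectivity from the uniqueness half and surjectivity from the existence half of the classification of simple embeddings by colored cones). Two points are worth tightening. First, your opening paragraph asserts $\F(X_Y)=\{\text{faces of }(\C_Y,\D_Y)\}$ ``by the dictionary'' before establishing it; present this as the statement to be proved in the simple case rather than as an input. Second, and more substantively, the bijection only holds if ``face'' is taken in Knop's sense: a face $\C'$ of $\C_Y$ whose \emph{relative interior meets} $\V(X)$, together with $\D'=\{D\in\D_Y\colon\rho(D)\in\C'\}$. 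With the purely polyhedral reading of ``face'' suggested by the wording of (CF1), the statement is false --- already in rank $2$ a colored cone may have an extremal ray spanned by $\rho(D)$ for a color $D$ with $\rho(D)\notin\V(X)$, and that face corresponds to no orbit. Your surjectivity step, which applies the existence theorem to realize a given face as the colored cone of the closed orbit of a simple open subvariety, is exactly where condition (CC2) for the face is required, so it should be stated explicitly.
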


\begin{proposition}\label{prop:affine-criterion}
Let $X$ be an affine spherical $G$-variety with weight monoid $\Gamma$. Let $\C$ be the largest face of the dual cone $(\QQ_{\geq 0}\Gamma)^\vee \inn N(X)$ of which the relative interior intersects $\V(X)$ and put $\D = \{D \in \col(X) \colon \rho(D) \in \C\}$.
Then $(\C,\D)$ is an element of the colored fan $\F(X)$ of $X$, and for every other $(\C',\D') \in \F(X)$, the cone $\C'$ is a proper face of $\C$. 
\end{proposition}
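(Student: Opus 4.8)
The plan is to recognize this as essentially the affine version of the Luna–Vust correspondence: for an affine spherical variety $X$, the unique closed orbit corresponds to the maximal colored cone in $\Fan(X)$, and one must identify that cone concretely in terms of the weight monoid $\wm(X) = \wm$. First I would recall that since $X$ is affine, the coordinate ring $\CC[X]$ is finitely generated and multiplicity free, and the weight monoid $\wm$ generates $\lat(X)$; moreover $X$ has a unique closed orbit $Y_0$, namely the one whose ideal is the augmentation ideal spanned by all nonconstant $B$-eigenfunctions. The colored cone attached to $Y_0$ is $(\C_{Y_0}, \D_{Y_0})$ with $\C_{Y_0} = \QQ_{\ge 0}\{\rho(D) : D \in \divB(X),\ Y_0 \subseteq D\}$. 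Every other $G$-orbit $Y$ has $Y_0 \subseteq \overline{Y}$, so by Proposition~\ref{thm:orbits-vs-faces} its colored cone is a face of $(\C_{Y_0},\D_{Y_0})$, and it is a proper face because $Y \ne Y_0$ gives a proper inclusion of the sets of divisors containing them (using that distinct orbits give distinct colored cones, a consequence of the injectivity in (CF2) together with (CC2)). So the only thing left is to verify that $\C_{Y_0}$ equals the cone $\C$ described in the statement, i.e.\ the largest face of $(\QQ_{\ge 0}\wm)^\vee$ whose relative interior meets $\V(X)$.

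The key computation is the standard description of $B$-stable prime divisors of an affine spherical variety via the weight monoid. For $D \in \divB(X)$, the element $\rho(D) \in N(X)$ satisfies $\langle \rho(D), \chi \rangle = v_D(f_\chi) \ge 0$ for every $\chi \in \wm$ (since $f_\chi$ is regular), so $\rho(D) \in (\QQ_{\ge 0}\wm)^\vee$ for all $D$; hence $\C_{Y_0} \subseteq (\QQ_{\ge 0}\wm)^\vee$. To pin down which face, recall that $Y_0$ is cut out by the vanishing of all $f_\chi$ with $\chi \in \wm \setminus \{0\}$, equivalently a $B$-stable prime divisor $D$ contains $Y_0$ if and only if its local ring does not contain the augmentation ideal, which translates into: $D \supseteq Y_0$ iff there is some $\chi \in \wm$ with $v_D(f_\chi) > 0$, i.e.\ iff $\rho(D) \notin \wm^\perp$. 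In other words, $\C_{Y_0}$ is generated by exactly those $\rho(D)$ lying outside the facet $(\QQ_{\ge 0}\wm)^\vee \cap \wm^\perp$. One then checks, using (CC1) and (CC2) applied to $\F(X)$ (which guarantees $\C_{Y_0}$ is generated by the colors in $\D_{Y_0}$ together with finitely many valuations in $\V(X)$, and that its relative interior meets $\V(X)$), that $\C_{Y_0}$ is precisely the smallest face of $(\QQ_{\ge 0}\wm)^\vee$ containing all $\rho(D)$ with $D \supseteq Y_0$ — and dually the largest such face whose relative interior meets $\V(X)$. Here one uses the general fact that a face $F$ of a cone $\mathcal{C}^\vee$ has relative interior meeting a sub-cone $\V$ iff $F$ is the minimal face containing $\V \cap \mathcal C^\vee$ intersected appropriately; more directly, since every $G$-orbit closure contains $Y_0$, the cone $\C_{Y_0}$ is the unique maximal cone of $\F(X)$, and by (CC2) its relative interior meets $\V(X)$, while any larger face of $(\QQ_{\ge 0}\wm)^\vee$ would have to correspond to an orbit in an affine embedding with $\F$ a single colored cone, forcing it to still be $\C_{Y_0}$.

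Concretely I would structure the proof as follows. Step 1: Observe $X$ affine has a unique closed orbit $Y_0$ and recall from \cite{knop:LV} that $\F(X)$ then has a unique maximal element $(\C_{Y_0}, \D_{Y_0})$, with every other element a proper face of it — the ``proper'' part from Proposition~\ref{thm:orbits-vs-faces} together with distinctness of colored cones of distinct orbits. Step 2: Show $\C_{Y_0} \subseteq (\QQ_{\ge 0}\wm)^\vee$ by the regularity argument above. Step 3: Identify $\C_{Y_0}$ as a face of $(\QQ_{\ge 0}\wm)^\vee$: since $X$ is affine, $\C_{Y_0}$ is a face of $(\QQ_{\ge 0}\wm)^\vee$ because the colored cone of an affine embedding is the ``image'' face of the weight monoid cone under the Luna–Vust dictionary (this is in \cite{knop:LV}, or one argues directly that $\C_{Y_0}$ being generated by valuations and colors each pairing non-negatively with $\wm$, and maximal among cones of $\F(X)$, it must be a face and not just a subcone). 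Step 4: Among faces of $(\QQ_{\ge 0}\wm)^\vee$, characterize $\C$ from the statement as the largest whose relative interior meets $\V(X)$, and match it with $\C_{Y_0}$ using (CC2). Step 5: Set $\D = \{D \in \col(X) : \rho(D) \in \C\}$ and note this forces $\D = \D_{Y_0}$ by definition of $\D_Y$ and the face structure (CF1), completing the identification $(\C,\D) = (\C_{Y_0},\D_{Y_0}) \in \F(X)$.

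The main obstacle I anticipate is Step 3–4: cleanly proving that $\C_{Y_0}$ is a \emph{face} of $(\QQ_{\ge 0}\wm)^\vee$ (not merely contained in it) and that it is the \emph{largest} face meeting $\V(X)$ in its relative interior. The containment and the "relative interior meets $\V(X)$" assertion are immediate from (CC1)–(CC2); the subtlety is maximality — one must rule out a strictly larger face of $(\QQ_{\ge 0}\wm)^\vee$ that still meets $\V(X)$ in its relative interior but does not arise as a colored cone of $X$. The resolution is that such a face would, by the converse part of Luna–Vust (Theorem~3.3 in \cite{knop:LV}), define its own spherical embedding with the same open orbit, and comparing coordinate rings (both with weight monoid saturating to the same lattice and dominating $\wm$) forces it to be affine with coordinate ring exactly $\CC[X]$, hence equal to $X$ and its cone equal to $\C_{Y_0}$; this uses the affineness criterion of Luna–Vust that an embedding is affine iff its colored fan is a single colored cone whose cone's relative interior meets $\V(X)$. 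Once this is in place the rest is bookkeeping.
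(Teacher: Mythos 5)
Your overall strategy is the right one, and it is essentially the argument in the reference that the paper's proof points to (\cite[Proposition~5.14]{ACF}): reduce to the unique closed orbit $Y_0$ of the affine variety $X$, whose colored cone $(\C_{Y_0},\D_{Y_0})$ is the unique maximal element of $\F(X)$, and then identify $\C_{Y_0}$ with the face $\C$ of the statement. However, two of the ingredients you invoke are false as stated, and they sit exactly at the points you yourself flag as delicate. First, the claim that $Y_0$ is cut out by the augmentation ideal, hence that $D\supseteq Y_0$ iff $\rho_X(D)\notin\wm^\perp$, fails already for $X=\SL_2/T$: there $Y_0=X$, so no $B$-stable prime divisor contains $Y_0$ and $\C_{Y_0}=\{0\}$, yet both colors satisfy $\rho_X(D)=\tfrac12\alpha^\vee\neq 0$ (and $\bigoplus_{\lambda\neq 0}\CC[X]_\lambda$ is not even an ideal in general). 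Second, the affineness criterion you appeal to at the end --- ``affine iff the colored fan is a single colored cone whose relative interior meets $\V(X)$'' --- characterizes \emph{simple} embeddings, not affine ones: $\PP^1\times\PP^1\supseteq\SL_2/T$ is a complete counterexample. The correct criterion (\cite[Theorem~6.7]{knop:LV}) requires in addition a character $\chi\in\lat(X)$ with $\chi|_{\C_{Y_0}}=0$, with $\chi\leq 0$ on $\V(X)$, and with $\<\rho_X(D),\chi\>>0$ for every color $D\notin\D_{Y_0}$. This $\chi$ is precisely the missing tool: since $(\QQ_{\geq 0}\wm)^\vee$ is generated by $\{\rho_X(D):D\in\divB(X)\}$ (by saturatedness of $\wm$) and every $G$-stable prime divisor of an affine variety contains $Y_0$, the functional $\chi$ is nonnegative on all generators, hence lies in $\QQ_{\geq 0}\wm$ and cuts out $\C_{Y_0}$ as a face of $(\QQ_{\geq 0}\wm)^\vee$; its strict positivity on the remaining colors gives $\{D\in\col(X)\colon\rho_X(D)\in\C_{Y_0}\}=\D_{Y_0}$, which is your Step~5 and does \emph{not} follow from ``the definition of $\D_Y$ and (CF1)''.

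For maximality you also do not need to compare coordinate rings of a hypothetical second embedding. The clean argument is: a $G$-invariant valuation $v$ has a center on the affine variety $X$ if and only if $v\geq 0$ on $\CC[X]$, equivalently $v\in(\QQ_{\geq 0}\wm)^\vee$; and it has a center if and only if it lies in the relative interior of $\C_Y$ for some $G$-orbit $Y$, i.e.\ in some face of $\C_{Y_0}$. Hence $\V(X)\cap(\QQ_{\geq 0}\wm)^\vee=\V(X)\cap\C_{Y_0}$. If now $F$ is a face of $(\QQ_{\geq 0}\wm)^\vee$ whose relative interior contains a point $v$ of $\V(X)$, then $v\in\C_{Y_0}$, and since $\C_{Y_0}$ is itself a face, $F\subseteq\C_{Y_0}$. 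Together with (CC2) for $(\C_{Y_0},\D_{Y_0})$, this shows $\C_{Y_0}$ is the largest face whose relative interior meets $\V(X)$. With these repairs your outline becomes a complete proof.
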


\begin{proof}
See e.g. Proposition 5.14 in~\cite{ACF} and its proof.
\end{proof}

We will say that a color $D$ of the spherical variety $X$ is \textbf{moved} by the simple root $\alpha \in \sr$ if $P_{\alpha}\cdot D \neq D$, where $P_{\alpha}$ is the minimal parabolic subgroup strictly containing $B$ and associated to the simple root $\alpha$. 
We recall from \cite[\S 1.4 and Proposition 3.2]{luna:typeA} that if $D$ is a color moved by $\alpha \in \sr\setminus\Sigma(X)$, then:
\begin{align}
&\rho(D)=    \label{eq:color_functional}
\begin{cases}
\alpha^{\vee}|_{\lat(X)} &\text{if }2 \alpha\notin \Sigma(X);\\
\frac{1}{2}\alpha^{\vee}|_{\lat(X)} &\text{if }2 \alpha\in \Sigma(X);
\end{cases} \quad  \text{ and }\\
& \text{$\beta \in \sr\setminus\{\alpha\}$ also moves $D$ if and only if $\alpha \perp \beta$ and $\alpha+\beta$ or $\frac{1}{2}(\alpha+\beta)$ is in $\Sigma(X)$.} \label{eq:color_moved_by_two}
\end{align}

If we fix $G$, the union of the sets of spherical roots of all spherical $G$-varieties is finite, and denoted  by $\Sigma(G)$. An element $\sigma\in \Sigma(G)$ is always a linear combination of simple roots of $G$ with non-negative rational coefficients; the set of simple roots whose coefficient is non-zero is called the {\em support} of $\sigma$. We list all possible $\sigma$, with their supports, in Table~\ref{tab:sr}. In the table, simple roots are denoted by $\alpha_1,\alpha_2,\ldots$, numbered as in Bourbaki~\cite{bbki}. In two cases the support is of type $\mathsf A_1\times \mathsf A_1$, in which case the two simple roots are $\alpha_1$ and $\alpha_1'$.

\begin{table}\caption{Spherical roots}
\begin{center}\label{tab:sr}
\begin{tabular}{c|c}
type of support & spherical root\\
\hline
$\mathsf A_n$, $n\geq 1$ & $\alpha_1+\ldots+\alpha_n$\\
$\mathsf A_1$ & $2\alpha_1$\\
$\mathsf A_1 \times \mathsf A_1$ & $\alpha_1+\alpha_1'$\\
$\mathsf A_1 \times \mathsf A_1$ & $\frac12(\alpha_1+\alpha_1')$\\
$\mathsf A_3$ & $\alpha_1+2\alpha_2+\alpha_3$ \\
$\mathsf A_3$ & $\frac12\alpha_1+\alpha_2+\frac12\alpha_3$ \\
$\mathsf B_n$, $n\geq 2$ & $\alpha_1+\ldots+\alpha_n$\\
$\mathsf B_n$, $n\geq 2$ & $2(\alpha_1+\ldots+\alpha_n)$\\
$\mathsf B_3$ & $\alpha_1+2\alpha_2+3\alpha_3$ \\
$\mathsf B_3$ & $\frac12(\alpha_1+2\alpha_2+3\alpha_3)$ \\
$\mathsf C_n$, $n\geq 3$ & $\alpha_1+2(\alpha_2+\ldots+\alpha_{n-1})+\alpha_n$\\
$\mathsf D_n$, $n\geq 4$ & $2(\alpha_1+\alpha_2+\ldots+\alpha_{n-2})+\alpha_{n-1}+\alpha_n$\\
$\mathsf D_n$, $n\geq 4$ & $\alpha_1+\alpha_2+\ldots+\alpha_{n-2}+\frac12(\alpha_{n-1}+\alpha_n)$\\
$\mathsf F_4$ & $\alpha_1+2\alpha_2+3\alpha_3+2\alpha_4$ \\
$\mathsf G_2$ & $\alpha_1+\alpha_2$ \\
$\mathsf G_2$ & $2\alpha_1+\alpha_2$ \\
$\mathsf G_2$ & $4\alpha_1+2\alpha_2$ \\
\end{tabular}
\end{center}
\end{table}

\subsection{Momentum polytopes and colored fans}
The next definition recalls a basic construction in convex geometry. 

\begin{definition} \label{def:normal_fan}
Let $\lat$ be a lattice
and let $Q$ be a full dimensional convex polytope in $\lat_\QQ$.
The \textbf{normal fan} of $Q$  is the set
\[\F(Q) := \{\C(F) \colon \text{$F$ is a face of $Q$}\},\]
where for every face $F$ of $Q$, we define $\C(F)$ to be the dual cone in $N = \Hom_{\QQ}(\lat_\QQ,\QQ)$ of $\QQ_{\ge 0}(Q-p)$ where $p$ is any point in the relative interior of $F$, that is
\[\C(F) := \{ \eta \in N \colon \<\eta,v\> \ge 0 \text{ for all $v \in (Q-p)$}\}.\]
\end{definition}

For later use, we introduce the following notion. This terminology is justified by assertion~\ref{thm:Brion-Woodward:orbitface} in Theorem~\ref{thm:Brion-Woodward}.

\begin{definition}\label{def:orbitface}
Retain the notation of Definition~\ref{def:normal_fan}.
Given a convex cone $\V$ in $N$, a face $F$ of $Q$ is an \textbf{orbit face for $\V$} if the relative interior of the cone $\C(F)$ intersects $\V$. 
\end{definition}

Theorem~\ref{thm:Brion-Woodward}, which is essentially a restatement of \cite[Proposition 5.3.2]{brion:notes}, gathers some results of Brion's from~\cite{brion:picard} and Woodward's from \cite{woodward}.
\begin{theorem}[Brion, Woodward]\label{thm:Brion-Woodward}
Let $(X,\lb)$ be a spherical polarized $G$-variety with momentum polytope $Q$ and valuation cone $\V$. Let $s \in H^{0}(X,\lb)^{(B)} \setminus\{0\}$. 
\begin{enumerate}[(a)]
\item The map $Y \mapsto Q(\overline{Y},\lb|_{\overline{Y}})$ is a bijection between the set of $G$-orbits in $X$ and the set of orbit faces of $Q$ for $\V$. \label{thm:Brion-Woodward:orbitface}
\item The colored fan $\F(X)$ of $X$ is the set of pairs $(\C(F),\D(F))$ where $F$ varies over the orbit faces of $Q$ for $\V$ and for each such face $F$
\begin{equation}
\D(F) := \{D \in \col(X) \colon \<\rho(D), p-\chi(s)\> + v_D(s) =0 \text{ for all $p \in F$}\}. 
\end{equation}
\end{enumerate}
\end{theorem}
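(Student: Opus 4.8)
The statement to prove is Theorem~\ref{thm:Brion-Woodward}, which relates the $G$-orbit structure of a polarized spherical variety $(X,\lb)$ to the face structure of its momentum polytope $Q$. The natural strategy is to pass to the affine cone $\widetilde X = \Spec R(X,\lb)$ and use the fact, recorded in Section~\ref{subsec:polarized_varieties}, that the weight monoid cone of $\widetilde X$ (as a $\widetilde G$-variety) is the cone over $Q\times\{1\}$. The point is that $G$-orbits of $X$ correspond bijectively to $\widetilde G$-orbits of $\widetilde X$ other than the vertex $\{0\}$ (the cone minus its vertex maps to $X$ via a $\GGm$-bundle), and the colored fan of $\widetilde X$, being that of an affine spherical variety, is governed by a single colored cone and its proper faces (Proposition~\ref{prop:affine-criterion}). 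So the key reduction is: describe the colored cone of $\widetilde X$ and its faces, then transport the description back to $X$.

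\textbf{Step 1: identify $\V(X)$ and the colors inside the valuation cone of $\widetilde X$.} First I would record the standard relation between the combinatorial invariants of $X$ and of $\widetilde X$: the weight lattice $\widetilde\lat$ fits in the exact sequence \eqref{eq:latticecone}, and a $G$-invariant valuation of $\CC(X)$ lifts to a $\widetilde G$-invariant valuation of $\CC(\widetilde X)$, so $\V(\widetilde X)$ projects onto $\V(X)$. More precisely, $\V(\widetilde X)$ is the preimage of $\V(X)$ under $N(\widetilde X)\to N(X)$ intersected with the appropriate half-space coming from the $\GGm$-grading, and the colors of $\widetilde X$ are in natural bijection with $\col(X)$ with $\rho_{\widetilde X}(D)$ the obvious lift of $\rho_X(D)$. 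This is where the section $s\in H^0(X,\lb)^{(B)}$ enters: choosing $s$ amounts to choosing a $\widetilde B$-eigenfunction on $\widetilde X$ of weight $(\chi(s),1)$, i.e.\ a point of $Q\times\{1\}$, which provides the affine chart / base point making the identification \eqref{eq:conepolytope} concrete, and $v_D(s)$ records the value of the corresponding coordinate of $\rho_{\widetilde X}(D)$.

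\textbf{Step 2: match faces of $Q$ with colored cones.} By \eqref{eq:conepolytope}, $\QQp\wm(\widetilde X)$ is the cone over $Q\times\{1\}$; its dual cone is therefore stratified, via Proposition~\ref{prop:affine-criterion} and the face lattice of a polytope, by the faces $F$ of $Q$: to $F$ corresponds the dual cone $\C(F)$ of $\QQp(Q-p)$ (this is exactly Definition~\ref{def:normal_fan}, transported through the cone construction). Now I apply Proposition~\ref{prop:affine-criterion} to each closed $G$-subvariety $\overline Y$ of $X$: by Proposition~\ref{prop:PSVFundFacts}\ref{prop:PSVFundFacts:item:sub}--\ref{prop:PSVFundFacts:item:faces}, $(\overline Y,\lb|_{\overline Y})$ is again polarized spherical with momentum polytope a face of $Q$, and its affine cone is the affine spherical variety whose weight monoid cone is the cone over that face; its distinguished colored cone is then $(\C(F),\D(F))$ with $\C(F)$ as above. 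The condition that $F$ be an \emph{orbit face for} $\V$ (relative interior of $\C(F)$ meets $\V$) is precisely condition (CC2)/(CF2) for colored fans combined with the affine criterion, so the colored cones arising this way are exactly those indexed by orbit faces. Combined with Proposition~\ref{thm:orbits-vs-faces} (orbits in $\overline Y$ versus faces of its colored cone) and the bijection between $G$-orbits and their closures, this yields part~\ref{thm:Brion-Woodward:orbitface}. For part~\ref{thm:Brion-Woodward:orbitface}, the formula for $\C_Y = \C(F)$ comes out of Proposition~\ref{prop:Pbrion}: the half-spaces $\<\rho(D),\xi\>+v_D(s)\ge 0$ cutting out $Q-\chi(s)$ are exactly the walls, and $\C(F)$ is generated by the $\rho(D)$ (for $D$ with $\overline Y\subseteq D$) together with the rays of $\V$ lying in it.

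\textbf{Step 3: the colors.} For part~\ref{thm:Brion-Woodward:orbitface}, I need $\D_Y = \{D\in\col(X)\colon \overline Y\subseteq D\}$ to coincide with $\D(F) = \{D\colon \<\rho(D),p-\chi(s)\>+v_D(s)=0\ \forall p\in F\}$. This follows by unwinding the definition of $\C_Y$: $D\in\D_Y$ iff $\rho(D)\in\C_Y = \C(F)$ iff $\rho(D)$ is in the dual cone of $\QQp(Q-p)$ iff $\<\rho(D),q-p\>\ge0$ for all $q\in Q$, $p\in F$; since the global inequality from Proposition~\ref{prop:Pbrion} reads $\<\rho(D),q-\chi(s)\>+v_D(s)\ge 0$ on all of $Q$, membership of $\rho(D)$ in $\C(F)$ forces equality exactly on $F$ — one direction is the face condition, the other uses that $F$ is cut out by these walls. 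I would be careful to check that $\rho(D)\ne 0$ (property (SCC)), which holds because $D$ is a color, i.e.\ not $G$-stable, so it is moved by some simple root and $\rho(D)$ is a (half-)coroot restricted to $\lat(X)$, hence nonzero on $\lat(X)_\QQ$ whenever the polytope is full-dimensional there.

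\textbf{Main obstacle.} The delicate point is Step~2, specifically the bookkeeping that turns Proposition~\ref{prop:affine-criterion} — a statement about \emph{one} affine spherical variety — into a statement identifying the \emph{whole} colored fan of $X$ with the orbit faces of $Q$. One must argue that every colored cone of $\F(X)$ arises from the affine cone of some closed $G$-subvariety $\overline Y$ (using that $X$ is covered by $B$-charts, or rather that each orbit $Y$ sits in $\overline Y$ and the local structure of $\overline Y$ near its closed orbit is that affine spherical variety), and conversely that the affine criterion applied to $\overline Y$ returns exactly $(\C_Y,\D_Y)$ and not some larger cone. Keeping track simultaneously of the $\GGm$-direction (so that faces of the polytope, not faces of the full cone over it, are what appear) and of which faces are ``visible'', i.e.\ orbit faces, is where the argument needs the most care; everything else is a matter of assembling Propositions~\ref{prop:PSVFundFacts}, \ref{thm:orbits-vs-faces}, \ref{prop:affine-criterion} and \ref{prop:Pbrion}.
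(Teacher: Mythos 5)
The paper does not actually prove Theorem~\ref{thm:Brion-Woodward}: it presents it as a restatement of results of Brion and Woodward and cites \cite[Proposition 5.3.2]{brion:notes}, so your proposal can only be judged on its own merits. The overall strategy you choose --- pass to the affine cone $\widetilde X$, invoke Proposition~\ref{prop:affine-criterion} and Knop's orbit--face correspondence (Proposition~\ref{thm:orbits-vs-faces}), and transport faces of $\wm(\widetilde X)^\vee$ back to faces of $Q$ --- is indeed the standard route. However, two of your steps would fail as written.

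The most serious gap is in Step~3. The chain ``$D\in\D_Y$ iff $\rho(D)\in\C_Y=\C(F)$ iff $\<\rho(D),q-p\>\ge 0$ for all $q\in Q$, $p\in F$'' is broken in both directions. Membership of $\rho(D)$ in the normal cone $\C(F)$ only says that the affine functional $q\mapsto\<\rho(D),q-\chi(s)\>+v_D(s)$ attains its minimum over $Q$ on $F$; it does not force that minimum to equal $0$, which is what $D\in\D(F)$ demands. In the other direction, $\D_Y$ is \emph{by definition} the set of colors containing $Y$, and this is genuinely extra data not determined by the cone $\C_Y$ --- that is the entire reason colored cones carry a set of colors. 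What must actually be proved is that $\overline Y\subseteq D$ if and only if the wall of $Q$ cut out by $D$ via Proposition~\ref{prop:Pbrion} contains the face $Q(\overline Y,\lb|_{\overline Y})$. The ``only if'' direction uses the surjectivity of $H^0(X,\lb^n)\to H^0(\overline Y,\lb^n|_{\overline Y})$ (Proposition~\ref{prop:PSVFundFacts}-\ref{prop:PSVFundFacts:item:surj}) applied to a $B$-eigensection whose weight lies in the relative interior of that face; the ``if'' direction needs a local-structure ($B$-chart) argument producing a section vanishing on $D$ but not on $\overline Y$. Your appeal to ``$F$ is cut out by these walls'' presupposes exactly this and is circular.

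The second gap is in Step~2: applying Proposition~\ref{prop:affine-criterion} ``to each closed $G$-subvariety $\overline Y$'' conflates the distinguished colored cone of the affine spherical variety $\widetilde{\overline Y}$, which lives in $\Hom_{\ZZ}(\lat(\widetilde{\overline Y}),\QQ)$, with the colored cone $(\C_Y,\D_Y)\subseteq N(X)$ of the orbit $Y$ inside $\F(X)$; these sit in different spaces and are not identified by any natural map. The criterion should instead be applied once, to $\widetilde X$ itself: its unique closed orbit is $\{0\}$, so Proposition~\ref{thm:orbits-vs-faces} indexes all $\widetilde G$-orbits by the faces of the maximal colored cone whose relative interiors meet $\V(\widetilde X)$, and these faces are then matched with the normal cones $\C(F)$ of faces $F$ of $Q$. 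For that matching to send orbit faces to orbit faces one needs $\V(\widetilde X)$ to be the \emph{full} preimage of $\V(X)$ under $N(\widetilde X)\to N(X)$ --- which it is, by Lemma~\ref{lemma:compare-spherical-roots}, since $\Sigma(\widetilde X)=\Sigma(X)$ lies in $\lat(X)\times\{0\}$ --- and not, as you assert in Step~1, the preimage intersected with a half-space coming from the $\GGm$-grading. You correctly flag this bookkeeping as the main obstacle, but the proposal does not resolve it.
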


When $(X,\lb)$, $Q$ and $\V$ are as in Theorem~\ref{thm:Brion-Woodward},  we will also simply say ``orbit face of $Q$'' instead of ``orbit face of $Q$ for $\V$.''

\section{Two characterizations of the momentum polytope of a spherical variety} \label{sec:characterizations_mp}
In this section, we give two purely combinatorial criteria to decide whether a couple $(\lat,Q)$ consisting in a lattice $\lat\inn \wl$ and a rational convex polytope $Q\subseteq \QQ_{\ge 0}\dw$
is geometrically realizable by a polarized spherical $G$-variety; see Theorem~\ref{thm:crit-convexhull} and Theorem~\ref{thm:compatible-polytope} below.

\subsection{A characterization as a convex hull}\label{s:crit1}

The characterization of momentum polytopes of polarized spherical varieties as convex hulls is inspired by a result of Brion's, namely \cite[Proposition 4.2]{brion:image}. 
Before giving the characterization in Theorem~\ref{thm:crit-convexhull}, we  give a slightly stronger version of Brion's result in Proposition~\ref{Prop_from_Brion}. 

Let $\widetilde\lat\subseteq \twl$ be a lattice and $Q\subseteq\QQ_{\ge 0}\dw$ be a convex polytope.
To these data, we associate the  submonoid $\wm(Q)$ of $\tdw$ defined by:
\begin{equation}
\wm(Q)=\QQ_{\ge 0}(Q\times \{1\})\cap \widetilde{\lat}.
\end{equation}

Because it relies on realizing a polarized spherical $G$-variety as the quotient
by $\{1\} \times \GGm \subseteq \widetilde{G}$ of an affine spherical $\widetilde{G}$-variety, the criterion given in this section involves the monoid $\wm(Q)$ and 
the combinatorial classification of affine spherical varieties with prescribed weight monoid in terms of admissible sets of spherical roots given in \cite[Theorem 6.9]{ACF} and in \cite[Proposition 2.24]{PVS}. 
We briefly review this classification, following the terminology used in~\cite{ACF}. 

We recall the definition of another invariant: if $X$ is a spherical $G$-variety with open $B$-orbit $X_0^B$, then 
\[
\Spp(X) : = \{\alpha \in \sr \colon P_{\alpha} \cdot X_0^B = X_0^B\},
\]
where $P_{\alpha}$ is the minimal parabolic subgroup strictly containing $B$ and associated to the simple root $\alpha$. 

\begin{definition} \label{def_SR_comp_with_lattice}
Let $\lat \subseteq \wl$ be a sublattice. A spherical root $\sigma \in \Sigma(G)$ is said to be
\textbf{compatible with $\lat$} if it satisfies the following properties:

\begin{enumerate}
\item \label{CL1}
$\sigma$ is a primitive element of~$\lat$;
\item \label{CL2}
the pair $(\Spp(\lat), \sigma)$ satisfies \textbf{Luna's axiom (S)}:  there exists a spherical $G$-variety $Z$ of rank one such that $\Spp(Z) = \Spp(\lat)$ and $\Sigma(Z)=\{\sigma\}$.
\item \label{CL3}
if $\sigma = \alpha + \beta$ or $\sigma = \frac12(\alpha + \beta)$ for some $\alpha, \beta \in \sr$ with $\alpha \perp \beta$, then $\langle \alpha^\vee, \lambda \rangle = \langle \beta^\vee, \lambda \rangle$ for all $\lambda \in \lat$;

\item \label{CL4}
if $\sigma = 2\alpha$ for some $\alpha \in \sr$, then $\langle \alpha^\vee, \lambda \rangle \in 2\ZZ$ for all $\lambda \in \lat$.
\end{enumerate}
\end{definition}

\begin{remark}
As is well-known, Luna's axiom (S) can be stated in a purely combinatorial fashion, cf.~\cite[\S 1.1.6]{bravi-luna}.
\end{remark}

Until Theorem~\ref{Theorem-AS}, $\wm$ will denote a finitely generated submonoid of $\dw$ which is saturated, i.e.\ for which the following equality holds in $\wl_{\QQ}$: 
\[ \ZZ\wm \cap \QQ_{\ge 0}\wm = \wm, \]
where $\ZZ\wm$ is the subgroup of $\wl$ generated by $\wm$. 
As is well known, the weight monoid $\wm(X)$ of an affine spherical variety $X$ is saturated because $X$ is normal. Recall that the dual cone of $\wm$ in $\Hom_{\QQ}(\QQ\wm,\QQ)$ is denoted by $\wm^\vee$.

\begin{definition} \label{def_SR_comp_with_monoid}
A spherical root $\sigma \in \Sigma(G)$ is said to be \textbf{compatible with $\wm$} if $\sigma$ is compatible with $\ZZ \wm$ and satisfies the following conditions:

\begin{enumerate}
\item \label{CM1}
if $\sigma \notin \sr$ then for every ray generator $\rho \in \wm^\vee$ such that $\<\rho,\sigma\>>0$, there exists $\delta \in \sr \setminus \Spp(\Gamma)$ such that $\delta^\vee|_{\ZZ\wm}$ is a positive multiple of~$\rho$. 

\item \label{CM2}
if $\sigma = \alpha \in \sr$ then there exist $\rho_1, \rho_2 \in \wm^\vee \cap \mathrm{Hom_{\ZZ}(\ZZ\Gamma,\ZZ)}$ with the following properties:
\begin{enumerate}
\item
$\langle \rho_1, \alpha \rangle = \langle \rho_2, \alpha \rangle = 1$;

\item
$\alpha^\vee|_{\ZZ\wm} = \rho_1 + \rho_2$;

\item \label{CM2-3}
if $\rho\in\wm^\vee$ is a ray generator such that  $\rho(\alpha)>0$ then  $\rho=\rho_1$ or $\rho=\rho_2$.
\end{enumerate}
\end{enumerate}
By $\Sigma(\Gamma)$, we denote the set of all spherical roots $\sigma \in \Sigma(G)$ that are compatible with~$\Gamma$.
\end{definition}

\begin{remark} \label{rem:Sdetermined}
Observe that it follows from (\ref{CM2}) of Definition~\ref{def_SR_comp_with_monoid} that at least one of $\rho_1,\rho_2$ is a ray generator of $\wm^{\vee}$, and consequently the set $\{\rho_1,\rho_2\}$ is uniquely determined by $\wm$.
\end{remark}

To every $\alpha \in \Sigma(\Gamma) \cap \sr$, 
we associate a two-element set $\mathcal S(\alpha) = \lbrace D_\alpha^+, D_\alpha^- \rbrace$ equipped with the map $\rho \colon \mathcal S(\alpha) \to \mathrm{Hom(\ZZ\Gamma,\ZZ)}$ given by $\rho(D_\alpha^+) = \rho_1$ and $\rho(D_\alpha^-) = \rho_2$.  

\begin{proposition} \label{prop_comp_with_monoid}
For a spherical root $\sigma \in \Sigma(G)$, the following
conditions are equivalent.

\begin{enumerate}
\item \label{comp_with_Gamma1}
$\sigma \in \Sigma(\Gamma)$.

\item \label{comp_with_Gamma2}
There exists an affine spherical $G$-variety $X$ with $\wm(X) = \Gamma$ and $\Sigma(X) = \lbrace \sigma \rbrace$.
\end{enumerate}
\end{proposition}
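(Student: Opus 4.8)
The plan is to prove the two implications separately, with the direction \ref{comp_with_Gamma2}$\Rightarrow$\ref{comp_with_Gamma1} being essentially a bookkeeping exercise and \ref{comp_with_Gamma1}$\Rightarrow$\ref{comp_with_Gamma2} requiring an explicit construction. For \ref{comp_with_Gamma2}$\Rightarrow$\ref{comp_with_Gamma1}, suppose $X$ is affine spherical with $\wm(X) = \Gamma$ and $\Sigma(X) = \{\sigma\}$. First, since $X$ is normal, $\Gamma$ is saturated, and $\sigma$ is a spherical root of an actual spherical variety, so $\sigma$ is primitive in $\lat(X) = \ZZ\Gamma$; moreover the homogeneous spherical datum of $X$ satisfies all of Luna's axioms, which immediately gives axiom (S) for the pair $(\Spp(X),\sigma) = (\Spp(\ZZ\Gamma),\sigma)$, hence \ref{CL2}, and the integrality conditions \ref{CL3}, \ref{CL4} follow from the standard constraints on how $\lat(X)$ pairs with coroots in the support of $\sigma$ (these are part of the axiomatics of spherical data, cf.\ \cite{luna:typeA}). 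So $\sigma$ is compatible with $\ZZ\Gamma$. For the monoid conditions: if $\sigma = \alpha \in \sr$, then $\alpha$ moves exactly two colors $D_\alpha^+, D_\alpha^-$ with $\rho(D_\alpha^\pm)$ summing to $\alpha^\vee|_{\lat(X)}$ by \eqref{eq:color_functional} (applied with $2\alpha\notin\Sigma(X)$) and the structure theory of rank-one type-$\sr$ colors; these two functionals lie in $\wm(X)^\vee = (\QQ_{\ge 0}\Gamma)^\vee$ because colors give effective $B$-divisors on the affine variety $X$, and condition \ref{CM2-3} on ray generators follows from Proposition~\ref{prop:affine-criterion} together with the fact that the only $B$-stable prime divisors of an affine spherical variety whose $\rho$ pairs positively with a simple root $\alpha$ are colors moved by $\alpha$. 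The case $\sigma\notin\sr$ giving \ref{CM1} is analogous: a ray generator $\rho$ of $\wm(X)^\vee = (\QQ_{\ge 0}\Gamma)^\vee$ with $\langle\rho,\sigma\rangle > 0$ must be $\rho(D)$ for some $B$-stable prime divisor $D$ (again by the affine criterion), and by \eqref{eq:color_functional} such a $D$ is a color moved by a simple root $\delta\notin\Spp(X)\cup\supp(\sigma)$, whence $\rho$ is a positive multiple of $\delta^\vee|_{\ZZ\Gamma}$.

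For the converse \ref{comp_with_Gamma1}$\Rightarrow$\ref{comp_with_Gamma2}, the strategy is to build the homogeneous spherical datum (or equivalently the colored fan of an affine embedding) directly from $\Gamma$ and $\sigma$ and invoke the Losev--Bravi--Pezzini classification of spherical varieties (or the classification of affine spherical varieties with prescribed weight monoid, \cite[Theorem 6.9]{ACF} / \cite[Proposition 2.24]{PVS}) to realize it. Concretely, I would set $\lat = \ZZ\Gamma$, take the set of spherical roots to be $\{\sigma\}$, take $\Spp$ to be $\Spp(\lat)$ (whose admissibility with $\sigma$ is guaranteed by \ref{CL2}), and define the color data as follows: if $\sigma = \alpha\in\sr$, introduce the two colors $D_\alpha^\pm$ with $\rho$-images the functionals $\rho_1,\rho_2$ provided by \ref{CM2}, and for every simple root $\delta\notin\Spp(\lat)$ not of this type, a color moved by $\delta$ with $\rho$-image dictated by \eqref{eq:color_functional}; if $\sigma\notin\sr$, all colors are of the latter kind and \ref{CM1} ensures the ray generators of $\wm^\vee$ positive on $\sigma$ are "covered" by colors, which is exactly what is needed for the colored cone axioms (CC1)--(CF2) to hold for the colored cone $(\C,\D)$ with $\C$ the face of $\wm^\vee$ meeting $\V$ as in Proposition~\ref{prop:affine-criterion}. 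Conditions \ref{CL1}, \ref{CL3}, \ref{CL4} are precisely the remaining axioms (primitivity, and the pairing constraints coming from the "$\mathsf A_1\times\mathsf A_1$" and "$2\alpha$" cases) needed for the datum to be a bona fide homogeneous spherical datum; \ref{CM1}, \ref{CM2} are precisely what makes the weight monoid of the resulting affine embedding equal to the prescribed $\Gamma$ rather than a proper saturated submonoid with the same saturation. Then the classification theorem produces the desired $X$.

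I expect the main obstacle to be the careful verification in the \ref{comp_with_Gamma1}$\Rightarrow$\ref{comp_with_Gamma2} direction that the combinatorial data assembled from the conditions \ref{CL1}--\ref{CL4} and \ref{CM1}--\ref{CM2} satisfies \emph{all} the axioms required by the classification — in particular the subtle compatibility axioms relating $\Sigma$, $\Spp$ and the color functionals (Luna's axioms (A1), (A2), ($\Sigma$1), ($\Sigma$2), (S)), and the matching between ``ray generators of $\wm^\vee$'' and ``$\rho$-images of colors'' that forces the weight monoid to come out exactly equal to $\Gamma$. Much of this is genuinely already packaged in \cite[Theorem 6.9]{ACF} and \cite[Proposition 2.24]{PVS}, so in the write-up I would aim to phrase Definition~\ref{def_SR_comp_with_monoid} so that it matches the hypotheses of those theorems as closely as possible and reduce the proof of this proposition to citing them, with the above identification of the geometric meaning of each condition filling the gap. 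The reverse direction \ref{comp_with_Gamma2}$\Rightarrow$\ref{comp_with_Gamma1} is then mostly a translation of known structure theory of the colors and $B$-divisors of an affine spherical variety.
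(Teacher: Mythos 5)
The paper itself offers no proof of this proposition: it is recalled as part of the classification of affine spherical varieties with prescribed weight monoid from \cite{ACF} and \cite{PVS}, and your proposed reduction --- translating conditions \ref{CL1}--\ref{CL4} and \ref{CM1}--\ref{CM2} into the axioms of a homogeneous spherical datum together with the Luna--Vust data of the affine embedding singled out by Proposition~\ref{prop:affine-criterion}, and conversely reading them off the colors and $B$-stable prime divisors of $X$ --- is exactly the route taken in those references, so your sketch is correct in substance and in approach. One small inaccuracy to fix in the write-up: \eqref{eq:color_functional} applies only to colors moved by $\alpha\in\sr\setminus\Sigma(X)$, so for $\sigma=\alpha\in\Sigma(X)\cap\sr$ the fact that the two colors moved by $\alpha$ have $\rho$-values summing to $\alpha^\vee|_{\lat(X)}$ and each pairing to $1$ with $\alpha$ must be quoted from Luna's axioms for type-$\A$ colors rather than from that display (and, in your verification of \ref{CM1}, the simple root $\delta$ moving the relevant color may well lie in the support of $\sigma$ --- this is harmless, since \ref{CM1} only requires $\delta\notin\Spp(\Gamma)$).
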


\begin{definition}\label{def_AS}
A subset $\Sigma \subseteq \Sigma(\Gamma)$ is said to be \textbf{admissible} for $\wm$ if it satisfies the following condition:
\begin{enumerate}
\item \label{AP}
for every $\alpha \in \Sigma \cap \sr$, $D \in \mathcal S(\alpha)$, and $\sigma \in \Sigma \setminus \lbrace \alpha \rbrace$, the inequality $\langle \rho(D), \sigma \rangle \le 1$ holds, 
and the equality is attained if and only if $\sigma = \beta \in \sr$ and there is $D' \in \mathcal S(\beta)$ with $\rho(D') = \rho(D)$.
\end{enumerate}
\end{definition}

\begin{theorem} \label{Theorem-AS}
For a subset $\Sigma \subseteq \Sigma(\Gamma)$, the following conditions are equivalent.
\begin{enumerate}
\item $\Sigma$ is admissible for $\wm$.
\item There exists an affine spherical $G$-variety $X$ with $\Gamma(X) = \Gamma$ and $\Sigma(X) = \Sigma$.
\end{enumerate}
\end{theorem}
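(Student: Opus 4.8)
The plan is to deduce this ``global'' classification of admissible sets of spherical roots from the ``one spherical root at a time'' result, Proposition~\ref{prop_comp_with_monoid}, together with the combinatorial theory of affine spherical varieties (the Luna--Vust / Losev uniqueness picture and Knop's description of affine spherical varieties in terms of colored data). Recall that an affine spherical $G$-variety $X$ with $\wm(X)=\wm$ is determined up to $G$-isomorphism by the pair $(\wm,\Sigma(X))$ — this is Losev's uniqueness theorem — so the content of the theorem is precisely which finite sets $\Sigma$ occur as $\Sigma(X)$ for some such $X$.

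\textbf{Direction (2)$\Rightarrow$(1).} Suppose $X$ is affine spherical with $\wm(X)=\wm$ and $\Sigma(X)=\Sigma$. First I would record that each $\sigma\in\Sigma$ is individually compatible with $\wm$, i.e.\ $\Sigma\subseteq\Sigma(\wm)$: indeed one can produce an affine spherical variety with weight monoid $\wm$ and a single spherical root $\sigma$ by a standard degeneration/localization argument (passing to the open $G$-stable subset of $X$ whose colored fan face corresponds to ``forgetting all spherical roots except $\sigma$''), so Proposition~\ref{prop_comp_with_monoid} applies. Then I would verify condition~\eqref{AP} of Definition~\ref{def_AS}. For $\alpha\in\Sigma\cap\sr$, the set $\mathcal S(\alpha)=\{D_\alpha^+,D_\alpha^-\}$ consists of the two colors of $X$ moved by $\alpha$, and for any other spherical root $\sigma\in\Sigma$ the inequality $\langle\rho(D),\sigma\rangle\le 1$ together with the equality case is exactly the (well-known) bound on how the $\rho$-images of colors pair with spherical roots of an actual spherical variety; this is read off from \eqref{eq:color_functional}--\eqref{eq:color_moved_by_two} and the classification of rank-one cases, i.e.\ it is the translation of Luna's axioms into the language of the present paper. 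The equality case $\langle\rho(D),\sigma\rangle=1$ forces $\sigma=\beta\in\sr$ with $D$ also moved by $\beta$, which is the statement that $\rho(D')=\rho(D)$ for some $D'\in\mathcal S(\beta)$.

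\textbf{Direction (1)$\Rightarrow$(2).} This is the substantive direction and where I expect the main work to be. Given an admissible $\Sigma\subseteq\Sigma(\wm)$, I would construct $X$ by assembling its colored data and invoking the existence half of the colored-fan/Knop classification of affine spherical varieties. Concretely: the valuation cone is forced to be $\V=\{\rho\in N:\langle\rho,\sigma\rangle\le 0\ \forall\sigma\in\Sigma\}$ (with $N=\Hom(\ZZ\wm,\QQ)$); the set of colors and the map $\rho$ on colors is dictated by $\Spp(\wm)$, by the sets $\mathcal S(\alpha)$ for $\alpha\in\Sigma\cap\sr$, and by \eqref{eq:color_functional} for the remaining simple roots not in $\Spp(\wm)\cup\Sigma$; and the colored cone is the one produced by Proposition~\ref{prop:affine-criterion} from $(\QQ_{\ge 0}\wm)^\vee$ and $\V$. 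The admissibility condition~\eqref{AP} is exactly what is needed to check that this assembled datum satisfies the axioms of a homogeneous spherical datum (in particular the compatibility conditions among distinct colors and between colors and spherical roots), so that Luna's classification theorem for spherical subgroups — or, since we only need the affine case with fixed weight monoid, Knop's or the \cite{ACF}/\cite{PVS} existence results — yields a spherical $G$-variety realizing it, which one then checks is affine with weight monoid $\wm$. The delicate point, and the main obstacle, is verifying that the colors coming from different $\alpha,\beta\in\Sigma\cap\sr$ and the colors coming from simple roots outside $\Sigma$ fit together consistently: one must rule out ``accidental'' coincidences or missing identifications among the $\rho$-images, and show the resulting list of colors has no repetitions beyond those forced by \eqref{eq:color_moved_by_two}. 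Condition~\eqref{AP}, read in both its inequality and its equality-case form, is precisely tailored to make this bookkeeping go through, but turning that into a clean verification against Luna's axioms is the technical heart of the argument; I would organize it by treating separately the pairs $(\sigma,\sigma')$ according to whether each lies in $\sr$, and for the non-simple spherical roots lean on condition~\eqref{CM1} of Definition~\ref{def_SR_comp_with_monoid} to locate the relevant color.
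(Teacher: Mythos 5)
First, a remark on the comparison you are being measured against: the paper does not prove Theorem~\ref{Theorem-AS} at all. It is recalled from \cite[Theorem~6.9]{ACF} and \cite[Proposition~2.24]{PVS}, so there is no in-paper argument to match. The proof in those references runs through Alexeev--Brion's moduli scheme $\ms$ of affine spherical varieties with weight monoid $\wm$ \cite{alexeev-brion:SSV}, the action of the adjoint torus $\Tad$ on $\ms$, and Knop's theorem that the saturation of the root monoid of $X$ is freely generated by $\Sigma(X)$; admissible sets are matched with irreducible components of $\ms$. Your route for $(1)\Rightarrow(2)$ --- assembling a homogeneous spherical datum $(\Spp(\wm),\Sigma,\A,\ZZ\wm,\rho)$ and invoking the Luna classification \cite{luna:typeA,losev:uniqueness,bp,cf} together with embedding theory --- is genuinely different and is in fact close to what the authors themselves do for the real analogue in Lemmas~\ref{lemma:homogeneous}--\ref{lemma:fan}; as a strategy it is viable, but you leave undone the two substantive verifications: that the datum satisfies \emph{all} of Luna's axioms (condition~(\ref{AP}) of Definition~\ref{def_AS} only accounts for axiom (A1); the remaining axioms must be extracted from Definition~\ref{def_SR_comp_with_lattice} and conditions~(\ref{CM1})--(\ref{CM2}) of Definition~\ref{def_SR_comp_with_monoid}), and that the resulting $G/H$ admits an affine embedding with weight monoid \emph{exactly} $\wm$ --- this uses Knop's affinity criterion and is precisely where (\ref{CM1})--(\ref{CM2}) are needed, to guarantee that every ray of $\wm^\vee$ on which some $\sigma\in\Sigma$ is positive is the image of a color, the remaining rays being realizable by $G$-stable divisors.

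The step that actually fails is in your $(2)\Rightarrow(1)$ direction. You propose to produce, for each $\sigma\in\Sigma(X)$, an affine spherical variety with weight monoid $\wm$ and spherical roots $\{\sigma\}$ by ``passing to the open $G$-stable subset of $X$ whose colored fan face corresponds to forgetting all spherical roots except $\sigma$.'' No open $G$-stable subset can accomplish this: every such subset contains the open $G$-orbit, hence has the same set of spherical roots as $X$ (and it is not affine with weight monoid $\wm$ either). The object you need is a flat \emph{degeneration} of $X$, not an open subvariety: one uses the filtration of $\CC[X]$ governed by the root monoid (equivalently, the closure of the $\Tad$-orbit of $[X]$ in $\ms$, an affine toric variety whose faces correspond, via Knop's freeness theorem, to the subsets of $\Sigma(X)$) to produce, for every $\Sigma'\subseteq\Sigma(X)$, an affine spherical $G$-variety with the same weight monoid and set of spherical roots $\Sigma'$. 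This is exactly the mechanism of \cite{ACF}, and without it your reduction of $(2)\Rightarrow(1)$ to Proposition~\ref{prop_comp_with_monoid} does not get off the ground; the verification of condition~(\ref{AP}) itself is, as you say, just Luna's axiom (A1) for the actual variety $X$ and is unproblematic.
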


We also need to recall two basic facts about spherical roots. The first one is that if $Y$ is an affine spherical $G$-variety, then
\begin{equation} \label{eq:coneSigma}
\QQ_{\ge 0}\Sigma(Y) = \QQ_{\ge 0}\{\lambda + \mu - \nu \colon \lambda,\mu,\nu \in \wm(Y) \text{ and } \CC[Y]_\nu \subseteq \CC[Y]_{\lambda}\cdot\CC[Y]_{\mu} \}
\end{equation}
where $\CC[Y]_{\lambda}$ is the submodule of $\CC[Y]$ of highest weight $\lambda$ and $\CC[Y]_{\lambda}\cdot\CC[Y]_{\mu}$ is the subspace of $\CC[Y]$ spanned by $\{f\cdot g \colon f \in \CC[Y]_{\lambda}, g \in\CC[Y]_{\mu}\}$. For a proof of \eqref{eq:coneSigma}, see e.g. \cite[Lemma 5.1]{knop:LV} or \cite[Prop. 4.2]{brion:notes}.

The second one is the following lemma, which is well-known. We provide a proof for completeness.

\begin{lemma}\label{lemma:compare-spherical-roots}
Let $(X,\lb)$ be a polarized spherical $G$-variety with momentum polytope $Q$ and let $\widetilde X$ be its affine cone.
Then $\Spp(X)=\Spp(\widetilde{X})=\Spp(Q)$ and $\Sigma_G(X)=\Sigma_{\tG}(\widetilde X)$, where $\lat(X)$ is identified with its image in $\lat(\widetilde{X})$ under the map in \eqref{eq:latticecone}.
\end{lemma}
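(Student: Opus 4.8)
The plan is to relate the three spaces of invariants—$\Spp(X)$ for the spherical $G$-variety $X$, $\Spp(\widetilde X)$ for the affine spherical $\widetilde G$-variety $\widetilde X$, and $\Spp(Q)$ for the momentum polytope—by exploiting the relationship in \eqref{eq:conepolytope} and \eqref{eq:wm_affine_cone} between $Q$, the weight monoid $\wm(\widetilde X)$, and the extended weight lattice, and the fact that $X$ and $\widetilde X$ have ``the same'' open orbit up to the $\GGm$-action. Recall that $\Spp(\cdot)$ for a variety is defined via parabolics fixing the open $B$-orbit, while $\Spp(Q)$ is defined combinatorially as $\{\alpha \in \sr \colon \<\alpha^\vee, x\> = 0 \text{ for all } x \in Q\}$, equivalently $\{\alpha \in \sr \colon \<\alpha^\vee, x - p\> = 0 \text{ for all } x \in Q\}$ for any fixed $p \in Q$, which by \eqref{eq:QXLminusp} equals $\{\alpha \in \sr \colon \alpha^\vee|_{\lat(X)_\QQ} = 0\}$.

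First I would treat $\Spp(X) = \Spp(\widetilde X)$. The affine cone $\widetilde X = \Spec R(X,\lb)$ has an open $\widetilde B = B \times \GGm$-orbit, and the natural projection $\widetilde X \setminus \{0\} \to X$ is a $\GGm$-torsor that is $G$-equivariant; since $\GGm$ is central in $\widetilde G$ and acts on fibers, $P_\alpha$ (for $\alpha \in \sr$, viewed inside both $G \subseteq \widetilde G$) fixes the open $\widetilde B$-orbit of $\widetilde X$ if and only if $P_\alpha$ fixes the open $B$-orbit of $X$. This gives $\Spp_{\widetilde G}(\widetilde X) = \Spp_G(X)$ (identifying simple roots of $G$ and of $\widetilde G$). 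Alternatively and perhaps more cleanly, one can argue via the characterization $\alpha \in \Spp(X)$ iff $\alpha^\vee|_{\lat(X)} \ne 0$ does not arise from a moved color and $X$ has no color moved by $\alpha$—but the torsor argument is shortest.

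Next, for $\Spp(\widetilde X) = \Spp(Q)$: since $\widetilde X$ is affine and spherical, $\alpha \in \Spp(\widetilde X)$ iff $\alpha^\vee|_{\lat(\widetilde X)} = 0$, because an affine spherical variety has $\QQ_{\ge 0}\wm(\widetilde X)$ spanning $\lat(\widetilde X)_\QQ$ and, for affine spherical varieties, $\Spp$ is exactly the set of simple roots orthogonal to the weight monoid. By \eqref{eq:conepolytope}, $\QQ_{\ge 0}\wm(\widetilde X)$ is the cone over $Q \times \{1\}$, so $\alpha^\vee|_{\lat(\widetilde X)} = 0$ iff $\<\alpha^\vee, (x,1)\> = \<\alpha^\vee, x\> = 0$ for all $x \in Q$ (using that $\alpha^\vee$ annihilates the $\GGm$-factor), i.e.\ iff $\alpha \in \Spp(Q)$. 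Under the identification in \eqref{eq:latticecone}, this also matches $\{\alpha \colon \alpha^\vee|_{\lat(X)} = 0\}$, closing the loop with the previous step.

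Finally I would show $\Sigma_G(X) = \Sigma_{\widetilde G}(\widetilde X)$. This should follow from \eqref{eq:coneSigma} applied to $\widetilde X$ together with the fact that spherical roots are primitive lattice elements: the identity \eqref{eq:coneSigma} expresses $\QQ_{\ge 0}\Sigma(\widetilde X)$ in terms of triples $\lambda + \mu - \nu$ with $\CC[\widetilde X]_\nu \subseteq \CC[\widetilde X]_\lambda \cdot \CC[\widetilde X]_\mu$; since the grading by $\GGm$-weight is respected by multiplication, such differences $\lambda + \mu - \nu$ automatically lie in $\lat(X) \times \{0\}$, and the structure constants of $R(X,\lb) = \bigoplus_n H^0(X,\lb^n)$ match those coming from the multiplication of sections on $X$, so $\QQ_{\ge 0}\Sigma_{\widetilde G}(\widetilde X) = \QQ_{\ge 0}\Sigma_G(X)$ inside $\lat(X)_\QQ$ (here one can invoke the analogous description of $\QQ_{\ge 0}\Sigma(X)$ for the possibly non-affine $X$, as in \cite[Prop. 4.2]{brion:notes}, or argue directly with the cone of the valuation cone). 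Since both $\Sigma_G(X)$ and $\Sigma_{\widetilde G}(\widetilde X)$ consist of primitive elements of $\lat(X)$ spanning the same cone and, by the classification, are the minimal such sets, they coincide. The main obstacle I anticipate is making the comparison of the $\Sigma$'s fully rigorous: one must be careful that ``primitive in $\lat(\widetilde X)$'' and ``primitive in $\lat(X)$'' agree for elements of $\lat(X) \times \{0\}$—which they do, by Corollary~\ref{cor:tildelattice} since $\lat(X) \times \{0\}$ is a direct summand of $\widetilde\lat(X,\lb)$—and that the cone generated by the differences really is the same cone rather than merely having the same closure; invoking the minimality clause in the definition of $\Sigma$ handles this cleanly.
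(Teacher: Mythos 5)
Your handling of the two $\Spp$ equalities is essentially the paper's argument and is correct: centrality of $\GGm$ gives $\Spp(X)=\Spp(\widetilde X)$, and the affine criterion $\Spp(\widetilde X)=\Spp(\lat(\widetilde X))$ combined with \eqref{eq:conepolytope} gives $\Spp(\widetilde X)=\Spp(Q)$. (One slip in your setup: $\Spp(Q)$ is \emph{not} equal to $\{\alpha\in\sr\colon \alpha^\vee|_{\lat(X)_\QQ}=0\}$ in general --- the condition $\<\alpha^\vee,x\>=0$ for all $x\in Q$ is strictly stronger than $\<\alpha^\vee,x-p\>=0$ for all $x\in Q$, as the example of $Q$ a single regular dominant point shows; likewise $\Spp(X)\neq\{\alpha\colon\alpha^\vee|_{\lat(X)}=0\}$ for projective $X$, e.g.\ $X=G/B$. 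This does not break your chain of equalities, which never actually uses that identification, but the ``closing the loop'' sentence should be deleted.) Your use of Corollary~\ref{cor:tildelattice} to see that $\lat(X)\times\{0\}$ is a direct summand of $\widetilde\lat(X,\lb)$, so that primitivity in the two lattices agrees, is a legitimate and arguably cleaner substitute for the paper's appeal to \cite[Lemma 2.4]{gandini}.

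The genuine gap is in the equality $\QQ_{\ge 0}\Sigma_G(X)=\QQ_{\ge 0}\Sigma_{\tG}(\widetilde X)$. Equation \eqref{eq:coneSigma} does compute $\QQ_{\ge 0}\Sigma_{\tG}(\widetilde X)$ from the graded multiplication of $\CC[\widetilde X]=R(X,\lb)$, and you correctly note that the resulting differences lie in $\lat(X)\times\{0\}$. But you then need the ``analogous description'' of $\QQ_{\ge 0}\Sigma_G(X)$ in terms of that same multiplication, and this is precisely what is not available: \eqref{eq:coneSigma} (and \cite[Prop.~4.2]{brion:notes}) is a statement about \emph{affine} varieties, whereas $\Sigma(X)$ for the projective variety $X$ is defined through $G$-invariant valuations of $\CC(X)$. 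Saying that ``the structure constants of $R(X,\lb)$ match those coming from the multiplication of sections'' is a tautology that does not bridge this: the natural maps $H^0(X,\lb^n)\to\CC(X)$, $t\mapsto t/s^n$ for a $B$-eigensection $s$, are only $B$-equivariant, so the $G$-module generated by a product of sections does not directly compute the $G$-module generated by the corresponding product of rational $B$-eigenfunctions, which is what controls $\V(X)$. Closing this gap amounts exactly to comparing $\V(X)$ with $\V(\widetilde X)$ under the quotient $\widetilde X_0\to X_0$ of open orbits, which is what the paper does using Knop's results that $\varphi_*(\V(\widetilde X_0))=\V(X_0)$ and that $\ker\varphi_*$ lies in the linear part of $\V(\widetilde X_0)$ (\cite[Theorems 4.4 and 6.1]{knop:LV}). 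As written, the central identity of cones in your third step is asserted rather than proved.
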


\begin{proof}
 We view all varieties involved here as $\tG$-varieties, by letting
 $\GGm$ act trivially on $X$. The open $\widetilde G$-orbit $X_0$ of
 $X$ is the quotient by the action of $\GGm$ of the open $\widetilde
 G$-orbit $\widetilde X_0$ of $\widetilde X$. Denote this quotient by
 $\varphi\colon  \widetilde X_0\to X_0$.

Since $\GGm$ is contained in the center of $\tG$ and therefore in any of its Borel subgroups, the assertion $\Spp(X)=\Spp(\widetilde{X})$ follows. Since $\widetilde X$ is affine, it is well-known that $\Spp(\lat(\widetilde{X}))=S^\perp(\widetilde X)$ (see e.g.~\cite[Proposition 5.3]{ACF}), and thanks to  $\Spp(\lat(\widetilde{X})) = \Spp(Q)$, the equality $\Spp(\widetilde{X})=\Spp(Q)$ holds as well. We now prove that $\Sigma_{\tG}(X_0) =
 \Sigma_{\tG}(\widetilde{X}_0)$.  First we claim that this equality
 holds up to replacing some elements with positive rational multiples.
 Indeed, consider the natural linear map $\varphi^*\colon \lat(X_0)
 \to \lat(\widetilde X_0)$ induced by pulling back rational functions
 (it is the map in (\ref{eq:latticecone})), and the dual map
 $\varphi_*\colon N(\widetilde X_0)\to N(X_0)$. Since $\GGm$ is in
 the center of $\widetilde G$, it follows form the first part of the proof of \cite[Theorem 6.1]{knop:LV} that the kernel of $\varphi_*$ is
 contained in the linear part of the valuation cone of $\widetilde X_0$.
Since by \cite[Theorem 4.4]{knop:LV} we have $\varphi_*(\V(\widetilde X_0)) = \V(X_0)$, this implies the claim.

 Now, notice that $\varphi$ is either injective, or has fiber isomorphic
 to $\GGm$. Therefore, by \cite[Lemma 2.4]{gandini}, the quotient
 $\lat(\widetilde X_0)/\lat(X_0)$ has no torsion, hence the elements of
 $\Sigma_{\tG}(X_0)$ are primitive in $\lat(\widetilde X_0)$. This shows
 the desired equality.
 \end{proof}

The following has been inspired by, and has essentially the same proof as Proposition 4.2 in \cite{brion:image}. 
\begin{proposition}\label{Prop_from_Brion}
Let $V$ be a finite dimensional $G$-module and let $X\inn \mathbb P(V)$ be a closed and spherical $G$-subvariety with set of spherical roots $\Sigma$, and with extended weight lattice $\widetilde\lat$ and momentum polytope $Q$ for the polarization given by the restriction $\lb$ of $\Oh_{\PP(V)}(1)$ to $X$. Let $\lambda_1,\lambda_2,\ldots,\lambda_s$ be those $\lambda \in \dw$ for which $\{\lambda\} = Q(Y,\lb|_Y)$ for some closed $G$-orbit $Y$ in $X$. Then $s\ge 1$ and
\begin{enumerate}
\item $(\lambda_1,1),(\lambda_2,1),\ldots,(\lambda_s,1)\in\Gamma(Q)$; \label{Prop_from_Brion_item0}
\item $\lambda_1,\lambda_2,\ldots,\lambda_s$ are vertices of $Q$ as well as highest weights of the dual module $V^*$;\label{Prop_from_Brion_item1}
   \item  $Q\inn\Conv(\lambda_1,\lambda_2,\ldots,\lambda_s)-\QQ_{\ge 0}\Sigma$; and \label{Prop_from_Brion_item2}
   \item $\Sigma$ is an admissible set for the monoid $\wm(Q)$. \label{Prop_from_Brion_item3}
\end{enumerate}
\end{proposition}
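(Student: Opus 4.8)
The plan is to pass to the affine cone $\widetilde X = \Spec R(X,\lb)$ and exploit the close relationship between the combinatorial invariants of $X$ and those of $\widetilde X$, most of which is already recorded in the excerpt. First I would observe that since $X$ is projective, it contains at least one closed $G$-orbit, so $s \ge 1$; moreover by Proposition~\ref{prop:PSVFundFacts}\ref{prop:PSVFundFacts:item:mompolclosedorbit} each such orbit $Y$ gives a vertex $\{\lambda_Y\}$ of $Q$ with $(\lambda_Y,1) \in \widetilde{\lat}(X,\lb)$, and since $(\lambda_Y,1)$ obviously lies in $\QQ_{\ge 0}(Q\times\{1\})$, we get $(\lambda_Y,1) \in \wm(Q) = \QQ_{\ge 0}(Q\times\{1\}) \cap \widetilde{\lat}(X,\lb)$. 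This is assertion~\ref{Prop_from_Brion_item0}. For the ``highest weight of $V^*$'' part of~\ref{Prop_from_Brion_item1}: the closed orbit $Y \subseteq \PP(V)$ spans a subspace which, by the Borel--Weil theorem, is (the projectivization of) an irreducible $G$-submodule of $V$, whose highest weight equals $\lambda_Y^*$ (one has to be a little careful with duals: $H^0(Y,\lb|_Y)$ has highest weight $\lambda_Y$ and it is a quotient of $V = H^0(X,\lb)^*$... rather, $\PP(V)$'s tautological setup forces the cone over $Y$ to be a highest weight orbit in $V$, so $Y$ corresponds to a submodule of $V$ of lowest weight $-\lambda_Y$, equivalently highest weight $\lambda_Y^*$). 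I would write this argument carefully, but it is standard and short.

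The core of the proposition is~\ref{Prop_from_Brion_item2} and~\ref{Prop_from_Brion_item3}, and here the strategy is to transfer everything to $\widetilde X$. By Lemma~\ref{lemma:compare-spherical-roots} we have $\Sigma_G(X) = \Sigma_{\widetilde G}(\widetilde X)$, so $\Sigma = \Sigma(\widetilde X)$, and by the normality of $\widetilde X$ (equation~\eqref{eq:wm_affine_cone}) we have $\wm(\widetilde X) = \wm(Q)$. Then~\ref{Prop_from_Brion_item3} is \emph{immediate} from the ``only if'' direction of Theorem~\ref{Theorem-AS} applied to the affine spherical $\widetilde G$-variety $\widetilde X$: its set of spherical roots is admissible for its weight monoid, i.e. $\Sigma$ is admissible for $\wm(Q)$. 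For~\ref{Prop_from_Brion_item2}, I would argue at the level of cones in $\widetilde\lat_\QQ$. Using~\eqref{eq:coneSigma} for $Y = \widetilde X$ together with the description of $\wm(\widetilde X)$, and using that each graded piece $H^0(X,\lb^n)$ of $R(X,\lb)$ is a quotient (by surjectivity, Proposition~\ref{prop:PSVFundFacts}\ref{prop:PSVFundFacts:item:surj}) of products of lower-degree pieces sitting over the $\lambda_i$, one shows that every weight $(\mu,n)$ of $\widetilde X$ lies in $\ZZ_{\ge0}\{(\lambda_i,1)\} - \QQ_{\ge0}\Sigma$; dividing by $n$ and intersecting with the hyperplane $\wl_\QQ\times\{1\}$ gives $Q \subseteq \Conv(\lambda_1,\dots,\lambda_s) - \QQ_{\ge0}\Sigma$. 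The mechanism is exactly Brion's in~\cite[Prop.~4.2]{brion:image}: the ring $R(X,\lb)$ is generated in degrees realizing the $\lambda_i$ up to adding spherical roots (the generators of $R(X,\lb)$ can be taken among $B$-eigenvectors, and every $B$-eigenvector's weight differs from a sum of the $(\lambda_i,1)$ by a non-negative combination of spherical roots, by~\eqref{eq:coneSigma} applied inductively along the grading).

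The main obstacle, and the only place requiring genuine care, is the inductive argument for~\ref{Prop_from_Brion_item2}: one must verify that $R(X,\lb)$ is, as a $\widetilde G$-algebra, ``generated by its lowest-degree behaviour at the $\lambda_i$ modulo spherical roots,'' i.e. that iterating~\eqref{eq:coneSigma} starting from the degree-one piece and the closed-orbit weights actually reaches every weight of every $H^0(X,\lb^n)$. This is where one genuinely uses that the $\lambda_i$ come from \emph{all} the closed orbits (not just one): a weight $(\mu,n) \in \wm(\widetilde X)$ lies in the momentum polytope, hence — via Theorem~\ref{thm:Brion-Woodward} and the Luna--Vust description — in some orbit face whose associated colored cone has a face corresponding to a closed orbit, which pins down a vertex $\lambda_i$ from which one can ``walk down.'' I would model this part closely on the proof of~\cite[Proposition~4.2]{brion:image}, adapting it to the present generality (Brion assumes the open orbit is fixed; here we do not, but the cone-theoretic statement is insensitive to that). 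Everything else — $s\ge1$, \ref{Prop_from_Brion_item0}, the vertex claim, the $V^*$ claim, and \ref{Prop_from_Brion_item3} — follows formally from results already available in the excerpt.
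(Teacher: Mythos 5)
Your proposal follows essentially the same route as the paper: parts (1), (2) and (4) are handled exactly as you describe (the paper gets the $V^*$ claim from the graded surjection $\CC[V]\to\CC[\widetilde X]$, equivalent to your Borel--Weil argument, and (4) directly from Theorem~\ref{Theorem-AS} applied to $\widetilde X$), and for part (3) the paper, like you, adapts \cite[Proposition~4.2]{brion:image}. The one place your sketch should be tightened is the mechanism for (3): it is not an induction along the grading nor a localization of $(\mu,n)$ in an orbit face, but rather the finiteness of the morphism $\widetilde X\to\Spec(R)$, where $R\subseteq\CC[\widetilde X]$ is the subalgebra generated by the modules $V((\lambda_i,1))$ --- finiteness holds because the zero fiber contains no cone over a closed $G$-orbit and is therefore $\{0\}$ --- after which integrality of the $U$-invariants puts some $V(pn(\nu,1))$ inside a product of the $V((\lambda_{j_k},1))$'s, and \eqref{eq:coneSigma} together with Lemma~\ref{lemma:compare-spherical-roots} yields the containment in $\Conv(\lambda_1,\dots,\lambda_s)-\QQ_{\ge 0}\Sigma$.
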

\begin{proof}
That $s\ge 1$, assertion (\ref{Prop_from_Brion_item0}) and the first part of assertion (\ref{Prop_from_Brion_item1}) follow from Proposition~\ref{prop:PSVFundFacts}-\ref{prop:PSVFundFacts:item:mompolclosedorbit} and the definition of $\wm(Q)$. Let $\widetilde X \inn V$ be the affine cone given by the inclusion $X\inn \PP(V)$. Recall from \eqref{eq:wm_affine_cone} that the weight monoid of the $\tG$-variety $\widetilde X$ is $\wm(Q)$. Since the surjective map $\CC[V] \to \CC[\widetilde X]$ induced by the inclusion $\widetilde{X} \inn V$ respects the grading, it follows that every $\lambda_i$ is the highest weight of an irreducible $G$-submodule of $V^*$, which completes the proof of assertion (\ref{Prop_from_Brion_item1}).

In this proof,  when $\tilde \gamma \in \wm(\widetilde{X})$, 
we will write $V(\tilde \gamma)$ for the $\tG$-submodule of $\CC[\widetilde{X}]$ of highest weight $\tilde\gamma$. 
Set $F=\{\lambda_1,\lambda_2,\ldots,\lambda_s\}$ and for every $\lambda \in F$ set $\tilde{\lambda} = (\lambda,1) \in \tdw$. Let $R\inn \CC[\widetilde X]$ be the subalgebra generated by the $\widetilde G$-modules $V(\tilde\lambda)$ 
with $\lambda \in F$.
Note that $R$ is graded because it is generated by homogeneous elements (of degree 1),  and that it is a finitely generated $G$-algebra.
Moreover, we claim that the morphism $f:\widetilde X\rightarrow \mathrm{Spec}(R)$
induced by the inclusion $R\subseteq \CC[\widetilde X]$ is finite.
To prove this claim, one notices that $f^{-1}(0)$ does not contain any cone $\widetilde Y$ over a closed $G$-orbit $Y$ of $X$ since $f$ is the identity on $\widetilde Y$. This implies that
$f^{-1}(0)=\{0\}$ which allows to conclude the proof of the claim, since $f$ is homogeneous of degree $1$.
As a consequence, the morphism $f//U: \mathrm{Spec}\,  \CC[\widetilde X]^U\rightarrow \mathrm{Spec}(R^U)$ induced by $f$ is also finite. 
Take $\nu\in Q=Q(X,\lb)$. Then, by the definition of $Q(X,\lb)$,
 there exists $n>0$ such that the homogeneous component 
$\CC[\widetilde X]_n$ contains the simple $\widetilde G$-module $V((n\nu,n))$.
This together with the finiteness of $f//U$ implies that
$V(p(n\nu,n))\subseteq R_{np}$ for some $p>0$. Set $\tilde \nu = (\nu,1)$. 
By the definition of $R$, there thus exist $\lambda_{j_1}, \lambda_{j_2}, \ldots, \lambda_{j_{np}} \in F$ such that
$V(pn\tilde\nu)$ is contained in the product $V(\tilde\lambda_{j_1})\cdot V(\tilde\lambda_{j_2}) \cdot \ldots \cdot V(\tilde\lambda_{j_{np}})\inn \CC[\widetilde X]$.
By \eqref{eq:coneSigma} and Lemma~\ref{lemma:compare-spherical-roots}, it follows that $\tilde\lambda_{j_1}+...+\tilde\lambda_{j_{np}}-np\tilde\nu\in \QQ_{\ge 0}\Sigma(\widetilde X) = \QQ_{\ge 0}\Sigma$. This proves assertion (\ref{Prop_from_Brion_item2}). 
Assertion (\ref{Prop_from_Brion_item3}) follows from Theorem~\ref{Theorem-AS} and the fact that the weight monoid of $\widetilde X$ is precisely $\wm(Q)$.
\end{proof}

\begin{remark}
In Brion's Proposition 4.2 of \cite{brion:image}, the inclusion $Q\inn\Conv(\lambda_1,...,\lambda_s)-\QQp\sr$ is obtained with $\sr$ being the set of simple roots of $G$.
\end{remark}

The main objective of this section consists in proving the converse of Proposition~\ref{Prop_from_Brion} 
in the more general setting of \textbf{$G$-varieties over $\mathbb P(V)$}, for some finite dimensional $G$-module $V$,
namely pairs $(X,f)$ with $X$ being a projective $G$-variety and $f:X\rightarrow \mathbb P(V)$ being a finite $G$-morphism.
Let $\lb=f^*(\mathcal O(1))$. 
By \textbf{the momentum polytope} (resp. \textbf{the extended weight lattice}) \textbf{of $X$ over $\PP(V)$}, we mean the momentum polytope (resp. the extended weight lattice) of $(X,\lb)$ .

Based on Proposition~\ref{Prop_from_Brion}, we start by introducing the following notion.

\begin{definition}\label{def:quadruple}
A quadruple $(\widetilde{\lat}, V, Q, \Sigma)$ where $\widetilde{\lat}$ is a sublattice of $\twl$, $V$ is a finite dimensional $G$-module, $Q$ is a convex polytope in $\QQp\dw$ and $\Sigma \inn \Sigma(G)$ is called \textbf{a momentum quadruple of $G$} if 
\begin{equation} \label{eq:lattice_of_GammaQ}
\ZZ\Gamma(Q)=\widetilde\lat
\end{equation}
and there exist elements of degree $1$ of $\widetilde\lat$, say $(\lambda_1,1),(\lambda_2,1),\ldots,(\lambda_s,1)$ such that the following properties are satisfied:
\begin{enumerate}
   \item $\lambda_1,\lambda_2,\ldots,\lambda_s$ are vertices of $Q$ as well as highest weights of the dual $V^*$;\label{def:quadruple:vert}
   \item  $Q\inn\Conv(\lambda_1,\lambda_2,\ldots,\lambda_s)-\QQp\Sigma$; and \label{def:quadruple:conv}
   \item $\Sigma \inn \Sigma(G)$ is admissible for the weight monoid $\Gamma(Q)$.  \label{def:quadruple:adm}
\end{enumerate}
\end{definition}

\begin{remark} 
\begin{enumerate}[(a)]
\item If $(\widetilde{\lat}, V, Q, \Sigma)$ is a momentum quadruple then so is $(\widetilde{\lat}, V, Q, \Sigma')$ whenever $\Sigma'$ is a set which contains $\Sigma$ and which is admissible with respect to $\Gamma(Q)$.
\item Condition~\eqref{eq:lattice_of_GammaQ} implies that the monoid $\wm(Q)$ is saturated in $\widetilde{\lat}$. 
It is also equivalent to the condition that $Q-p$ be a spanning subset of $(\widetilde{\lat} \cap \wl)_{\QQ}$ for some (equivalently, any) $p\in Q$ (compare with \eqref{eq:QXLminusp}). 
\end{enumerate}
\end{remark}

\begin{lemma}\label{lemma:degree-1-element}
Given a momentum quadruple $(\widetilde\lat, V,Q,\Sigma)$ of $G$, let 
$E$ denote a minimal set of generators of the monoid $\Gamma (Q)$
and let $X(\Sigma)\subseteq W=\oplus_{\tilde\nu\in E} V(\tilde\nu)^*$ be an affine spherical $\widetilde G$-variety with weight monoid $\Gamma(Q)$ and set of spherical roots $\Sigma$.

Suppose $(\lambda_1,1),(\lambda_2,1),\ldots,(\lambda_s,1) \in \widetilde{\lat}$ satisfy conditions (\ref{def:quadruple:vert}) and (\ref{def:quadruple:conv}) in Definition~\ref{def:quadruple}. 
If $\tilde\nu\in E$ and the highest weight vector $v_{\tilde\nu^*}$ is an element of $X(\Sigma)$, then $\tilde{\nu}$ is equal to one of the weights $(\lambda_i,1)$.
\end{lemma}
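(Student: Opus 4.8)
The plan is to extract the element $\tilde\nu$ as the weight of a highest weight vector that survives in $X(\Sigma)$, and to show that the corresponding function on $X(\Sigma)$ must be supported, in an appropriate sense, on the cones over closed $G$-orbits of the would-be polarized variety; by condition~\eqref{def:quadruple:conv} these are exactly the $(\lambda_i,1)$. More precisely, I would argue as follows. The vector $v_{\tilde\nu^*}$ is, by construction, a highest weight vector in the $i$-th summand $V(\tilde\nu)^*$ of $W$, and its being an element of $X(\Sigma)$ means that the linear coordinate function $\ell_{\tilde\nu}$ on $W$ dual to $v_{\tilde\nu^*}$ does not vanish identically on $X(\Sigma)$. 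This function is a lowest-weight (equivalently, up to the longest Weyl group element, a $B$-semiinvariant) regular function on $X(\Sigma)$ of the appropriate weight; in particular $\tilde\nu\in\wm(Q)=\Gamma(Q)$, which we of course already know since $\tilde\nu\in E$.

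\textbf{Key steps.} First I would reduce to a statement about the monoid: it suffices to show that if $\tilde\nu=(\nu,1)\in E$ is such that $\tilde\nu$ cannot be written as a non-negative rational combination of the $(\lambda_i,1)$ minus an element of $\QQ_{\ge 0}\Sigma$ in a way compatible with $\tilde\nu$ lying on the hyperplane of degree $1$, then $v_{\tilde\nu^*}\notin X(\Sigma)$. Concretely, by condition~\eqref{def:quadruple:conv} every vertex of $Q$ is among the $\lambda_i$, so $\nu$ is a vertex of $Q$ if and only if $\nu=\lambda_i$ for some $i$; hence it is enough to rule out $\nu$ being a non-vertex point of $Q$. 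Second, I would use the description of $X(\Sigma)$ via its colored fan (Theorem~\ref{Theorem-AS} guarantees $X(\Sigma)$ exists with the prescribed $\Gamma(Q)$ and $\Sigma$, and Proposition~\ref{prop:affine-criterion} identifies the distinguished colored cone): the closed $G$-orbits (or rather, here, the relevant closed $\tG$-subvarieties) of the associated projective variety correspond, via Theorem~\ref{thm:Brion-Woodward}\eqref{thm:Brion-Woodward:orbitface} and Proposition~\ref{prop:PSVFundFacts}\eqref{prop:PSVFundFacts:item:mompolclosedorbit}, exactly to the vertices $\lambda_i$ of $Q$. Third — and this is the crux — I would show that the coordinate function $\ell_{\tilde\nu}$, which is a global degree-$1$ section of the line bundle on the projectivization $\Proj R$ with $R=\bigoplus_n \CC[X(\Sigma)]_n$, is a $B$-eigensection whose zero locus is a $B$-stable divisor not passing through at least one closed $G$-orbit precisely when $\nu$ is a vertex; if $\nu$ is not a vertex, the section $\ell_{\tilde\nu}$ vanishes on every closed $G$-orbit (its weight $\nu$ lies in the relative interior of a positive-dimensional face of $Q$, so it is not an extreme point and the corresponding semiinvariant cannot be nonzero at any point whose image under the moment map is a vertex), and then $v_{\tilde\nu^*}$, being a point of the affine cone lying over such an orbit-free locus while simultaneously being a highest-weight vector, cannot lie on $X(\Sigma)$.

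\textbf{Main obstacle.} The delicate point is the last step: translating ``$v_{\tilde\nu^*}\in X(\Sigma)$'' into a positivity/support statement about the weight $\tilde\nu$ relative to the polytope $Q$. The clean way to do this is to observe that membership of the highest weight vector $v_{\tilde\nu^*}$ in $X(\Sigma)$ forces the existence, for suitable $p>0$, of a decomposition $p\tilde\nu=\tilde\lambda_{j_1}+\cdots+\tilde\lambda_{j_{p}}-\eta$ with $\eta\in\QQ_{\ge 0}\Sigma\cap\widetilde\lat$ and all $\tilde\lambda_{j_k}$ of degree $1$ in $\Gamma(Q)$ — because $\CC[X(\Sigma)]$ is generated in degree $1$ and the degree-$1$ part is the span of the $V(\tilde\mu)$ with $\tilde\mu\in E$, while multiplying highest weight vectors and projecting to an isotypic component changes the weight by an element of $\QQ_{\ge 0}\Sigma$ by \eqref{eq:coneSigma} and Lemma~\ref{lemma:compare-spherical-roots}. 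Since $v_{\tilde\nu^*}\cdot v_{\tilde\nu^*}\cdot\cdots$ (a product of copies of the highest weight vector) is itself a highest weight vector in $\CC[X(\Sigma)]$ of weight $p\tilde\nu$, and since $\tilde\nu$ is indecomposable in $\Gamma(Q)$ as an element of the minimal generating set $E$, one deduces $\eta=0$ and $\tilde\nu=\tilde\lambda_{j_1}$, i.e. $\nu=\lambda_{j_1}$. Carrying out this argument carefully — in particular making precise that products of highest weight vectors are nonzero highest weight vectors in $\CC[X(\Sigma)]$ (this uses that $X(\Sigma)$ is irreducible and $B$-spherical, so the multiplication map on $B$-semiinvariants is injective), and that indecomposability in $E$ forbids a nontrivial $\QQ_{\ge 0}\Sigma$-correction — is the technical heart of the proof; once it is in place the conclusion is immediate.
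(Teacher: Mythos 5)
There are two genuine gaps here, and together they mean the proof does not go through. First, your reduction is based on the claim that condition~(\ref{def:quadruple:conv}) forces every vertex of $Q$ to be one of the $\lambda_i$. That is only true when $\Sigma=\emptyset$: the inclusion $Q\inn\Conv(\lambda_1,\ldots,\lambda_s)-\QQp\Sigma$ allows $Q$ to have vertices of the form $\lambda_i-\sigma$ with $\sigma\in\QQp\Sigma$ nonzero (in the $SL_2\times SL_2$ example following Theorem~\ref{thm:crit-convexhull}, $Q=\Conv(0,2\lambda_1,\lambda_2)$ has $0$ and $2\lambda_1$ as vertices while only $\lambda_2$ is admissible). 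So it is \emph{not} enough to rule out non-vertex $\nu$; the hard case is a vertex $\nu\notin\{\lambda_1,\ldots,\lambda_s\}$, and your argument never addresses it. This is exactly where the paper's proof does its real work: it uses the Luna--Vust correspondence to identify the orbit closure of $v_{\tilde\nu^*}$ with the facet $\wm(Q)^\vee\cap\tilde\nu^\perp$ (which in particular yields, as a \emph{conclusion} rather than an assumption, that $\nu$ is a vertex of $Q$), shows that this facet meets the relative interior of the valuation cone $\V(X(\Sigma))$, and then derives a contradiction with $Q\inn\Conv(\lambda_i)-\QQp\Sigma$ by pairing $\tilde\nu$ and the $\tilde\lambda_i$ against an element of that facet. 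Note that the dependence on $\Sigma$ enters only through $\V(X(\Sigma))$; an argument that never invokes the valuation cone (or some equivalent use of $\Sigma(X(\Sigma))=\Sigma$) cannot distinguish the admissible vertices from the others.

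Second, the decomposition in your ``main obstacle'' paragraph does not hold up. The algebra $\CC[X(\Sigma)]$ is \emph{not} generated in degree $1$: the minimal generators $E$ of $\wm(Q)=\QQp(Q\times\{1\})\cap\widetilde\lat$ can have arbitrary positive degree, and even the degree-$1$ part of $\CC[X(\Sigma)]$ is the sum of all $V(\mu,1)$ with $(\mu,1)\in\wm(Q)$, not just the $V(\tilde\lambda_i)$ --- the subalgebra generated by the $V(\tilde\lambda_i)$ is the auxiliary ring $R$ of Proposition~\ref{Prop_from_Brion}, a proper subring in general. Moreover, even granting a relation $p\tilde\nu=\tilde\lambda_{j_1}+\cdots+\tilde\lambda_{j_p}-\eta$ with $\eta\in\QQp\Sigma$, minimality of $\tilde\nu$ in $E$ gives nothing: for $p>1$ the element $p\tilde\nu$ decomposes freely in $\wm(Q)$, and $\eta$ need not lie in $\wm(Q)$, so indecomposability is simply not applicable. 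Finally, and most importantly, the hypothesis $v_{\tilde\nu^*}\in X(\Sigma)$ is never actually used to \emph{produce} such a relation --- the reasoning you give for it (generation in degree $1$ plus \eqref{eq:coneSigma}) would apply whether or not $v_{\tilde\nu^*}$ lies on $X(\Sigma)$, so it cannot prove an implication whose hypothesis is precisely that membership.
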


\begin{proof}
Set $\wm(Q)^{\vee}:=(\QQp\wm(Q))^{\vee}$, and notice that in our setting it is full-dimensional in $\Hom_{\QQ}(\QQ\wm,\QQ)$, because $\QQp\wm(Q)$ doesn't contain any line. Let $\C$ be the largest face of $\wm(Q)^{\vee}$ whose relative interior intersects the valuation cone $\V$ of $X(\Sigma)$. After Proposition~\ref{prop:affine-criterion} and \cite[Lemma 3.2]{knop:LV}, the cone $\C$ has a face $\C'$ corresponding to the $\tG$-orbit closure $Y$ of $v_{\tilde\nu^*}$ in $X(\Sigma)$. 
Note that $\lat(\tG\cdot v_{\tilde\nu^*}) = \ZZ\tilde{\nu}$, hence by the proof of \cite[Theorem 6.3]{knop:LV} it follows that $\ZZ\tilde{\nu} = \ZZ\wm(Q) \cap (\C')^{\perp}.$ This implies that $\C'$ equals the facet $\wm(Q)^{\vee} \cap \tilde{\nu}^\perp$ of $\wm(Q)^{\vee}$. Consequently, the ray $\QQ_{\ge 0} \tilde{\nu}$ is an edge of $\QQ_{\ge 0}\wm(Q)$.  In particular, if we write $\tilde\nu=(n\nu,n)$, then $\nu$ is a vertex of $Q$. 

We claim that $\C'$ meets the relative interior of $\V$. To prove the claim, we observe that it is obvious if $\C'=\C$, because in this case $X(\Sigma)=Y$ and $\Sigma=\emptyset$. For dimension reasons, the only other possibility is that $\C=\Gamma(Q)^\vee$. Then $\C$ and $\V$ are both full-dimensional convex cones, each intersecting the relative interior of the other. Since $\C'$ is the facet of $\C$ having inward-pointing normal $\tilde\nu$, there are elements of $\V$ that are strictly positive on $\tilde\nu$. On the other hand, it is well known that $\V$ contains the image in $N(X(\Sigma))$ of the antidominant chamber, hence there are elements of $\V$ that are strictly negative on $\tilde\nu$. Since the relative interior of $\C'$ intersects $\V$, an elementary argument shows that $\C'$ also intersects the relative interior of $\V$.

Suppose now, to get a contradiction, that $\tilde\nu\neq\tilde\lambda_i$ for all $i \in \{1,2,\ldots,s\}$, where $\tilde\lambda_i=(\lambda_i,1)$.
Let $\tilde{\rho}\in \C'$.
Then $\<\tilde{\rho},\tilde\nu\>=0$ and $\<\tilde{\rho},\tilde{\lambda_i}\>\geq 0$, for all $i$.
These inequalities together with Definition~\ref{def:quadruple}-(\ref{def:quadruple:conv})
imply that there exists $\sigma\in\Sigma$ such that $\<\tilde{\rho},\sigma\>\geq 0$.
Therefore the above claim does not hold --- a contradiction. 
\end{proof}

\begin{theorem}\label{thm:crit-convexhull}
Let $V$ be a finite dimensional $G$-module, $\widetilde\lat$ be a sublattice of $\twl$, $Q\subseteq\QQ_{\geq 0}\dw$ be a convex polytope and $\Sigma$ be a set of spherical roots of $G$.

The following assertions are equivalent.
\begin{enumerate}
\item The tuple $(\widetilde\lat,V,Q,\Sigma)$ is a momentum quadruple of $G$.
\item There exists a spherical projective $G$-variety over $\mathbb{P}(V)$
with extended weight lattice $\widetilde\lat$,
momentum polytope $Q$ and set of spherical roots $\Sigma$.   
\end{enumerate}
\end{theorem}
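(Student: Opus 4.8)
The plan is to prove Theorem~\ref{thm:crit-convexhull} by establishing the two implications separately. The forward direction (2)$\Rightarrow$(1) is essentially contained in Proposition~\ref{Prop_from_Brion}: if $(X,f)$ is a spherical projective $G$-variety over $\PP(V)$ with the prescribed invariants, then setting $\lb = f^*\Oh(1)$ makes $(X,\lb)$ a polarized spherical $G$-variety whose affine cone $\widetilde X$ has weight monoid $\wm(Q)$ (here one uses \eqref{eq:wm_affine_cone} together with $\ZZ\wm(Q) = \widetilde\lat$, which holds because $\widetilde\lat$ is by hypothesis the extended weight lattice, and the normality of $\widetilde X$). The finiteness of $f$ guarantees that the closed $G$-orbits of $X$ map to closed $G$-orbits in $\PP(V)$, so their momentum polytopes $\{\lambda_i\}$ are vertices of $Q$ and the $\lambda_i$ are highest weights of $V^*$; then items \eqref{Prop_from_Brion_item2} and \eqref{Prop_from_Brion_item3} of Proposition~\ref{Prop_from_Brion} give conditions \eqref{def:quadruple:conv} and \eqref{def:quadruple:adm} of Definition~\ref{def:quadruple}. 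One subtlety: Proposition~\ref{Prop_from_Brion} is stated for closed spherical subvarieties of $\PP(V)$, so I would first reduce the ``over $\PP(V)$'' case to it, or re-run the same argument (the cone argument with $f//U$ finite is already phrased so as to cover this).

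For the hard direction (1)$\Rightarrow$(2), I would start from a momentum quadruple $(\widetilde\lat,V,Q,\Sigma)$ and use Theorem~\ref{Theorem-AS} (admissibility of $\Sigma$ for $\wm(Q)$) to produce an affine spherical $\widetilde G$-variety $X(\Sigma)$ with weight monoid $\wm(Q)$ and spherical roots $\Sigma$; by construction $X(\Sigma)$ embeds $\widetilde G$-equivariantly in $W = \bigoplus_{\tilde\nu\in E} V(\tilde\nu)^*$ for a minimal generating set $E$ of $\wm(Q)$. Since $\wm(Q)$ is generated in degree $\ge 1$ and the $\GGm$-grading is by degree, $X(\Sigma)$ is a cone with vertex $0$, so I can take its projectivization $X := \Proj R(X(\Sigma)) = (X(\Sigma)\setminus\{0\})/\GGm$, a projective spherical $G$-variety with $\lb = \Oh_X(1)$, extended weight lattice $\widetilde\lat$, momentum polytope $Q$ (via \eqref{eq:conepolytope}/\eqref{eq:wm_affine_cone}), and spherical roots $\Sigma$ by Lemma~\ref{lemma:compare-spherical-roots}. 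The remaining task is to build a \emph{finite} $G$-morphism $f\colon X \to \PP(V)$ with $f^*\Oh(1) = \lb$. The natural candidate comes from the degree-1 part: condition \eqref{def:quadruple:vert} says each $\lambda_i$ is a highest weight of $V^*$, so there is a $G$-equivariant linear map $V^* \to H^0(X,\lb) = \CC[X(\Sigma)]_1$ hitting the submodules $V((\lambda_i,1))$, dually a $G$-map $H^0(X,\lb)^* \to V$, hence a rational $G$-map $X\dashrightarrow\PP(V)$; I must show it is a morphism and is finite.

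The key point — and the place where Lemma~\ref{lemma:degree-1-element} does its work — is base-point-freeness and finiteness. Lemma~\ref{lemma:degree-1-element} tells us that every highest weight vector $v_{\tilde\nu^*}$ of a generator $\tilde\nu\in E$ that lies in $X(\Sigma)$ must be one of the $(\lambda_i,1)$; said differently, the generators of $\wm(Q)$ that are \emph{not} among the $\tilde\lambda_i$ contribute weight vectors that vanish on $X(\Sigma)$, so the ideal of $X(\Sigma)$ in $\CC[W]$ already forces the map cut out by the degree-1 piece to be well-defined. Concretely I would argue that the $G$-subvariety $\widetilde X' \subseteq \bigoplus_i V((\lambda_i,1))^*$ obtained by projecting $X(\Sigma)$ is again affine spherical with the same weight monoid — using that the $\lambda_i$ generate the degree-1 part of $\wm(Q)$ in the sense needed, which follows because $Q = \Conv(\lambda_i) \cap (\text{stuff})$ forces every degree-1 generator to be a vertex, and Lemma~\ref{lemma:degree-1-element} pins the relevant vertices down — so that $X(\Sigma)\to\widetilde X'$ is an isomorphism; then $\widetilde X'\setminus\{0\}\to V\setminus\{0\}$ descends to the desired $f\colon X\to\PP(V)$, which is finite because $X$ is projective and $f$ is quasi-finite (its fibers are $G$-orbit-theoretically finite — the only point over $0$ would be forced away by the cone structure, exactly as in the proof of Proposition~\ref{Prop_from_Brion}). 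I expect the main obstacle to be precisely the passage from ``the degree-1 weight vectors separate enough'' to ``$f$ is a finite morphism onto its image'': one has to rule out that $f$ collapses positive-dimensional subvarieties or fails to extend, and this is where one leans hardest on Lemma~\ref{lemma:degree-1-element} and on the combinatorial description of the $G$-orbits of $X(\Sigma)$ via its colored fan (Proposition~\ref{prop:affine-criterion} and Proposition~\ref{thm:orbits-vs-faces}).
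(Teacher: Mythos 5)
Your proposal follows the same route as the paper's proof in both directions: for (2)$\Rightarrow$(1) one re-runs the argument of Proposition~\ref{Prop_from_Brion} with the subalgebra generated by the modules $V((\lambda_i,1))$ attached to the closed orbits, and for (1)$\Rightarrow$(2) one produces $X(\Sigma)$ from Theorem~\ref{Theorem-AS}, passes to $\operatorname{Proj}$, and extracts the morphism to $\PP(V)$ from the degree-one modules $V((\lambda_i,1))$, with Lemma~\ref{lemma:degree-1-element} controlling the fibre over the origin. The forward direction is fine.

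In the converse, however, one step fails as written: the claim that the projection $X(\Sigma)\to\widetilde X'\inn\bigoplus_i V((\lambda_i,1))^*$ is an \emph{isomorphism}, together with the supporting assertion that every degree-one generator of $\wm(Q)$ is a vertex of $Q$. Neither is true. Since $Q$ is bounded, the degree-zero part of $\wm(Q)$ is trivial, so \emph{every} element $(\lambda,1)\in\wm(Q)$ is a minimal generator, whether or not $\lambda$ is a vertex; and Lemma~\ref{lemma:degree-1-element} only constrains those generators whose highest weight vector actually lies on $X(\Sigma)$. Concretely, take $G=\SL(2)$, $\lambda_1=0$, $\lambda_2=2\varpi$, $V=V(0)\oplus V(2\varpi)$, $Q=\Conv(0,2\varpi)$, $\widetilde\lat=\ZZ(0,1)\oplus\ZZ(\varpi,0)$ and $\Sigma=\emptyset$: then $(\varpi,1)$ is a degree-one minimal generator of $\wm(Q)$ that is not an integral combination of $(0,1)$ and $(2\varpi,1)$, so the subalgebra $R\inn\CC[X(\Sigma)]$ generated by $V((0,1))$ and $V((2\varpi,1))$ is proper and $X(\Sigma)\to\operatorname{Spec}(R)$ is finite of degree $2$, not an isomorphism. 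This does not sink the argument, because all that is needed --- and all that is true, and what the paper proves --- is that $\varphi\colon X(\Sigma)\to\operatorname{Spec}(R)$ is \emph{finite}: since $\varphi$ is homogeneous of degree one, this reduces to $\varphi^{-1}(0)=\{0\}$, which is exactly where Lemma~\ref{lemma:degree-1-element} and condition~(\ref{def:quadruple:conv}) of Definition~\ref{def:quadruple} enter (a nonzero closed $\tG$-stable cone in $X(\Sigma)$ contains the orbit closure of some $v_{\tilde\nu^*}$ with $\tilde\nu\in E$; the lemma forces $\tilde\nu=(\lambda_i,1)$; and $\varphi$ restricts to the identity on those orbit closures). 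Once you replace the isomorphism claim by this finiteness statement, your argument closes up and coincides with the paper's: assertion (2) only asks for a finite morphism $X\to\PP(V)$, not a closed embedding.
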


\begin{proof}
$(2)\Rightarrow (1)$
To prove this implication, we follow the proof of Proposition~\ref{Prop_from_Brion}. Take a finite morphism $f: X\rightarrow \PP(V)$.
Let $\lb=f^*(\mathcal O(1))$ and $\widetilde X = \Spec R(X,\lb)$ be the affine cone of $(X,\lb)$, which is a $\widetilde G$-variety.
Consider  the simple $G$-submodules $V(\lambda_i^*)\inn V$ such that there exists a closed $G$-orbit $Y_i$ of $X$ with $f(Y_i) \inn \PP(V(\lambda_i^*)) \inn \PP(V)$.
We define $R$ to be the subalgebra of $\CC[\widetilde X$] generated
by the corresponding simple $\widetilde G$-modules $V(\tilde\lambda_i)$
with $\tilde\lambda_i=(\lambda_i,1)$.
We thus argue as for the proof of Proposition~\ref{Prop_from_Brion} while considering this algebra $R$.
Finally, note that $\lambda_i$ are indeed vertices of $Q(X,\lb)$; see Proposition~\ref{prop:PSVFundFacts}-\ref{prop:PSVFundFacts:item:mompolclosedorbit}.

$(1)\Rightarrow (2)$
Thanks to Definition~\ref{def:quadruple}-$(3)$, there exists a spherical $\tilde G$-variety $X(\Sigma)$ with weight monoid $\Gamma(Q)$ and set of spherical roots equal to $\Sigma$.
Recall that $X(\Sigma)$ can be regarded as a $\tilde G$-subvariety of $W=\oplus _E V(\tilde\nu)^*$ where $E$ denotes a minimal set of generators of the monoid $\Gamma (Q)$.

Let $R$ be the subalgebra of the coordinate ring of $X(\Sigma)$ generated by the modules $V(\tilde\lambda_i)$ with $\tilde\lambda_i=(\lambda_i,1)$ for $i\in\{1,\ldots,s\}$.
We then have a natural morphism  $\varphi: X(\Sigma)\rightarrow \mathrm{Spec}(R)$; we shall prove that conditions (\ref{def:quadruple:vert}) and (\ref{def:quadruple:conv})  imply that $\varphi$  is finite.
Once this is proven, we get a finite $\tG$-equivariant morphism $X(\Sigma)\rightarrow V$, where the factor $\GGm$ of $\tG$ acts by scalar multiplication on $V$. In turn we obtain that the quotient $\operatorname{Proj}(\CC[X(\Sigma)])$ 
is a projective spherical $G$-variety over $\mathbb P(V)$ which satisfies our requirements.

Let us thus prove that the morphism $\varphi$ is finite. 
Since $\varphi$ is homogeneous of degree $1$, it suffices to show that $\varphi^{-1}(0)$  equals the singleton $\{0\}$.
Note that the $\tilde G$-orbit closures of $v_{\tilde\lambda_i^*}$ within $W$ are obviously not contained in the fiber $\varphi^{-1}(0)$ because $\varphi$ restricted on such orbit closures is the identity map.
Lemma~\ref{lemma:degree-1-element} shows that the $\tilde G$-orbit closures of $v_{\tilde\nu^*}$, for $\tilde\nu\neq\tilde\lambda_i$ and $\tilde\nu\in E$, are not contained in this fiber either. This concludes the proof.
\end{proof}

\begin{example}  Take $G=SL_2\times SL_2$.
Let $\lambda_1=2\varpi_1$, $\lambda_2=4\varpi_1+2\varpi_1'$ and $V=V(\lambda_1)\oplus V(\lambda_2)$.
We consider the polytope $Q$ defined as the convex hull of $0$, $2\lambda_1$ and $\lambda_2$.

In \cite[Example 3.20]{alexeev-brion:SSV}, Alexeev and Brion show that the polytope $Q$ cannot be realized as the momentum polytope of a projective spherical variety over $\mathbb P(V)$. 
Let us recover this result, by applying Theorem~\ref{thm:crit-convexhull}.

Assume there exists a projective spherical $G$-variety $X$ over $\mathbb P(V)$ with momentum polytope $Q$.
Then $\lambda_2$ is the only vertex of $Q$ satisfying Definition~\ref{def:quadruple}-(\ref{def:quadruple:vert}).  Let $\widetilde{\lat}$ be the extended weight lattice of $X$.
The properties (\ref{def:quadruple:conv}) and (\ref{def:quadruple:adm})  in Definition~\ref{def:quadruple} imply that $X$ should have at least two spherical roots, which have to be admissible for the weight monoid $\Gamma(Q) = \QQp(Q\times\{1\}) \cap \widetilde{\lat}$. Recall  that $\Sigma(G)=\{\alpha_1,\alpha_1', 2\alpha_1,2\alpha_1',\alpha_1+\alpha_1',\frac{1}{2}(\alpha_1+\alpha_1')\}$. It follows from Definition~\ref{def_SR_comp_with_monoid}-(\ref{CM1}) and from Definition~\ref{def_SR_comp_with_lattice}-(\ref{CL3}) that $2\alpha_1, \alpha_1+\alpha_1'$ and $\frac{1}{2}(\alpha_1+\alpha_1')$  are not compatible with $\Gamma(Q)$. This means that $\Sigma(X) = \{\alpha_1,\alpha_1'\}$ or $\Sigma(X) = \{\alpha_1,2\alpha_1'\}$. In particular, $\alpha_1$ is compatible with $\wm(Q)$. One of the ray generators of $\wm(Q)^{\vee}$ is a positive multiple of $\rho_1:=\frac{1}{2}\alpha_1^{\vee}|_{\widetilde{\lat}}-\alpha_1'^\vee|_{\widetilde{\lat}}$. Since $\<\rho_1,\alpha_1\> = 1$, it follows from  Definition~\ref{def_SR_comp_with_monoid}-(\ref{CM1}) that $\rho_1$ is a ray generator of $\wm(Q)$. Consequently, there exists $D \in \mathcal{S}(\alpha_1)$ such that $\rho(D) = \alpha_1^{\vee}|_{\widetilde{\lat}} - \rho_1$. Since $\<\rho(D),\alpha_1'\> = 2$, it follows that neither $\{\alpha_1,\alpha_1'\}$ nor $\{\alpha_1,2\alpha_1'\}$ satisfies Definition~\ref{def_AS}.
This contradicts Theorem~\ref{thm:crit-convexhull} 
hence shows that there is no projective $G$-spherical variety over $\mathbb P(V)$ with momentum polytope $Q$.
\end{example}

\begin{corollary}\label{cor:crit-convexhull}
Let $V$ be a finite dimensional $G$-module, $\lat\subseteq\wl$ be a lattice and $Q\subseteq\QQ_{\geq 0}\dw$ be a convex polytope.  
The following assertions are equivalent.
\begin{enumerate}
\item There exists a momentum quadruple $(\widetilde\lat, V,Q,\Sigma)$ for some $\widetilde\lat$ spanned by $\lat$ and some $(\lambda,1)$'s with $\lambda$ being some highest weights of $V^*$.
\item
There exists a  projective spherical $G$-variety over $\mathbb{P}(V)$ with weight lattice $\lat$ and momentum polytope $Q$.

\end{enumerate}
\end{corollary}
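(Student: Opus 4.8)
Corollary~\ref{cor:crit-convexhull} is Theorem~\ref{thm:crit-convexhull} with the extended weight lattice and the set of spherical roots no longer prescribed: $\Sigma$ is then allowed to range over the admissible sets for $\Gamma(Q)$, while $\widetilde\lat$ is essentially determined by $\lat(X)$ together with the choice of a closed $G$-orbit, via Corollary~\ref{cor:tildelattice}. So the plan is to pass through Theorem~\ref{thm:crit-convexhull} in both directions, carefully tracking the exact sequence \eqref{eq:latticecone} relating $\lat(X)$ and $\widetilde\lat(X,\lb)$ (throughout, $\lat$ is identified with $\lat\times\{0\}\subseteq\twl$ as in \eqref{eq:latticecone}).

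\textbf{$(2)\Rightarrow(1)$.} Let $f\colon X\to\PP(V)$ be finite with $X$ projective spherical, $\lb=f^*\Oh(1)$, $\lat(X)=\lat$ and $Q(X,\lb)=Q$. Put $\widetilde\lat:=\widetilde\lat(X,\lb)$ and $\Sigma:=\Sigma(X)$. By Theorem~\ref{thm:crit-convexhull}, $(\widetilde\lat,V,Q,\Sigma)$ is a momentum quadruple of $G$. Since $X$ is complete it has a closed $G$-orbit $Y$; let $\lambda_Y\in\dw$ be as in Proposition~\ref{prop:PSVFundFacts}\ref{prop:PSVFundFacts:item:mompolclosedorbit}, so $Q(Y,\lb|_Y)=\{\lambda_Y\}$. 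Corollary~\ref{cor:tildelattice} then gives $\widetilde\lat=(\lat\times\{0\})\oplus\ZZ(\lambda_Y,1)$, i.e.\ $\widetilde\lat$ is spanned by $\lat$ and the single degree-one element $(\lambda_Y,1)$. Finally $\lambda_Y$ is a highest weight of $V^*$: the restriction $H^0(X,\lb)\onto H^0(Y,\lb|_Y)=V(\lambda_Y)$ of Proposition~\ref{prop:PSVFundFacts}\ref{prop:PSVFundFacts:item:surj} is surjective, and precomposing it with the pullback $V^*=H^0(\PP(V),\Oh(1))\to H^0(X,\lb)$ yields a nonzero, hence surjective, $G$-morphism $V^*\onto V(\lambda_Y)$ --- the same point used for Proposition~\ref{Prop_from_Brion}\ref{Prop_from_Brion_item1} and in the proof of Theorem~\ref{thm:crit-convexhull}. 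Hence (1) holds.

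\textbf{$(1)\Rightarrow(2)$.} Conversely, let $(\widetilde\lat,V,Q,\Sigma)$ be a momentum quadruple with $\widetilde\lat$ spanned by $\lat\times\{0\}$ together with degree-one elements $(\lambda_j,1)$ whose $\lambda_j$ are highest weights of $V^*$ (so the $\lambda_j$ are in particular pairwise congruent modulo $\lat$; equivalently, $\widetilde\lat=(\lat\times\{0\})\oplus\ZZ(\lambda_0,1)$ for one such $\lambda_0$). By Theorem~\ref{thm:crit-convexhull} there is a projective spherical $G$-variety $X$ over $\PP(V)$ with $\widetilde\lat(X,\lb)=\widetilde\lat$, $Q(X,\lb)=Q$ and $\Sigma(X)=\Sigma$; it remains only to check $\lat(X)=\lat$. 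By \eqref{eq:latticecone}, $\lat(X)$ is identified with $\{\mu\in\wl\colon(\mu,0)\in\widetilde\lat\}$, which clearly contains $\lat$. Conversely, write $(\mu,0)\in\widetilde\lat$ as $(\nu,0)+\sum_j n_j(\lambda_j,1)$ with $\nu\in\lat$ and $n_j\in\ZZ$; comparing last coordinates forces $\sum_j n_j=0$, so $\mu=\nu+\sum_j n_j(\lambda_j-\lambda_{j_0})\in\lat$ for any fixed index $j_0$. Hence $\lat(X)=\lat$, completing the proof.

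\textbf{Main obstacle.} There is nothing deep beyond Theorem~\ref{thm:crit-convexhull}; the only delicate point is in $(1)\Rightarrow(2)$, namely ruling out that the variety furnished by Theorem~\ref{thm:crit-convexhull} has weight lattice a proper overlattice of $\lat$. This is exactly what the hypothesis on $\widetilde\lat$ in (1) is designed to prevent: combined with \eqref{eq:latticecone} and Corollary~\ref{cor:tildelattice} (which pins down $\widetilde\lat/(\lat(X)\times\{0\})\cong\ZZ$), it forces $\widetilde\lat\cap(\wl\times\{0\})=\lat\times\{0\}$ via the short coordinate computation above, so that the weight lattice of the constructed variety is exactly $\lat$.
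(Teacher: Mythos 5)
Your proof is correct and takes essentially the same route as the paper, whose entire proof of this corollary is the single sentence ``This follows from Theorem~\ref{thm:crit-convexhull} together with Corollary~\ref{cor:tildelattice}''; you have simply written out the details, and you correctly identify the only point with any content, namely that $\widetilde\lat\cap(\wl\times\{0\})=\lat\times\{0\}$ must hold for the constructed variety to have weight lattice exactly $\lat$ rather than an overlattice. One caveat: your parenthetical ``so the $\lambda_j$ are in particular pairwise congruent modulo $\lat$'' is not a deduction from the literal hypothesis in (1) --- if one spans $\widetilde\lat$ by $\lat$ and two weights $(\lambda_1,1),(\lambda_2,1)$ with $\lambda_1-\lambda_2\notin\lat$, the resulting $\widetilde\lat\cap(\wl\times\{0\})$ strictly contains $\lat\times\{0\}$ and your final computation breaks down --- so it is really the reading of (1) forced by the $(2)\Rightarrow(1)$ direction (where Corollary~\ref{cor:tildelattice} produces a single $\lambda_Y$) rather than a consequence of it; this is an imprecision inherited from the statement of the corollary itself, not a flaw in your argument.
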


\begin{proof}
This follows from Theorem~\ref{thm:crit-convexhull} together with Corollary~\ref{cor:tildelattice}.
\end{proof}

\begin{example}[cf.~{\cite[Chapter 3]{foschi}}] \label{ex:Foschi}
Take $G=SL_3$ along with
$\lambda_1=4\varpi_1+4\varpi_2$, $\lambda_2=5\varpi_1+2\varpi_2$ and $\lambda_3=2\varpi_1+5\varpi_2$.
Let $V=V(\lambda_1)^*\oplus V(\lambda_2)^*\oplus V(\lambda_3)^*$,
$\widetilde\lat$ be the lattice generated by $(\lambda_i,1)$, $i=1,2$ or $3$,
and $Q$ be the convex hull of the weights $\lambda_1,\lambda_2$ and $\lambda_3$.
Since $Q$ is a lattice polytope, $(\widetilde \lat,V,Q,\emptyset)$ is obviously a momentum quadruple.
One next computes that the set $\{\alpha_1,\alpha_2\}$ of simple roots of $G$ forms an admissible set for $\Gamma(Q)$. 
The power set of $\{\alpha_1,\alpha_2\}$ thus yields
four non-isomorphic projective spherical $G$-varieties with given data $(\widetilde\lat,V,Q)$.
\end{example}

\subsection{A characterization as an intersection of half-spaces}
The criterion we establish in this section (Theorem~\ref{thm:compatible-polytope}) is also derived from the classification of affine spherical varieties with a given weight monoid recalled in the previous subsection. It has the advantage that it can be stated without resorting to a monoid.

Throughout this subsection,  $\lat\subseteq \wl$ denotes a lattice and $Q\subseteq \QQ_{\geq 0} \dw$ is a convex polytope such that $Q-\omega$ is a full dimensional polytope in $\lat_{\mathbb Q}$ for some (equivalently, any) $\omega\in Q$.

The results given in this subsection are based on the notion of an {\em admissible set} of spherical roots for the couple $(\lat,Q)$. Before defining it, we introduce some basic notations.
Fix $\omega\in Q$. 
Let $F$ be any facet of $Q$. 
In our setting, $F$ generates an affine subspace of $\wl_\QQ$ of dimension $\rk \lat-1$, which we denote by $H_F$.
Analogously, the corresponding facet $F-\omega$ of $Q-\omega$ generates an affine hyperplane $H_{F-\omega}$ of $\lat_\QQ$ which we can write as
\[
H_{F-\omega}=\left\{\xi\in\lat_\mathbb Q: \left\langle\rho_F,\xi\right\rangle + m_{F,\omega} =0\right\}
\]
where $m_{F,\omega}\in\QQ$ and $\rho_F$ is a primitive element of $\Hom_\ZZ(\lat, \ZZ)$. We choose $\rho_F$ to be inward-pointing, i.e.\ such that it takes on $Q-\omega$ values that are greater than or equal to $-m_{F,\omega}$.

Notice that $\rho_F$ does not depend on $\omega$; it is uniquely determined by $F$ and it uniquely determines $F$ among the facets of $Q$: we call it the \emph{primitive inward-pointing facet normal} of $F$.

We thus have
\[
Q=\omega+\left\{\xi\in\lat_\mathbb Q: \left\langle\rho_F,\xi\right\rangle + m_{F,\omega} \geq 0 \text{ for all facets $F$ of $Q$}\right\}.
\]

In the following two definitions we introduce the projective analogues of Definitions~\ref{def_SR_comp_with_monoid} and~\ref{def_AS}. First, recall the notion of a spherical root compatible with a lattice given in Definition~\ref{def_SR_comp_with_lattice}. 

\begin{definition}\label{def:Q-compatible}
A spherical root $\sigma\in\Sigma(G)$ is \textbf{$\mathbb Q$-compatible with $(\lat,Q)$} if $\sigma$ is compatible with $\lat$, the couple $(\Spp(Q),\sigma)$ satisfies Luna's axiom (S), and $\sigma$ satisfies the following properties:
\begin{enumerate}
\item\label{Q-compatible-b} if $\sigma\notin S$ and a facet $F\subseteq Q$ satisfies $\left\langle\rho_F,\sigma\right\rangle>0$, then there exists $\alpha\in S\smallsetminus \Spp(Q)$ 
such that $\<\alpha^\vee, F\> =0$.
\item \label{Q-compatible-a} if $\sigma=\alpha\in S$ then there exists a facet $F\subseteq Q$ such that
\begin{enumerate}
\item $\left\langle\rho_F,\alpha \right\rangle =1$;
\item if $F'\subseteq Q$ is a facet such that $\left\langle\rho_{F'},\alpha \right\rangle>0$ then $H_{F'}=H_{F}$ or $H_{F'}=s_\alpha (H_{F})$.
\end{enumerate}
\item \label{Q-compatible-d2} if $\sigma = \alpha + \beta$ or $\sigma = \frac{1}{2}(\alpha + \beta)$ for two orthogonal simple roots $\alpha$ and $\beta$, then $\<\alpha^{\vee},q\> = \<\beta^{\vee},q\>$ for all $q \in Q$.
\end{enumerate}

We denote by $\Sigma_\mathbb Q(\lat, Q)$ the set of spherical roots that are $\mathbb Q$-compatible with $(\lat, Q)$.

For a simple root $\alpha\in \Sigma_\QQ(\lat,Q)$,
we denote by $\A(\alpha)$ a set with two elements $D_\alpha^+$ and $D_\alpha^-$, 
and we define a map $\rho\colon \A(\alpha)\to \Hom_\ZZ(\lat,\ZZ)$ 
by setting $\rho(D_\alpha^+)=\rho_F$ and $\rho(D_\alpha^-)=\alpha^\vee|_{\lat}-\rho_F$, where we choose
\footnote{From now on we implicitly fix a choice of such a facet for all $\alpha\in\Sigma_\QQ(\lat,Q)$; see Lemma~\ref{lemma:sum}-(\ref{lemma:sum:both}).}
a facet $F\subseteq Q$ as in (\ref{Q-compatible-a}). 
\end{definition}

\begin{remark} 
\begin{enumerate}[(a)]
\item In case~(\ref{Q-compatible-b}) of the above definition, notice that $\alpha^\vee$ is not constantly $0$ on $Q$. This implies that $F=Q\cap \{\<\alpha^\vee,-\>=0 \}$, and that $\rho_F=\alpha^\vee|_\lat$ up to a positive rational factor.
\item The conditions on the facet $F$ in parts (\ref{Q-compatible-b}) and (\ref{Q-compatible-a}) of Definition~\ref{def:Q-compatible} had already been observed by Woodward in \cite[Theorem 2.5]{woodward}, under the additional assumption that $\rk \lat = \rk \wl$. In fact, it is straightforward to deduce from \loccit\ that if $(X,\lb)$ is a polarized spherical $G$-variety of maximal rank, then every $\sigma \in \Sigma(X)$ satisfies  (\ref{Q-compatible-b}) and (\ref{Q-compatible-a}) in Definition~\ref{def:Q-compatible} with respect to $Q=Q(X,\lb)$ and $\lat = \lat(X)$. We use a similar argument to Woodward's in the proof of Proposition~\ref{prop:realized_then_admissible} and of Theorem~\ref{thm:existence-Kaehler-converse}, without the assumption on the rank of $\lat(X)$. 
\end{enumerate}
\label{rem:multiple_of_coroot}
\end{remark}

In the next lemma, we give some first consequences of $\QQ$-compatibility and $\QQ$-admissibility on
the configuration of the polytope $Q$. Part (\ref{lemma:sum:both}) of the lemma implies that, up to permuting $D_{\alpha}^+$ and $D_{\alpha}^-$, the pair $(\A(\alpha),\rho)$ of Definition~\ref{def:Q-compatible} does not depend on the choice of the facet $F$ of $Q$ as in part (\ref{Q-compatible-a}) of that definition. 

\begin{lemma}\label{lemma:sum}
Let $\alpha\in \Sigma_\QQ(\lat,Q)\cap S$.
\begin{enumerate}
\item\label{lemma:sum:general} For all $q\in Q$ we have
\[
\<\rho(D_\alpha^-), q-\omega\> + \<\alpha^\vee,\omega\> - m_{F,\omega} \geq 0,
\]
where $F$ is the facet of $Q$ such that $\rho(D_\alpha^+)=\rho_F$.
\item\label{lemma:sum:both} Suppose that there exist two distinct facets $F,F'\subseteq Q$ whose primitive inward-pointing facet normals are positive on $\alpha$. Then we have the following two equalities:
\begin{align}
\rho_F + \rho_{F'}&=\alpha^\vee|_\lat; \label{eq:sum1}\\ 
m_{F,\omega} + m_{F',\omega}&=\<\alpha^\vee,\omega\>. \label{eq:sum2}
\end{align}
\end{enumerate} 
\end{lemma}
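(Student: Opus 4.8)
The plan is to unwind the definitions of $\QQ$-compatibility and of the normals $\rho_F$, and to exploit that for $\alpha \in \Sigma_\QQ(\lat,Q)\cap S$, condition~(\ref{Q-compatible-a}) in Definition~\ref{def:Q-compatible} forces a very rigid picture: the set of facets of $Q$ on which $\alpha^\vee$ (equivalently, $\rho_{F'}$) is strictly positive consists only of the chosen facet $F$ and possibly one other facet $F'$ with $H_{F'}=s_\alpha(H_F)$.

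For part~(\ref{lemma:sum:general}), I would fix $q\in Q$ and write $\xi = q-\omega \in \lat_\QQ$. Since $\rho(D_\alpha^-)=\alpha^\vee|_\lat - \rho_F$, we have $\<\rho(D_\alpha^-),\xi\> = \<\alpha^\vee,q\> - \<\alpha^\vee,\omega\> - \<\rho_F,\xi\>$. Because $F$ is a facet of $Q$ and $\rho_F$ is the inward-pointing primitive facet normal, $\<\rho_F,\xi\> + m_{F,\omega}\ge 0$, i.e.\ $-\<\rho_F,\xi\>\le m_{F,\omega}$. Substituting, the claimed inequality $\<\rho(D_\alpha^-),q-\omega\> + \<\alpha^\vee,\omega\> - m_{F,\omega}\ge 0$ reduces to $\<\alpha^\vee,q\>\ge 0$, which holds because $q\in Q\subseteq\QQ_{\ge 0}\dw$ and $\alpha^\vee$ is a coroot, hence nonnegative on dominant weights. (Here the hypothesis $\sigma=\alpha\in S\cap\Sigma_\QQ(\lat,Q)$ guarantees $F$ exists via~(\ref{Q-compatible-a})(a); $\<\rho_F,\alpha\>=1$ is not even needed for this part.)

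For part~(\ref{lemma:sum:both}), suppose $F,F'$ are distinct facets with $\<\rho_F,\alpha\>>0$ and $\<\rho_{F'},\alpha\>>0$. By~(\ref{Q-compatible-a})(b) applied with the facet $F$, we get $H_{F'}=s_\alpha(H_F)$ (it cannot equal $H_F$ since $F\ne F'$, and distinct facets of a full-dimensional polytope span distinct hyperplanes). The reflection $s_\alpha$ acts on $\lat_\QQ$ by $x\mapsto x - \<\alpha^\vee,x\>\alpha$; so a linear functional $\rho$ cutting out a translate of $H_F$ gets carried to $\rho\circ s_\alpha = \rho - \<\rho,\alpha\>\alpha^\vee$ (as functionals, using $s_\alpha = s_\alpha^{-1}$ and $\<\alpha^\vee,\alpha\>=2$... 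I will compute this carefully: $\langle \rho\circ s_\alpha, x\rangle = \langle\rho, s_\alpha x\rangle = \langle\rho,x\rangle - \langle\alpha^\vee,x\rangle\langle\rho,\alpha\rangle$, so $\rho\circ s_\alpha = \rho - \langle\rho,\alpha\rangle\,\alpha^\vee$). Since $H_{F'}=s_\alpha H_F$, the primitive inward-pointing normal $\rho_{F'}$ must be $\pm(\rho_F - \<\rho_F,\alpha\>\alpha^\vee)$ up to a positive rational scalar; using~(\ref{Q-compatible-a})(a) we have $\<\rho_F,\alpha\>=1$, and then symmetry (apply~(\ref{Q-compatible-a})(b) with $F'$ in the role of $F$, so $\<\rho_{F'},\alpha\>=1$ too) pins down $\rho_{F'}=\alpha^\vee|_\lat - \rho_F$, which is exactly~\eqref{eq:sum1}. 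For~\eqref{eq:sum2}, I would evaluate at $\omega$: since $H_{F-\omega}$ and $H_{F'-\omega}$ pass through the corresponding translated facets, and using that $s_\alpha$ fixes the affine span relation, comparing the constant terms $m_{F,\omega}$, $m_{F',\omega}$ against the value $\<\alpha^\vee,\omega\>$ gives the identity; concretely, pick any $q\in F'$ so $\<\rho_{F'},q-\omega\>+m_{F',\omega}=0$, rewrite $\rho_{F'}=\alpha^\vee|_\lat-\rho_F$, and use that $s_\alpha q\in F$ hence $\<\rho_F, s_\alpha q - \omega\>+m_{F,\omega}=0$; expanding $s_\alpha q = q-\<\alpha^\vee,q\>\alpha$ and combining the two equations yields $m_{F,\omega}+m_{F',\omega}=\<\alpha^\vee,\omega\>$ after the $\<\alpha^\vee,q\>$ terms cancel using $\<\rho_F,\alpha\>=1$.

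The main obstacle I anticipate is the bookkeeping in part~(\ref{lemma:sum:both}): correctly tracking how $s_\alpha$ acts on affine hyperplanes versus on linear functionals, and making sure the "primitive inward-pointing" normalization is consistent under the reflection so that no stray positive scalar survives — this is what forces me to invoke condition~(\ref{Q-compatible-a})(a)/(b) symmetrically for both $F$ and $F'$ rather than just once. Once the functional identity~\eqref{eq:sum1} is established, the scalar identity~\eqref{eq:sum2} is a short evaluation, and part~(\ref{lemma:sum:general}) is essentially immediate from dominance of $q$.
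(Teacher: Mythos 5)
Your part~(\ref{lemma:sum:both}) is essentially the paper's argument and is correct in substance: you identify $H_{F'}=s_\alpha(H_F)$, transport the normal through $s_\alpha$ to get \eqref{eq:sum1}, and obtain \eqref{eq:sum2} by evaluating the two facet equations at $q\in F'$ and at $s_\alpha q\in H_F$ (the paper instead evaluates at an $s_\alpha$-fixed point of $H_F\cap H_{F'}$; same computation). Two small touch-ups: $\<\rho_{F'},\alpha\>=1$ should be deduced from the primitivity of $\alpha^\vee|_\lat-\rho_F$ (it takes the value $1$ on $\alpha\in\lat$) together with $\<\rho_{F'},\alpha\>>0$, not by ``applying (b) with $F'$ in the role of $F$'' --- Definition~\ref{def:Q-compatible}-(\ref{Q-compatible-a}) only asserts the existence of one facet with property (a); and $s_\alpha q$ lies in $H_F$, not necessarily in $F$, which is all you need anyway.

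Part~(\ref{lemma:sum:general}) has a genuine gap: your substitution runs in the wrong direction. Writing it out,
\[
\<\rho(D_\alpha^-), q-\omega\> + \<\alpha^\vee,\omega\> - m_{F,\omega}
= \<\alpha^\vee,q\> - \bigl(\<\rho_F,q-\omega\> + m_{F,\omega}\bigr),
\]
and the quantity $\<\rho_F,q-\omega\>+m_{F,\omega}$ is nonnegative but unbounded (it measures how far $q$ sits from the facet $F$). So the facet inequality only gives the \emph{upper} bound $\<\alpha^\vee,q\>$ for the left-hand side; proving it is $\geq 0$ is equivalent to $\<\alpha^\vee,q\>\geq\<\rho_F,q-\omega\>+m_{F,\omega}$, which is strictly stronger than dominance of $q$ whenever $q\notin F$. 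The assertion is a global one --- that all of $Q$ lies on the nonnegative side of the reflected hyperplane $s_\alpha(H_F)$ --- and it genuinely requires condition (\ref{Q-compatible-a}) of Definition~\ref{def:Q-compatible} (both (a) and (b), contrary to your parenthetical remark that $\<\rho_F,\alpha\>=1$ is not needed). The paper's proof supplies the missing step: if some $q\in Q$ violated the inequality, move from $q$ in the direction $-\alpha$ until exiting $Q$ through a facet $E$ with $\<\rho_E,\alpha\>>0$; the left-hand side only decreases along the way since $\<\rho(D_\alpha^-),\alpha\>=1$, so it is still negative at the exit point $q'$, which rules out $H_E=s_\alpha(H_F)$; condition (b) then forces $E=F$, and combining $\<\rho_F,q'-\omega\>+m_{F,\omega}=0$ with the negativity yields $\<\alpha^\vee,q'\><0$, contradicting $Q\inn\QQp\dw$. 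Some argument of this kind is unavoidable; the one-line reduction does not work.
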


\begin{proof}
We prove part (\ref{lemma:sum:general}). Since
\[
H_{F} = \{x \in \lat_\QQ+\omega \colon \<\rho_F, x-\omega\> + m_{F,\omega} = 0\}
\]
it is elementary  to show that 
\[
s_\alpha(H_{F}) =\{x \in \lat_\QQ+\omega \colon \<\rho(D_\alpha^-), x-\omega\> + \<\alpha^\vee,\omega\> - m_{F,\omega} = 0\},
\]
using the fact from Definition~\ref{def:Q-compatible} that $\rho(D_\alpha^-)=\alpha^\vee|_\lat - \rho_F$	.
	
Suppose, for the sake of contradiction, that there exists $q\in Q$ such that
\[
\<\rho(D_\alpha^-), q-\omega\> + \<\alpha^\vee,\omega\> - m_{F,\omega} < 0.
\]
Consider the maximum rational number $a$ such that $q-a\alpha\in Q$: we have $a\geq 0$, and the point $q'=q-a\alpha$ is on a facet $E$ of $Q$ such that $\<\rho_E,\alpha\> >0$ (otherwise $q-\QQ_{\geq0}\alpha$ would be entirely contained in $Q$).
Then
\[
\<\rho(D_\alpha^-), q'-\omega\> + \<\alpha^\vee,\omega\> - m_{F,\omega} < 0,
\]
in particular the left hand side is $\neq0$. This excludes the possibility that $H_{E}=s_\alpha(H_{F})$. Then $H_{E}=H_{F}$, i.e.\ $E=F$. This yields $\<\rho_F, q'-\omega\> + m_{F,\omega}= 0$, and
\[
\<\alpha^\vee, q'\> = \<\rho_F, q'-\omega\> + \<\rho(D_\alpha^-), q'-\omega\> + \<\alpha^\vee,\omega\> <0,
\]
which contradicts $Q \inn \QQp\dw$. 

We now prove part~(\ref{lemma:sum:both}). By Definition~\ref{def:Q-compatible} we have $H_{F'}=s_\alpha(H_{F})$, which implies $\rho_{F'}= \pm (\alpha^{\vee}|_\lat-\rho_F)$. Since $\rho_F$ and $\rho_{F'}$ both take value $1$ on $\alpha$, we have $\rho_F + \rho_{F'}=\alpha^\vee|_\lat$. 
	
Since $F$ and $F'$ are distinct subsets of $\QQp\dw$, the affine subspaces $H_{F}$ and $H_{F'}$  of $\wl_\RR$ are not parallel.  It follows that  $H_{F}\cap H_{F'}$ is non-empty and stable under $s_\alpha$, hence there exists $p\in H_{F}\cap H_{F'}$ such that $s_\alpha(p)=p$, in other words $\<\alpha^\vee,p\> =0$. Then $p-\omega \in H_{F-\omega}\cap H_{F'-\omega}$, which yields
\[
m_{F,\omega}+ m_{F',\omega} = \< \rho_F, \omega-p \> + \< \rho_{F'}, \omega-p \> = \<\alpha^\vee,\omega-p\>=\<\alpha^\vee,\omega\>,
\]	
as desired.
\end{proof}

The analogue of Definition~\ref{def_AS} while replacing $\mathcal S$ by $\A$ reads as follows.
\begin{definition}\label{def:Q-admissibleset}
A subset $\Sigma\inn \Sigma_\mathbb Q(\lat, Q)$ is $\mathbb Q$-\textbf{admissible} (for $(\lat,Q)$) if it satisfies the following condition:
\begin{enumerate}[label=(\arabic*'),ref=\arabic*']
\item \label{A1polytope} For every $\alpha \in \Sigma \cap S$, $D \in \A(\alpha)$, and $\sigma \in \Sigma \setminus \lbrace \alpha \rbrace$, the inequality $\langle \rho(D), \sigma \rangle \le 1$ holds, 
and the equality is attained if and only if $\sigma = \beta \in S$ and there is $D' \in \mathcal \A(\beta)$ with $\rho(D') = \rho(D)$. 
\end{enumerate}
\end{definition}

\begin{remark} \label{rem:Qadmiss_subsets}
Notice that the property of being $\QQ$-admissible for a subset of $\Sigma_\QQ(\lat, Q)$  is closed under taking subsets.  Even a stronger property holds evidently: a subset $\Sigma\subseteq\Sigma_\QQ(\lat, Q)$ is $\QQ$-admissible if and only if $\{\sigma, \tau\}$ is $\QQ$-admissible for all $\sigma,\tau\in\Sigma$.  
We also point out that $\Sigma=\{\sigma\}$ is $\QQ$-admissible if and only if $\sigma$ is $\QQ$-compatible with $(\lat,Q)$: 
$\QQ$-admissibility for singletons does not involve checking condition~(1') of Definition~\ref{def:Q-admissibleset}.
\end{remark}

Before defining admissible sets of spherical roots, we need to introduce some additional notation. 
Let $\Sigma \inn \Sigma_\mathbb Q(\lat, Q)$ be $\mathbb Q$-admissible.
Suppose $\alpha\in S\cap\frac12\Sigma$.
Recall from Remark~\ref{rem:multiple_of_coroot} that  whenever $\<\rho_F,\alpha\> >0$ for a facet $F$ of $Q$, then $\rho_F$ equals a positive rational multiple of $\alpha^\vee|_{\lat}$.
In this case, it is useful to rescale $\rho_F$ and $m_{F,\omega}$ accordingly, as follows. 
We denote
\[
\rho^\Sigma_F =
\begin{cases}
\frac12\alpha^\vee|_{\lat} & \text{if $\<\rho_F,\alpha\> >0$ for some $\alpha\in S\cap\frac12\Sigma$,} \\
\rho_F & \text{otherwise,}
\end{cases}
\]
and we set
\[
m^\Sigma_{F,\omega} = -\<\rho^\Sigma_F,F-\omega\>.
\]

Let $\Sigma\inn \Sigma(G)$. In accordance with Definition~\ref{def:orbitface}, we call a face of $Q$ an \textbf{orbit face with respect to $\Sigma$} if it is an orbit face of $Q$ for the cone dual to $-\Sigma$. When $\Sigma$ is clear from the context,  we will sometimes omit ``with respect to $\Sigma$.''

\begin{definition}\label{def:admissible}
A $\QQ$-admissible subset $\Sigma\inn \Sigma_\QQ(\lat,Q)$ is \textbf{admissible} if the following conditions are satisfied:
\begin{enumerate}
\item\label{def:admissible:inXi} for all orbit vertices $\mathsf v,\mathsf w$ of $Q$ w.r.t. $\Sigma$, the difference $\mathsf v-\mathsf w$ is in $\lat$;
\item\label{def:admissible:v} there exists an orbit vertex $\mathsf v\in Q$ w.r.t. $\Sigma$ such that
\begin{enumerate}
\item\label{def:admissible:v:L} $\mathsf v\in \wl$ and
\item\label{def:admissible:v:a_2a} $m^\Sigma_{F,\mathsf v}\in\ZZ$, for all facets $F\subseteq Q$ such that $\<\rho_F,\alpha\> > 0$ with $\alpha\in S\cap (\Sigma\cup \frac12\Sigma)$.
\end{enumerate}
\end{enumerate}
\end{definition}

\begin{remark}\label{rem:orbitvertices}
\begin{enumerate}[(a)]
\item Observe that when condition~(\ref{def:admissible:inXi}) of Definition~\ref{def:admissible} is true, then  condition~(\ref{def:admissible:v})  holds for all orbit vertices of $Q$, whenever it holds for one of them.
\item It follows from Remark~\ref{rem:multiple_of_coroot} that if $\Sigma \inn \Sigma_{\QQ}(\lat,Q)$ is $\QQ$-admissible and $F$ is a facet of $Q$ such that $\<\rho_F,\sigma\> > 0 $ for some $\sigma \in \Sigma \setminus \sr$, then 
\begin{equation} \label{eq:m_simpler}
m_{F,\omega}^{\Sigma} = a\<\alpha^{\vee},\omega\>,
\end{equation}
where $a$ is the positive rational number such that $\rho_F^{\Sigma} = a \alpha^{\vee}|_{\lat}$.
\item It is straightforward to verify that a subset $\Sigma$ of $\Sigma(G)$ is $\QQ$-admissible for $(\lat,Q)$ if and only if there exists a positive integer $n$ such that $\Sigma$ is admissible for $(\lat, nQ)$. 
\item In general, admissibility is not closed under taking subsets of $\Sigma$, because of the integrality conditions involved in the definition (see Example~\ref{ex:bizarre}).
\end{enumerate}
\end{remark}

Here is our second criterion for the geometric realizability of a pair $(\lat,Q)$. Its proof will be given after Proposition~\ref{prop:realized_then_admissible}.

\begin{theorem}\label{thm:compatible-polytope}
For a subset $\Sigma\inn\Sigma(G)$, the following assertions are equivalent.
\begin{enumerate}
\item The set $\Sigma$ is admissible (resp.\ $\QQ$-admissible) for the couple $(\lat,Q)$.
\item There exists a polarized spherical $G$-variety with weight lattice $\lat$, set of spherical roots $\Sigma$, and momentum polytope $Q$ (resp.\ $nQ$ for some positive integer $n$).
\end{enumerate}
\end{theorem}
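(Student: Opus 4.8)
The plan is to prove the $\QQ$-admissible version first and then bootstrap to the integral version via the scaling remark (Remark~\ref{rem:orbitvertices}(c)), which reduces the statement ``$\Sigma$ admissible for $(\lat,Q)$'' to ``$\Sigma$ $\QQ$-admissible for $(\lat,Q)$ plus an integrality condition on a suitable orbit vertex''. So the core is the equivalence: $\Sigma$ is $\QQ$-admissible for $(\lat,Q)$ if and only if there is a polarized spherical $G$-variety with weight lattice $\lat$, spherical roots $\Sigma$, and momentum polytope $nQ$ for some $n>0$. The strategy is to pass to the affine cone and apply the combinatorial classification of affine spherical varieties with prescribed weight monoid (Theorem~\ref{Theorem-AS}), using Lemma~\ref{lemma:compare-spherical-roots} to translate spherical roots, colors, and $\Spp$ back and forth between $(X,\lb)$ and $\widetilde X$.

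For the implication (2)$\Rightarrow$(1): given $(X,\lb)$ realizing $(\lat,\Sigma,Q)$ (or $nQ$), consider $\widetilde X=\Spec R(X,\lb)$, whose weight monoid is $\Gamma:=\Gamma(Q)=\QQ_{\ge0}(Q\times\{1\})\cap\widetilde\lat$ by \eqref{eq:wm_affine_cone}, and which by Lemma~\ref{lemma:compare-spherical-roots} has $\Sigma_{\tG}(\widetilde X)=\Sigma_G(X)=\Sigma$. Theorem~\ref{Theorem-AS} then tells us $\Sigma$ is admissible for $\Gamma$ in the sense of Definition~\ref{def_AS}. The work is to unwind the conditions of Definitions~\ref{def_SR_comp_with_monoid} and \ref{def_AS} (phrased via $\Gamma^\vee$, its ray generators, and the sets $\mathcal S(\alpha)$ inside $N(\widetilde X)=\Hom(\widetilde\lat,\QQ)$) into the conditions of Definitions~\ref{def:Q-compatible} and \ref{def:Q-admissibleset} (phrased via primitive inward-pointing facet normals $\rho_F$ of $Q$ inside $\Hom(\lat,\QQ)$ and the sets $\A(\alpha)$). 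The key dictionary is \eqref{eq:conepolytope}: facets $F$ of $Q$ correspond to those extremal rays of $\Gamma^\vee$ that are not ``at infinity'', and $\rho_F$ (together with the shift $m_{F,\omega}$) is exactly the restriction to $\lat$ of the corresponding ray generator of $\Gamma^\vee$ after using Corollary~\ref{cor:tildelattice} to split $\widetilde\lat=(\lat\times\{0\})\oplus\ZZ(\lambda_Y,1)$ for a closed orbit $Y$. Under this dictionary, condition (CM1) becomes Definition~\ref{def:Q-compatible}\ref{Q-compatible-b}, condition (CM2) becomes \ref{Q-compatible-a} (with $H_{F'}=s_\alpha(H_F)$ the polytope incarnation of $\rho_1+\rho_2=\alpha^\vee|$), and Definition~\ref{def_AS}\eqref{AP} becomes Definition~\ref{def:Q-admissibleset}\ref{A1polytope}; the lattice conditions \ref{CL3},\ref{CL4} and $\Spp$-conditions transfer verbatim via Lemma~\ref{lemma:compare-spherical-roots} and Remark~\ref{rem:multiple_of_coroot}. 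The integrality conditions of Definition~\ref{def:admissible}\eqref{def:admissible:inXi}--\eqref{def:admissible:v} come from the requirement that the closed orbits $Y\subseteq X$ give lattice points $(\lambda_Y,1)\in\widetilde\lat$ (Proposition~\ref{prop:PSVFundFacts}\ref{prop:PSVFundFacts:item:mompolclosedorbit}) together with the fact, via Theorem~\ref{thm:Brion-Woodward}, that closed orbits correspond to orbit vertices of $Q$ w.r.t.\ $\Sigma$, and that $(\C(F),\D(F))$ being a colored cone in $\F(\widetilde X)$ forces the $m^\Sigma_{F,\mathsf v}$ to be integers.

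For (1)$\Rightarrow$(2): start from a $\QQ$-admissible $\Sigma$, rescale $Q$ to $nQ$ so that all the relevant orbit-vertex data become integral (Remark~\ref{rem:orbitvertices}(c), and using Definition~\ref{def:admissible}\eqref{def:admissible:v} in the integral case to pick $\mathsf v\in\wl$), set $\Gamma=\Gamma(nQ)$, and verify that $\Sigma$ is admissible for $\Gamma$ in the sense of Definition~\ref{def_AS} — this is again the dictionary above, run in reverse, with Remark~\ref{rem:Qadmiss_subsets} reducing $\QQ$-admissibility to pairs $\{\sigma,\tau\}$. Theorem~\ref{Theorem-AS} then produces an affine spherical $\tG$-variety $\widetilde X$ with $\Gamma(\widetilde X)=\Gamma$ and $\Sigma_{\tG}(\widetilde X)=\Sigma$; one checks (as in the proof of Theorem~\ref{thm:crit-convexhull}) that $X:=\operatorname{Proj}(\CC[\widetilde X])$ with $\lb=\Oh(1)$ is a polarized spherical $G$-variety, that $\GGm=\{1\}\times\GGm$ acts with the correct grading so that $\lb$ is ample and $R(X,\lb)=\CC[\widetilde X]$, hence $\widetilde X$ really is the affine cone, and that therefore $Q(X,\lb)=nQ$ by \eqref{eq:conepolytope}, $\lat(X)=\lat$ by Corollary~\ref{cor:tildelattice}, and $\Sigma_G(X)=\Sigma$ by Lemma~\ref{lemma:compare-spherical-roots}.

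The main obstacle I expect is the faithful two-way translation between the ``monoid picture'' (cones in $\Hom(\widetilde\lat,\QQ)$, ray generators of $\Gamma^\vee$, the precise combinatorics of which extremal rays of $\Gamma^\vee$ are facet normals of $Q$ versus rays of $\QQ_{\ge0}\Gamma$ lying over a vertex) and the ``polytope picture'' (facets $F$, normals $\rho_F$, the reflections $s_\alpha(H_F)$, and the rescaled $\rho_F^\Sigma$, $m_{F,\omega}^\Sigma$): getting every inequality, every primitivity statement, and especially the integrality bookkeeping around $m^\Sigma_{F,\mathsf v}$ and the $(\lambda_Y,1)$ exactly right — in particular verifying that Definition~\ref{def:Q-compatible}\ref{Q-compatible-a}(b)'s dichotomy $H_{F'}=H_F$ or $H_{F'}=s_\alpha(H_F)$ is equivalent to (CM2)(c)'s ``$\rho=\rho_1$ or $\rho=\rho_2$'' — is the delicate heart of the argument. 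Lemma~\ref{lemma:sum} was proved precisely to make the $s_\alpha$-reflection bookkeeping manageable, and Lemma~\ref{lemma:degree-1-element}'s analogue (ensuring the closed-orbit weights $(\lambda_i,1)$ are exactly the degree-one generators surviving in $\widetilde X$) is what makes the Proj construction come out with the right momentum polytope rather than a proper subpolytope.
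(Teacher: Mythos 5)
Your plan follows essentially the same route as the paper: pass to the affine cone, translate the polytope conditions into the monoid conditions via the bijection between primitive inward-pointing facet normals of $Q$ and ray generators of $\wm(Q)^\vee$ (the paper's Lemma~\ref{lem:aux-ray}), invoke Theorem~\ref{Theorem-AS} together with Lemma~\ref{lemma:compare-spherical-roots}, and handle the orbit vertices and the integrality of the $m^\Sigma_{F,\mathsf v}$ exactly as you indicate. The only cosmetic difference is the direction of the rescaling reduction: the paper deduces the $\QQ$-admissible statement from the integral one (via $nQ$), whereas your opening sentence frames it the other way around; since your argument in fact treats the integral case directly with the monoid $\wm(Q)$ and the chosen orbit vertex $\mathsf v\in\wl$, this is immaterial.
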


\begin{remark} 
\begin{enumerate}[(a)]
\item \label{rem:subsets_of_sigma_a} It follows from Theorem~\ref{thm:compatible-polytope} and Remark~\ref{rem:Qadmiss_subsets} that a pair $(\lat,Q)$ can be realized by a $\QQ$-polarized spherical $G$-variety if and only if $\Sigma = \emptyset$ is $\QQ$-admissible for $(\lat,Q)$. For completeness, we recall that a $\QQ$-polarized $G$-variety is a pair $(X,\lb)$ where $X$ is a projective $G$-variety and $\lb$ is an element of $\Pic_G(X)_{\QQ}$ such that $n\lb \in \Pic_G(X)$ is ample for some $n \in \NN$.
\item Given $G$ and a pair $(\lat,Q)$, deciding whether there exists a subset $\Sigma$ of $\Sigma(G)$ which is admissible requires only finitely many verifications. 
In particular, one can, at least in principle, decide algorithmically whether such a pair $(\lat,Q)$ is realized by a polarized spherical $G$-variety. 
Indeed, the set $\Sigma(G)$ is finite, and by Definitions~\ref{def:Q-compatible} and \ref{def:Q-admissibleset} determining the subsets of $\Sigma(G)$ that are $\QQ$-admissible with respect to $(\lat,Q)$ can be done in finitely many steps. 
It is clear from Definition~\ref{def:admissible} that, determining whether a given $\QQ$-admissible subset $\Sigma$ of $\Sigma(G)$ is admissible only requires finitely many further verifications. 
\end{enumerate}  \label{rem:subsets_of_sigma}
\end{remark}

\begin{example} \label{ex:bizarre}
The property of being admissible for a subset of $\Sigma_\QQ(\lat, Q)$ is not closed under taking subsets.
We exhibit here a minimal admissible set of spherical roots having two elements. The polytope $Q$ is taken from \cite[Section~2.1, example c)]{brion:image}.
Let $G=\Sp(6)$, with simple roots $\alpha_i$ and corresponding fundamental dominant weights $\varpi_i$ ($i\in\{1,2,3\}$) numbered as in Bourbaki~\cite{bbki}. 
Define $Q$ as the convex hull of the points $\frac12\varpi_2$, $\varpi_2$, and $\frac13(\varpi_1+\varpi_3)$.
Take $\lat=\Span_\ZZ\{\alpha_1+\alpha_3, \alpha_2\}$.

Then $Q-q$ is contained and of full dimension in $\lat_\QQ$ for any $q\in Q$. One checks that $\Sigma=\{\alpha_1+\alpha_3,\alpha_2\}$ is admissible for $(\lat,Q)$.
Notice that $\mathsf v:=\varpi_2$ is the unique orbit vertex and also the unique lattice point in $Q$.
With respect to any proper subset $\Sigma'\subsetneq\Sigma$, the polytope $Q$ has more than one orbit vertex;
those different from $\mathsf v$ are not integral weights.
Therefore condition~(\ref{def:admissible:inXi}) of Definition~\ref{def:admissible} cannot be fulfilled for such $\Sigma'$ and $\Sigma'$ is not admissible.
\end{example}

We now begin the proof of Theorem~\ref{thm:compatible-polytope}, which we divide into several steps.

\begin{lemma}\label{lemma:sum:11}
Let $\alpha,\beta$ be two distinct simple roots in $\Sigma_\QQ(\lat,Q)$ 
such that $\{\alpha,\beta\}$ is $\QQ$-admissible.
If $\< \rho(D),\alpha\> = \<\rho(D),\beta\> = 1$ for some $D\in\A(\alpha)$
then $\rho(D)$ is an inward-pointing facet normal of $Q$.
\end{lemma}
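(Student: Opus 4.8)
The plan is to show that $D^+_\alpha$ is (up to the usual sign ambiguity) the element of $\A(\alpha)$ whose $\rho$-value equals $\rho_F$ for a \emph{unique} facet $F$, and then to argue that $D = D^-_\alpha$ forces a symmetric configuration that cannot occur. More concretely, recall from Definition~\ref{def:Q-compatible} that $\A(\alpha) = \{D^+_\alpha, D^-_\alpha\}$ with $\rho(D^+_\alpha) = \rho_F$ for some facet $F$ with $\<\rho_F,\alpha\> = 1$, and $\rho(D^-_\alpha) = \alpha^\vee|_\lat - \rho_F$. So if $D = D^+_\alpha$ there is nothing to prove: $\rho(D) = \rho_F$ is already a primitive inward-pointing facet normal of $Q$. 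The real content is the case $D = D^-_\alpha$, where we must produce a facet $F'$ of $Q$ with primitive inward-pointing normal $\rho(D^-_\alpha) = \alpha^\vee|_\lat - \rho_F$.

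First I would reduce to the following dichotomy, coming from part~(\ref{Q-compatible-a}) of Definition~\ref{def:Q-compatible}: either there is a second facet $F'$ of $Q$, distinct from $F$, with $\<\rho_{F'},\alpha\> > 0$, or $F$ is the only facet on which $\alpha^\vee$ is positive. In the first case, Lemma~\ref{lemma:sum}(\ref{lemma:sum:both}) gives $\rho_F + \rho_{F'} = \alpha^\vee|_\lat$, hence $\rho_{F'} = \rho(D^-_\alpha)$, and we are done. So the heart of the matter is the second case, where I want to derive a contradiction with the hypothesis $\<\rho(D^-_\alpha), \beta\> = 1$. Here I would use the $\QQ$-admissibility of $\{\alpha,\beta\}$: by condition~(\ref{A1polytope}) of Definition~\ref{def:Q-admissibleset}, since $\<\rho(D^-_\alpha),\beta\> = 1$ is attained, we must have $\beta \in S$ (which it is) and there is $D' \in \A(\beta)$ with $\rho(D') = \rho(D^-_\alpha)$. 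Symmetrically applying the same analysis to $\beta$: either $\rho(D')$ is $\rho_{F''}$ for a facet $F''$ with $\<\rho_{F''},\beta\> = 1$ — in which case $\rho(D^-_\alpha) = \rho_{F''}$ is a facet normal and we are done — or $\rho(D') = \beta^\vee|_\lat - \rho_{F_\beta}$ where $F_\beta$ is the chosen facet for $\beta$.

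The remaining (and I expect hardest) sub-case is when $\rho(D^-_\alpha) = \alpha^\vee|_\lat - \rho_F = \beta^\vee|_\lat - \rho_{F_\beta}$, with $F$ the \emph{only} facet positive on $\alpha$ and $F_\beta$ the chosen facet for $\beta$; I want to show this is impossible. The strategy is to exploit that $\rho(D^-_\alpha)$ takes value $1$ on both $\alpha$ and $\beta$ while $\alpha^\vee$ is positive only on the single facet $F$: evaluating $\rho(D^-_\alpha)$ and tracing through Lemma~\ref{lemma:sum}(\ref{lemma:sum:general}) (which bounds $\<\rho(D^-_\alpha), q - \omega\>$ below on all of $Q$) together with $\<\rho_F,\beta\>$-considerations, I would show that $s_\alpha(H_F)$ would have to \emph{be} a supporting hyperplane of $Q$, i.e. actually cut out a facet, contradicting the assumption that $F$ is the unique facet with $\alpha^\vee > 0$ (note $s_\alpha(H_F)$ is the zero set of $\rho(D^-_\alpha)$ shifted appropriately, and a point of $Q$ on it would lie on $\<\alpha^\vee,-\> \leq 0$, forcing it onto $\<\alpha^\vee,-\>=0$, hence onto a genuine facet distinct from $F$). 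The bookkeeping of the affine constants $m_{F,\omega}$ versus $m_{F_\beta,\omega}$ and checking that the hyperplane $s_\alpha(H_F)$ genuinely meets $Q$ in codimension one — rather than in a lower-dimensional face or not at all — is where the argument requires care, and is the main obstacle; I would handle it by the same "push along $-\alpha$ until you hit the boundary" device used in the proof of Lemma~\ref{lemma:sum}(\ref{lemma:sum:general}), applied to a point where $\rho(D^-_\alpha)$ is minimized on $Q$.
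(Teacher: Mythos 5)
Your reduction is sound right up to the last sub-case, and up to that point it coincides with the paper's: if $D=D_\alpha^+$ you are done, if a second facet of $Q$ is positive on $\alpha$ then Lemma~\ref{lemma:sum}-(\ref{lemma:sum:both}) finishes, and $\QQ$-admissibility produces $D'\in\A(\beta)$ with $\rho(D')=\rho(D)$, reducing to the case $\rho(D)=\alpha^\vee|_\lat-\rho_{F_\alpha}=\beta^\vee|_\lat-\rho_{F_\beta}$ with $F_\alpha$ (and, by the same dichotomy, $F_\beta$) the \emph{unique} facet positive on $\alpha$ (resp.\ $\beta$). The genuine gap is that this final sub-case is not actually resolved. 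Your plan is to show that $s_\alpha(H_{F_\alpha})$ must support $Q$ along a facet, but the ``push along $-\alpha$'' device does not deliver this: applied to a minimizer $q$ of $\<\rho(D),\cdot\>$ on $Q$ it only shows that $q$ already lies on $F_\alpha$ (and, pushing along $-\beta$, on $F_\beta$), whence the minimum over $Q$ of the affine function cutting out $s_\alpha(H_{F_\alpha})$ equals $\<\alpha^\vee,q\>$, which there is no reason to vanish. Your parenthetical also has the inequality backwards: a point $p$ of $Q$ on $s_\alpha(H_{F_\alpha})$ automatically satisfies $\<\alpha^\vee,p\>\geq 0$ (this holds on all of $Q\inn\QQp\dw$), and even $\<\alpha^\vee,p\>=0$ would not place $p$ on a facet unless $Q\cap\{\<\alpha^\vee,-\>=0\}$ happens to be one. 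So the contradiction is asserted rather than derived.

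The paper closes exactly this case with a different, short observation that you should substitute for the supporting-hyperplane argument. Using $\<\rho(D),\alpha\>=\<\rho(D),\beta\>=1$ one computes $\<\rho_{F_\alpha},\alpha+\beta\>=\<\alpha^\vee,\beta\>\leq 0$ and $\<\rho_{F_\beta},\alpha+\beta\>=\<\beta^\vee,\alpha\>\leq 0$, while every other facet normal is $\leq 0$ on both $\alpha$ and $\beta$ by your uniqueness assumption, hence $\leq 0$ on $\alpha+\beta$. But $\alpha+\beta$ is a nonzero element of $\lat$ and $Q-\omega$ is a bounded, full-dimensional polytope in $\lat_\QQ$, so some facet normal must be strictly positive on $\alpha+\beta$ --- a contradiction. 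Equivalently, as the paper phrases it: pick a facet $F$ with $\<\rho_F,\alpha+\beta\>>0$; it is distinct from $F_\alpha$ and $F_\beta$, yet positive on $\alpha$ or on $\beta$, so \eqref{eq:sum1} forces $\rho_F=\rho(D)$, which is the desired conclusion directly (no case distinction on uniqueness is even needed).
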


\begin{proof}
Let $F_{\alpha}$ (resp. $F_{\beta}$) be the facet of $Q$ such that $\rho_{F_{\alpha}} = \rho(D^+_{\alpha})$ (resp. $\rho_{F_{\beta}} = \rho(D^+_{\beta})$) as in Definition~\ref{def:Q-compatible}.
Since $\{\alpha,\beta\}$ is $\QQ$-admissible, $\rho(D)=\rho(D')$ for some $D'\in\A(\beta)$. If $D=D^+_{\alpha}$ or $D' = D^+_{\beta}$ then we are done; so we assume that neither of these two equalities hold. 
By the definitions of $\A(\alpha)$ and $\A(\beta)$, it then follows that $\rho(D) + \rho(D^+_\alpha) = \alpha^{\vee}|_\lat$ and $\rho(D) + \rho(D^+_\beta) = \beta^{\vee}|_\lat$.

Let $F$ be a facet of $Q$ such that $\<\rho_{F},\alpha+\beta\> >0$ (which exists because $\alpha + \beta \in \lat$ and $Q-\omega$ is full dimensional in $\lat_\QQ$). We will prove that $\rho_{F}=\rho(D)$.
Since
\[
\<\rho(D_\alpha^+),\alpha+\beta\> = \<\alpha^\vee|_{\lat} - \rho(D), \alpha+\beta\> = \<\alpha^\vee,\beta\> \leq 0
\]
and similarly $\<\rho(D_\beta^+),\alpha+\beta\> \leq 0$, we know that $F\neq F_{\alpha}$ and $F \neq F_{\beta}$. 
But since $\<\rho_{F},\alpha+\beta\> >0$, we have that $\<\rho_F,\alpha\> > 0$ or $\<\rho_F,\beta\> > 0$. It follows from equation~\eqref{eq:sum1} that $\rho_F + \rho(D^+_{\alpha}) = \alpha^\vee|_{\lat}$ or   $\rho_F + \rho(D^+_{\beta}) = \beta^\vee|_{\lat}$. Either way, $\rho(D) = \rho_F$, as desired.
\end{proof}

Let us now assume that $\Sigma \inn \Sigma(G)$ is admissible for $(\lat,Q)$ and let $\mathsf{v}$ be an orbit vertex of $Q$. 
Recall that $\mathsf{v} \in \wl$ by Definition~\ref{def:admissible}-(\ref{def:admissible:v:L}) and let $\widetilde\lat$ be the following sublattice of $\twl = \wl \times \ZZ$:
\begin{equation} \label{eq:ext-lattice-admissible}
\widetilde{\lat} : = (\lat \times \{0\}) \oplus \ZZ(\mathsf{v},1).
\end{equation}
Observe that by Definition~\ref{def:admissible}-(\ref{def:admissible:inXi}), this lattice does not depend on the choice of $\mathsf{v}$.
We will often implicitly identify $\lat$ with $\lat \times \{0\} \inn \widetilde\lat$ using the map $\lambda \mapsto (\lambda,0)$. 
As before, we set
\begin{equation} \label{eq:defwmQ}
\wm(Q):= \QQp(Q \times \{1\}) \cap \widetilde{\lat} \inn \twl_{\QQ}.
\end{equation}
Moreover, since $Q-\mathsf{v}$ is a full dimensional polytope in $\lat_\QQ$, we have the equalities
\begin{align}
\ZZ\wm(Q)&= \widetilde\lat \label{eq:latwmQ} \\
\Spp(Q)&=\Spp(\widetilde{\lat}); \label{eq:aux-equality}
\end{align}

The next elementary lemma in convex geometry will be used frequently in what follows.
\begin{lemma}\label{lem:aux-ray}
Suppose $\mathsf{v}$ is any point in $\wl \cap Q$ and define $\widetilde{\lat}$ and $\wm(Q)$ by equation \eqref{eq:ext-lattice-admissible} and  equation~\eqref{eq:defwmQ} respectively. The assignment $\widetilde{\rho}\mapsto a\widetilde\rho |_\lat$, where $a\in\QQ_{>0}$ is such that $a\widetilde\rho |_\lat$ is primitive in $\Hom_\ZZ(\lat, \ZZ)$, defines a bijective correspondence between the ray generators of $\Gamma(Q)^\vee$ and the primitive inward-pointing facet normals of $Q$.
The inverse map is given, for every facet $F$ of $Q$, by $\rho_F\mapsto \widetilde\rho$ where $\widetilde\rho$ denotes the ray generator of $\Gamma(Q)^\vee$ defining the facet of $\QQp\wm(Q)$ containing $F\times\{1\}$.
\end{lemma}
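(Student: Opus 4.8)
The plan is to set up the standard dictionary between the affine cone $\widetilde X = \Spec \CC[\wm(Q)]$ (more precisely, its weight monoid $\wm(Q)$) and the polytope $Q$, and to show that the two described maps are mutually inverse bijections. First I would recall that since $\mathsf v \in \wl \cap Q$, the lattice $\widetilde\lat = (\lat \times \{0\}) \oplus \ZZ(\mathsf v, 1)$ is well-defined, and $\wm(Q) = \QQp(Q \times \{1\}) \cap \widetilde\lat$ is a full-dimensional, saturated, pointed monoid in $\widetilde\lat$ (pointedness because $Q \times \{1\}$ lies in an affine hyperplane not through the origin). Hence $\QQp \wm(Q)$ is a full-dimensional pointed cone in $\twl_\QQ = \lat_\QQ \times \QQ$, and its facets are in bijection with the ray generators of the dual cone $\Gamma(Q)^\vee \inn \Hom_\QQ((\widetilde\lat)_\QQ, \QQ)$, each ray generator $\widetilde\rho$ cutting out the facet $\Gamma(Q)^\vee \cap \widetilde\rho^\perp$... wait, rather each ray generator of $\Gamma(Q)^\vee$ is an inward-pointing primitive normal to a facet of $\QQp\wm(Q)$.

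Next I would use the elementary fact that the facets of the cone $\QQp(Q \times \{1\})$ are exactly the cones over the facets of the polytope $Q$ (since $Q \times \{1\}$ is a full-dimensional polytope in the affine hyperplane $\{$last coordinate $=1\}$, and the cone over it meets that hyperplane in $Q \times \{1\}$). So a facet $F$ of $Q$ corresponds to the facet $\QQp(F \times \{1\})$ of $\QQp\wm(Q)$, which in turn corresponds to a unique ray generator $\widetilde\rho$ of $\Gamma(Q)^\vee$: this defines the stated inverse map $\rho_F \mapsto \widetilde\rho$. Conversely, given a ray generator $\widetilde\rho$ of $\Gamma(Q)^\vee$, it is inward-pointing normal to a facet of $\QQp\wm(Q)$, which is the cone over $F \times \{1\}$ for a unique facet $F$ of $Q$, and then the restriction $\widetilde\rho|_\lat$ is (up to the positive rational scalar $a$ making it primitive) an inward-pointing normal to the affine hyperplane $H_{F - \mathsf v}$ in $\lat_\QQ$ spanned by $F - \mathsf v$; I would need to check $\widetilde\rho|_\lat \neq 0$, which holds because if $\widetilde\rho$ vanished on $\lat \times \{0\}$ it would be a multiple of the projection to the last coordinate, but that functional is strictly positive on all of $\wm(Q) \setminus \{0\}$ and hence not supported on a facet (its kernel meets $\QQp\wm(Q)$ only in $0$). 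Thus $a\widetilde\rho|_\lat$ equals $\pm\rho_F$, and comparing signs on $Q - \mathsf v$ (both are chosen inward-pointing) gives $a\widetilde\rho|_\lat = \rho_F$. Then I would verify the two composites are the identity, which is immediate once both constructions are seen to route through the same facet-facet correspondence $F \leftrightarrow \QQp(F \times \{1\})$.

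The main obstacle, and the only genuinely substantive point, is the well-definedness and bijectivity of the restriction step $\widetilde\rho \mapsto a\widetilde\rho|_\lat$: one must confirm that restricting a facet normal of the cone to the sublattice $\lat$ gives a nonzero functional (handled by the positivity-of-last-coordinate remark above) and that distinct ray generators of $\Gamma(Q)^\vee$ give, after restriction and rescaling, distinct primitive facet normals of $Q$. Injectivity follows because distinct ray generators define distinct facets of $\QQp\wm(Q)$, which meet the hyperplane $\{$last coord $=1\}$ in distinct facets $F \times \{1\}$ of $Q \times \{1\}$ (a facet of a full-dimensional pointed cone is determined by its intersection with a hyperplane transverse to the cone's apex), and a facet $F$ of $Q$ determines $\rho_F$ uniquely by the remark following Definition~\ref{def:Q-compatible}. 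Surjectivity is the content of the facets-of-$Q$ to facets-of-the-cone correspondence already noted. I would keep the write-up short, citing the standard duality between facets of a full-dimensional pointed rational cone and ray generators of its dual, and spelling out only the passage between $Q$ and $\QQp(Q \times \{1\})$ and the non-vanishing of $\widetilde\rho|_\lat$.
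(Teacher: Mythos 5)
Your proposal is correct and fills in exactly the standard convex-geometric argument that the paper itself dismisses as ``Elementary, using that dualizing \eqref{eq:ext-lattice-admissible} we have $\Hom_\ZZ(\widetilde{\lat},\ZZ)=\Hom_\ZZ(\lat,\ZZ)\oplus\ZZ$'': the facet--ray-generator duality for the full-dimensional pointed cone $\QQp\wm(Q)=\QQp(Q\times\{1\})$, the identification of its facets with cones over facets of $Q$, and the non-vanishing of $\widetilde\rho|_\lat$. No discrepancy with the paper's (essentially omitted) proof.
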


\begin{proof}
Elementary, using that dualizing \eqref{eq:ext-lattice-admissible} we have $\Hom_\ZZ(\widetilde{\lat},\ZZ)=\Hom_\ZZ(\lat,\ZZ)\oplus\ZZ$.
\end{proof}

Our next aim is to establish that $\Sigma$ is admissible for the weight monoid $\wm(Q) \inn \tdw$ (in the sense of Definition~\ref{def_SR_comp_with_monoid}.) We break the verifications up in a few lemmas. 

\begin{lemma} \label{lemma:aux-compatibility}
Suppose $\Sigma \inn \Sigma_{\QQ}(\lat,Q)$ is admissible for $(\lat,Q)$. If $\sigma \in \Sigma$, then $\sigma$ is compatible with the lattice $\widetilde{\lat}$. 
\end{lemma}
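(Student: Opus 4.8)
Recall that $\widetilde\lat=(\lat\times\{0\})\oplus\ZZ(\mathsf v,1)$ as in \eqref{eq:ext-lattice-admissible}, where $\mathsf v$ is an orbit vertex of $Q$ with respect to $\Sigma$, so that $\mathsf v\in Q\cap\wl$ by Definition~\ref{def:admissible}-(\ref{def:admissible:v:L}); also, being a member of the $\QQ$-admissible set $\Sigma$, $\sigma$ is $\QQ$-compatible with $(\lat,Q)$ by Remark~\ref{rem:Qadmiss_subsets}, hence compatible with $\lat$ and subject to conditions~(\ref{Q-compatible-b}), (\ref{Q-compatible-a}), (\ref{Q-compatible-d2}) of Definition~\ref{def:Q-compatible}. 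The plan is to verify one by one the four conditions of Definition~\ref{def_SR_comp_with_lattice} for $\sigma$ and $\widetilde\lat$, exploiting that $\sigma=(\sigma,0)$ already lies in $\lat\times\{0\}$ and that $\widetilde\lat$ is generated over $\lat$ by the single element $(\mathsf v,1)$.

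Primitivity and Luna's axiom (S) cost nothing: if $\sigma=kw$ with $w\in\widetilde\lat$ and $k\in\ZZ_{\ge1}$, then $w$ has vanishing $\ZZ$-component, so $w\in\lat\times\{0\}$, and primitivity of $\sigma$ in $\lat$ forces $k=1$; and as $\Spp(\widetilde\lat)=\Spp(Q)$ by \eqref{eq:aux-equality} while $(\Spp(Q),\sigma)$ satisfies axiom (S), so does $(\Spp(\widetilde\lat),\sigma)$. If $\sigma=\alpha+\beta$ or $\sigma=\tfrac12(\alpha+\beta)$ with $\alpha,\beta\in S$ orthogonal, then $\langle\alpha^\vee,-\rangle$ and $\langle\beta^\vee,-\rangle$ agree on $\lat$ by compatibility of $\sigma$ with $\lat$ and agree at $\mathsf v$ by Definition~\ref{def:Q-compatible}-(\ref{Q-compatible-d2}) (applied with $q=\mathsf v\in Q$); since $\widetilde\lat=\lat+\ZZ(\mathsf v,1)$, they agree on $\widetilde\lat$.

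The remaining, and only substantial, case is $\sigma=2\alpha$ for some $\alpha\in S$, where we must show $\langle\alpha^\vee,\tilde\lambda\rangle\in2\ZZ$ for all $\tilde\lambda\in\widetilde\lat$; compatibility of $\sigma$ with $\lat$ settles $\tilde\lambda\in\lat\times\{0\}$, so it suffices to prove $\langle\alpha^\vee,\mathsf v\rangle\in2\ZZ$. Since $2\alpha=\sigma\in\lat$ and $Q-\mathsf v$ is a bounded, full-dimensional polytope in $\lat_\QQ$, some facet $F$ of $Q$ satisfies $\langle\rho_F,\alpha\rangle>0$, hence also $\langle\rho_F,\sigma\rangle=2\langle\rho_F,\alpha\rangle>0$. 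As $\sigma\notin S$, Definition~\ref{def:Q-compatible}-(\ref{Q-compatible-b}) together with Remark~\ref{rem:multiple_of_coroot}(a) yields $\gamma\in S\setminus\Spp(Q)$ with $\langle\gamma^\vee,q\rangle=0$ for all $q\in F$ and with $\rho_F$ a positive rational multiple of $\gamma^\vee|_\lat$; then $\langle\rho_F,\alpha\rangle>0$ forces $\langle\gamma^\vee,\alpha\rangle>0$, and since $\langle\gamma^\vee,\alpha\rangle\le0$ for distinct simple roots we get $\gamma=\alpha$, so $\langle\alpha^\vee,q\rangle=0$ for all $q\in F$. On the other hand $\alpha\in S\cap\tfrac12\Sigma$ and $\langle\rho_F,\alpha\rangle>0$, so by definition $\rho^\Sigma_F=\tfrac12\alpha^\vee|_\lat$, whence for any $q\in F$ we have $m^\Sigma_{F,\mathsf v}=-\langle\rho^\Sigma_F,q-\mathsf v\rangle=\tfrac12(\langle\alpha^\vee,\mathsf v\rangle-\langle\alpha^\vee,q\rangle)=\tfrac12\langle\alpha^\vee,\mathsf v\rangle$; and Definition~\ref{def:admissible}-(\ref{def:admissible:v:a_2a}), applied with this facet $F$ and the orbit vertex $\mathsf v$, gives $m^\Sigma_{F,\mathsf v}\in\ZZ$. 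Therefore $\langle\alpha^\vee,\mathsf v\rangle=2m^\Sigma_{F,\mathsf v}\in2\ZZ$, and condition~(4) follows on all of $\widetilde\lat$.

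I expect the whole difficulty to sit in this last case: one must produce a facet $F$ with $\langle\rho_F,\alpha\rangle>0$, recognise that the ambient simple root $\gamma$ furnished by~(\ref{Q-compatible-b}) must be $\alpha$ itself, and then reconcile the rescaled data $\rho^\Sigma_F$, $m^\Sigma_{F,\mathsf v}$ with the unrescaled $\rho_F$, $m_{F,\mathsf v}$. The first three conditions are essentially formal, needing only that $\widetilde\lat$ is obtained from $\lat$ by adjoining $(\mathsf v,1)$ with $\mathsf v\in Q\cap\wl$, and that coroots are evaluated on $\twl$ through its $\wl$-factor.
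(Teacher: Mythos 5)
Your proof is correct and follows essentially the same route as the paper's: primitivity and Luna's axiom (S) via $\Spp(Q)=\Spp(\widetilde\lat)$, condition on orthogonal simple roots via Definition~\ref{def:Q-compatible}-(\ref{Q-compatible-d2}) at the generator $(\mathsf v,1)$, and the $\sigma=2\alpha$ case via Definition~\ref{def:Q-compatible}-(\ref{Q-compatible-b}), the identity $m^\Sigma_{F,\mathsf v}=\tfrac12\<\alpha^\vee,\mathsf v\>$ (equation~\eqref{eq:m_simpler}), and the integrality condition (\ref{def:admissible:v:a_2a}) of Definition~\ref{def:admissible}. You simply spell out the details that the paper leaves as citations, in particular the identification of the simple root $\gamma$ furnished by~(\ref{Q-compatible-b}) with $\alpha$ itself.
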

\begin{proof}
We check the conditions in Definition~\ref{def_SR_comp_with_lattice}. Recall that, by assumption, $\sigma$ is compatible with $\lat$. That $\sigma$ is primitive in $\widetilde\lat$ follows from the fact that it is primitive in $\lat \times \{0\} \inn \widetilde\lat$. 
Condition (\ref{CL2}) of Definition~\ref{def_SR_comp_with_lattice} follows from the equality~\eqref{eq:aux-equality} and the fact that $(\Spp(Q),\sigma)$ satisfies Luna's axiom (S). Condition (\ref{CL3}) follows from the fact that $\sigma$ is compatible with $\lat$ and part (\ref{Q-compatible-d2}) of Definition~\ref{def:Q-compatible}. Finally, condition (\ref{CL4})  in Definition~\ref{def_SR_comp_with_lattice} follows from Definition~\ref{def:Q-compatible}-(\ref{Q-compatible-b}), equation~\eqref{eq:m_simpler} and condition (\ref{def:admissible:v:a_2a}) of Definition~\ref{def:admissible}. 
\end{proof}

\begin{lemma} \label{lemma:aux-compatibility-monoid}
Suppose $\Sigma \inn \Sigma_{\QQ}(\lat,Q)$ is admissible for $(\lat,Q)$. If $\sigma \in \Sigma$, then $\sigma$ is compatible with the monoid $\wm(Q)$. 
\end{lemma}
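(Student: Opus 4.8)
The plan is to verify, one by one, the conditions in Definition~\ref{def_SR_comp_with_monoid} that make a spherical root $\sigma \in \Sigma(G)$ compatible with the monoid $\wm(Q)$. By Lemma~\ref{lemma:aux-compatibility} we already know that $\sigma$ is compatible with $\ZZ\wm(Q) = \widetilde{\lat}$ (using~\eqref{eq:latwmQ}), so what remains is conditions (\ref{CM1}) and (\ref{CM2}), depending on whether $\sigma \notin \sr$ or $\sigma = \alpha \in \sr$. The key translation device throughout is Lemma~\ref{lem:aux-ray}, which sets up a bijection between the ray generators $\widetilde\rho$ of $\wm(Q)^\vee$ and the primitive inward-pointing facet normals $\rho_F$ of $Q$, with $\widetilde\rho|_\lat$ a positive rational multiple of $\rho_F$; one also uses $\Spp(\Gamma(Q)) = \Spp(Q)$ from~\eqref{eq:aux-equality}.

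\begin{proof}
By Lemma~\ref{lemma:aux-compatibility}, $\sigma$ is compatible with $\ZZ\wm(Q) = \widetilde{\lat}$ (see \eqref{eq:latwmQ}). We now verify the remaining conditions of Definition~\ref{def_SR_comp_with_monoid}.

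Suppose first that $\sigma \notin \sr$, so we must check (\ref{CM1}). Let $\widetilde\rho$ be a ray generator of $\wm(Q)^\vee$ with $\<\widetilde\rho,\sigma\> > 0$. By Lemma~\ref{lem:aux-ray} there is a facet $F$ of $Q$ with $\rho_F = a\widetilde\rho|_\lat$ for some $a \in \QQ_{>0}$; since $\<\widetilde\rho,\sigma\> = \<\widetilde\rho|_\lat,\sigma\>$ (as $\sigma \in \lat$), we get $\<\rho_F,\sigma\> > 0$. Applying Definition~\ref{def:Q-compatible}-(\ref{Q-compatible-b}) yields $\delta \in \sr \setminus \Spp(Q)$ with $\<\delta^\vee, F\> = 0$. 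By Remark~\ref{rem:multiple_of_coroot}, it follows that $\rho_F$ is a positive rational multiple of $\delta^\vee|_\lat$, hence so is $\widetilde\rho|_\lat$. Using $\Spp(Q) = \Spp(\Gamma(Q))$ from \eqref{eq:aux-equality}, we have $\delta \in \sr \setminus \Spp(\Gamma(Q))$, and $\delta^\vee|_{\ZZ\wm(Q)}$ is a positive multiple of $\widetilde\rho$ (both restrict to positive multiples of the same primitive functional on $\lat$, and both vanish on $(\mathsf v,1)$; more carefully, one checks the value on $(\mathsf v, 1)$ is forced, cf. the construction of $\rho$ in Definition~\ref{def_SR_comp_with_lattice} for colors moved by $\delta$). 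This gives (\ref{CM1}).

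Now suppose $\sigma = \alpha \in \sr$, so we must check (\ref{CM2}). By Definition~\ref{def:Q-compatible}-(\ref{Q-compatible-a}) there is a facet $F \subseteq Q$ with $\<\rho_F,\alpha\> = 1$, and every facet $F'$ with $\<\rho_{F'},\alpha\> > 0$ satisfies $H_{F'} = H_F$ or $H_{F'} = s_\alpha(H_F)$. Let $\rho_1, \rho_2 \in \Hom_\ZZ(\widetilde{\lat},\ZZ)$ be the ray generators (or, where needed, the explicit functionals) of $\wm(Q)^\vee$ corresponding under Lemma~\ref{lem:aux-ray} to $\rho(D_\alpha^+) = \rho_F$ and $\rho(D_\alpha^-) = \alpha^\vee|_\lat - \rho_F$ respectively. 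Part (a): $\<\rho_i,\alpha\> = 1$ follows since $\<\rho_F,\alpha\> = 1$ and $\<\alpha^\vee|_\lat - \rho_F, \alpha\> = 2 - 1 = 1$. Part (b): $\alpha^\vee|_{\ZZ\wm(Q)} = \rho_1 + \rho_2$; this is Lemma~\ref{lemma:sum}-(\ref{lemma:sum:both}) equation~\eqref{eq:sum1} on the $\lat$-component, together with the identity \eqref{eq:sum2} $m_{F,\mathsf v} + m_{F',\mathsf v} = \<\alpha^\vee,\mathsf v\>$ which pins down the values on $(\mathsf v,1)$, using the integrality condition Definition~\ref{def:admissible}-(\ref{def:admissible:v:a_2a}) to guarantee $\rho_1, \rho_2$ lie in $\Hom_\ZZ(\widetilde\lat,\ZZ)$ (this is where the distinction between $\rho_F^\Sigma$ and $\rho_F$ matters, but for $\alpha \in \Sigma \cap \sr$, $\alpha \notin \frac12\Sigma$, so $\rho_F^\Sigma = \rho_F$ and $m_{F,\mathsf v}^\Sigma = m_{F,\mathsf v}$). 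Part (c): if $\widetilde\rho$ is a ray generator of $\wm(Q)^\vee$ with $\<\widetilde\rho,\alpha\> > 0$, then by Lemma~\ref{lem:aux-ray} it corresponds to a facet $F'$ with $\<\rho_{F'},\alpha\> > 0$, so $H_{F'} = H_F$ or $H_{F'} = s_\alpha(H_F)$; in the first case $\rho_{F'} = \rho_F$ and $\widetilde\rho = \rho_1$, while in the second Lemma~\ref{lemma:sum}-(\ref{lemma:sum:both}) gives $\rho_{F'} = \alpha^\vee|_\lat - \rho_F$ and $\widetilde\rho = \rho_2$. This establishes (\ref{CM2}), and hence $\sigma$ is compatible with $\wm(Q)$.
\end{proof}

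The step I expect to be the main obstacle is the bookkeeping on the $\GGm$-component in conditions (\ref{CM1})(b) and (\ref{CM2})(b): Lemma~\ref{lem:aux-ray} cleanly identifies ray generators of $\wm(Q)^\vee$ with facet normals \emph{after} restriction to $\lat$, but one must separately check that the lifts to $\Hom_\ZZ(\widetilde{\lat},\ZZ)$ are honest \emph{integral} functionals and that they add up to $\alpha^\vee|_{\ZZ\wm(Q)}$ rather than merely a positive multiple; this is exactly where equation~\eqref{eq:sum2} of Lemma~\ref{lemma:sum} and the integrality clause Definition~\ref{def:admissible}-(\ref{def:admissible:v:a_2a}) (via Remark~\ref{rem:orbitvertices}) are needed. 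A secondary subtlety is that in (\ref{CM2})(c) one needs that $\rho_1$ or $\rho_2$ — and more precisely the one arising from each admissible facet — is actually a ray generator and not merely a lattice functional in $\wm(Q)^\vee$; this matches Remark~\ref{rem:Sdetermined}, and should be extracted from the fact that $H_F$ and $s_\alpha(H_F)$ are the only facets meeting $\{\<\alpha^\vee,-\> > 0\}$ nontrivially, so no further rays of $\wm(Q)^\vee$ are positive on $\alpha$.
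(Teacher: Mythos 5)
Your overall route is the same as the paper's: reduce to conditions (\ref{CM1}) and (\ref{CM2}) of Definition~\ref{def_SR_comp_with_monoid} via Lemma~\ref{lemma:aux-compatibility} and \eqref{eq:latwmQ}, and translate between ray generators of $\wm(Q)^\vee$ and facet normals of $Q$ via Lemma~\ref{lem:aux-ray}, feeding in Definition~\ref{def:Q-compatible} and Lemma~\ref{lemma:sum}. Two steps, however, are not correct as written. First, in the case $\sigma\notin\sr$ your justification that $\delta^\vee|_{\ZZ\wm(Q)}$ is a positive multiple of $\widetilde\rho$ is that the two functionals ``both vanish on $(\mathsf v,1)$'': this is false in general, since the orbit vertex $\mathsf v$ need not lie on the facet $F$. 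The argument that works (and is the one in the paper) is that $\delta^\vee|_{\widetilde\lat}$ belongs to $\wm(Q)^\vee$ because $Q\inn\QQp\dw$, is not identically zero on $\wm(Q)$ because $\delta\notin\Spp(Q)=\Spp(\wm(Q))$, and vanishes on $F\times\{1\}$, hence on the entire facet of $\QQp\wm(Q)$ cut out by $\widetilde\rho$; a nonzero element of the dual cone vanishing on a facet must be a positive multiple of the corresponding ray generator.

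Second, in part (a) of (\ref{CM2}) you assert $\<\rho_1,\alpha\>=1$ ``since $\<\rho_F,\alpha\>=1$,'' but Lemma~\ref{lem:aux-ray} only gives $\rho_1|_\lat=\tfrac1a\rho_F$ for some $a\in\QQ_{>0}$, so you must still show $a=1$. This normalization is the crux of the whole lemma and is exactly where condition (\ref{def:admissible:v:a_2a}) of Definition~\ref{def:admissible} enters: one writes $\rho_1=(s\rho_F,\,s\,m^\Sigma_{F,\mathsf v})$ with $s=\<\rho_1,(\alpha,0)\>$ a positive integer, notes that primitivity of the ray generator forces $\gcd(s,\,s\,m^\Sigma_{F,\mathsf v})=1$, and then the integrality of $m^\Sigma_{F,\mathsf v}$ forces $s=1$, whence $\rho_1=(\rho_F,m^\Sigma_{F,\mathsf v})$. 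You correctly identify this bookkeeping on the $\GGm$-component as the main obstacle in your closing paragraph, but the proof itself never carries it out; without it, parts (a)--(c) of (\ref{CM2}) (and in particular the identification of the explicit functional $(\alpha^\vee|_\lat-\rho_F,\,\<\alpha^\vee,\mathsf v\>-m^\Sigma_{F,\mathsf v})$ with $\rho_2=\alpha^\vee|_{\widetilde\lat}-\rho_1$) do not follow. Once these two points are repaired, the remainder of your argument — membership of $\rho_2$ in $\wm(Q)^\vee$ via Lemma~\ref{lemma:sum}-(\ref{lemma:sum:general}) and the uniqueness statement in (c) via \eqref{eq:sum1} and \eqref{eq:sum2} — coincides with the paper's proof.
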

\begin{proof}
By Lemma~\ref{lemma:aux-compatibility} and equation~\eqref{eq:latwmQ}, what is left is to verify conditions (\ref{CM1}) and (\ref{CM2}) of Definition~\ref{def_SR_comp_with_monoid}. Suppose that $\sigma\in \Sigma \smallsetminus S$, and let $\widetilde\rho$ be a ray generator of $\Gamma(Q)^\vee$ such that $\<\widetilde\rho,\sigma\> >0$. Let $\rho_F$ be the inward-pointing facet normal of $Q$ corresponding to $\widetilde\rho$ under the correspondence in Lemma~\ref{lem:aux-ray}. Then $\<\rho_F,\sigma\> > 0$ and so, by Definition~\ref{def:Q-compatible}-(\ref{Q-compatible-b}), there exists $\alpha \in S \smallsetminus \Spp(\wm(Q))$ such that $\<\alpha^{\vee},F\> = 0$. Therefore $\alpha^{\vee}$ vanishes on the facet of $\QQp\wm(Q)$ that contains $F\times\{1\}$ but not on $\wm(Q)$. Consequently, $\widetilde{\rho}$ is a positive rational multiple of $\alpha^{\vee}|_{\widetilde{\lat}}$, which proves condition (\ref{CM1}) of Definition~\ref{def_SR_comp_with_monoid}. 

Suppose now that $\sigma=\alpha\in S$, let $F$ be a facet of $Q$ as in part (\ref{Q-compatible-a}) of Definition~\ref{def:Q-compatible} and let $\widetilde\rho$ be the ray generator of $\Gamma(Q)^\vee$ corresponding to $\rho_F$ as given by Lemma~\ref{lem:aux-ray}. We claim that $\rho_1:=\widetilde\rho$ and $\rho_2:=\alpha^\vee|_{\widetilde{\lat}}-\widetilde\rho$ satisfy the three properties required by Definition~\ref{def_SR_comp_with_monoid}-(\ref{CM2}).

To prove this claim, we first observe that $\widetilde{\rho} = (s\rho_F, a) \in \Hom_\ZZ(\ZZ\Gamma(Q),\ZZ) = \Hom_{\ZZ}(\lat,\ZZ)\oplus \ZZ$ for some $s\in\QQ_{\ge 0}$ and some $a \in \ZZ$. Since $\<\rho_F, \alpha\> = 1$, we have that $\mathsf{v}-m^{\Sigma}_{F,\mathsf{v}}\alpha \in H_F$ and consequently that 
\[
0 =\<\widetilde\rho,(\mathsf{v}-m^{\Sigma}_{F,\mathsf{v}}\alpha,1)\> =\<(s\rho_F,a),  -m_{F,\mathsf{v}}(\alpha,0) + (\mathsf{v},1)\> = -sm^{\Sigma}_{F,\mathsf{v}} + a,
\]
that is, $s{m^{\Sigma}_{F,\mathsf{v}}}=a$. As $s$ is a positive integer,
since $\alpha\in\lat$ and $\<\widetilde{\rho},(\alpha,0)\> = s\<\rho_F,\alpha\> = s$, and as $\widetilde\rho$ is primitive in $\Hom_\ZZ(\ZZ\Gamma(Q),\ZZ)$, we know that $\gcd(s,a)=1$. Because $m^{\Sigma}_{F,\mathsf{v}} \in \ZZ$ by part (\ref{def:admissible:v:a_2a}) of  Definition~\ref{def:admissible}, it follows that $a= m^{\Sigma}_{F,\mathsf{v}}$ and that 
\begin{equation}\label{eq:rhorestricted}
\widetilde \rho = (\rho_F,m^{\Sigma}_{F,\mathsf{v}}).
\end{equation}
The equalities $\langle \rho_1, \alpha \rangle = \langle \rho_2, \alpha \rangle = 1$ follow, and $\alpha^\vee|_{\widetilde{\lat}} = \rho_1 + \rho_2$ holds by construction. The latter equality also yields 
\begin{equation} \label{eq:rho2restricted}
\rho_2 = (\alpha^\vee|_\lat - \rho_F, \<\alpha^\vee,\mathsf{v}\> - m^{\Sigma}_{F,\mathsf{v}}),
\end{equation}
 so by Lemma~\ref{lemma:sum}, part (\ref{lemma:sum:general}), we have that $\rho_2$ is non-negative on $Q\times\{1\}$. This implies that $\rho_2\in\Gamma(Q)^\vee$. We have proved the first two required properties.

To prove the third required property, let $\overline{\rho}$ be a ray generator of $\wm(Q)^\vee$ for which $\<\overline{\rho},\alpha\> > 0$.  Let  $\rho_{F'}$ be the corresponding primitive inward-pointing facet normal of $Q$. If $F'=F$, then $\overline{\rho} = \widetilde{\rho}=\rho_1$. If $F' \neq F$, then it follows from equation~\eqref{eq:sum1} that $\rho_F' = \alpha^{\vee}|_\lat - \rho_F$, and so $\<\rho_F',\alpha\>=1$. Using the same argument as above, we then find that  $\overline{\rho} = (\rho_{F'},m^{\Sigma}_{F',\mathsf{v}})$.  
Thanks to equation \eqref{eq:sum2}, we then obtain $\widetilde{\rho} = (\alpha^\vee|_\lat - \rho_F, \<\alpha^\vee,\mathsf{v}\> - m^{\Sigma}_{F,\mathsf{v}}) = \rho_2$.  
This proves the third required property and concludes the proof.
\end{proof}

\begin{lemma} \label{lemma:aux-admissibility-monoid}
Suppose $\Sigma \inn \Sigma_{\QQ}(\lat,Q)$ is admissible for $(\lat,Q)$. Then $\Sigma$ is admissible for the monoid $\wm(Q)$. 
\end{lemma}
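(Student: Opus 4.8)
The plan is to verify that $\Sigma$ satisfies condition~(\ref{AP}) of Definition~\ref{def_AS} with respect to $\wm(Q)$, using the translation between the data $(\A(\alpha),\rho)$ attached to a simple root $\alpha\in\Sigma\cap S$ (as in Definition~\ref{def:Q-compatible}) and the data $(\mathcal S(\alpha),\rho)$ attached to $\alpha$ as a spherical root compatible with $\wm(Q)$ (as in Definition~\ref{def_SR_comp_with_monoid}). By Lemma~\ref{lemma:aux-compatibility-monoid} we already know that every $\sigma\in\Sigma$ lies in $\Sigma(\wm(Q))$, so the set-up of Definition~\ref{def_AS} applies; moreover, inspecting the proof of Lemma~\ref{lemma:aux-compatibility-monoid}, we see that for $\alpha\in\Sigma\cap S$ the set $\{\rho_1,\rho_2\}$ produced there is exactly $\{(\rho_F,m^\Sigma_{F,\mathsf v}),\,(\alpha^\vee|_\lat-\rho_F,\<\alpha^\vee,\mathsf v\>-m^\Sigma_{F,\mathsf v})\}$, i.e.\ the image of $\A(\alpha)$ under the map $\rho(D^\pm_\alpha)\mapsto(\rho(D^\pm_\alpha), m^\Sigma_{F,\mathsf v})$ resp.\ with the complementary constant. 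So the first step is to record precisely this identification: there is a natural bijection $\mathcal S_{\wm(Q)}(\alpha)\leftrightarrow\A(\alpha)$ compatible with the two $\rho$-maps in the sense that if $\msD\in\mathcal S_{\wm(Q)}(\alpha)$ corresponds to $D\in\A(\alpha)$, then $\rho(\msD)=(\rho(D),c_D)$ in $\Hom_\ZZ(\ZZ\Gamma(Q),\ZZ)=\Hom_\ZZ(\lat,\ZZ)\oplus\ZZ$ for the appropriate constant $c_D\in\ZZ$.

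Next I would take $\alpha\in\Sigma\cap S$, $\msD\in\mathcal S_{\wm(Q)}(\alpha)$, and $\sigma\in\Sigma\setminus\{\alpha\}$, and compute $\<\rho(\msD),\sigma\>$. Since $\sigma$, viewed in $\widetilde\lat$, is the element $(\sigma,0)$ (the spherical roots of $\widetilde X$ lie in $\lat\times\{0\}$, cf.\ Lemma~\ref{lemma:compare-spherical-roots} and the identification of $\Sigma$ with $\Sigma_{\tG}(\widetilde X)$), the pairing kills the $\ZZ$-component: $\<\rho(\msD),(\sigma,0)\> = \<\rho(D),\sigma\>$, where $D\in\A(\alpha)$ is the color corresponding to $\msD$. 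Now $\QQ$-admissibility of $\Sigma$ for $(\lat,Q)$ (Definition~\ref{def:Q-admissibleset}-(\ref{A1polytope})) gives exactly that $\<\rho(D),\sigma\>\le 1$, with equality iff $\sigma=\beta\in S$ and $\rho(D)=\rho(D')$ for some $D'\in\A(\beta)$. Transporting this back through the bijection $\A(\beta)\leftrightarrow\mathcal S_{\wm(Q)}(\beta)$, the equality $\rho(D)=\rho(D')$ in $\Hom_\ZZ(\lat,\ZZ)$ must be upgraded to the equality $\rho(\msD)=\rho(\msD')$ in $\Hom_\ZZ(\ZZ\Gamma(Q),\ZZ)$, i.e.\ one must check that the $\ZZ$-components also agree. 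This is where the content lies: $\rho(D)=\rho(D')$ with $\<\rho(D),\alpha\>=\<\rho(D'),\beta\>=1$ forces, by Lemma~\ref{lemma:sum:11}, that the common value is a primitive inward-pointing facet normal $\rho_E$ of $Q$, and then equation~\eqref{eq:rhorestricted} (applied with $F=E$, once for $\alpha$ and once for $\beta$) pins down both $\ZZ$-components to be $m^\Sigma_{E,\mathsf v}$, so they coincide. Conversely, if $\rho(\msD)=\rho(\msD')$ in the extended Hom-group then certainly the restrictions to $\lat$ agree, giving the ``only if'' direction. This yields condition~(\ref{AP}) for $\wm(Q)$.

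The main obstacle I anticipate is precisely the bookkeeping with the $\ZZ$-component of the functionals: Definition~\ref{def_AS} requires equality of the functionals $\rho(\msD),\rho(\msD')$ as elements of $\Hom_\ZZ(\ZZ\Gamma(Q),\ZZ)$, whereas Definition~\ref{def:Q-admissibleset} only sees their restrictions to $\lat$; bridging this gap is exactly the role of Lemma~\ref{lemma:sum:11} together with the explicit formula~\eqref{eq:rhorestricted} for the lift of a facet normal to a ray generator of $\Gamma(Q)^\vee$, and one must be slightly careful that the constant $m^\Sigma_{E,\mathsf v}$ appearing is the same whether $E$ is viewed as the distinguished facet for $\alpha$ or for $\beta$ (which is automatic since it depends only on $E$ and $\mathsf v$). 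A secondary, purely formal point is to treat the degenerate cases: when $\alpha\in\frac12\Sigma$ so that $\rho_F^\Sigma=\tfrac12\alpha^\vee|_\lat$, but since $\sigma\neq\alpha$ this rescaling does not affect the inequality $\<\rho(D),\sigma\>\le 1$ in any essential way (and the case $2\alpha\in\Sigma$ with $\alpha\in S$ cannot occur in $\Sigma\cap S$ simultaneously), and when $|\A(\alpha)|$-elements coincide with ray generators of $\wm(Q)^\vee$ versus not, which is handled uniformly by Remark~\ref{rem:Sdetermined}. Once condition~(\ref{AP}) is checked for all such triples, the lemma follows.
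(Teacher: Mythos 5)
Your proposal is correct and follows essentially the same route as the paper's proof: reduce to axiom (1) of Definition~\ref{def_AS} via Lemma~\ref{lemma:aux-compatibility-monoid}, identify $\mathcal S(\alpha)$ with $\A(\alpha)$ so that the extended functionals restrict to the polytope ones, observe that pairing against $\sigma\in\lat\times\{0\}$ only sees the restrictions, and then use Lemma~\ref{lemma:sum:11} to recognize the common value as a facet normal whose lift to a ray generator of $\wm(Q)^\vee$ (equation~\eqref{eq:rhorestricted}) has a $\ZZ$-component determined by the facet alone. The paper phrases this last step by noting $\widetilde\rho(\mathcal S(\alpha))=\{\overline\rho,\alpha^\vee|_{\widetilde\lat}-\overline\rho\}$ and excluding the second option using $\<\rho(D),\beta\>=1$, but this is the same computation you describe.
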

\begin{proof}
Since we know by Lemma~\ref{lemma:aux-compatibility-monoid} that $\Sigma \inn \Sigma(\wm(Q))$, we only need to check that $\Sigma$ satisfies axiom (\ref{AP}) of Definition~\ref{def_AS}.  
For $\alpha \in \Sigma \cap \sr$, let $\rho_1,\rho_2$ be as in Definition~\ref{def_SR_comp_with_monoid} and $\rho(D_\alpha^+),\rho(D_{\alpha}^-)$ as in Definition~\ref{def:Q-compatible}. In the proof of Lemma~\ref{lemma:aux-compatibility-monoid}
we showed that, after permuting $\rho_1$ and $\rho_2$ if necessary, we have $\rho(D_{\alpha}^+) = \rho_1|_\lat$ and $\rho(D_{\alpha}^-) = \rho_2|_{\lat}$.   To avoid confusion we denote here by $\widetilde{\rho}$ the  map $\mathcal S(\alpha)\to\Hom_\ZZ(\widetilde{\lat},\ZZ)$ with $\widetilde{\rho}(D_\alpha^+)=\rho_1$ and $\widetilde{\rho}(D_\alpha^-)=\rho_2$, and by $\rho$ the map $\A(\alpha)\to\Hom_\ZZ(\lat,\ZZ)$ of Definition~\ref{def:Q-compatible}. 

By the  discussion in the preceding paragraph we can, for every $\alpha \in \Sigma\cap \sr$, identify $\mathcal S(\alpha)$ and $\A(\alpha)$ in such a way that $\widetilde{\rho}|_\lat=\rho$. At this point axiom (\ref{AP}) of Definition~\ref{def_AS} follows from axiom (\ref{A1polytope}) of Definition~\ref{def:Q-admissibleset}, if we prove the following claim: for all $\alpha,\beta\in\Sigma\cap S$, if $\rho(D)=\rho(E)$ for some $D\in\mathcal S(\alpha)$  and $E \in \mathcal{S}(\beta)$, then $\widetilde{\rho}(D)=\widetilde{\rho}(E)$. 

To prove the claim, we may suppose that $\alpha\neq\beta$. It then follows from Lemma~\ref{lemma:sum:11} that $\rho(D)=\rho(E)$ is a primitive inward-pointing facet normal of $Q$. Let $\overline{\rho}$ be the corresponding ray generator of $\wm(Q)^{\vee}$ as in Lemma~\ref{lem:aux-ray}.  In the proof of Lemma~\ref{lemma:aux-compatibility-monoid} we  showed that $\widetilde{\rho}(\mathcal{S}(\alpha)) = \{\overline{\rho}, \alpha^{\vee}|_{\widetilde{\lat}}-\overline{\rho}\}$ and $\widetilde{\rho}(\mathcal{S}(\beta)) = \{\overline{\rho}, \beta^{\vee}|_{\widetilde{\lat}}-\overline{\rho}\}$. It follows from  the equalities $\<\rho(D),\beta\> = 1$ and $\<\rho(E),\alpha\>=1$ that $\widetilde{\rho}(D) = \overline{\rho} = \widetilde{\rho}(E)$. This proves the claim and the lemma.
\end{proof}

We can now establish one implication in Theorem~\ref{thm:compatible-polytope}.
\begin{proposition} \label{prop:admissible_then_realized}
If $\Sigma \inn \Sigma_{\QQ}(\lat,Q)$ is admissible for $(\lat,Q)$ then there exists a polarized spherical $G$-variety $(X,\lb)$ such that $\lat(X) = \lat$, $\Sigma(X)=\Sigma$ and $Q(X,\lb)=Q$. 
\end{proposition}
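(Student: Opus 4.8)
The plan is to reduce the statement to Theorem~\ref{thm:crit-convexhull}. Fix an orbit vertex $\mathsf v$ of $Q$ with respect to $\Sigma$ and define $\widetilde\lat$ and $\wm(Q)$ by \eqref{eq:ext-lattice-admissible} and \eqref{eq:defwmQ}; by Lemma~\ref{lemma:aux-admissibility-monoid}, $\Sigma$ is admissible for the saturated monoid $\wm(Q)\inn\tdw$. Let $\lambda_1,\ldots,\lambda_s$ be the orbit vertices of $Q$ with respect to $\Sigma$. By Definition~\ref{def:admissible}-(\ref{def:admissible:inXi}) and~(\ref{def:admissible:v:L}) these are vertices of $Q$ lying in $\wl$ and pairwise congruent modulo $\lat$, so that $\widetilde\lat = (\lat\times\{0\})\oplus\ZZ(\lambda_i,1)$ for every $i$; moreover $s\geq 1$, since a generic $\ell$ in the (full dimensional) interior of the cone $\V$ dual to $-\Sigma$ is minimized on $Q$ at a vertex whose normal cone is full dimensional and contains $\ell$ in its interior. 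I would then put $V:=\bigoplus_{i=1}^s V_G(\lambda_i^*)$, so that the highest weights of $V^*$ are exactly $\lambda_1,\ldots,\lambda_s$.

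The next step is to verify that $(\widetilde\lat, V, Q, \Sigma)$ is a momentum quadruple of $G$. Condition \eqref{eq:lattice_of_GammaQ} is \eqref{eq:latwmQ}; the $(\lambda_i,1)$ are degree-$1$ elements of $\widetilde\lat$ with $\lambda_1,\ldots,\lambda_s$ vertices of $Q$ and highest weights of $V^*$, which is part~(\ref{def:quadruple:vert}) of Definition~\ref{def:quadruple}; and part~(\ref{def:quadruple:adm}) is exactly Lemma~\ref{lemma:aux-admissibility-monoid}. The only nontrivial point is part~(\ref{def:quadruple:conv}), the inclusion $Q\inn\Conv(\lambda_1,\ldots,\lambda_s)-\QQp\Sigma$, which I would prove as follows. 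By the standard duality between a polyhedron written as a convex hull plus a cone and the closed half-spaces containing it, this inclusion is equivalent to the assertion that for every $\ell\in\V$ the minimum of $\ell$ over $Q$ is attained at some $\lambda_i$. Given such an $\ell$, the face $F$ of $Q$ in the relative interior of whose normal cone $\C(F)$ the functional $\ell$ lies is an orbit face (its normal cone meets $\V$), so it suffices to show that $F$ contains an orbit vertex: the normal cones $\C(v)$ of the vertices $v$ of $F$ are full dimensional and have $\C(F)$ as a common face, so a generic small perturbation of an element of $\operatorname{relint}\C(F)\cap\V$, chosen in the tangent cone of $\V$ at that element, lands in $\operatorname{relint}\C(v)\cap\V$ for some such $v$, and $v$ is then an orbit vertex contained in $F$.

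Granting this, Theorem~\ref{thm:crit-convexhull} produces a spherical projective $G$-variety $(X,\lb)$ over $\PP(V)$; in particular $\lb$ is the pullback of $\Oh_{\PP(V)}(1)$ along a finite $G$-morphism, hence an ample $G$-linearized line bundle, so $(X,\lb)$ is a polarized spherical $G$-variety, and it has $\widetilde\lat(X,\lb)=\widetilde\lat$, $Q(X,\lb)=Q$ and $\Sigma(X)=\Sigma$. It then remains only to identify $\lat(X)$: from $\widetilde\lat=(\lat\times\{0\})\oplus\ZZ(\lambda_1,1)$ together with the exact sequence \eqref{eq:latticecone} (equivalently, by Corollary~\ref{cor:tildelattice}) one reads off $\lat(X)=\lat$, which finishes the proof. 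The crux of the argument is the convexity inclusion of the second paragraph: although $Q$ itself may lie far from $\Conv(\lambda_1,\ldots,\lambda_s)$, every face of $Q$ cut out by a functional in $\V$ — in particular every orbit face — reaches a closed-orbit vertex; everything else is bookkeeping with the lemmas already proved and with Theorem~\ref{thm:crit-convexhull}.
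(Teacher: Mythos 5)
Your proof is correct, but it reaches the conclusion by a detour that the paper avoids. The paper's own proof is three lines: Lemma~\ref{lemma:aux-admissibility-monoid} shows that $\Sigma$ is admissible for the monoid $\wm(Q)$, Theorem~\ref{Theorem-AS} then yields an affine spherical $\tG$-variety $\widetilde X$ with $\wm(\widetilde X)=\wm(Q)$ and $\Sigma_{\tG}(\widetilde X)=\Sigma$, and $\operatorname{Proj}(\CC[\widetilde X])$ is the desired polarized variety, with $\lat$, $Q$ and $\Sigma$ read off from \eqref{eq:latticecone}, \eqref{eq:conepolytope} and Lemma~\ref{lemma:compare-spherical-roots}. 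You feed the same key input (Lemma~\ref{lemma:aux-admissibility-monoid}) into the convex-hull criterion, Theorem~\ref{thm:crit-convexhull}, which obliges you to verify in addition that $Q\inn\Conv(\lambda_1,\ldots,\lambda_s)-\QQp\Sigma$ for the orbit vertices $\lambda_i$; your duality argument for this is sound, and the perturbation step is essentially the paper's Lemma~\ref{lem:orbitface_contains_orbitvertex}, which the paper only proves later (in Section~\ref{subsec:Kaehler_Delzant_Woodward}, for a different purpose). Since the proof of Theorem~\ref{thm:crit-convexhull}, direction $(1)\Rightarrow(2)$, itself just feeds admissibility of $\Sigma$ for $\wm(Q)$ into Theorem~\ref{Theorem-AS} and takes $\operatorname{Proj}$, the underlying construction is identical; what your version buys is the extra information that the resulting variety carries a finite $G$-morphism to $\PP(V)$ with $V$ built from the orbit vertices (information which Proposition~\ref{Prop_from_Brion} recovers a posteriori anyway), at the cost of the convexity verification, which is not needed for the bare existence statement. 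One point you should make explicit: the full-dimensionality of $\V=(\QQp(-\Sigma))^{\vee}$, on which both your ``$s\ge 1$'' claim and your perturbation argument rely, requires the linear independence of $\Sigma$ in $\lat_\QQ$; this follows from Brion's theorem once $\Sigma$ is known to be the set of spherical roots of some affine spherical variety, i.e.\ only after Lemma~\ref{lemma:aux-admissibility-monoid} and Theorem~\ref{Theorem-AS} are already in play.
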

\begin{proof}
From Lemma~\ref{lemma:aux-admissibility-monoid} and Theorem~\ref{Theorem-AS}, we obtain an affine spherical $\tG$-variety $\widetilde{X}$ with $\wm(\widetilde{X}) = \wm(Q)$ and $\Sigma_{\tG}(\widetilde{X}) = \Sigma$. It follows from equation~\eqref{eq:ext-lattice-admissible} and the exact sequence \eqref{eq:latticecone} that the lattice of the polarized spherical $G$-variety $\operatorname{Proj}(\CC[\widetilde{X}])$ is $\lat$, from equation~\eqref{eq:conepolytope} that its momentum polytope is $Q$ and from Lemma~\ref{lemma:compare-spherical-roots} that its set of spherical roots is $\Sigma$.  
\end{proof}

We now establish the converse of the previous proposition.
\begin{proposition} \label{prop:realized_then_admissible}
If $(X,\lb)$ is a polarized spherical $G$-variety with weight lattice $\lat$, momentum polytope $Q$ and set of spherical roots $\Sigma$, then $\Sigma \inn \Sigma_{\QQ}(\lat,Q)$ and $\Sigma$ is admissible for $(\lat,Q)$. 
\end{proposition}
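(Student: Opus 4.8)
The strategy is to reverse the reasoning of Lemmas~\ref{lemma:aux-compatibility}--\ref{lemma:aux-admissibility-monoid} and of Proposition~\ref{prop:admissible_then_realized}, transporting information from the affine cone $\widetilde X = \Spec R(X,\lb)$ back to the polytope $Q$. First I would fix a closed $G$-orbit $Y\inn X$ and set $\mathsf v = \lambda_Y$, so that by Corollary~\ref{cor:tildelattice} the extended weight lattice is $\widetilde\lat = (\lat\times\{0\})\oplus\ZZ(\mathsf v,1)$, and by \eqref{eq:wm_affine_cone} the weight monoid of $\widetilde X$ is exactly $\wm(Q)$ as defined in \eqref{eq:defwmQ}. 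By Lemma~\ref{lemma:compare-spherical-roots} we have $\Sigma = \Sigma_{\tG}(\widetilde X)$, $\Spp(X)=\Spp(\widetilde X)=\Spp(Q)$, and $\widetilde\lat = \lat(\widetilde X)$; and since $\widetilde X$ is an affine spherical variety, Theorem~\ref{Theorem-AS} tells us $\Sigma$ is admissible for the monoid $\wm(Q)$, in particular each $\sigma\in\Sigma$ is compatible with $\wm(Q)$ in the sense of Definition~\ref{def_SR_comp_with_monoid} (hence with the lattice $\widetilde\lat$, hence with $\lat$).

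Next I would verify $\QQ$-compatibility of each $\sigma\in\Sigma$ with $(\lat,Q)$. The lattice conditions in Definition~\ref{def_SR_comp_with_lattice} are already in hand; the condition that $(\Spp(Q),\sigma)$ satisfies Luna's axiom~(S) follows from the axiom~(S) condition for $\wm(Q)$ together with the equality $\Spp(Q)=\Spp(\widetilde\lat)=\Spp(\lat)$ (the latter via \eqref{eq:aux-equality}). For Definition~\ref{def:Q-compatible}\ref{Q-compatible-d2}, the equality $\<\alpha^\vee,q\>=\<\beta^\vee,q\>$ for $q\in Q$ follows from condition~\ref{CL3} applied to $\widetilde\lat$: writing $q = (q,1)$ rescaled, $(\lambda,n)\in\wm(Q)$ means $\<\alpha^\vee,\lambda\>=\<\beta^\vee,\lambda\>$, and $\QQp\wm(Q)$ spans $Q\times\{1\}$. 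For the facet conditions \ref{Q-compatible-b} and \ref{Q-compatible-a}, I would use Lemma~\ref{lem:aux-ray}, which sets up a bijection between ray generators of $\wm(Q)^\vee = \Gamma(\widetilde X)^\vee$ and primitive inward-pointing facet normals of $Q$. Condition~\ref{CM1} of Definition~\ref{def_SR_comp_with_monoid} for a ray generator $\widetilde\rho$ with $\<\widetilde\rho,\sigma\>>0$ produces $\delta\in\sr\setminus\Spp(\wm(Q))$ with $\delta^\vee|_{\widetilde\lat}$ a positive multiple of $\widetilde\rho$; translating through Lemma~\ref{lem:aux-ray}, the corresponding facet $F$ satisfies $\<\rho_F,\sigma\>>0$ and $\<\delta^\vee,F\>=0$ with $\delta\notin\Spp(Q)$, which is \ref{Q-compatible-b}. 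Similarly, condition~\ref{CM2} gives $\rho_1,\rho_2\in\wm(Q)^\vee$ with $\alpha^\vee|_{\widetilde\lat}=\rho_1+\rho_2$ and $\<\rho_i,\alpha\>=1$; at least one (say $\rho_1$) is a ray generator, so it corresponds to a facet $F$ with $\<\rho_F,\alpha\>=1$, and the uniqueness clause \ref{CM2-3} translates via \eqref{eq:sum1}-type reasoning into \ref{Q-compatible-a}(b) (any facet $F'$ with $\<\rho_{F'},\alpha\>>0$ has $H_{F'}=H_F$ or $H_{F'}=s_\alpha(H_F)$). This also identifies the pair $(\A(\alpha),\rho)$ with $(\mathcal S(\alpha),\widetilde\rho|_\lat)$ up to swapping $D_\alpha^\pm$.

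Having $\Sigma\inn\Sigma_\QQ(\lat,Q)$, the $\QQ$-admissibility condition \ref{A1polytope} of Definition~\ref{def:Q-admissibleset} follows directly from axiom~\ref{AP} of Definition~\ref{def_AS} for $\wm(Q)$ via the identification $\rho = \widetilde\rho|_\lat$ just established, together with Lemma~\ref{lemma:sum:11}: if $\<\rho(D),\sigma\>\le1$ in $\Hom(\widetilde\lat,\ZZ)$ it is so after restricting to $\lat$, and equality in the two pairings transfers as well. Finally, for (genuine, non-$\QQ$) admissibility, I would invoke Theorem~\ref{thm:Brion-Woodward}: the $G$-orbits of $X$ correspond to orbit faces of $Q$, and the closed $G$-orbits to orbit vertices; by Proposition~\ref{prop:PSVFundFacts}\ref{prop:PSVFundFacts:item:mompolclosedorbit} each such vertex $\mathsf v = \lambda_{Y'}$ lies in $\wl$, giving Definition~\ref{def:admissible}\ref{def:admissible:v:L}, and $\mathsf v - \mathsf w = \lambda_{Y'} - \lambda_{Y''} \in \lat$ by Corollary~\ref{cor:tildelattice} applied with either orbit, giving \ref{def:admissible:inXi}. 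The integrality \ref{def:admissible:v:a_2a} — that $m^\Sigma_{F,\mathsf v}\in\ZZ$ for relevant facets $F$ — comes from \eqref{eq:rhorestricted}: the ray generator of $\wm(Q)^\vee$ corresponding to $F$ equals $(\rho_F, m^\Sigma_{F,\mathsf v})$ (or, when $\alpha\in S\cap\frac12\Sigma$, a rescaled version, handled via \eqref{eq:m_simpler}), and its last coordinate is an integer because it lies in $\Hom_\ZZ(\widetilde\lat,\ZZ) = \Hom_\ZZ(\lat,\ZZ)\oplus\ZZ$. The main obstacle I anticipate is precisely this last step: carefully matching the rescaled normals $\rho_F^\Sigma$ and the shifted quantities $m_{F,\mathsf v}^\Sigma$ against the $\frac12\alpha^\vee$ vs.\ $\alpha^\vee$ dichotomy in Definition~\ref{def_SR_comp_with_monoid}\ref{CM1} and \eqref{eq:color_functional}, and checking that the facets arising in condition~\ref{def:admissible:v:a_2a} are exactly those detected by ray generators of $\wm(Q)^\vee$ on which some $\sigma\in\Sigma$ (or $2\sigma$) is positive — i.e., that no relevant facet is missed. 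The other conditions are essentially bookkeeping once Lemma~\ref{lem:aux-ray} and the $\wm(Q)$-admissibility of $\Sigma$ are in place.
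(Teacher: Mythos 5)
Your proposal is correct and follows essentially the same route as the paper's proof: pass to the affine cone $\widetilde X$, invoke Theorem~\ref{Theorem-AS} and Lemma~\ref{lemma:compare-spherical-roots} to get admissibility of $\Sigma$ for $\wm(Q)$, translate conditions (\ref{CM1})--(\ref{CM2}) into Definition~\ref{def:Q-compatible} via the ray-generator/facet-normal correspondence of Lemma~\ref{lem:aux-ray}, identify $\mathcal S(\alpha)$ with $\A(\alpha)$ to transfer axiom (\ref{AP}), and obtain the integrality and lattice conditions on orbit vertices from Proposition~\ref{prop:PSVFundFacts}-\ref{prop:PSVFundFacts:item:mompolclosedorbit}, Corollary~\ref{cor:tildelattice} and the fact that the relevant ray generators lie in $\Hom_\ZZ(\widetilde\lat,\ZZ)$. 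The case distinction you flag as the main obstacle ($2\alpha\in\Sigma$ handled via Definition~\ref{def_SR_comp_with_lattice}-(\ref{CL4}) and \eqref{eq:m_simpler}, versus $\alpha\in\Sigma$ handled via the integrality of the ray generator) is exactly how the paper closes that step.
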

\begin{proof}
It follows from Theorem~\ref{thm:Brion-Woodward} that the orbit vertices of $Q$ (for the cone $\V(X)=(-\Sigma)^{\vee} \inn N(X)$) are exactly the momentum polytopes of the closed $G$-orbits of $X$. Conditions (\ref{def:admissible:inXi}) and (\ref{def:admissible:v:L}) of Definition~\ref{def:admissible} now follow from Proposition~\ref{prop:PSVFundFacts}-\ref{prop:PSVFundFacts:item:mompolclosedorbit} and Corollary~\ref{cor:tildelattice}. 

Next let $\widetilde{X} = \Spec(R(X,\lb))$ be the affine cone of $(X,\lb)$. Recall that  $\widetilde{X}$ is an affine spherical $\tG$-variety with set of spherical roots $\Sigma$, by Lemma~\ref{lemma:compare-spherical-roots}. Denote its weight lattice by $\widetilde{\lat}$; it is a sublattice of $\twl = \wl \times \ZZ$ and fits into the exact sequence \eqref{eq:latticecone}. In particular, $\widetilde{\lat}\cap\wl = \lat$.  From \eqref{eq:wm_affine_cone} we know that the weight monoid of $\widetilde{X}$ is $\wm(Q) = \QQp(Q \times \{1\}) \cap \widetilde{\lat}$. Theorem~\ref{Theorem-AS} tells us that $\Sigma$ is admissible for $\wm(Q)$. 

We now show that $\Sigma \inn \Sigma_{\QQ}(\lat,Q)$ by verifying the conditions in Definition~\ref{def:Q-compatible}. Let $\sigma\in\Sigma$. Then it is compatible with $\lat$, because it is compatible with the bigger lattice $\widetilde{\lat}$. Moreover $\Spp(Q)=\Spp(\widetilde{\lat})$, hence $(\Spp(Q), \sigma)$ satisfies Luna's axiom (S).

Now, assume $\sigma\notin S$, and let $F\subseteq Q$ be a facet such that $\<\rho_F,\sigma\> >0$. Denote by $\widetilde{\rho}$ the ray generator of $\wm(Q)^{\vee}$ corresponding to $\rho_F$ under the bijection of Lemma~\ref{lem:aux-ray}. Then $\widetilde{\rho}|_\lat=\rho_F$ up to a positive rational factor. Since $\widetilde{\rho}=\alpha^{\vee}|_{\widetilde{\lat}}$ up to a positive rational factor, for some $\alpha \in \sr \smallsetminus \Spp(\widetilde{\lat})$ we get $\rho_F=\alpha^{\vee}|_\lat$ up to a positive rational factor, which proves condition (\ref{Q-compatible-b}) of  Definition~\ref{def:Q-compatible}. 

Assume now $\sigma=\alpha\in S$, and let $\rho_1,\rho_2$ be as in Definition~\ref{def_SR_comp_with_monoid}. We may assume that $\rho_1$ is a ray generator of $\Gamma(Q)^\vee$, let $F$ be the corresponding facet of $Q$. Then $a\rho_1|_{\lat}=\rho_F$ for some rational number $a$ between $0$ and $1$. Because $\rho_F$ is primitive in $\Hom_{\ZZ}(\lat,\ZZ)$ by definition and  $\<\rho_1|_{\lat},\alpha\>=1$, we obtain $\rho_1|_\lat=\rho_F$, which yields $\<\rho_F,\alpha\> = 1$. Suppose now $\<\rho_{F'},\alpha\> >0$ for some facet $F'\subseteq Q$, and let $\rho'$ be the ray generator of $\Gamma(Q)^\vee$ corresponding to $F'$. Notice that $\ker(\rho')\cap(\wl_Q\times\{1\}) = H_{F'}\times\{1\}$.
By hypothesis we have $\rho'=\rho_1$ or $\rho' = \rho_2 = \alpha^{\vee}|_{\widetilde{\lat}} - \rho_1$.  In the former case $H_F=H_{F'}$ and in the latter $H_{F'}=s_\alpha(H_F)$. We have proved condition (\ref{Q-compatible-a}) of Definition~\ref{def:Q-compatible} and therefore $\Sigma\inn \Sigma_\QQ(\lat,Q)$. 

We have also proved that for all $\alpha\in S\cap \Sigma$ we can identify $\mathcal S(\alpha)$ and $\A(\alpha)$ in such a way that $\widetilde{\rho}|_\lat(D)=\rho(D)$ for all $D\in\mathcal S(\alpha)$, where to avoid confusion we denote by $\widetilde{\rho}$ the usual map $\mathcal S(\alpha)\to\Hom_\ZZ(\widetilde{\lat},\ZZ)$ and by $\rho$ the usual map $\A(\alpha)\to\Hom_\ZZ(\lat,\ZZ)$. Condition~(\ref{A1polytope}) of Definition~\ref{def:Q-admissibleset} stems from condition~(\ref{AP}) of Definition~\ref{def_AS}, assuring $\QQ$-admissibility of $\Sigma$. 

To finish the proof that $\Sigma$ is admissible for $(\lat,Q)$, all that remains is to check condition~(\ref{def:admissible:v:a_2a}) of Definition~\ref{def:admissible}. Let  $\mathsf{v}$ be an orbit vertex of $Q$, $\alpha\in S$ and $\<\rho_F,\alpha\>>0$ for a facet $F$ of $Q$. If $2\alpha\in\Sigma$ then $\rho_F^\Sigma=\frac{1}{2}\alpha^\vee|_{\lat}$ and, by Definition~\ref{def_SR_comp_with_lattice}-(\ref{CL4}), $\alpha^{\vee}$ takes even values on $\widetilde{\lat}$.  Equation~\eqref{eq:m_simpler} then gives  $m_{F,\mathsf{v}}^{\Sigma} = \frac{1}{2}\<\alpha^{\vee},\mathsf{v}\>=\frac{1}{2}\<\alpha^{\vee},(\mathsf{v},1)\>\in \ZZ$, since $(\mathsf{v},1) \in \widetilde{\lat}$.

Finally, if $\alpha\in\Sigma$, let $\widetilde{\rho}$ be the ray generator of $\wm(Q)^{\vee}$ corresponding to $\rho_F$. Recall that $\widetilde{\rho} \in \Hom_{\ZZ}(\widetilde{\lat},\ZZ)$ by definition and that $\<\widetilde{\rho},\alpha\>=1$ by condition (\ref{CM2-3}) of Definition~\ref{def_SR_comp_with_monoid}. As above we have $\widetilde{\rho}|_\lat=\rho_F$. Consequently, $m^\Sigma_{F,\mathsf{v}}=m_{F,\mathsf{v}} = \widetilde{\rho}(\mathsf{v},1)\in\ZZ$ and the proof is complete. 
\end{proof}

Finally, we complete the proof of our second criterion. 

\begin{proof}[Proof of Theorem~\ref{thm:compatible-polytope}] \label{thm:compatible-polytope:proof}
The assertion with $\QQ$-admissibility follows from the one with admissibility, since $\Sigma$ is $\QQ$-admissible for $(\lat,Q)$ if and only if it is admissible for $(\lat,nQ)$ for some positive integer $n$.
The theorem follows now from Propositions \ref{prop:admissible_then_realized} and \ref{prop:realized_then_admissible}.
\end{proof}

\section{Some classification results} \label{sec:classifications}

\subsection{A classification of polarized spherical varieties}

A $G$-morphism  between two given polarized $G$-varieties $(X_1,\lb_1)$ and $(X_2,\lb_2)$ is a pair $(f,\varphi)$ where $f:X_1\rightarrow X_2$ is a $G$-equivariant morphism 
and $\varphi: \lb_2\rightarrow f^*\lb_1$ is an isomorphism of $G$-linearized line bundles. 
As is well-known, such $G$-morphisms $(X_1,\lb_1) \to (X_2,\lb_2)$ are in natural bijective correspondence with $\widetilde{G}$-equivariant morphisms $\widetilde{X}_1 \to \widetilde{X}_2$ between the affine cones.

We first deduce a uniqueness statement for polarized spherical varieties from the following uniqueness theorem for affine spherical varieties, due to Losev (see also~\cite{ACF18} for another proof).
\begin{theorem}[{\cite[Theorem 1.2]{losev:knopconj}}] \label{thm:losev}
Two affine spherical $G$-varieties are $G$-equivariantly isomorphic if and only if 
they have the same weight monoid and the same set of spherical roots. 
\end{theorem}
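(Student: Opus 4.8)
The forward implication is immediate, since the weight monoid $\Gamma(X)$ and the set of spherical roots $\Sigma(X)$ are, by their very definitions, invariants of the abstract $G$-variety $X$. All the content is in the converse, which is genuinely deep; the plan is to deduce it from the structure of the Alexeev--Brion moduli scheme $\ms$ of affine multiplicity-free $G$-schemes with fixed weight monoid $\Gamma$ (see \cite{alexeev-brion:SSV}). Recall that $\ms$ is an affine scheme of finite type carrying an action of the adjoint torus $\Tad$, that every affine spherical $G$-variety $X$ with weight monoid $\Gamma$ determines a point $x_X\in\ms$, and that the assignment $X\mapsto x_X$ enjoys the following properties: (i) $X$ and $X'$ are $G$-isomorphic if and only if $x_X$ and $x_{X'}$ lie in the same $\Tad$-orbit; and (ii) with respect to the natural $\Tad$-grading of $\CC[\ms]$, the set $\Sigma(X)$ is precisely the \emph{support} of $x_X$, that is, the set of $\Tad$-weights on which $x_X$ has a nonzero coordinate. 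In particular, the horospherical variety $X_0$ with weight monoid $\Gamma$ (the unique such variety with $\Sigma=\emptyset$) corresponds to the unique $\Tad$-fixed point $0\in\ms$; moreover, Knop's canonical horospherical degeneration shows that $0$ lies in the closure of the $\Tad$-orbit of every point of $\ms$, and in fact a suitable one-parameter subgroup of $\Tad$ contracts all of $\ms$ to $0$.

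The crux is then Losev's theorem that $\ms$ is \emph{smooth}. Granting this, $\ms$ is a smooth affine $\Tad$-scheme contracted by a one-parameter subgroup of $\Tad$ to the fixed point $0$, hence it is $\Tad$-equivariantly isomorphic to its tangent space $T_0\ms$ with the linear $\Tad$-action. One computes $T_0\ms$ via the deformation theory of the $G$-algebra $\CC[X_0]$: the first-order multiplicity-free deformations that preserve the weight monoid $\Gamma$ form a $\Tad$-module whose weights are precisely the spherical roots compatible with $\Gamma$ in the sense of Definition~\ref{def_SR_comp_with_monoid}, each occurring with multiplicity one. Thus $\ms\cong\bigoplus_{\sigma\in\Sigma(\Gamma)}\CC_\sigma$ with $\Tad$ acting on the line $\CC_\sigma$ through the character $\sigma$. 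Writing $x_X=(x_\sigma)_{\sigma\in\Sigma(\Gamma)}$ in these coordinates, property (ii) reads $\{\sigma:x_\sigma\neq 0\}=\Sigma(X)$. Since the elements of $\Sigma(X)$ are linearly independent --- the valuation cone $\V(X)$ being cosimplicial with $-\Sigma(X)$ as its inward facet normals --- the subtorus of $\Tad$ acting on the coordinates indexed by $\Sigma(X)$ acts transitively on the stratum $\{x\in\ms:\supp(x)=\Sigma(X)\}$, which is therefore a single $\Tad$-orbit. Consequently $\Gamma(X)=\Gamma(X')$ and $\Sigma(X)=\Sigma(X')$ place $x_X$ and $x_{X'}$ in the same $\Tad$-orbit, and by (i) this gives $X\cong X'$.

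As anticipated, the main obstacle is the smoothness of $\ms$; this is the central technical result of \cite{losev:knopconj} and does not follow formally from the deformation-theoretic setup. Losev establishes it through a detailed analysis of the Hamiltonian geometry of the cotangent bundle $T^*X$ and of its invariant moment map, building on Knop's work, so as to produce enough unobstructed first-order deformations to annihilate the relevant obstruction classes; an alternative, more algebraic argument appears in \cite{ACF18}. Once smoothness is available, the remaining ingredients --- the identification of $\Tad$-orbits with $G$-isomorphism classes, the computation of $T_0\ms$, and the linear-algebra bookkeeping with spherical roots --- are essentially formal.
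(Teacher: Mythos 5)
First, a remark on the comparison: the paper does not prove this statement at all --- it is quoted from \cite{losev:knopconj}, with \cite{ACF18} mentioned as an alternative proof --- so there is no internal argument to measure yours against. Your sketch follows the moduli-theoretic route (closer to \cite{ACF18} than to Losev's original argument), and the general setup is correct: the scheme $\ms$, the $\Tad$-action, the identification of $\Tad$-orbits with isomorphism classes, and the contraction of $\ms$ onto the horospherical fixed point are all as you describe.

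The gap is precisely the step you call the crux: the moduli scheme $\ms$ is \emph{not} smooth in general, and its smoothness is not what is proved in \cite{losev:knopconj}. Since the horospherical point lies in the closure of every $\Tad$-orbit, $\ms$ is connected, so smoothness together with your contraction argument would force $\ms$ to be irreducible and $\Tad$-isomorphic to the single module $\bigoplus_{\sigma\in\Sigma(\wm)}\CC_\sigma$. But $\ms$ is in general reducible --- this is exactly the subject of \cite{ACF}, whose main theorem describes its (possibly several) irreducible components --- and a connected reducible scheme is singular where its components meet, in particular at the horospherical point. The combinatorial shadow of this reducibility is visible in the present paper: admissibility (Definition~\ref{def_AS}) is a genuine condition on \emph{subsets} of $\Sigma(\wm)$ and fails for some pairs of individually compatible roots, as in the $\SL_2\times \SL_2$ example following Theorem~\ref{thm:crit-convexhull}. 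So the identification of $\ms$ with its tangent space at the fixed point collapses, and with it your final orbit count. The correct substitute, which is what \cite{ACF} and \cite{ACF18} establish, is that each irreducible \emph{component} of $\ms$ is $\Tad$-equivariantly an affine space whose weights form a linearly independent set of spherical roots; one must then still argue that all points corresponding to spherical varieties with spherical roots exactly $\Sigma$ lie in a single orbit, which is where the admissibility combinatorics enter. Losev's own proof avoids $\ms$ altogether: he shows that the pair $(\wm(X),\Sigma(X))$ determines the full homogeneous spherical datum of the open orbit together with the colored data of the embedding, and then invokes his uniqueness theorem for spherical homogeneous spaces.
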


\begin{corollary}\label{cor:PSunique}
Two polarized spherical $G$-varieties $(X_1,\lb_1)$ and $(X_2,\lb_2)$ are $G$-\hspace{0pt}equivariantly isomorphic if and only if
$\lat(X_1)=\lat(X_2)$, $\Sigma(X_1)=\Sigma(X_2)$ and $Q(X_1,\lb_1)=Q(X_2,\lb_2)$.
\end{corollary}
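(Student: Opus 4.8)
The plan is to reduce the statement about polarized spherical $G$-varieties to Losev's uniqueness theorem (Theorem~\ref{thm:losev}) for their affine cones, using the dictionary between polarized $G$-varieties and affine $\widetilde G$-varieties recalled just before the statement. First I would note that the ``only if'' direction is immediate: a $G$-equivariant isomorphism $(X_1,\lb_1)\to(X_2,\lb_2)$ (together with the isomorphism of linearized line bundles) induces an isomorphism of the graded section rings $R(X_1,\lb_1)\cong R(X_2,\lb_2)$ as $\widetilde G$-algebras, hence a $\widetilde G$-equivariant isomorphism $\widetilde X_1\cong\widetilde X_2$ of the affine cones; this forces $\widetilde{\lat}(X_1,\lb_1)=\widetilde{\lat}(X_2,\lb_2)$ and $\Sigma_{\widetilde G}(\widetilde X_1)=\Sigma_{\widetilde G}(\widetilde X_2)$, and then $\lat(X_1)=\lat(X_2)$ via the exact sequence \eqref{eq:latticecone}, $\Sigma(X_1)=\Sigma(X_2)$ via Lemma~\ref{lemma:compare-spherical-roots}, and $Q(X_1,\lb_1)=Q(X_2,\lb_2)$ from \eqref{eq:conepolytope} (or directly from the definition of the momentum polytope).

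For the ``if'' direction, assume $\lat(X_1)=\lat(X_2)=:\lat$, $\Sigma(X_1)=\Sigma(X_2)=:\Sigma$ and $Q(X_1,\lb_1)=Q(X_2,\lb_2)=:Q$. The key point is to show the affine cones $\widetilde X_1$ and $\widetilde X_2$ have the same weight monoid and the same set of spherical roots, so that Theorem~\ref{thm:losev} yields a $\widetilde G$-equivariant isomorphism $\widetilde X_1\cong\widetilde X_2$, which descends to the desired isomorphism of polarized varieties by the correspondence recalled above. By Lemma~\ref{lemma:compare-spherical-roots} we already have $\Sigma_{\widetilde G}(\widetilde X_i)=\Sigma_G(X_i)=\Sigma$. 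The slightly delicate step is the weight monoid: by \eqref{eq:wm_affine_cone}, $\wm(\widetilde X_i)=\QQ_{\ge 0}(Q\times\{1\})\cap\widetilde\lat(X_i,\lb_i)$, so it suffices to show that the extended weight lattices coincide, i.e.\ $\widetilde\lat(X_1,\lb_1)=\widetilde\lat(X_2,\lb_2)$. For this I would invoke Corollary~\ref{cor:tildelattice}: each $X_i$ has a closed $G$-orbit $Y_i$ (since $X_i$ is projective), and $Q(Y_i,\lb_i|_{Y_i})=\{\lambda_{Y_i}\}$ is a vertex of $Q$; by Proposition~\ref{prop:PSVFundFacts}\ref{prop:PSVFundFacts:item:faces} together with Theorem~\ref{thm:Brion-Woodward}\ref{thm:Brion-Woodward:orbitface}, the set of such vertices is exactly the set of orbit vertices of $Q$ for the cone $\V=(-\Sigma)^\vee$, which depends only on $\lat$, $\Sigma$ and $Q$; in particular one may pick the \emph{same} vertex $\lambda_Y$ of $Q$ for both varieties, and then Corollary~\ref{cor:tildelattice} gives $\widetilde\lat(X_i,\lb_i)=(\lat\times\{0\})\oplus\ZZ(\lambda_Y,1)$ for $i=1,2$, so the two extended lattices agree.

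With $\wm(\widetilde X_1)=\wm(\widetilde X_2)$ and $\Sigma_{\widetilde G}(\widetilde X_1)=\Sigma_{\widetilde G}(\widetilde X_2)$ in hand, Theorem~\ref{thm:losev} provides a $\widetilde G$-equivariant isomorphism $\widetilde X_1\xrightarrow{\sim}\widetilde X_2$; since this isomorphism respects the $\GGm$-grading (any $\widetilde G$-equivariant morphism does, as $\GGm$ is a direct factor of $\widetilde G$), it is an isomorphism of graded rings $R(X_2,\lb_2)\cong R(X_1,\lb_1)$, which by the correspondence recalled before the corollary corresponds to a $G$-isomorphism $(X_1,\lb_1)\cong(X_2,\lb_2)$. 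The main obstacle I anticipate is precisely the verification that the extended weight lattices coincide: a priori the value of $\lambda_Y$ attached to a closed orbit could differ between $X_1$ and $X_2$, and one needs the combinatorial description of orbit vertices (Theorem~\ref{thm:Brion-Woodward}) to see that the \emph{set} of possible $\lambda_Y$ depends only on $(\lat,Q,\Sigma)$ and hence can be chosen consistently; everything else is a routine unwinding of the cone/affine-cone dictionary.
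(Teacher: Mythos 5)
Your proposal is correct and follows essentially the same route as the paper: reduce to Losev's uniqueness theorem for the affine cones by showing they share the same weight monoid, which you obtain exactly as the paper does, via Theorem~\ref{thm:Brion-Woodward} (the orbit vertices of $Q$ are determined by $(\lat,Q,\Sigma)$) combined with Corollary~\ref{cor:tildelattice} to match the extended weight lattices, and Lemma~\ref{lemma:compare-spherical-roots} to match the spherical roots of the cones. The only difference is that you also spell out the (routine) ``only if'' direction, which the paper omits.
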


\begin{proof}
We prove the ``if'' statement by applying  Theorem~\ref{thm:losev} to show that the affine cones $\widetilde{X}_1$ and $\widetilde{X}_2$ are $\widetilde G$-equivariantly isomorphic. 
First note that since $X_1$ and $X_2$ have the same weight lattice $\lat$ as well as the same set of spherical roots, they have the same valuation cone. Because these varieties (with the given polarizations) also have the same momentum
polytope $Q$, their closed $G$-orbits correspond to the same vertices of $Q$; see Theorem~\ref{thm:Brion-Woodward}. 
By Corollary~\ref{cor:tildelattice}, it follows that $\widetilde{X}_1$ and $\widetilde{X}_2$ have the same weight lattice $\widetilde\lat$ and, in turn, 
the same weight monoid $\QQ_{\ge 0}(Q \times \{1\})\cap\widetilde\lat$.
Finally, since any polarized spherical $G$-variety has the same set of spherical roots as its affine cone (Lemma~\ref{lemma:compare-spherical-roots}), Theorem~\ref{thm:losev} allows to conclude. 
\end{proof}

\begin{remark}  
In~\cite{losev:uniqueness}, one can find a slightly different proof of the above corollary. See Step 2 of the proof of Theorem 8.3 therein. 
\end{remark}

\begin{corollary}\label{cor:finitelymany}
There exist only finitely many $G$-isomorphism classes of spherical polarized $G$-varieties $X$ with prescribed weight lattice and momentum polytope.
\end{corollary}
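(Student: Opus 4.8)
The plan is to obtain this as an immediate consequence of the uniqueness statement in Corollary~\ref{cor:PSunique}, the necessary realizability condition in Proposition~\ref{prop:realized_then_admissible}, and the finiteness of the set $\Sigma(G)$ recalled just before Table~\ref{tab:sr}. No new ideas are needed; the corollary is purely an assembly of results already at our disposal.

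Concretely, I would fix the prescribed data: a sublattice $\lat \inn \wl$ and a rational convex polytope $Q$. Let $(X,\lb)$ be any polarized spherical $G$-variety with $\lat(X) = \lat$ and $Q(X,\lb) = Q$; if no such variety exists the statement is vacuous. For such an $X$, the translates $Q-p$ span $\lat_\QQ$ by \eqref{eq:QXLminusp}, so the pair $(\lat,Q)$ meets the standing hypothesis of Section~\ref{sec:characterizations_mp}, and Proposition~\ref{prop:realized_then_admissible} applies and yields $\Sigma(X) \inn \Sigma_{\QQ}(\lat,Q) \inn \Sigma(G)$. Since $\Sigma(G)$ is finite, there are only finitely many possibilities (at most $2^{|\Sigma(G)|}$) for the set $\Sigma(X)$.

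Finally, Corollary~\ref{cor:PSunique} tells us that a polarized spherical $G$-variety is determined up to $G$-equivariant isomorphism by the triple $(\lat(X),\Sigma(X),Q(X,\lb))$, so once $\lat$ and $Q$ are fixed the isomorphism class is determined by $\Sigma(X)$ alone. Combining the last two observations gives the finiteness claim.

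I do not expect any genuine obstacle here. The only minor point to handle with care is that ``prescribed weight lattice and momentum polytope'' does not presuppose that the pair $(\lat,Q)$ is actually realizable, but this is dealt with by the trivial (empty) case; once realizability is assumed, \eqref{eq:QXLminusp} guarantees that the hypotheses of Proposition~\ref{prop:realized_then_admissible} and Corollary~\ref{cor:PSunique} are in force.
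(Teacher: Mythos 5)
Your proof is correct and follows essentially the same route as the paper: the paper deduces the corollary directly from Corollary~\ref{cor:PSunique} together with the finiteness of $\Sigma(G)$. Your detour through Proposition~\ref{prop:realized_then_admissible} is harmless but unnecessary, since $\Sigma(X)\subseteq\Sigma(G)$ already holds by the very definition of $\Sigma(G)$ as the union of the sets of spherical roots of all spherical $G$-varieties.
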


\begin{proof}
The set of spherical roots of $G$ being finite, the corollary follows from Corollary~\ref{cor:PSunique}.
\end{proof}

Recall the notion of admissibility (see~Definition~\ref{def:admissible}). 
Let $\lat\subseteq\wl$ be a sublattice, $Q\subseteq\QQ_{\geq 0}\dw$ be a polytope such that $Q-\omega\subseteq \lat_\QQ$ is full dimensional for some $\omega\in Q$ and $\Sigma$ be a subset of $\Sigma(G)$.
We call the triple $(\lat, Q, \Sigma)$ a \textbf{$\QQ$-momentum triple of $G$} (resp. \textbf{momentum triple of $G$}) if $\Sigma\subseteq\Sigma_\QQ(\lat,Q)$ (resp. $\Sigma$ is admissible for the couple $(\lat,Q)$). 
Combining Corollary~\ref{cor:PSunique} and Theorem~\ref{thm:compatible-polytope}, we obtain a classification of polarized spherical $G$-varieties involving these triples.

\begin{theorem}\label{thm:classification-triples}
The map $(X,\lb)\mapsto (\lat(X), Q(X,\lb),\Sigma(X))$
is a bijection between the $G$-isomorphism classes of polarized spherical $G$-varieties and the momentum triples of $G$.
\end{theorem}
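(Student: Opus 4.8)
The plan is to show that the map $(X,\lb)\mapsto (\lat(X), Q(X,\lb),\Sigma(X))$ is well-defined, injective, and surjective onto the set of momentum triples of $G$, each of these three points being essentially a corollary of results already established in the excerpt.

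\emph{Well-definedness.} First I would note that for any polarized spherical $G$-variety $(X,\lb)$, the polytope $Q(X,\lb)$ is a rational convex polytope contained in $\QQp\dw$, and that by \eqref{eq:QXLminusp} the translate $Q(X,\lb)-\omega$ spans $\lat(X)_\QQ$ for any $\omega\in Q(X,\lb)$; hence $Q(X,\lb)-\omega$ is full dimensional in $\lat(X)_\QQ$, so the pair $(\lat(X),Q(X,\lb))$ is of the type for which admissibility is defined. Then Proposition~\ref{prop:realized_then_admissible} says precisely that $\Sigma(X)\inn\Sigma_\QQ(\lat(X),Q(X,\lb))$ and that $\Sigma(X)$ is admissible for $(\lat(X),Q(X,\lb))$, i.e.\ the triple $(\lat(X),Q(X,\lb),\Sigma(X))$ is a momentum triple of $G$. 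This shows the map lands in the claimed target set.

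\emph{Injectivity.} This is exactly Corollary~\ref{cor:PSunique}: two polarized spherical $G$-varieties with the same weight lattice, the same set of spherical roots, and the same momentum polytope are $G$-equivariantly isomorphic. So two polarized spherical $G$-varieties having the same image under the map are isomorphic, which is the required injectivity on isomorphism classes.

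\emph{Surjectivity.} Given a momentum triple $(\lat,Q,\Sigma)$ of $G$, by definition $\Sigma$ is admissible for $(\lat,Q)$, so Proposition~\ref{prop:admissible_then_realized} (equivalently, the implication $(1)\Rightarrow(2)$ of Theorem~\ref{thm:compatible-polytope}) produces a polarized spherical $G$-variety $(X,\lb)$ with $\lat(X)=\lat$, $\Sigma(X)=\Sigma$ and $Q(X,\lb)=Q$; its isomorphism class maps to $(\lat,Q,\Sigma)$.

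Since each ingredient has already been proved in the excerpt, there is no real obstacle here --- the only thing to be careful about is checking that the domain and codomain of the map are correctly matched, i.e.\ that every $(\lat(X),Q(X,\lb))$ arising from a polarized spherical variety genuinely satisfies the full-dimensionality hypothesis built into the definition of a momentum triple (handled by \eqref{eq:QXLminusp}), and conversely that Theorem~\ref{thm:compatible-polytope} is being invoked in the ``admissible'' rather than merely ``$\QQ$-admissible'' form so that one recovers $Q$ itself and not a multiple $nQ$. The argument is then simply the assembly of Proposition~\ref{prop:realized_then_admissible}, Proposition~\ref{prop:admissible_then_realized}, and Corollary~\ref{cor:PSunique}.
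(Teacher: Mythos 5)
Your proposal is correct and follows exactly the paper's route: the paper derives Theorem~\ref{thm:classification-triples} by combining Corollary~\ref{cor:PSunique} (injectivity) with Theorem~\ref{thm:compatible-polytope} (well-definedness and surjectivity, via Propositions~\ref{prop:realized_then_admissible} and~\ref{prop:admissible_then_realized}). Your write-up merely spells out these three steps, including the full-dimensionality check via \eqref{eq:QXLminusp}, which the paper leaves implicit.
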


\begin{remark}
Pasquier classified the polarized horospherical $G$-varieties by means of some quadruples that he calls moment quadruples; see~\cite[Corollary 2.10]{pasquier:survey}. Recall that a horospherical $G$-variety is a (spherical) $G$-variety such that the stabilizer $H$ of any point of its open $G$-orbit contains a maximal unipotent subgroup of $G$. Moreover, horospherical varieties do not have any spherical roots.
For a polarized horospherical $G$-variety $(X,D)$, Pasquier's moment quadruple  is the quadruple
$(P_X,M_X,Q(X,D),\widetilde Q(X,D))$ where $P_X$ is the parabolic subgroup of $G$ such that $G/H\rightarrow G/P_X$ is a torus fibration; $M_X$ is the sublattice of $\Chi(P_X)$ consisting of characters vanishing on $H$; $Q(X,D)$ is the momentum polytope of $(X,D)$ and 
$\widetilde{Q}(X,D)$ is the polytope $Q(X,D)-q$ where $q$ denotes the $B$-weight of the canonical section of $D$.
Note that $\widetilde{Q}(X,D)$ is contained in $(M_X)_\QQ$. Furthermore,
the datum $(P_X,M_X)$ is equivalent to the datum $\lat(X)$. These observations yield the connection with our momentum quadruples, more precisely with those such that $\Sigma=\emptyset$. 
\end{remark}

\subsection{A classification of all Fano spherical varieties}

Given a normal complex variety $X$, we denote its anti-canonical sheaf by $-K_X$.
If $X$ is projective and $-K_X$ is an ample Cartier divisor then $X$ is called a Fano variety.
In case $-K_X$ is ample but  only $\QQ$-Cartier, $X$ is called $\QQ$-Fano.

In this section, we establish a bijection between all Fano spherical $G$-varieties and peculiar momentum triples.
This result is derived from the classification obtained in Theorem~\ref{thm:classification-triples} together with the description of the anti-canonical sheaf of a projective spherical variety.

We start with the recollection of this description which essentially  follows from some results of Brion's (\cite[Theorem 4.2]{brion97}) and Luna's (\cite[\S 3.6]{luna97}); see~\cite{hof-gag} for details.

We shall need the notation set up in Section~\ref{section:coloredfan} and introduce some further ones. In particular, $\col(X)$ denotes the set of colors of $X$.
Let $P\subseteq G$ be the parabolic subgroup containing $B$ and stabilizing $\Delta(X)$ pointwise and $S^p$ be its corresponding set of simple roots. Observe that $S^p = \Spp(X)$.
By $\rho$ resp.\ $\rho_{S^p}$, we denote the half-sum of the positive roots of $G$, resp.\ of the root system generated by $S^p$. 
For $\alpha\in S$, let $P_\alpha\subseteq G$ be the corresponding minimal parabolic subgroup strictly containing $B$.

\begin{proposition}\label{prop:canonicalsheaf}
Let $X$ be a projective spherical $G$-variety and let $X_1,X_2,\ldots,X_n$ be the $G$-stable prime divisors of $X$. Then there exists $s \in H^0(X,-K_X)^{(B)} \setminus \{0\}$ of $B$-weight $2\rho-2\rho_{S^p}$ with
\begin{equation} \label{eq:div_antican_sheaf}
\operatorname{div}(s)=\sum_{1\leq i\leq n} X_i+\sum_{D\in\Delta(X)} n_D D
\end{equation}
where
\begin{enumerate}
\item $n_D=1$ for $D\in\Delta(X)$ such that $P_\alpha D\neq D$ for some $\alpha \in \sr$ with $\alpha\in\Sigma(X)\cup \frac{1}{2}\Sigma(X)$.

\item $n_D=\left\langle 2\rho-2\rho_{S^p},\alpha^\vee\right\rangle$ for $D\in\Delta(X)$ such that $P_\alpha D\neq D$ where $\alpha \in \sr$ with $\alpha\notin\Sigma(X)\cup \frac{1}{2}\Sigma(X)$.
\end{enumerate}
\end{proposition}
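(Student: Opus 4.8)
The plan is to reduce everything to the description of the anticanonical divisor on the open $G$-orbit and then extend it to all of $X$. First I would recall that $X$ is covered, up to a subset of codimension $\geq 2$, by its open $B$-orbit $X_0^B$ together with the $B$-stable prime divisors; equivalently, writing $X^\circ$ for the union of $X_0^B$ and all $D \in \divB(X)$, the complement of $X^\circ$ in $X$ has codimension at least $2$, so it suffices to exhibit a rational section $s$ of $-K_X$, compute its divisor on $X^\circ$, and observe that the coefficients along the finitely many $B$-stable prime divisors $X_1, \ldots, X_n$ (which are $G$-stable) and $D \in \Delta(X)$ determine $\operatorname{div}(s)$ entirely. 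The section $s$ itself I would produce as a $B$-eigensection of $-K_X$ by the standard fact (see Brion~\cite{brion97}, Luna~\cite{luna97}) that the canonical divisor of a projective spherical variety admits a $B$-semiinvariant rational section whose $B$-weight on the open orbit is forced by the behaviour of the top exterior power of the cotangent space; this is where the weight $2\rho - 2\rho_{S^p}$ enters, via the identification $X_0^B \cong$ an open subset of the affine space $\mathfrak{g}/\mathfrak{p}^- \times (\text{torus part})$ and the computation of the $T$-action on $\bigwedge^{\mathrm{top}}(\mathfrak{g}/\mathfrak{p})^*$.

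Next I would treat the two types of coefficients separately. The coefficient $1$ along each $G$-stable prime divisor $X_i$ comes from the fact that, near the generic point of $X_i$, the variety $X$ looks like a spherical variety whose colored fan has a one-dimensional colored cone with no colors attached; the local computation of the order of vanishing of $s$ there is exactly the toric-type computation giving coefficient $1$, and it is independent of the spherical roots. For the colors $D \in \Delta(X)$, the coefficient $n_D$ is governed by how $D$ behaves under the minimal parabolics $P_\alpha$: if $D$ is moved by some $\alpha \in S$ with $\alpha \in \Sigma(X) \cup \frac{1}{2}\Sigma(X)$, then the relevant $\mathrm{SL}_2$ or $\mathrm{PGL}_2$-calculation (the rank-one case) forces $n_D = 1$; otherwise $D$ is moved by some $\alpha \notin \Sigma(X) \cup \frac{1}{2}\Sigma(X)$ and the corresponding $P_\alpha/B \cong \mathbb{P}^1$-computation of the order of vanishing of $s$ yields $n_D = \langle 2\rho - 2\rho_{S^p}, \alpha^\vee \rangle$. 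Here one must use \eqref{eq:color_functional} and \eqref{eq:color_moved_by_two} to check that this is well-defined, i.e.\ independent of the choice of simple root $\alpha$ moving $D$ when two such roots exist, and compatible with $\rho(D)$.

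The main obstacle I expect is the bookkeeping in the color case: matching the abstract Luna--Vust data $\rho(D)$, the parabolic $P = P(\Delta(X))$ with $S^p = \Spp(X)$, and the precise normalization of the section $s$ so that its $B$-weight is exactly $2\rho - 2\rho_{S^p}$ (and not merely a positive multiple or a translate), and then verifying that the local computation near each $D$ produces precisely the stated coefficient rather than something off by a factor related to whether $2\alpha$ or $\alpha+\alpha'$-type spherical roots are present. Since the statement explicitly says this ``essentially follows'' from Brion~\cite{brion97} and Luna~\cite{luna97} and refers to~\cite{hof-gag} for details, I would organize the proof as: (i) quote the existence of $s$ and its $B$-weight from \loccit; (ii) quote the formula for $\operatorname{div}(-K_X)$ restricted to the open orbit; (iii) carry out the two local coefficient computations, citing the rank-one classification to handle the dichotomy on $\Sigma(X) \cup \frac12\Sigma(X)$; and (iv) conclude by the codimension-$2$ argument that these coefficients determine $\operatorname{div}(s)$.
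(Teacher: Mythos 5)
The paper itself offers no proof of Proposition~\ref{prop:canonicalsheaf}: it is stated as a recollection, with the content attributed to \cite[Theorem 4.2]{brion97}, \cite[\S 3.6]{luna97} and \cite{hof-gag}. Your outline reproduces the standard argument from those sources in the correct order: a $B$-eigensection of $-K_X$ has divisor supported on the $B$-stable prime divisors (the complement of the open $B$-orbit together with these divisors has codimension at least $2$, so only the coefficients along the $X_i$ and the colors matter); the weight $2\rho-2\rho_{S^p}$ is the sum of the roots in $R^+\setminus R^+_{S^p}$ coming from the local structure theorem; the coefficient $1$ along $G$-stable divisors is Brion's computation; and the dichotomy for colors is governed by whether a simple root moving $D$ lies in $\Sigma(X)\cup\frac12\Sigma(X)$, with well-definedness checked via \eqref{eq:color_functional} and \eqref{eq:color_moved_by_two}. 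So the route is the same as the one the paper points to. Be aware, though, that as written your argument is a plan rather than a proof: the two local coefficient computations in step (iii) --- which are the actual content of the proposition beyond Brion's weaker statement $n_D\geq 1$ --- are only named (``the rank-one calculation forces $n_D=1$'', ``the $P_\alpha/B\cong\PP^1$ computation yields $\langle 2\rho-2\rho_{S^p},\alpha^\vee\rangle$'') and not carried out; if you were not allowed to cite \cite{hof-gag} for these, you would still have to do them. Since the paper likewise defers exactly these points to the references, this is consistent with its treatment.
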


A reflexive $\QQ$-momentum triple is a $\QQ$-momentum triple where the  numbers $m_{F,\msw}^\Sigma$ are prescribed. Here is the precise definition.

\begin{definition} \label{def:reflexivepol}
A $\QQ$-momentum triple $(\lat, Q,\Sigma)$ is called \textbf{$\QQ$-reflexive} if $\msw:=2\rho-2\rho_{\Spp(Q)}\in Q$, and the following properties are satisfied:
\begin{enumerate}[(1)]
\item If $F$ is a facet of $Q$ such that $\left\langle\rho_F,\sigma\right\rangle> 0$ for some $\sigma\in S\cap\Sigma$, then $m_{F, \msw}^\Sigma=1$;
\item If $F$ is a facet of $Q$ such that $\left\langle\rho_F,\sigma\right\rangle \leq 0$ for all $\sigma \in \Sigma$, and $\left\langle\alpha^{\vee},F\right\rangle \neq \{0\}$ for all $\alpha \in S\setminus \Spp(Q)$, then $m_{F, \msw}^\Sigma=1$. 
\end{enumerate}
A momentum triple $(\lat, Q,\Sigma)$ is called \textbf{reflexive} if it is $\QQ$-reflexive and it satisfies the following property:
\begin{enumerate}[(3)]
\item There is an orbit vertex $\msv$ of $Q$ such that $\msv-\msw \in \lat$.  \label{w_integral}
\end{enumerate}
\end{definition}

\begin{remark}
\begin{enumerate}[(a)]
\item The integrality condition \ref{w_integral} in Definition~\ref{def:reflexivepol} does not follow from the integrality conditions on orbit vertices in Definition~\ref{def:admissible}, as the following example shows. Take $G=\Sp(4)$ with $\alpha$ its short simple root and $\beta$ the long one. The weight $\msw$ as in Definition~\ref{def:reflexivepol} is then $\msw=2\rho=4\alpha+3\beta$. Let $Q$ be the convex hull of $0, 4(\alpha+\beta)$ and $8\alpha + 4\beta$ and set $\lat:=\Span_\ZZ\{\alpha+\beta,2\alpha\}$. Then $(\lat,Q,\emptyset)$ is a momentum triple, which is $\QQ$-reflexive, but not reflexive: the three vertices of $Q$, which are all orbit vertices, belong to $\lat$, while $\msw$ does not.
\item There is some redundancy in the definition of a reflexive momentum triple: a $\QQ$-reflexive $\QQ$-momentum triple $(\lat,Q,\Sigma)$ is a reflexive momentum triple if and only if $\msv - \msw \in \lat$ for all orbit vertices $\msv$ of $Q$.
\item If $(\lat,Q,\Sigma)$ is a $\QQ$-reflexive $\QQ$-momentum triple, which is not a reflexive
momentum triple, then there exists $n \in \ZZ_{>0}$, such that $(\lat,nQ,\Sigma)$ is a momentum triple, but no $r \in \ZZ_{>0}$ such that $(\lat,rQ,\Sigma)$ is a reflexive one. 
\end{enumerate}
\end{remark}

\begin{theorem}\label{thm:Fano}
The map $X\mapsto (\lat(X),Q(X,-K_X),\Sigma(X))$ is a bijection between the $\QQ$-Fano spherical $G$-varieties (resp. Fano spherical $G$-varieties) 
and the $\QQ$-reflexive $\QQ$-momentum triples of $G$ (resp. reflexive momentum triples of $G$).
\end{theorem}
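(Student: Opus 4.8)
The plan is to combine Theorem~\ref{thm:classification-triples} with Proposition~\ref{prop:canonicalsheaf}, reducing the statement to a dictionary between the combinatorial data of $(X,-K_X)$ and the two (resp.\ three) conditions defining a ($\QQ$-)reflexive momentum triple. First I would observe that by Theorem~\ref{thm:classification-triples} the assignment $X \mapsto (\lat(X),Q(X,-K_X),\Sigma(X))$ is injective on the class of $\QQ$-Fano (resp.\ Fano) spherical $G$-varieties once we know $-K_X$ is $\QQ$-ample (resp.\ ample Cartier), since then $(X,-K_X)$ is a ($\QQ$-)polarized spherical variety and the momentum triple determines $(X,-K_X)$ up to $G$-isomorphism, hence determines $X$. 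So the real content is to characterize, among all momentum triples $(\lat,Q,\Sigma)$, those that arise as $(\lat(X),Q(X,-K_X),\Sigma(X))$ for some $\QQ$-Fano (resp.\ Fano) $X$.

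The key step is the following translation. Given a polarized spherical $G$-variety $(X,\lb)$ with $\lb$ ample Cartier, Proposition~\ref{prop:Pbrion} expresses $Q(X,\lb)$ using a $B$-eigensection $s \in H^0(X,\lb)^{(B)}$: writing $\chi(s)$ for its weight, the facets of $Q$ coming from $G$-stable prime divisors $X_i$ correspond to the inequalities $\langle \rho(X_i),\xi\rangle + v_{X_i}(s)\ge 0$, while the facets coming from colors are governed by $\rho(D)$ and $v_D(s)$. Now take $\lb = -K_X$; then Proposition~\ref{prop:canonicalsheaf} gives a distinguished $s$ with $\chi(s) = \msw := 2\rho - 2\rho_{\Spp(X)}$ and with $v_{X_i}(s) = 1$ for all $i$ and prescribed $v_D(s) = n_D$ for colors $D$. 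Translating $v_{X_i}(s)=1$ through Proposition~\ref{prop:Pbrion} and the identification in Lemma~\ref{lem:aux-ray}, and using that for $-K_X$ ample the polytope has $\msw$ in its interior shifted correctly, one gets precisely $m_{F,\msw}^\Sigma = 1$ for every facet $F$ corresponding to a $G$-stable divisor $X_i$. The point is then to sort the facets of $Q$ into those of "$G$-stable type" and those of "color type": a facet $F$ with $\langle\rho_F,\sigma\rangle > 0$ for some $\sigma\in S\cap\Sigma$ is moved by a simple root in $\Sigma\cup\frac12\Sigma$, hence by Proposition~\ref{prop:canonicalsheaf}(1) the corresponding color has $n_D = 1$, which forces $m_{F,\msw}^\Sigma=1$; this is condition~(1) of Definition~\ref{def:reflexivepol}. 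A facet with $\langle\rho_F,\sigma\rangle\le 0$ for all $\sigma\in\Sigma$ and $\langle\alpha^\vee,F\rangle\ne\{0\}$ for all $\alpha\in S\setminus\Spp(Q)$ is a $G$-stable facet (it is not of the form $Q\cap\{\langle\alpha^\vee,-\rangle = 0\}$ and cannot correspond to a color moved by a root outside $\Sigma\cup\frac12\Sigma$), so again $m_{F,\msw}^\Sigma = 1$; this is condition~(2). Conversely, given a ($\QQ$-)reflexive momentum triple, build $(X,\lb)$ from Theorem~\ref{thm:compatible-polytope} and verify, using Proposition~\ref{prop:canonicalsheaf} and Proposition~\ref{prop:Pbrion} again, that $\lb \cong -K_X$ (they are both determined up to $\QQ$-linear equivalence by having the same $B$-eigensection divisor and $B$-weight on the spherical variety $X$, since $\Pic_G(X)$ embeds into the $B$-divisor data), hence $X$ is $\QQ$-Fano; the Cartier/integrality refinement distinguishing Fano from $\QQ$-Fano comes from requiring the section $s$ to be an honest (not fractional) section, which is exactly the integrality condition $\msv - \msw\in\lat$ of Definition~\ref{def:reflexivepol}\ref{w_integral} together with the ambient integrality already built into being a momentum triple.

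More precisely, I would organize the proof as follows. (i) Well-definedness: show that if $X$ is $\QQ$-Fano spherical then $(X,-K_X)$ is a $\QQ$-polarized spherical variety, so $(\lat(X),Q(X,-K_X),\Sigma(X))$ is a $\QQ$-momentum triple by Theorem~\ref{thm:compatible-polytope}; then check conditions~(1),(2) of Definition~\ref{def:reflexivepol} from Proposition~\ref{prop:canonicalsheaf} and Proposition~\ref{prop:Pbrion} as sketched above, noting $\msw\in Q$ because $\msw = \chi(s)$ with $s\in H^0(X,-K_X)^{(B)}\setminus\{0\}$ so $\msw\in Q(X,-K_X)$; in the Fano case add condition~(3) using that $-K_X$ is Cartier, i.e.\ $(\msw,1)$ lies in the extended weight lattice, which by Corollary~\ref{cor:tildelattice} means $\msw - \lambda_Y \in \lat(X)$ for the closed orbit $Y$, and $\lambda_Y = \msv$ is an orbit vertex. (ii) Injectivity: immediate from Corollary~\ref{cor:PSunique}. (iii) Surjectivity: given a ($\QQ$-)reflexive momentum triple, apply Theorem~\ref{thm:compatible-polytope} to get a ($\QQ$-)polarized spherical $(X,\lb)$ realizing $(\lat,Q,\Sigma)$ (in the $\QQ$-case replacing $Q$ by $nQ$ and $\lb$ by $\lb^{n}$, then dividing by $n$ in $\Pic_G(X)_\QQ$), and then verify $\lb \sim_\QQ -K_X$ by comparing the $B$-eigensection data: by Proposition~\ref{prop:Pbrion} applied to $\lb$ and by Proposition~\ref{prop:canonicalsheaf} applied to $-K_X$, both line bundles have a $B$-eigensection whose divisor is $\sum X_i + \sum n_D D$ with the same $n_D$ and the same $B$-weight $\msw$, and on a spherical variety a $G$-linearized line bundle is determined by such data up to $\QQ$-linear equivalence (and up to genuine isomorphism when the data is integral), so $X$ is $\QQ$-Fano (resp.\ Fano, using condition~(3) to ensure Cartier-ness).

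The main obstacle I expect is the precise bookkeeping in step (iii): matching the coefficients $n_D$ on colors between "what Proposition~\ref{prop:canonicalsheaf} forces for $-K_X$" and "what the reflexivity conditions (1),(2) impose on the facet constants $m_{F,\msw}^\Sigma$ of the polytope $Q$ realized by $\lb$" — in particular handling the case where a single color $D$ is moved by two simple roots (cf.\ \eqref{eq:color_moved_by_two}) and the case distinction in Proposition~\ref{prop:canonicalsheaf}(1)–(2) between roots in $\Sigma\cup\frac12\Sigma$ and outside, and making sure the rescaling $\rho_F \rightsquigarrow \rho_F^\Sigma$, $m_{F,\omega}\rightsquigarrow m_{F,\omega}^\Sigma$ in Definition~\ref{def:admissible} is compatible with how $v_D(s)$ enters Proposition~\ref{prop:Pbrion}. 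A secondary subtlety is the $\QQ$-Fano vs.\ Fano dichotomy: one must check that condition~\ref{w_integral} of Definition~\ref{def:reflexivepol} is equivalent to $(\msw,1)\in\widetilde\lat(X,-K_X)$, i.e.\ to $-K_X$ being Cartier rather than merely $\QQ$-Cartier, which uses Corollary~\ref{cor:tildelattice} and the fact that a Weil divisor on a spherical variety supported on $G$-stable divisors and colors is Cartier iff its associated piecewise-linear function on the colored fan is integral — this is exactly encoded in $(\msw,1)$ lying in the extended weight lattice.
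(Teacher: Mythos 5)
Your proposal is correct and follows essentially the same route as the paper: well-definedness via Proposition~\ref{prop:canonicalsheaf} together with Theorem~\ref{thm:classification-triples}, injectivity from that classification, and surjectivity by realizing the triple as a polarized spherical variety $(X,\lb)$ and identifying $\lb$ with $-K_X$ (resp.\ $-nK_X$) by matching the divisor and $B$-weight of the canonical anticanonical section. The extra bookkeeping you flag (sorting facets into color type versus $G$-stable type, and the Cartier condition $(\msw,1)\in\widetilde\lat$ via Corollary~\ref{cor:tildelattice}) is exactly the content the paper's proof leaves implicit.
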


\begin{proof}
First, note that if $X$ is a spherical ($\QQ$-)Fano $G$-variety then $(\lat(X),Q(X,-K_X),\Sigma(X))$ is indeed a ($\QQ$-)reflexive ($\QQ$-)momentum triple thanks to Proposition~\ref{prop:canonicalsheaf} and Theorem~\ref{thm:classification-triples}. The injectivity of the given mapping is part of Theorem~\ref{thm:classification-triples}. 
To prove its surjectivity, consider any ($\QQ$-)reflexive ($\QQ$-)momentum triple $(\lat, Q,\Sigma)$. Let $(X,\lb)$ be the corresponding polarized spherical $G$-variety given by Theorem~\ref{thm:classification-triples}. 
In particular, we have $Q(X,\lb)=Q$ (resp.\ $Q(X,\lb)=nQ$ for some $n\in\NN$) and there exists $r \in H^{0}(X,\lb)^{(B)}\setminus\{0\}$ such that $\operatorname{div}(r)$ (resp. $\frac{1}{n} \operatorname{div}(r)$) is equal to $\operatorname{div}(s)$ as in \eqref{eq:div_antican_sheaf}. 
Consequently  $\lb \cong -K_X$ (resp.\ $\lb \cong -nK_X$) as invertible sheaves on $X$, and $-K_X$ (resp.\ $-nK_X$) is an ample line bundle. Finally, $Q(X,-K_X)=Q$ thanks to Proposition~\ref{prop:Pbrion} and Proposition~\ref{prop:canonicalsheaf}.  
\end{proof}

\begin{remark} \label{rmk:Fano}
In~\cite{hof-gag}, generalizing the work of Pasquier's \cite{pasquier08}, Gagliardi and Hofscheier classify the Fano spherical embeddings of a given spherical homogeneous space $G/H$, 
in terms of so-called $G/H$-reflexive polytopes (see \loccit\ for the definition).
The correspondence is given as follows. 
Let $X$ be a such an embedding and $X_1,\ldots, X_n$ be the $G$-stable prime divisors of $X$.
Then $X\mapsto \mathcal Q_X$ where $\mathcal Q_X$ is the convex hull of the $\rho(D)/n_D$ and the $\rho_{X_i}$, for $D\in\Delta(X)$ and $1\leq i\leq n$. In particular, $\mathcal Q_X\subseteq N(G/H)_\QQ$.

The polytopes $\mathcal Q_X$ were introduced by Alexeev and Brion \cite{AB04}; for $X$ a Fano spherical $G$-variety,
it turns out that the momentum polytope of $(X,-K_X)$ is precisely $\mathcal Q_X^*+\msw$  where $\mathcal Q_X^*$ denotes the dual of $\mathcal Q_X$ and $\msw$ is the $B$-weight of the canonical section of $-K_X$ (see Proposition~\ref{prop:Pbrion}).
This yields the connection with the work of Pasquier, Gagliardi and Hofscheier.
\end{remark}

\section{Momentum polytopes of smooth polarized spherical varieties} \label{sec:smoothness}

In this section we will specialize the combinatorial smoothness criterion in \cite{camus} to our setting. It allows one to decide whether an admissible polytope is the momentum polytope of a \emph{smooth} polarized spherical variety. We will use the exposition of the criterion given in \cite[Section 3]{PVS}.

To state Camus' criterion we need a few definitions and notations. We start with spherically closed spherical roots, and the notion of socle. For the motivation behind the next definition, we refer to \cite[Proposition~2.7]{PVS}.

\begin{definition} 
Let $X$ be a spherical $G$-variety. We define the set $\Sigma^{sc}(X)$ of {\em spherically closed spherical roots} of $X$, as the set obtained from $\Sigma(X)$ by replacing $\sigma \in \Sigma(X)$ with $2\sigma$ exactly when it satisfies any one of the following conditions:
\begin{enumerate}
\item\label{prop:doubling_sr:B}  $\sigma = \alpha_1+\ldots+\alpha_n$, where $\{\alpha_1,\ldots,\alpha_n\}\inn S$ has type\footnote{Simple roots are numbered here according to the usual Bourbaki notation \cite{bbki}.} $\sB_n,$ with $n \geq 2$, and $\alpha_i\in \Spp(X)$ for all $i\in\{2,3,\ldots,n\}$,
\item\label{prop:doubling_sr:G} $\sigma = 2\alpha_1+\alpha_2$, where $\{\alpha_1,\alpha_2\}\inn \sr$ has type $\sG_2$,
\item\label{prop:doubling_sr:root} $\sigma$ is not in the root lattice of $G$.
\end{enumerate}
\end{definition}
We observe that  $\Sigma(X)\cap \sr = \Sigma^{sc}(X) \cap \sr$ and introduce the following notation:
\begin{align*}
\A(X,\alpha) &:= \{D \in \col(X) \colon \text{$D$ is moved by $\alpha$}\}\ \text{ for every  $\alpha \in \Sigma(X) \cap \sr$}; \\
\A(X) &:= \cup_{\alpha \in \sr\cap \Sigma(X)}\, \A(X,\alpha).
\end{align*}

\begin{definition}[{\cite{camus}}] \label{def:socle}
Let $X$ be a spherical $G$-variety with a unique closed $G$-orbit $Y$ and write  $\msV_X$ for the set of $G$-stable prime divisors of $X$. 
The \textbf{socle} of $X$ is
\[
\soc(X):=(\sr,\Spp(X), \Sigma^{sc}(X), \A(X), \D_Y, \msV_X, \rho'_X\colon(\D_Y \cup \msV_X) \to (\ZZ\Sigma^{sc}(X))^*)
\]
where $\rho'_X(D):=\rho_X(D)|_{\ZZ\Sigma^{sc}(X)}$ for all $D\in \D_Y \cup \msV_X$.
\end{definition}

Suppose that $(X,\lb)$ is a polarized $G$-variety, and let $Q=Q(X,\lb)$ be its momentum polytope. Recall from Theorem~\ref{thm:Brion-Woodward} and Proposition~\ref{prop:PSVFundFacts}-\ref{prop:PSVFundFacts:item:mompolclosedorbit} that the closed $G$-orbits in $X$ correspond to the orbit vertices $\msv$ of $Q$, and that these correspond to the maximal colored cones $(\C(\msv),\D(\msv))$ in the colored fan of $X$ (see \cite[Lemma 3.2]{knop:LV}).

\begin{definition} \label{def:locsoc}
Let $(X,\lb)$ be a polarized $G$-variety with momentum polytope $Q$, and let $\msv$ be an orbit vertex of $Q$. We define the following:
\begin{enumerate}[1.]
\item $S(\msv):=\{\alpha \in S \colon \text{ if $D \in \col(X)$ and $\alpha$ moves $D$, then $D \in \D(\msv)$}\}.$
\item $\B(\msv):=$ the set of primitive elements in $\Hom_{\ZZ}(\lat(X),\ZZ)$ that lie on extremal rays of $\C(\msv)$ which do not contain any element of $\{\rho_X(D) \colon D \in \D(\msv)\}$. 
\item The \emph{localized socle of $X$ at $\msv$} is
\[
\overline{\soc}(X(\msv)) := (S(\msv), S(\msv) \cap \Spp(Q), \Sigma^{sc}(X)\cap \ZZ S(\msv),\overline{\A(\msv)},\overline{\D(\msv)},\B(\msv) \cup (\D(\msv) \smallsetminus   \overline{\D(\msv)}), \overline{\rho}),
\]
where 
\begin{itemize}
\item $\overline{\A(\msv)} = \bigcup_{\alpha\in S(\msv)\cap \Sigma^{sc}(X)} \A(X,\alpha)$;
\item $\overline{\D(\msv)}=\{D \in \D(\msv) \colon \text{$D$ is moved by some $\alpha$ in $S(\msv)$}\}$; and
\item $\overline{\rho}: (\D(\msv) \cup \B(\msv)) \to [\ZZ(\Sigma^{sc}(X)\cap \ZZ S(\msv))]^*$ is defined by the fact that  $\overline{\rho}(D)$ is the restriction of $\rho_X(D)$ to $\ZZ(\Sigma^{sc}(X)\cap \ZZ S(\msv))$ for every $D \in \D(\msv) \cup \B(\msv)$.
\end{itemize}
\end{enumerate}
\end{definition}

\begin{remark}
Thanks to \cite[Lemma 2.4]{knop:LV}, the set $\B(\msv)$ is (the image in $N(X)$ of) the set of valuations of $G$-stable prime divisors on $X$ that contain the orbit corresponding to $\msv$.
\end{remark}

Since $X$ is smooth if and only if it is smooth along each of its closed orbits, Camus' smoothness criterion, adapted to our setting, reads as follows. 
\begin{proposition}[{\cite{camus}}] \label{prop:smoothness}
Suppose $(X,\lb)$ is a polarized $G$-variety, and let $Q=Q(X,\lb)$ be its momentum polytope. Then $X$ is smooth if and only if for every orbit vertex $\msv$ of $Q$ we have:
\begin{enumerate}[(a)]
\item the $|\D(\msv)\cup\B(\msv)|$-tuple $(\rho_X(D))_{D\in \D(\msv)\cup\B(\msv)}$ is a basis of $\lat(X)^*$; and \label{prop:smoothness:item1}
\item $\overline{\soc}(X(\msv))$ is the socle of a spherical module. \label{prop:smoothness:item2}
\end{enumerate}
\end{proposition}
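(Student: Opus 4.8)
The plan is to reduce the smoothness criterion for a polarized spherical $G$-variety $(X,\lb)$ to the Luna--Vust-style local statement ``$X$ is smooth at a point if and only if it is smooth in a neighbourhood of the closed orbit in the $G$-stable affine chart containing that point,'' and then to identify each such chart with an affine spherical variety whose smoothness is governed by Camus' criterion via its (localized) socle.

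First I would recall, as is stated in the excerpt just before Proposition~\ref{prop:smoothness}, that the closed $G$-orbits of $X$ are in bijection with the orbit vertices $\msv$ of $Q=Q(X,\lb)$, and that under this bijection the orbit corresponding to $\msv$ has associated maximal colored cone $(\C(\msv),\D(\msv))$ in $\F(X)$ (this is Theorem~\ref{thm:Brion-Woodward}\ref{thm:Brion-Woodward:orbitface} together with Proposition~\ref{prop:PSVFundFacts}\ref{prop:PSVFundFacts:item:mompolclosedorbit} and \cite[Lemma~3.2]{knop:LV}). Since smoothness is a local (in fact open) condition and every point of $X$ lies in the closure of some orbit whose own closure meets a closed orbit (by Theorem~\ref{thm:orbits-vs-faces} the orbit closures correspond to faces of colored cones, and every colored cone is a face of a maximal one), $X$ is smooth if and only if it is smooth along each closed $G$-orbit $Y_\msv$. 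Next, for a fixed orbit vertex $\msv$, I would pass to the $G$-stable affine open subvariety $X_{\msv}\subseteq X$ whose colored fan is the ``star'' of $(\C(\msv),\D(\msv))$, i.e.\ the simplicial-type affine spherical $G$-variety having this colored cone as its unique maximal cone; this $X_\msv$ is affine spherical with a unique closed orbit $Y_\msv$, and $X$ is smooth along $Y_\msv$ iff $X_\msv$ is smooth. The combinatorial invariants of $X_\msv$ — its colors $\D(\msv)$, its $G$-stable divisors $\B(\msv)$ (identified, via the remark after Definition~\ref{def:locsoc} and \cite[Lemma~2.4]{knop:LV}, with the valuations of $G$-stable divisors through $Y_\msv$), its weight lattice $\lat(X)$, its spherically closed spherical roots, and the maps $\rho_X$ — are exactly the data appearing in $\soc(X_\msv)$, which after localizing to the support $S(\msv)$ of the relevant simple roots becomes the \emph{localized socle} $\overline{\soc}(X(\msv))$ of Definition~\ref{def:locsoc}.

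The heart of the argument is then Camus' theorem for affine spherical varieties with a unique closed orbit: such a variety $Z$ is smooth if and only if (i) the family $(\rho_Z(D))_{D\in\D_Y\cup\msV_Z}$ of functionals attached to the colors and $G$-stable divisors containing the closed orbit forms a $\ZZ$-basis of $\lat(Z)^*$, and (ii) the socle $\soc(Z)$ — or, after the standard reduction that discards the simple roots and spherical roots not supported on $S(\msv)$, the localized socle — coincides with the socle of a spherical $G$-module (equivalently, of the affine spherical variety which is a product of a torus and a spherical vector space). Applying this to $Z=X_\msv$ and translating the two conditions through the dictionary above yields precisely conditions \ref{prop:smoothness:item1} and \ref{prop:smoothness:item2} of Proposition~\ref{prop:smoothness} at the orbit vertex $\msv$. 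Condition (i) becomes the statement that $(\rho_X(D))_{D\in\D(\msv)\cup\B(\msv)}$ is a basis of $\lat(X)^*$, since the colors of $X_\msv$ through $Y_\msv$ are $\D(\msv)$ and the $G$-stable divisors through $Y_\msv$ are exactly $\B(\msv)$; condition (ii) becomes the statement that $\overline{\soc}(X(\msv))$ is the socle of a spherical module, because the localization of the socle to $\ZZ S(\msv)$ is what encodes the smoothness of $X_\msv$ along $Y_\msv$ (the part of the geometry transverse to the torus directions being automatically smooth once (i) holds). Conversely, if both conditions hold at every orbit vertex, each $X_\msv$ is smooth, hence $X$ is smooth along every closed orbit, hence everywhere.

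The main obstacle — and the place where genuine work is needed rather than bookkeeping — is the passage from Camus' criterion as stated for affine spherical varieties in \cite{camus} (or in the exposition of \cite[Section~3]{PVS}) to the localized form needed here: one must check carefully that replacing the full socle by the localized socle $\overline{\soc}(X(\msv))$ does not lose information, i.e.\ that the simple roots outside $S(\msv)$ and the spherically closed spherical roots outside $\ZZ S(\msv)$ are irrelevant to smoothness along $Y_\msv$. This is precisely the content of the ``localization'' lemmas one extracts from \loccit, and it rests on the fact that in a neighbourhood of the closed orbit the variety looks like a product involving a parabolic induction from a smaller group whose Levi has root system supported on $S(\msv)$; the colors not moved by $S(\msv)$ and the $G$-stable divisors not containing $Y_\msv$ drop out of the local picture. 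I would cite \cite{camus} and \cite[Section~3]{PVS} for this reduction, noting as the proposition's statement already does that the criterion is ``adapted'' from Camus' work, and leaving the verification that our $\overline{\soc}(X(\msv))$ matches Camus' localized invariant to a careful but routine comparison of definitions.

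\begin{proof}
Since smoothness is an open condition and, by Theorem~\ref{thm:orbits-vs-faces}, every $G$-orbit of $X$ has in its closure a closed $G$-orbit, $X$ is smooth if and only if it is smooth along each of its closed $G$-orbits. By Theorem~\ref{thm:Brion-Woodward}\ref{thm:Brion-Woodward:orbitface} and Proposition~\ref{prop:PSVFundFacts}\ref{prop:PSVFundFacts:item:mompolclosedorbit}, together with \cite[Lemma~3.2]{knop:LV}, the closed $G$-orbits of $X$ are in bijection with the orbit vertices $\msv$ of $Q$, the orbit $Y_\msv$ corresponding to $\msv$ having associated maximal colored cone $(\C(\msv),\D(\msv))$ in $\F(X)$.

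Fix an orbit vertex $\msv$ and let $X_\msv\subseteq X$ be the (unique) $G$-stable affine open subvariety whose colored fan consists of the colored cone $(\C(\msv),\D(\msv))$ and its faces; this $X_\msv$ exists and is an affine spherical $G$-variety with unique closed orbit $Y_\msv$ by \cite[Section~3]{knop:LV}, and $X$ is smooth along $Y_\msv$ if and only if $X_\msv$ is smooth. By construction $\lat(X_\msv)=\lat(X)$, the set of colors of $X_\msv$ containing $Y_\msv$ is $\D(\msv)$, and by the remark following Definition~\ref{def:locsoc} (which rests on \cite[Lemma~2.4]{knop:LV}) the set of $G$-stable prime divisors of $X_\msv$ is $\B(\msv)$, with the functionals $\rho_{X_\msv}$ being the restrictions to $\lat(X)$ of the corresponding $\rho_X$.

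We now apply Camus' smoothness criterion for affine spherical varieties with a unique closed orbit, in the form of \cite{camus} as presented in \cite[Section~3]{PVS}: such a variety $Z$ is smooth if and only if the family of functionals attached to the colors and $G$-stable divisors containing the closed orbit is a $\ZZ$-basis of $\lat(Z)^*$, and the localization of $\soc(Z)$ to the sublattice spanned by the relevant simple roots is the socle of a spherical $G$-module. Applied to $Z=X_\msv$, the first condition is precisely that $(\rho_X(D))_{D\in\D(\msv)\cup\B(\msv)}$ be a basis of $\lat(X)^*$, which is \ref{prop:smoothness:item1}. For the second, one checks directly from the definitions that the set of simple roots relevant to $X_\msv$ is $S(\msv)$ (a simple root $\alpha\notin S(\msv)$ moves a color not in $\D(\msv)$, hence not containing $Y_\msv$, and so does not affect the local structure at $Y_\msv$), and that the localization of $\soc(X_\msv)$ accordingly is exactly $\overline{\soc}(X(\msv))$ as in Definition~\ref{def:locsoc}; thus the second condition is \ref{prop:smoothness:item2}.

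Therefore $X_\msv$ is smooth if and only if conditions \ref{prop:smoothness:item1} and \ref{prop:smoothness:item2} hold at $\msv$, and $X$ is smooth if and only if both conditions hold at every orbit vertex $\msv$ of $Q$.
\end{proof}
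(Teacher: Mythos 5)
Your overall route coincides with the paper's: the paper gives no proof of this proposition beyond the one-line observation that $X$ is smooth if and only if it is smooth along each closed orbit, after which Camus' criterion is invoked orbit by orbit via the dictionary \emph{closed orbits} $\leftrightarrow$ \emph{orbit vertices of $Q$} $\leftrightarrow$ \emph{maximal colored cones}. Your identification of the relevant data --- $\D(\msv)$ as the colors through $Y_\msv$, $\B(\msv)$ as the $G$-stable prime divisors through $Y_\msv$ (via the remark after Definition~\ref{def:locsoc}), and $\overline{\soc}(X(\msv))$ as the localized invariant entering Camus' criterion --- is exactly the intended one.

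There is, however, one step that fails as written: the minimal $G$-stable open subvariety $X_\msv$ containing $Y_\msv$ (the simple embedding with colored fan the faces of $(\C(\msv),\D(\msv))$) is \emph{not} affine in general, and \cite[Section~3]{knop:LV} does not assert that it is. Indeed $X_\msv$ contains the closed orbit $Y_\msv$, which is a complete $G$-orbit, so $X_\msv$ cannot be affine unless $Y_\msv$ is a point; already for $X=G/P$ a flag variety one has $X_\msv=X$ projective. (You are extrapolating from the toric case, where closed orbits are fixed points and the star of a maximal cone genuinely is an affine chart.) Consequently ``Camus' smoothness criterion for affine spherical varieties with a unique closed orbit'' cannot be applied to $X_\msv$ in the form you quote. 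The repair does not change your architecture: either invoke Camus' criterion in its local form --- a criterion for smoothness of an arbitrary spherical variety \emph{along a closed orbit}, expressed in terms of the colored cone $(\C(\msv),\D(\msv))$, the rays $\B(\msv)$ and the localized socle, which is the form used in \cite[\S 6.3]{camus} and recalled in \cite[Section~3]{PVS} and is what the paper actually cites --- or first pass through the local structure theorem near $Y_\msv$ to replace $X_\msv$ by a genuinely affine spherical variety for a Levi subgroup, which is precisely where the localization to $S(\msv)$ comes from. With that correction your argument is the one the paper intends.
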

We recall that the socles of spherical modules are described in \cite[Theorem 3.22]{PVS}, by re-interpreting the combinatorial data in \cite[Section 5]{knop:rmks}.

\begin{remark} \label{rem:toric_smoothness}
\begin{enumerate}[(a)]
\item For each orbit vertex $\msv$, the union of the $G$-orbits in $X$, which contain the orbit corresponding to $\msv$ in their closure, is a $G$-stable open subvariety $X(\msv)$ of $X$ (see, e.g., \cite[Theorem~2.1]{knop:LV}). Condition \ref{prop:smoothness:item1} in Proposition~\ref{prop:smoothness} expresses that $X(\msv)$ is locally factorial (i.e., that every Weil divisor on $X(\msv)$ is Cartier). As is well known (see, e.g. \cite[Section 2.1]{fulton}), a toric variety is smooth if and only if it is  locally factorial. In other words, when $G$ is a torus, condition \ref{prop:smoothness:item2} is superfluous in Proposition~\ref{prop:smoothness} and \ref{prop:smoothness:item1} is just the well-known combinatorial smoothness criterion for toric varieties. More generally, condition  \ref{prop:smoothness:item2} is superfluous in Proposition~\ref{prop:smoothness} under the assumption that $X$ is a \emph{toroidal} spherical variety, see for example \cite[29.2]{timashev}. 
\item We recall that if $X$ is a toric variety, then every vertex of $Q(X,\lb)$ is an orbit vertex, since $\Sigma(G) = \emptyset$ when $G$ is abelian. 
\end{enumerate}
\end{remark}

\begin{remark}\label{rem:combinatorialversions}
The conditions \ref{prop:smoothness:item1} and \ref{prop:smoothness:item2} of Proposition~\ref{prop:smoothness} can be viewed as purely combinatorial conditions on the triple $(\lat(X),Q(X,\lb),\Sigma(X))$, because the data needed to apply the proposition can be combinatorially computed from the triple using standard constructions in the theory of spherical varieties that are due to Luna. 
To make this more precise, let $(\lat,Q,\Sigma)$ be a $\QQ$-momentum triple. The data needed to apply the smoothness criterion are obtained as follows:
\begin{enumerate}[(1)]
\item From the sets $\A(\alpha)$ and the maps $\rho\colon \A(\alpha) \to \lat^*$ of Definition~\ref{def:Q-compatible},  one defines a set $\A = \A(\lat,Q,\Sigma)$ and a map $\rho\colon\A \to \lat^*$ as follows:
\begin{equation} \label{eq:AfromAalpha}
	\A := \left(\coprod_{\alpha\in\Sigma\cap S} \A(\alpha)\right)/\sim,
\end{equation}
	where $\sim$ is the equivalence relation on the above union defined by $D\sim E$ if and only if there exist $\alpha,\beta\in \Sigma\cap S$ with $\alpha\neq\beta$ such that $D\in \A(\alpha)$, $E\in \A(\beta)$, and $\rho(D)=\rho(E)$. Then the given maps $\A(\alpha)\to \lat^*$ glue to a map $\rho\colon \A\to \lat^*$. 
\item The quintuple $(\Spp(Q), \Sigma, \A,\lat,\rho)$ is  a \emph{homogeneous spherical datum} as in \cite[\S 2.2]{luna:typeA} (see also the proof of Lemma~\ref{lemma:homogeneous}). Combinatorially encoding the properties \eqref{eq:color_functional} and \eqref{eq:color_moved_by_two}, \cite[\S 2.3]{luna:typeA} then defines a combinatorial set of colors $\col = \col(\lat,Q,\Sigma)$ associated to $(\Spp(Q), \Sigma, \A,\lat,\rho)$ and extends $\rho$ to a map $\col \to \lat^*$, which we still denote by $\rho$. Furthermore, for every $\beta \in \sr$, one also defines the elements of $\col$  that are \emph{moved} by $\beta$ (see, e.g., \cite[Definition 2.17]{PVS}).
\item For any orbit face $\msv$ of $Q$ (for the cone $\V = (\QQp(-\Sigma))^{\vee}$), the colored cone $(\C(\msv),\D(\msv))$ is then combinatorially defined as in Lemmas \ref{lemma:nD} and \ref{lemma:fan}.
\item Definition~\ref{def:locsoc} then yields the set $\B(\msv)$ and the localized socle $\overline{\soc}(\msv)$. Furthermore, one defines the map
\(
\rho_{\msv}\colon \D(\msv) \cup \B(\msv) \to \lat^*
\)
by $\rho_{\msv}(D)=\rho(D)$ if $D \in \D(\msv)$ and $\rho_{\msv}(n)=n$ for $n\in \B(\msv)$.    
\end{enumerate}
Standard arguments then imply that if $(\lat,mQ,\Sigma) = (\lat(X),Q(X,\lb),\Sigma(X))$ for some polarized spherical $G$-variety $(X,\lb)$ and some $m\in \NN$, then 
\begin{itemize}
\item[-] $\Spp(Q) = \Spp(X)$ by \eqref{eq:aux-equality}, Lemma~\ref{lemma:compare-spherical-roots} and, e.g., \cite[Proposition 5.3(c)]{ACF};
\item[-] the sets $\A(\alpha)$ and $\A(X,\alpha)$ are naturally identified, as are $\rho|_{\A(\alpha)}$ and $\rho_X|_{\A(X,\alpha)}$ (cf.\ Proposition~\ref{proposition:face} and Corollary~\ref{cor:cancellation});
\item[-]$\A(\lat,Q,\Sigma)$ and $\col(\lat,Q,\Sigma)$ are naturally identified with $\A(X)$ and $\col(X)$, respectively, by \cite[Proposition 3.2]{luna:typeA} and $\rho:\col(\lat,Q,\Sigma) \to \lat^*$ is naturally identified with $\rho_X\colon \col(X) \to \lat(X)^*$ by \cite[\S 1.4]{luna:typeA};
\item[-] for every orbit vertex $\msv$ of $Q$, the colored cone $(\C(\msv),\D(\msv))$ is naturally identified with the colored cone in $\F(X)$ associated with the closed orbit in $X$ corresponding to $\msv$ (thanks to Theorem~\ref{thm:Brion-Woodward} and Corollary~\ref{cor:cancellation});
\item[-] $\overline{\soc}(\msv)$ and $\rho_{\msv}$ are naturally identified with $\overline{\soc}(X(\msv))$ and $\rho_X|_{\D(\msv)\cup\B(\msv)}$, respectively.
\end{itemize}
\end{remark}

Given a $\QQ$-momentum triple $(\lat,Q,\Sigma)$ and an orbit vertex $\msv$ of $Q$, Remark~\ref{rem:combinatorialversions}
defines a  map $\rho_{\msv}\colon \mathcal{D}(\msv) \cup \mathcal{B}(\msv) \to \lat^*$ and a localized socle $\overline{\soc}(\mathsf{v})$.

\begin{definition} \label{def:smooth_Q_momentum_triple}
We will call a momentum triple or a $\QQ$-momentum triple $(\lat,Q,\Sigma)$ \textbf{smooth} if for every orbit vertex $\mathsf{v}$ of $Q$, the socle $\overline{\soc}(\mathsf{v})$ and the pair $( \mathcal{D}(\mathsf{v}) \cup \mathcal{B}(\mathsf{v}), \rho_{\mathsf{v}})$ satisfy the following conditions:
\begin{enumerate}[(a)]
\item the $|\D(\msv)\cup\B(\msv)|$-tuple $(\rho_\msv(D))_{D\in \D(\msv)\cup\B(\msv)}$ is a basis of $\lat^*$; and \label{def:smooth_Q_momentum_triple:item1}
\item $\overline{\soc}(\msv)$ is the socle of a spherical module. \label{def:smooth_Q_momentum_triple:item2}
\end{enumerate} 
\end{definition}

Combining Theorem~\ref{thm:classification-triples} with Proposition~\ref{prop:smoothness} we obtain the following classification of smooth polarized spherical $G$-varieties.
\begin{corollary} \label{cor:classif_smooth_polarized}
The map $(X,\lb)\mapsto (\lat(X), Q(X,\lb),\Sigma(X))$
is a bijection between the $G$-isomorphism classes of smooth polarized spherical $G$-varieties and the smooth momentum triples of $G$. 
\end{corollary}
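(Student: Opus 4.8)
The plan is to combine the classification of polarized spherical $G$-varieties by momentum triples (Theorem~\ref{thm:classification-triples}) with Camus' smoothness criterion (Proposition~\ref{prop:smoothness}), using the identifications collected in Remark~\ref{rem:combinatorialversions}. First I would recall that by Theorem~\ref{thm:classification-triples} the map $(X,\lb)\mapsto (\lat(X),Q(X,\lb),\Sigma(X))$ is a bijection from $G$-isomorphism classes of polarized spherical $G$-varieties onto momentum triples of $G$; it therefore suffices to check that, under this bijection, a polarized spherical $G$-variety $(X,\lb)$ is smooth if and only if the associated momentum triple $(\lat(X),Q(X,\lb),\Sigma(X))$ is smooth in the sense of Definition~\ref{def:smooth_Q_momentum_triple}.

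The core of the argument is a translation. Let $(X,\lb)$ be a polarized spherical $G$-variety and put $(\lat,Q,\Sigma)=(\lat(X),Q(X,\lb),\Sigma(X))$, which is a momentum triple. By Theorem~\ref{thm:Brion-Woodward} and Proposition~\ref{prop:PSVFundFacts}-\ref{prop:PSVFundFacts:item:mompolclosedorbit}, the closed $G$-orbits of $X$ correspond bijectively to the orbit vertices $\msv$ of $Q$, and each such orbit corresponds to a maximal colored cone $(\C(\msv),\D(\msv))$ of $\F(X)$. Then I would invoke the list of natural identifications in Remark~\ref{rem:combinatorialversions}: for every orbit vertex $\msv$ of $Q$, the combinatorially defined colored cone $(\C(\msv),\D(\msv))$, the set $\B(\msv)$, the map $\rho_{\msv}\colon\D(\msv)\cup\B(\msv)\to\lat^*$, and the localized socle $\overline{\soc}(\msv)$ coincide with $(\C_{Y},\D_{Y})$, $\B(\msv)$ (the valuations of $G$-stable prime divisors through the corresponding orbit $Y$), $\rho_X|_{\D(\msv)\cup\B(\msv)}$, and $\overline{\soc}(X(\msv))$ respectively, where $Y$ is the closed orbit attached to $\msv$. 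Given these identifications, condition \ref{def:smooth_Q_momentum_triple:item1} of Definition~\ref{def:smooth_Q_momentum_triple} for $\msv$ is literally condition \ref{prop:smoothness:item1} of Proposition~\ref{prop:smoothness} for the closed orbit $Y$, and condition \ref{def:smooth_Q_momentum_triple:item2} is literally condition \ref{prop:smoothness:item2}. Since by Proposition~\ref{prop:smoothness} the variety $X$ is smooth if and only if these two conditions hold at every closed $G$-orbit, and the closed $G$-orbits are exactly parametrized by the orbit vertices of $Q$, we conclude that $X$ is smooth if and only if $(\lat,Q,\Sigma)$ is a smooth momentum triple.

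Conversely, starting from a smooth momentum triple $(\lat,Q,\Sigma)$, Theorem~\ref{thm:classification-triples} produces a (unique up to $G$-isomorphism) polarized spherical $G$-variety $(X,\lb)$ realizing it, and by the same identifications the smoothness of the triple forces conditions \ref{prop:smoothness:item1}--\ref{prop:smoothness:item2} at every closed orbit of $X$, hence $X$ is smooth by Proposition~\ref{prop:smoothness}. Together with the injectivity already contained in Theorem~\ref{thm:classification-triples}, this gives the asserted bijection.

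I expect the only genuinely delicate point to be making the identifications of Remark~\ref{rem:combinatorialversions} airtight: one must check that the purely combinatorial objects built from $(\lat,Q,\Sigma)$ (the set of colors $\col(\lat,Q,\Sigma)$ via Luna's homogeneous spherical data, the maps $\rho$, the colored cones $(\C(\msv),\D(\msv))$, the sets $\B(\msv)$, and the localized socles) really agree with their geometric counterparts for the variety $X$ realizing the triple. All the ingredients for this are assembled in Remark~\ref{rem:combinatorialversions} (relying on \cite{luna:typeA}, \cite{knop:LV}, \cite{ACF}, \cite{PVS} and on Theorem~\ref{thm:Brion-Woodward}), so at the level of this proof it is enough to quote that remark; the rest is a formal matching of hypotheses between Definition~\ref{def:smooth_Q_momentum_triple} and Proposition~\ref{prop:smoothness}.
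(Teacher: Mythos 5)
Your proposal is correct and follows exactly the route the paper takes: the paper obtains this corollary by combining Theorem~\ref{thm:classification-triples} with Proposition~\ref{prop:smoothness}, using precisely the identifications assembled in Remark~\ref{rem:combinatorialversions} to match the conditions of Definition~\ref{def:smooth_Q_momentum_triple} with those of Camus' criterion at each closed orbit. Your write-up is simply a more explicit version of the paper's one-line derivation, and correctly isolates the only delicate point (the combinatorial-versus-geometric identifications) as the content of that remark.
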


\begin{remark} 
\begin{enumerate}[(a)] 
\item \label{rem:SvinSkv:inclusion} For future use, we remark that if $(\lat,Q,\Sigma)$ is a $\QQ$-momentum triple and $\msv$ is an orbit vertex of $Q$ (for the cone $(-\QQp\Sigma)^{\vee}$), then
\begin{equation}
S(\msv) \inn \{\alpha \in \sr \colon \<\alpha^{\vee},\msv\> = 0\}.\label{eq:SvinSKv}
\end{equation}
This follows from the description of the colored cone $(\C(\msv),\D(\msv))$ in Lemmas \ref{lemma:nD} and \ref{lemma:fan} (or, geometrically, in Theorem~\ref{thm:Brion-Woodward} and Corollary~\ref{cor:cancellation}). To see how, let $\alpha \in S(\msv)$. We may assume that $\alpha \notin \Spp(Q)$. This implies that there is at least one color in $\col(\lat,Q,\Sigma)$ moved by $\alpha$ (see Remark \ref{rem:combinatorialversions}). If $\alpha \notin \Sigma$ and $D$ is the color moved by $\alpha$, then $\rho(D) = \alpha^{\vee}|_\lat$ or $\rho(D)=\frac{1}{2}\alpha^{\vee}|_\lat$ and $\<\rho(D),\msv\>=0$, by Lemma~\ref{lemma:fan}, since $D \in \D(\msv)$. On the other hand, if $\alpha \in \Sigma$, let $D_{\alpha}^+, D_{\alpha}^-$ be the two colors moved by $\alpha$. Since $\alpha \in S(\msv)$, we have that $D_{\alpha}^+, D_{\alpha}^- \in \D(\msv)$ and one deduces, again with Lemma~\ref{lemma:fan}, that $\<\alpha^{\vee},\msv\>=0$. This proves the inclusion~\eqref{eq:SvinSKv}.
\item \label{rem:SvinSkv:local_toric_smoothness} It follows from part \ref{rem:SvinSkv:inclusion} of this remark that if $\msv$ is an orbit vertex of $Q$ which lies in the relative interior of the dominant Weyl chamber, then condition \ref{def:smooth_Q_momentum_triple:item2} of Definition~\ref{def:smooth_Q_momentum_triple} is always satisfied, while condition \ref{def:smooth_Q_momentum_triple:item1} holds if and only if
\begin{equation} \label{eq:local_toric_smoothness}
\{\rho_F\colon  F \text{ is a facet of $Q$ containing $\msv$}\} \text{ is a basis of }\lat^*,
\end{equation}
where $\rho_F \in \lat^*$ is the primitive inward-pointing facet normal to $F$. Let us prove this claim. It follows from \eqref{eq:SvinSKv} that 
\(S(\msv) = \emptyset. 
\) This implies that \[\overline{\soc}(\msv)=(\emptyset,\emptyset,\emptyset,\emptyset,\emptyset,\B(\msv)\cup\D(\msv),0).\] which is the socle of the module $\CC^k$ under the standard action of $(\CC^{\times})^k$, where $k=|\B(\msv) \cup \D(\msv)|$. 
By the definition of $\D(\msv)$, it also follows from \(S(\msv)=\emptyset\) that $\D(\msv)\inn \A$ and that 
\begin{equation}|\D(\msv) \cap \A(\alpha)| \leq 1 \label{eq:DcapA1}
\end{equation}
 for every $\alpha \in \sr \cap \Sigma$. By the definition of $\B(\msv)$, the claim will follow once we show that $\rho(D)$ is a primitive inward-pointing facet normal of $Q$ for every $D \in \D(\msv)$.  Let $D \in \D(\msv)$. Then $D \in \A(\alpha)$ for some $\alpha \in \sr\cap \Sigma$, and $\rho(D)$ is primitive in $\lat^*$ because $\<\rho(D),\alpha\>=1$ and $\alpha \in \lat$. If $\rho(D)$ were not an inward-pointing facet normal of $Q$, then $\rho(D)$ would be a nontrivial positive rational linear combination of primitive inward-pointing normals to facets of $Q$ that contain $\msv$. Consequently, at least one of these normals, say $\rho_1$, would take a positive value on $\alpha$. This would then imply that $\rho_1 = \rho(D')$ for some $D' \in \A(\alpha)$, by Definition~\ref{def:Q-compatible}, and that $D' \in \D(\msv)$, which contradicts \eqref{eq:DcapA1}. This proves the claim.  
\end{enumerate} \label{rem:SvinSKv}
\end{remark}

\begin{example}[{\cite[Chapter 3]{foschi}}] \label{ex:Foschi_smooth} 
We revisit Example~\ref{ex:Foschi}:  take $G = \SL(3)$, $\lat = \ZZ\{\alpha_1,\alpha_2\} = \wl_R$ and $Q = \Conv(\lambda_1,\lambda_2,\lambda_3)$ where $\lambda_1=4\varpi_1+4\varpi_2$, $\lambda_2=5\varpi_1+2\varpi_2$ and $\lambda_3=2\varpi_1+5\varpi_2$. It follows from Example~\ref{ex:Foschi} --- or it can be checked directly from the definition --- that $(\lat,Q,\Sigma)$ is a momentum triple for every subset $\Sigma \inn \{\alpha_1,\alpha_2\}$. 
In fact, the four corresponding projective spherical varieties are smooth as follows from their geometric description in \cite[Chapter 3]{foschi}.  We illustrate Corollary~\ref{cor:classif_smooth_polarized}, and more specifically  Remark~\ref{rem:SvinSKv}\ref{rem:SvinSkv:local_toric_smoothness}, by combinatorially verifying their smoothness. Since all the vertices of $Q$ lie in the interior of the dominant Weyl chamber, the same is true for all its orbit vertices (recall that it depends on $\Sigma$ which of the vertices of $Q$ are orbit vertices). The smoothness of the four momentum triples $(\lat,Q,\Sigma)$, with $\Sigma \inn  \{\alpha_1,\alpha_2\}$, follows from Remark~\ref{rem:SvinSKv}\ref{rem:SvinSkv:local_toric_smoothness} once we verify the claim that \eqref{eq:local_toric_smoothness} holds at every vertex of $Q$.
Let $\{\varepsilon_1,\varepsilon_2\}$ be the basis of $\lat^*$ that is dual to the basis $\{\alpha_1,\alpha_2\}$ of $\lat$ and number the facets of $Q$ as follows:
\[F_1 = \Conv(\lambda_1,\lambda_2), F_2 = \Conv(\lambda_2,\lambda_3), F_3 = \Conv(\lambda_3,\lambda_1)\]
One then computes that the corresponding primitive inward-pointing facet normals of $Q$ are 
\[\rho_1 = -\varepsilon_1,\quad  \rho_2 = \varepsilon_1+\varepsilon_2= (\alpha_1^{\vee}+\alpha_2^{\vee})|_\lat, \quad \rho_3 = -\varepsilon_2.\] Since any subset of $\{\rho_1,\rho_2,\rho_3\}$ with two elements forms a basis of $\lat^*$, the claim and the  smoothness follow. 
\end{example}

\begin{example} \label{ex:bizarre_smooth} 
We illustrate Corollary~\ref{cor:classif_smooth_polarized} and Remark~\ref{rem:combinatorialversions} by showing that the $\Sp(6)$-variety from Example~\ref{ex:bizarre} is not smooth.  Recall that the momentum triple of the variety was $(\lat,Q,\Sigma)$ where $\Sigma = \{\alpha_1+\alpha_3, \alpha_2\}, \lat = \ZZ\Sigma$ and $Q = \Conv(\frac{1}{2}\varpi_2, \varpi_2, \frac{1}{3}(\varpi_1+\varpi_3))$. Let $(\varepsilon_1,\varepsilon_2)$ be the basis  of $\lat^*$ that is dual to the basis $(\alpha_1+\alpha_3, \alpha_2)$ of $\lat$ and number the vertices and facets of $Q$ as follows:
\begin{align*}
&\msv_1 = \frac{1}{2}\varpi_2,\quad \msv_2 = \varpi_2, \quad \msv_3 = \frac{1}{3}(\varpi_1+\varpi_3),\\
&F_1 = \Conv(\msv_2,\msv_3), \quad F_2 = \Conv(\msv_1,\msv_3), \quad F_3 = \Conv(\msv_1,\msv_2).
\end{align*}
One computes that the corresponding primitive inward-pointing facet normals are
\[\rho_1 = -3\varepsilon_1 + \varepsilon_2, \quad \rho_2 = \varepsilon_1, \quad \rho_3 = 2\varepsilon_1 - \varepsilon_2.
\]
The constructions recalled in Remark~\ref{rem:combinatorialversions}  then yield
\begin{itemize}
\item $\A(\lat,Q,\Sigma) = \A(\alpha_2) = \{D_2^+,D_2^-\}$ with $\rho(D_2^+) = \rho_1$ and $\rho(D_2^-) = \rho_2$;
\item $\col(\lat,Q,\Sigma) = \{D_2^+, D_2^-, D\}$ with $\rho(D) = \rho_3 = \alpha_1^\vee|_\lat = \alpha_3^\vee|_\lat$;
\item $\alpha_2$ moves only $D_2^+$ and $D_2^-$, while $\alpha_1$ and $\alpha_3$ both only move $D$.
\end{itemize}
As stated in Example~\ref{ex:bizarre}, the only orbit vertex of $Q$ is $\msv_2$. One checks that 
\[
 \C(\msv_2) = \QQp\{\rho_1,\rho_3\},
\D(\msv_2) =\{D,D^+_2\}, \text{ and }
\B(\msv_2) = \emptyset. 
\]
Because $\{\rho(D),\rho(D^+_2)\} = \{\rho_1,\rho_3\}$ is a basis of $\lat^*$, condition \ref{def:smooth_Q_momentum_triple:item1} of Definition~\ref{def:smooth_Q_momentum_triple} is satisfied. 
On the other hand, 
\begin{align*}
&\overline{\soc}(\msv_2) = (\{\alpha_1,\alpha_3\}, \emptyset,\{\alpha_1+\alpha_3\}, \emptyset, \{D\}, \{D_2^+\}, \overline{\rho}\colon \{D,D_2^+\} \to \ZZ\{\alpha_1+\alpha_3\})\\
&\text{with }\overline{\rho}(D) = \rho_3|_{\ZZ\{\alpha_1+\alpha_3\}} \text{ and }\overline{\rho}(D_2^+) = \rho_1|_{\ZZ\{\alpha_1+\alpha_3\}}.
\end{align*}
Using \cite[Theorem 3.22]{PVS}, we see that $\overline{\soc}(\msv_2)$ is not the socle of a spherical module. Indeed, the only socle in Table 2 of \cite{PVS} which has the same first four entries as $\overline{\soc}(\msv_2)$ (up to isomorphism of socles) is socle $\# 6$. On the other hand, because $\<\overline{\rho}(D_2^+),\alpha_1+\alpha_3\> = -3 \neq -1$, the  socle  $\overline{\soc}(\msv_2)$ is not isomorphic to socle $\# 6$ of \cite[Table 2]{PVS}. Condition \ref{def:smooth_Q_momentum_triple:item2} of Definition~\ref{def:smooth_Q_momentum_triple} is not satisfied and we have shown that the momentum triple $(\lat,Q,\Sigma)$ is not smooth,  that is, the corresponding variety is singular along the orbit corresponding to $\msv_2$. 
\end{example}

\section{K\texorpdfstring{\"a}{ae}hlerizability of multiplicity free Hamiltonian manifolds} \label{sec:Kaehler}

Throughout this section, we are concerned with compact and connected multiplicity free Hamiltonian manifolds. 
We obtain a necessary and sufficient condition for these manifolds to 
admit a K\"ahler structure.

\subsection{Basics and recollections}
Given a connected compact Lie group $K$ with a maximal torus $T_\RR$, we denote their respective complexifications by $G$ and $T$. Let $\ft_{\RR}$ be the Lie algebra of $T_{\RR}$ and  $\ft_\RR^*=\Hom_{\RR}(\ft_{\RR},\RR)$, which we identify with the space of $T_{\RR}$-invariant vectors in $\fk^*=\Hom_{\RR}(\fk,\RR)$, where $\fk$ is the Lie algebra of $K$. We identify the weight lattices $\wl=\Hom(T,\CC^{\times})$ and $\Hom(T_{\RR},\Uni(1))$ by the restriction map $\Hom(T,\CC^{\times}) \to \Hom(T_{\RR},\Uni(1)) \colon \lambda \mapsto \lambda|_{T_{\RR}}$, and we view $\wl$ as a sublattice of $\ft_\RR^*$ by mapping $\gamma \in \Hom(T_{\RR},\Uni(1))$ to $(2\pi i)^{-1}\gamma_* \in \ft^*_{\RR}$, where $\gamma_*\colon \ft_{\RR} \to i\RR$ is the differential of $\gamma$. Fix a Borel subgroup $B\subseteq G$ containing $T$ and let $\mathfrak t^*_+\inn \ft_\RR^*$ be the corresponding positive Weyl chamber. The identifications above then imply that $\wl_{\RR} = \ft^*_{\RR}$, $\dw = \ft^*_+ \cap \wl$,  and $\RR_{\ge 0}\dw = \ft^*_+$. 

Let $(M,\omega,\Phi)$ be a Hamiltonian $K$-manifold that is, a symplectic manifold $(M,\omega)$ equipped with a smooth action of $K$ and a momentum map  $\Phi:M\rightarrow \mathfrak  k^*$. Let $\Pc(M)=\Pc(M,\omega,\Phi)$ be the \emph{Kirwan set of $M$} that is, 
\[
\Pc(M):=\Pc(M,\omega,\Phi):=\Phi(M)\cap \mathfrak t_+^*. 
\]
By a theorem of F. Kirwan's in  \cite{kirwan}, $\Pc(M)$ is a convex polytope whenever $M$ is compact and connected; in this case, we call $\Pc(M)$ the \textbf{Kirwan polytope} of $(M,\omega,\Phi)$.

As usual, we will write $[\wom]$ for the class in the second $K$-equivariant cohomology group $H^2_K(M,\RR)$  of the equivariantly closed form
\[\widetilde{\omega} := \omega + \Phi.\]
Moreover, $\widetilde{\omega}$ is called an \emph{equivariant K\"ahler form} if $\omega$ is K\"ahler, in which case the Kirwan polytope $\Pc(M,\omega,\Phi)$ depends only on the cohomology class of $\wom$ in $H^2_K(M,\RR)$ (see e.g.~\cite{bgh14} for a detailed proof of this fact). 
Henceforth, we thus denote the polytope $\Pc(M,\omega,\Phi)$ simply by $\Pc(M,[\widetilde{\omega}])$ whenever $\omega$ is K\"ahler.

From now on, a \textbf{multiplicity free $K$-manifold} is a compact connected Hamiltonian $K$-manifold $(M,\omega,\Phi)$  such that the map $M/K \to \Pc(M)\colon K\cdot m \mapsto \Phi(K\cdot m)\cap \ft_+^*$ is a homeomorphism. 

\emph{In the remainder of this subsection, $M=(M,\omega,\Phi)$ is a multiplicity free $K$-manifold.}

Let $L_0$ be the principal isotropy group of $M$, that is, the stabilizer in $K$ of a point  $x\in \Phi^{-1}(\eta)$, where $\eta$ is a generic element of $\Pc(M)$, and let $L$ be the stabilizer of $\eta$ in $K$. Then $A_\RR:=L/L_0$ is abelian. Following Knop (\cite{knop:hamilton}), we define \emph{the weight lattice of $M$} as the following group
\begin{equation} \label{eq:lattice_M}
\lat(M):=\mathrm{Hom}(A_\RR, \CC^\times),
\end{equation}
regarded as a sublattice of the weight lattice $\wl$.  
Since  $\lat(M)$ contains the same information as $L_0$ (see pages 570-571 in~\cite{knop:hamilton}), Theorem 10.2 in \loccit, specialized to the compact case,
can be stated as follows.
\begin{theorem}[Knop]\label{thm:knop-uniqueness}
A multiplicity free $K$-manifold is uniquely determined by its Kirwan polytope together with its weight lattice.
\end{theorem}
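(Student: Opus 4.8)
The statement is precisely \cite[Theorem~10.2]{knop:hamilton} restricted to the compact connected case, so the plan is to indicate how Knop's argument runs rather than to reprove it from scratch. The three ingredients are a local rigidity result along orbits, a reduction of the global structure of $M$ to combinatorial data indexed by the faces of the Kirwan polytope, and a patching argument that upgrades local isomorphisms to a global one.

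First I would establish local rigidity. Given $m\in M$ with isotropy $H=K_m$, the Marle--Guillemin--Sternberg normal form identifies a $K$-invariant neighbourhood of $K\cdot m$ with a model built from $H$ and the symplectic slice representation $W$ of $H$ (schematically $K\times_H(\fh^{\circ}\oplus W)$, with $\fh^{\circ}$ a vector space). Since $M$ is multiplicity free, $H$ acts coisotropically on $W$; such representations, together with their isotropy data, are classified by discrete invariants, and I would check that these invariants — and the conjugacy class of $H$ itself — are determined by the germ of $\Pc(M)$ at $\Phi(m)$ and by $\lat(M)$. Concretely, $\Phi(m)$ lies in the relative interior of a unique face $F$ of $\Pc(M)$, and the local cone of $\Pc(M)$ at $F$ together with $\lat(M)$ recovers the weight monoid of the slice, hence $W$ and $H$ up to the relevant equivalence.

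Next I would translate this into a description of $M$ as glued from local models: the faces $F$ of $\Pc(M)$ index the orbit-type strata, to each $F$ one attaches a canonical multiplicity free model $M_F$ built only from $(\lat(M),\Pc(M))$, and the first step shows $M$ is covered by $K$-invariant open pieces, each $K$-equivariantly symplectomorphic to the corresponding piece of $M_F$. Two multiplicity free $K$-manifolds sharing the same Kirwan polytope and weight lattice are then, stratum by stratum, isomorphic over $\Pc(M)$.

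The hard part — and where I would expect to spend the real effort, following \loccit — is the globalisation: assembling these local isomorphisms into a single $K$-equivariant symplectomorphism. This requires controlling the group of $K$-equivariant automorphisms of each local model $M_F$ covering the identity of the polytope, and then trivialising the transition cocycle by an induction on the faces of $\Pc(M)$, ordered by decreasing dimension and starting from the dense stratum over the interior of $\Pc(M)$; convexity of $\Pc(M)$ is what makes this induction run without obstruction. (In the Kählerizable case one can bypass this analytic patching altogether: via Huckleberry--Wurzbacher and Brion one reduces to polarized spherical varieties and invokes the uniqueness statement of Corollary~\ref{cor:PSunique} — but that route does not reach the non-Kähler manifolds covered by Knop's theorem.)
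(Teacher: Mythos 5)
Your proposal takes essentially the same route as the paper: the paper offers no independent proof, merely observing that $\lat(M)$ and the principal isotropy group $L_0$ determine one another (pages 570--571 of \cite{knop:hamilton}) and then quoting Theorem~10.2 of \loccit\ specialized to the compact connected case. Your outline of Knop's argument (local normal forms, recovery of the local model from the germ of $\Pc(M)$ and $\lat(M)$ --- which ultimately rests on Losev's uniqueness theorem for smooth affine spherical varieties --- and the patching via acyclicity of the automorphism sheaf over the convex polytope) is a fair description of that source, so there is nothing to add.
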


In case   $(M,\omega)$  has a $K$-invariant complex structure $J$ compatible with $\omega$, i.e. $(M,\omega,J)$ is K\"ahler, the $K$-action on $M$ extends to a unique action of $G$ by holomorphic automorphisms. Moreover, we have the following theorem of Huckleberry and Wurzbacher specialized to the compact case (see also \cite[Corollary 5.3]{woodward} for a different proof).

\begin{theorem}\cite[Equivalence Theorem in \S6]{huckwurz}\label{thm:huck-wurz}
If $(M,\omega,J)$ is K\"ahler then the complex $G$-manifold  $(M,J)$ is $G$-equivariantly biholomorphic to a projective spherical $G$-variety. 
\end{theorem}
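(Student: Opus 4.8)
The plan is to verify, in order, that $(M,J)$ with its holomorphic $G$-action has a dense $G$-orbit, that this orbit is a spherical homogeneous space, and that $M$ is a projective variety; the last point is the crux. The $G=K^{\CC}$-action on $(M,J)$ is available by the remark preceding the statement. Now exploit multiplicity-freeness: by the Guillemin--Sternberg theory of multiplicity free spaces, the fact that $\Phi$ induces a homeomorphism $M/K\to\Pc(M)$ forces the generic $K$-orbit $K\cdot x$ to be coisotropic in $(M,\omega)$. Writing $g=\omega(\,\cdot\,,J\,\cdot\,)$ for the associated $K$-invariant Kähler metric and $W=T_x(K\cdot x)$, coisotropy of $K\cdot x$ is equivalent to $W^{\perp_g}\subseteq JW$, and since the fundamental vector field on $M$ of $i\xi\in i\fk\subseteq\fg$ is $J$ applied to the fundamental vector field of $\xi$, we get $T_xM=W\oplus W^{\perp_g}\subseteq W+JW\subseteq T_x(G\cdot x)$. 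Hence $G\cdot x$ is open, so $M$ is a $G$-equivariant compactification of a single homogeneous space $G/H$, with $H$ a closed complex subgroup of $G$.

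Next I would show that $G/H$ is spherical, i.e.\ admits a dense $B$-orbit. One way is to observe that coisotropy of the generic $K$-orbit is equivalent to $\CC(G/H)^B=\CC$, which is one of the standard characterizations of sphericity of the homogeneous space $G/H$; an alternative, more hands-on route is to flow the open $K$-orbit under $\exp(i\RR\xi)$ for a regular $\xi\in\ft_\RR$ and check that the resulting limit set meets the open $G$-orbit in a nonempty $B$-invariant open set. Either way, $M$ is now a smooth complete complex $G$-variety containing a dense $B$-orbit.

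The main obstacle is to promote ``smooth complete complex $G$-variety'' to ``projective algebraic variety'', since in general a compact Kähler manifold with a large holomorphic automorphism group need not be projective (complex tori being the cautionary example). Here reductivity of $G$ and almost-homogeneity enter: the meromorphic function field $\CC(M)=\CC(G/H)$ has transcendence degree $\dim_{\CC}M$, so $M$ is Moishezon, and a Moishezon Kähler manifold is projective. Equivalently, one checks $H^{2,0}(M)=0$ (using the analogue, for a smooth complete complex manifold with a dense $B$-orbit, of the vanishing $H^i(\mathcal O)=0$, $i>0$, for complete spherical varieties), whence $H^2(M,\CC)=H^{1,1}(M)$ and the $K$-invariant Kähler cone contains an integral class. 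Up to replacing that class by a multiple it is $c_1$ of a $G$-linearized ample line bundle $\lb$ (Mumford's theory of $G$-linearizations on normal varieties), and the $G$-action on the projective manifold $M$ is then algebraic. Thus $(M,\lb)$ is a polarized $G$-variety.

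Finally, I would assemble the statement. The polarized $G$-variety $(M,\lb)$ is smooth, hence normal, and it contains a dense $B$-orbit, so it is a projective spherical $G$-variety in the sense of Section~\ref{sec:basics}; this is consistent with multiplicity-freeness, since the Kirwan polytope attached to $c_1(\lb)$ equals, up to its rational structure, Brion's momentum polytope $Q(M,\lb)$ by Proposition~\ref{prop:brion-Kirwan}, and then each $H^0(M,\lb^n)$ is a multiplicity free $G$-module. Taking a $G$-linearized very ample power $\lb^m$, the Kodaira embedding maps $M$ biholomorphically onto a closed $G$-stable subvariety $X\subseteq\PP\big(H^0(M,\lb^m)^*\big)$, and this map is $G$-equivariant by construction. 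Hence $(M,J)$ is $G$-equivariantly biholomorphic to the projective spherical $G$-variety $X$, as desired.
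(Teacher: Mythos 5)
First, a point of comparison: the paper does not prove this statement at all --- it imports it verbatim from Huckleberry--Wurzbacher \cite{huckwurz} and points to \cite[Corollary 5.3]{woodward} for an alternative proof --- so what you have written is a reconstruction of the cited argument rather than something to set against an in-paper proof. Your opening step is correct and cleanly done: multiplicity freeness gives coisotropy of the principal $K$-orbits, and the computation $T_xM=W\oplus W^{\perp_g}\subseteq W+JW=T_x(G\cdot x)$ does produce the open $G$-orbit. The trouble begins exactly where \cite{huckwurz} has to work hardest.

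The sphericity step is circular as written. For a compact K\"ahler almost homogeneous $G$-manifold, the claim that ``coisotropy of the generic $K$-orbit is equivalent to $\CC(G/H)^B=\CC$'' \emph{is} the Equivalence Theorem you are trying to prove; the genuinely standard characterization ``$\CC(G/H)^B=\CC$ iff $G/H$ is spherical'' presupposes that $G/H$ is an algebraic homogeneous space, whereas at this stage $H$ is only a closed complex subgroup. Your alternative route (flowing under $\exp(i\RR\xi)$) is too vague to assess: a $B$-invariant open subset of the limit set does not by itself yield a dense $B$-orbit. The projectivity step has a parallel gap: the assertion that $\CC(M)$ has transcendence degree $\dim_\CC M$ is unjustified (meromorphic functions on the open orbit need not extend across codimension-one boundary components, and $G/H$ is not yet known to be algebraic), and the vanishing $H^{2,0}(M)=0$ is derived from ``the analogue'' of $H^i(X,\Oh_X)=0$ for complete spherical \emph{varieties} --- a theorem about algebraic varieties that cannot be invoked before algebraicity is established. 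A correct route is direct: holomorphic $2$-forms on a compact K\"ahler manifold are harmonic for a $K$-invariant metric, hence $K$-invariant, hence $G$-invariant, and the resulting $H$-invariant element of $\Lambda^2(\fg/\fh)^*$ is killed using $\fg=\mathfrak{b}+\fh$; only then do Kodaira's theorem and the algebraicity of the holomorphic $G$-action produce the polarized spherical variety. As it stands, the two pivotal implications --- coisotropic $\Rightarrow$ spherical, and compact K\"ahler spherical $\Rightarrow$ projective --- are asserted rather than proved.
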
 

We denote the projective spherical $G$-variety  in the above theorem by $X(J)$.
It is well-known to experts that then
\begin{equation}
\lat(M) = \lat(X(J)), \label{eq:lattice_equality}
\end{equation}
where $\lat(M)$ is defined in \eqref{eq:lattice_M} and $\lat(X(J))$ in Section~\ref{subsec:polarized_varieties} (for a proof see, e.g., \cite[Proposition 8.6]{losev:knopconj}.)

Recall from Section~\ref{subsec:polarized_varieties} that there is a standard way to assign an equivariant K\"ahler class $[\wom_{\lb}]$ on (the real manifold) $X$  to an ample $G$-line bundle $\lb$ on (the variety) $X$. 

\begin{lemma}[{\cite[Lemmas 8.4-8.5]{losev:knopconj}}]\label{lemma:Losev-cohomology}
Let $X$ be a smooth projective spherical $G$-variety.  
\begin{enumerate}
\item The unique homomorhism $\varphi \colon \Pic_G (X)_\RR \to H^2_K(X,\RR)$ which sends a very ample line bundle $\lb \in \Pic_G(X)$ to $[\wom_{\lb}]$ is an isomorphism. \label{lemma:Losev-cohomology-isom}
\item The dependence of the Kirwan polytope $\Pc(X,[\wom])$ on $[\wom] \in H^2_K(X,\RR)^+$ is continuous. 
\label{lemma:Losev-cohomology-depcont}
\end{enumerate}
\end{lemma}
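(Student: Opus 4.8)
The statement is \cite[Lemmas~8.4--8.5]{losev:knopconj}; the plan is to reconstruct that argument. For part~\ref{lemma:Losev-cohomology-isom} I would first note that $\varphi$, if it exists, is unique, since every $G$-linearized line bundle is the difference of two very ample ones (tensor with a sufficiently ample $G$-line bundle), so the very ample classes span $\Pic_G(X)_\RR$. For existence I would check additivity of $\lb\mapsto[\wom_\lb]$ via the Segre embedding: if $\lb_1,\lb_2$ are very ample with associated embeddings $X\hookrightarrow\PP(V_i)$, then $\lb_1\otimes\lb_2$ is the pullback of $\Oh(1)$ on $\PP(V_1\otimes V_2)$ and, for compatible invariant hermitian forms, $\wom_{\lb_1\otimes\lb_2}=\wom_{\lb_1}+\wom_{\lb_2}$ already as forms, hence in $H^2_K(X,\RR)$. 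The key observation is then that, with the normalisation fixed in Section~\ref{subsec:polarized_varieties}, $[\wom_\lb]$ is exactly the image in $H^2_K(X,\RR)$ of the equivariant first Chern class $c_1^K(\lb)$ (the equivariant version of ``the Fubini--Study form represents $c_1(\Oh(1))$''), so that $\varphi=c_1^K\otimes_\ZZ\RR$. To conclude part~\ref{lemma:Losev-cohomology-isom} it then remains to see that $c_1^K$ is a rational isomorphism for a smooth projective spherical variety; here I would use that such an $X$ satisfies $H^1(X,\Oh_X)=H^2(X,\Oh_X)=0$ (a standard vanishing for complete spherical varieties, which are rational and admit a cellular decomposition): the standard comparison over $\CC$ between the algebraic equivariant Picard group and topological equivariant cohomology gives $\Pic_G(X)\cong H^2_G(X,\ZZ)$, and tensoring with $\RR$ and using that the inclusion $K\hookrightarrow G$ of a maximal compact subgroup is a homotopy equivalence yields $\Pic_G(X)_\RR\cong H^2_K(X,\RR)$, i.e.\ that $\varphi$ is an isomorphism.

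For part~\ref{lemma:Losev-cohomology-depcont} I would proceed as follows. Let $\mathcal K\inn\Pic_G(X)_\RR$ be the (open, convex) ample cone, so that $\varphi(\mathcal K)=H^2_K(X,\RR)^+$; its ``rational'' points --- the classes $q[\lb]$ with $\lb$ an ample $G$-line bundle and $q\in\QQ_{>0}$ --- are dense in $\mathcal K$. For such a class, Proposition~\ref{prop:brion-Kirwan} together with the evident rescaling of the symplectic form gives $\Pc\bigl(X,\varphi(q[\lb])\bigr)=q\,Q(X,\lb)$, and Proposition~\ref{prop:Pbrion} presents $Q(X,\lb)$ as the translate by $\chi(s_\lb)$ of $\{\xi\in\lat(X)_\QQ:\<\rho(D),\xi\>+v_D(s_\lb)\ge0\text{ for all }D\in\divB(X)\}$. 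The point is that $\divB(X)$ and the functionals $\rho(D)\in N(X)$ depend only on $X$ (indeed only on its open orbit), while $s_{\lb_1\otimes\lb_2}=s_{\lb_1}\otimes s_{\lb_2}$ makes $\lb\mapsto\bigl(\chi(s_\lb),(v_D(s_\lb))_D\bigr)$ additive. Hence $q[\lb]\mapsto q\,Q(X,\lb)$ is a linear map from $\Pic_G(X)_\RR$ into a space of ``support data'' followed by the map that sends such data to the polytope cut out by the fixed normals $\rho(D)$; the latter map is continuous for the Hausdorff metric on the region where the polytope stays nonempty and bounded, which contains $\mathcal K$. Writing $F$ for this continuous map on $\mathcal K$, I would thus have $\Pc(X,[\wom])=F(\varphi^{-1}[\wom])$ on a dense subset of $H^2_K(X,\RR)^+$, and continuity of $\Pc(X,\cdot)$ would follow once this identity is extended to all of $H^2_K(X,\RR)^+$.

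\emph{The main obstacle} is precisely this last extension, i.e.\ showing that at an \emph{irrational} equivariant Kähler class the combinatorially defined $F$ still computes the Kirwan polytope; it is here that one must reconcile the representation-theoretic polytope with the geometric definition $\Phi(M)\cap\ft^*_+$. To handle it I would, near a fixed $[\wom_0]=\varphi(c_0)$, use openness of $\mathcal K$ and of the Kähler condition to choose a family of $K$-invariant Kähler forms $\omega_c$ (for $c$ near $c_0$) depending smoothly on $c$ with $[\omega_c+\Phi_c]=\varphi(c)$; then $(c,m)\mapsto\Phi_c(m)$ is continuous on a compact parameter set times $X$, so $c\mapsto\Phi_c(X)$ is continuous for the Hausdorff metric. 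Since, by the invariance recalled in \cite{bgh14}, $\Pc(X,\varphi(c))=\Phi_c(X)\cap\ft^*_+$ depends only on $\varphi(c)$, a compactness argument --- approximating $c_0$ by rational points $c_n\in\mathcal K$, using $\Phi_{c_n}(X)\to\Phi_{c_0}(X)$ together with the convexity of the Kirwan polytope and some care near the Weyl walls (the operation ``intersect with $\ft^*_+$'' being discontinuous on general compact sets, one works instead with the relative interior of the smallest face of $\ft^*_+$ meeting the polytope) --- should give $F(c_0)=\Pc\bigl(X,\varphi(c_0)\bigr)$, completing the argument.
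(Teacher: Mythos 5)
The paper does not actually prove this lemma: it is imported verbatim from Losev \cite[Lemmas 8.4--8.5]{losev:knopconj}, so there is no in-paper argument to compare yours against. Judged on its own terms, your reconstruction is essentially correct. For part~(1): the reduction of uniqueness to very ample bundles, additivity via the Segre embedding (valid at the level of classes, which is all you use --- the forms themselves need not literally add, since the multiplication map $H^0(X,\lb_1)\otimes H^0(X,\lb_2)\to H^0(X,\lb_1\otimes\lb_2)$ need not be surjective and the hermitian metrics need not match), the identification of $\varphi$ with the real equivariant first Chern class, and the vanishing $H^1(X,\Oh_X)=H^2(X,\Oh_X)=0$ for smooth complete spherical varieties do combine to give the isomorphism; this is consistent with the exact sequence $0\to(\fk/[\fk,\fk])^*\to\Pic_G(X)_\RR\to\Pic(X)_\RR\to 0$ that the paper later quotes from the same source.

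For part~(2), your detour through the combinatorially defined map $F$ inverts the logical order used here (continuity is proved first, and the half-space description for arbitrary K\"ahler classes, Proposition~\ref{prop:Kaehler-polytope}, is then \emph{deduced} from it); it is not wrong, but the work that actually proves continuity is your last paragraph, and $F$ plays no essential role. The one step you leave vague --- passing from Hausdorff convergence of the momentum images $\Phi_{c_n}(X)$ to convergence of their intersections with $\ft^*_+$ --- closes cleanly without any case analysis near the Weyl walls: for any subset $A\inn\ft^*_+$ one has $(K\cdot A)\cap\ft^*_+=A$, because each coadjoint orbit meets $\ft^*_+$ in exactly one point, and the sweep map $\xi\mapsto(K\cdot\xi)\cap\ft^*_+$ is a continuous retraction $\fk^*\to\ft^*_+$; hence $A\mapsto K\cdot A$ is a Hausdorff-metric homeomorphism from compact subsets of $\ft^*_+$ onto their $K$-saturations. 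Since $\Phi_c(X)=K\cdot\Pc(X,\varphi(c))$ by equivariance of the momentum map, the Hausdorff continuity of $c\mapsto\Phi_c(X)$ (obtained, as you say, from a locally defined continuous family of representing equivariant K\"ahler forms, using openness of the K\"ahler condition and joint continuity on the compact manifold $X$) yields continuity of $c\mapsto\Pc(X,\varphi(c))$ directly, with no need for the dense rational subset.
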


We also recall the basic fact that, for $X$ as in the previous lemma, the subset $H^2_K(X,\RR)^+ \subseteq H^2_K(X,\RR)$ consisting of the classes of equivariant K\"ahler forms is open. 

The following lemma is standard. We give a proof here for completeness. 
\begin{lemma}\label{lemma:amplesequence}
Suppose the projective spherical $G$-variety $X$ is equipped with an equivariant K\"ahler form $\widetilde\omega=\omega+\Phi$ and let $\varphi$ be as in Lemma~\ref{lemma:Losev-cohomology}. Then there exists a sequence of equivariant K\"ahler forms $\widetilde\omega_n=\omega_n+\Phi_n$ (for $n\in\NN$) whose classes converge to $[\widetilde\omega]$ in $H^2_K(X,\RR)$, and such that for all $n\in\NN$ a  positive integral multiple of $\varphi\inv([\widetilde\omega_n])$ is an ample linearized line bundle on $X$.
\end{lemma}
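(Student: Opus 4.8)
The plan is to transport everything to the finite-dimensional real vector space $H^2_K(X,\RR)$ via the linear isomorphism $\varphi$ of Lemma~\ref{lemma:Losev-cohomology}, and to approximate $D:=\varphi\inv([\widetilde\omega])\in\Pic_G(X)_\RR$ by ample classes in $\Pic_G(X)_\QQ$. First I would record two elementary facts. (i) The map $\varphi$ sends every ample class in $\Pic_G(X)_\QQ$ into $H^2_K(X,\RR)^+$: such a class equals $\tfrac1N\mathcal M$ for some $N\in\ZZ_{>0}$ and some very ample $\mathcal M\in\Pic_G(X)$, so by Lemma~\ref{lemma:Losev-cohomology} we have $\varphi(\tfrac1N\mathcal M)=\tfrac1N[\wom_{\mathcal M}]$, and $\tfrac1N\wom_{\mathcal M}=\tfrac1N\omega_{\mathcal M}+\tfrac1N\Phi_{\mathcal M}$ is again an equivariant K\"ahler form. (ii) Since $\Pic_G(X)_\QQ$ is dense in $\Pic_G(X)_\RR$, the rational ample classes are dense in the open convex cone $\mathrm{Amp}_G(X)\subseteq\Pic_G(X)_\RR$, so the closure of the set of rational ample classes contains $\overline{\mathrm{Amp}_G(X)}=\mathrm{Nef}_G(X)$, which by Kleiman's theorem consists exactly of the classes intersecting every irreducible curve of $X$ non-negatively.

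The crucial step is to check that $D$ itself is nef. For a $G$-linearized line bundle $\mathcal M$ and an irreducible curve $C\subseteq X$ one has $\deg(\mathcal M|_C)=\int_C\omega_{\mathcal M}$, since $\omega_{\mathcal M}$ represents the first Chern class of $\mathcal M$; extending $\RR$-linearly, the intersection number $D\cdot C$ equals $\int_C\omega$, where $\omega$ is the K\"ahler form underlying $\widetilde\omega=\omega+\Phi$. As $C$ is a positive-dimensional complex submanifold of the K\"ahler manifold $(X,\omega)$, we get $D\cdot C=\int_C\omega>0$ for every such $C$; hence $D\in\mathrm{Nef}_G(X)$.

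Combining (i), (ii) and the previous paragraph, $D$ lies in the closure of the set of rational ample classes of $\Pic_G(X)_\RR$, so we may choose rational ample classes $D_n$ with $D_n\to D$. For each $n$ pick $m_n\in\ZZ_{>0}$ with $m_nD_n\in\Pic_G(X)$; this is an ample $G$-linearized line bundle, and we set $\widetilde\omega_n:=\tfrac1{m_n}\wom_{m_nD_n}=\tfrac1{m_n}\omega_{m_nD_n}+\tfrac1{m_n}\Phi_{m_nD_n}$, which is an equivariant K\"ahler form with $\varphi\inv([\widetilde\omega_n])=D_n$. Since $\varphi$ is a linear isomorphism of finite-dimensional real vector spaces, $[\widetilde\omega_n]=\varphi(D_n)\to\varphi(D)=[\widetilde\omega]$, and $m_nD_n$ is a positive integral multiple of $\varphi\inv([\widetilde\omega_n])$ which is an ample linearized line bundle, as required.

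The one point I expect to require care is the bookkeeping between $\Pic_G(X)$ and the ordinary Picard group in the nefness argument: one must know that, for $G$-linearized $\RR$-classes, being nef is equivalent to lying in the closure of $\mathrm{Amp}_G(X)$ and is tested by intersection with curves; this is standard and uses only that ampleness does not depend on the choice of $G$-linearization. Notably, the argument avoids the (true but deeper) fact that on a smooth projective variety every K\"ahler class is already ample, using instead only the easy implication that a K\"ahler class is nef.
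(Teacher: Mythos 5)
Your proof is correct, but it takes a genuinely different route from the paper's. The paper approximates $[\wom]$ by an \emph{arbitrary} sequence of rational classes in $H^2_K(X,\RR)$, invokes the openness of the K\"ahler cone $H^2_K(X,\RR)^+$ to guarantee that these approximants are still classes of equivariant K\"ahler forms, and only then deduces ampleness of the integral multiples via the Kodaira embedding theorem (a line bundle whose first Chern class is represented by a K\"ahler form is positive, hence ample). You instead approximate inside the ample cone from the outset: you show that $D=\varphi\inv([\wom])$ is nef by pairing with curves ($D\cdot C=\int_C\omega>0$), then use Kleiman's criterion together with the density of rational points in the open ample cone to produce rational ample approximants $D_n$, whose associated Fubini--Study forms are K\"ahler for free. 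In effect you trade Kodaira plus the openness of $H^2_K(X,\RR)^+$ for Kleiman plus a curve-intersection computation; both inputs are standard, and your version has the virtue of only using the easy implication ``K\"ahler $\Rightarrow$ nef'' rather than ``integral K\"ahler $\Rightarrow$ ample.'' Two small points deserve a word in a final write-up: an irreducible curve $C$ need not be a submanifold, so the positivity of $\int_{[C]}\omega$ should be justified on the smooth locus or the normalization (Wirtinger); and the transfer of ``nef $=$ closure of ample'' from $\Pic(X)_\RR$ up to $\Pic_G(X)_\RR$ uses that the kernel $(\fk/[\fk,\fk])^*$ of $\Pic_G(X)_\RR\to\Pic(X)_\RR$ consists of numerically trivial classes --- you flag this correctly and it does work out.
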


\begin{proof}
Recall that the isomorphism $\varphi$ fits into the commutative diagram
\begin{equation} \label{eq:H2}
\begin{CD}
 \Pic_G (X)_\RR @>>{f}>\Pic(X)_\RR\\
@VV{\varphi}V @VV{\Psi}V \\
H^2_K(X,\RR)@>{g}>>
H^2(X,\RR)
\end{CD}
\end{equation}
where  $\Psi$ sends a line bundle to its first Chern class, and the natural maps $f$ and $g$ are surjective.
We can take a sequence of elements $[\widetilde\omega_n]$ (for $n\in\NN$) converging to the class $[\widetilde\omega]$ in $H^2_K(X,\RR)$, such that $\varphi\inv([\widetilde\omega_n])\in \Pic_G(X)\otimes_\ZZ\QQ$. Since $H^2_K(X,\RR)^+ $ is an open subset of $H^2_K(X,\RR)$, we may assume that $\wom_n$ is an equivariant K\"ahler form $\om_n+\Phi_n$ for all $n\in\NN$. Now, some positive integral multiple $k_n[\widetilde\omega_n]$ of $[\widetilde\omega_n]$ corresponds to a $G$-linearized line bundle $\mathcal L_n$ on $X$. Moreover $\mathcal L_n$ is a positive line bundle, since $\omega_n$ is positive and $c_1(\mathcal L_n)=[\omega_n]$. By the Kodaira embedding theorem, we have that $\mathcal L_n$ is ample.
\end{proof}

The description of the momentum polytope of a  polarized spherical variety as an intersection of half-spaces (Proposition~\ref{prop:Pbrion}) can be extended to K\"ahler multiplicity free  $K$-manifolds. 
Following~\cite{losev:knopconj}, let us review this description.
Assume $(M,\omega,\Phi)$ admits a compatible complex structure $J$ and denote by $X=X(J)$ the projective spherical $G$-variety to which $M$ is equivariantly biholomorphic. Then one can view $\omega + \Phi$ as an equivariant K\"ahler form on $X$.  

Recall  from Section~\ref{subsec:mom_pol_inters_halfspaces} the definition of $\rho(D)\in N(X) = \Hom_\ZZ(\lat(X),\QQ)$ for $D\in\divB(X)$, and embed $\lat(X)$ into $\wl \times \ZZ^{\divB(X)}$ by $\lambda \mapsto (\lambda,\sum_{D \in \divB(X)} \<\rho(D),\lambda\> D)$. Extending $\rho(D)$ to $\lat(X)_\RR$ by linearity, we obtain an element of $N(X)_\RR$, still denoted by $\rho(D)$ below. Given $\lb \in \Pic_G(X)$ and a $B$-semi-invariant rational section $s$ of $\lb$, let $\Chi(s)$ be the $B$-weight of $s$. Thanks to the isomorphism $\varphi$ of Lemma~\ref{lemma:Losev-cohomology}, the assignment $\Pic_G(X) \to (\wl \times \ZZ^{\divB(X)})/\lat(X) \colon \lb\mapsto (\chi(s),\sum_{D\in \divB(X)}v_D(s)D)$ induces an $\RR$-linear map:
\begin{equation} \label{eq:chiH2KX}
\overline{\chi}:H^2_K(X,\RR)\to(\wl_\RR\times \RR^{\divB(X)})/\lat(X)_\RR.
\end{equation}
Using part (\ref{lemma:Losev-cohomology-depcont}) of Lemma~\ref{lemma:Losev-cohomology} we get the extension of Proposition~\ref{prop:Pbrion}.

\begin{proposition}[Losev]\label{prop:Kaehler-polytope}
Let $\wom$ be an arbitrary $K$-equivariant K\"ahler form on a projective spherical $G$-variety $X$ and let $(\chi_0,\sum_{\mathrm{div}^B(X)}\chi_D D)\colon H^2_K(X,\RR) \to (\wl_\RR\times \RR^{\divB(X)})$ be a lifting of $\overline{\chi}$ in \eqref{eq:chiH2KX}.
Then the Kirwan polytope $\Pc(X,[\wom])$
satisfies the following equality
\[
\Pc(X,[\wom])=\chi_0([\widetilde\omega]) + \{ \xi\in \lat(X)_\RR \;|\; \<\rho(D),\xi\> + \chi_D([\widetilde\omega]) \geq 0 \text{ for all $D\in \divB(X)$}\}.
\]
\end{proposition}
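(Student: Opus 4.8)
The plan is to prove the identity first for those equivariant K\"ahler classes that are, up to a positive rational scalar, of the form $\varphi(\lb)$ for an ample $G$-linearized line bundle $\lb$ (so that Brion's description, Proposition~\ref{prop:Pbrion}, applies directly), and then to reach the general case by approximation, using the sequence of ample classes supplied by Lemma~\ref{lemma:amplesequence} together with the continuity statements of Lemma~\ref{lemma:Losev-cohomology}. To set things up I would fix a \emph{linear} lift $(\chi_0,\sum_{D}\chi_D D)\colon H^2_K(X,\RR)\to\wl_\RR\times\RR^{\divB(X)}$ of the map $\overline\chi$ from \eqref{eq:chiH2KX}; such a lift exists because the quotient map $\wl_\RR\times\RR^{\divB(X)}\to(\wl_\RR\times\RR^{\divB(X)})/\lat(X)_\RR$ of real vector spaces admits a linear section. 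At the very end I will check that the right-hand side of the claimed formula does not depend on the chosen lift, so this choice is harmless.

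Now suppose $[\wom']\in H^2_K(X,\RR)^+$ satisfies $k[\wom']=\varphi(\lb)$ for some $k\in\ZZ_{>0}$ and some ample $G$-linearized line bundle $\lb$ on $X$, and pick $s\in H^0(X,\lb)^{(B)}\setminus\{0\}$. By Proposition~\ref{prop:Pbrion},
\[
Q(X,\lb)=\chi(s)+\{\xi\in\lat(X)_\RR:\ \<\rho(D),\xi\>+v_D(s)\geq 0\ \text{ for all }D\in\divB(X)\}.
\]
On the other hand, the Kirwan polytope depends only on the equivariant cohomology class, satisfies $\Pc(X,k[\wom'])=k\,\Pc(X,[\wom'])$, and (standard fact) the Kirwan polytope attached to a polarized spherical variety with its Fubini--Study form is a rational polytope; hence Proposition~\ref{prop:brion-Kirwan} gives $Q(X,\lb)=\Pc(X,\varphi(\lb))=k\,\Pc(X,[\wom'])$. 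Since $(\chi(s),\sum_D v_D(s)D)$ and $k\,(\chi_0([\wom']),\sum_D\chi_D([\wom'])D)$ are two lifts of $\overline\chi(\varphi(\lb))$ through the quotient map, they differ by an element of $\lat(X)_\RR$ embedded as in \eqref{eq:chiH2KX}; that is, there is $\mu\in\lat(X)_\RR$ with $\chi(s)=k\chi_0([\wom'])-\mu$ and $v_D(s)=k\chi_D([\wom'])-\<\rho(D),\mu\>$ for all $D$. Substituting these into the formula for $Q(X,\lb)$ and replacing the variable $\xi$ by $\xi+\mu$ makes $\mu$ disappear; dividing the result by $k$ yields
\[
\Pc(X,[\wom'])=\chi_0([\wom'])+\{\xi\in\lat(X)_\RR:\ \<\rho(D),\xi\>+\chi_D([\wom'])\geq 0\ \text{ for all }D\in\divB(X)\}.
\]

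For an arbitrary equivariant K\"ahler class $[\wom]\in H^2_K(X,\RR)^+$, Lemma~\ref{lemma:amplesequence} provides equivariant K\"ahler forms $\wom_n$ with $[\wom_n]\to[\wom]$ such that each $[\wom_n]$ is of the type just treated, so the previous displayed identity holds with $[\wom']$ replaced by each $[\wom_n]$. Letting $n\to\infty$: by Lemma~\ref{lemma:Losev-cohomology}-\ref{lemma:Losev-cohomology-depcont} the left-hand side converges to $\Pc(X,[\wom])$; on the right-hand side, $\chi_0$ and the $\chi_D$ are continuous (being linear), and the polyhedron $\{\xi:\<\rho(D),\xi\>+\chi_D([\wom_n])\geq 0\ \forall D\}$ is a translate of the bounded polytope $\Pc(X,[\wom_n])$, while its limiting counterpart $\Pc(X,[\wom])-\chi_0([\wom])$ is again bounded since $X$ is compact; hence the elementary continuity of the polyhedron cut out by finitely many half-spaces with fixed inner normals applies, and the right-hand side converges to $\chi_0([\wom])+\{\xi:\<\rho(D),\xi\>+\chi_D([\wom])\geq 0\ \text{for all }D\}$. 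Equating the two limits gives the asserted formula for $[\wom]$ and the linear lift. Finally, any other lift $(\chi_0',\sum_D\chi_D'D)$ of $\overline\chi$ differs from $(\chi_0,\sum_D\chi_D D)$, at each class, by some $(\nu,\sum_D\<\rho(D),\nu\>D)$ with $\nu\in\lat(X)_\RR$, and the same change of variable by $\nu$ shows the two half-space descriptions agree, so the formula holds for an arbitrary lift.

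The step I expect to be most delicate is the reduction in the second paragraph: one must match the canonical datum $(\chi(s),\sum_D v_D(s)D)$ coming from an honest section with the value of the chosen lift of $\overline\chi$, see that the discrepancy lies in $\lat(X)_\RR$, and absorb it by a translation --- this is precisely what forces the formula to be independent of $s$ and of the lift, and it is the only place where the exact sequence \eqref{eq:latticecone}/the quotient by $\lat(X)_\RR$ must be handled with care. The remaining ingredients, namely continuity of the Kirwan polytope in the K\"ahler class (Lemma~\ref{lemma:Losev-cohomology}) and continuity of a bounded polyhedron under perturbation of its defining inequalities, are then routine, the boundedness being guaranteed throughout by compactness of $X$.
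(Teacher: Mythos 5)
Your proof is correct and follows essentially the same route the paper intends (and attributes to Losev): establish the half-space description for classes proportional to ample linearized line bundles via Proposition~\ref{prop:Pbrion}, then pass to an arbitrary equivariant K\"ahler class by the approximating sequence of Lemma~\ref{lemma:amplesequence} together with the continuity of $\Pc(X,[\wom])$ in $[\wom]$ from Lemma~\ref{lemma:Losev-cohomology}. The bookkeeping you do with the discrepancy $\mu\in\lat(X)_\RR$ between the section-theoretic lift and the chosen linear lift is exactly the point that makes the statement independent of the lift, and your treatment of it is sound.
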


Next we recall, and slightly generalize, a result from \cite{foschi}; we
deduce it from the intermediate generalization in \cite[Lemma 30.24]{timashev}.

\begin{lemma}[\cite{foschi}]\label{lemma:foschi}
Let $X$ and $\wom$ be as in Proposition~\ref{prop:Kaehler-polytope}. For $\alpha\in S$, set $\varepsilon_\alpha=\frac12$ if $2\alpha\in \Sigma(X)$, and $\varepsilon_\alpha=1$ otherwise. Then we have
\[
\varepsilon_\alpha\langle\alpha^\vee,\chi_0([\wom])\rangle = \sum_{D} \chi_D([\wom])
\]
where the sum is taken over all colors $D$ of $X$ moved by $\alpha$.
\end{lemma}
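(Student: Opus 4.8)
The plan is to reduce the statement for an arbitrary equivariant K\"ahler form $\wom$ to the case of an ample $G$-linearized line bundle, where it follows from the divisor formula for the anticanonical class. First I would observe that both sides of the claimed equality are $\RR$-linear (or at least affine-linear) in $[\wom] \in H^2_K(X,\RR)$: the map $\chi_0$ is the composition of $\overline\chi$ from \eqref{eq:chiH2KX} with a linear lifting, hence linear, and likewise each $\chi_D$. Since by Lemma~\ref{lemma:Losev-cohomology}\ref{lemma:Losev-cohomology-isom} the map $\varphi$ is an isomorphism and $\Pic_G(X)_\QQ$ is dense in $H^2_K(X,\RR)$, it suffices to verify the identity on a Zariski-dense subset, e.g.\ on the classes of the form $[\wom_\lb]$ with $\lb$ an ample $G$-linearized line bundle (using Lemma~\ref{lemma:amplesequence}, or simply that $\varphi^{-1}$ of a dense set of classes lands in $\Pic_G(X)_\QQ$ and positive multiples give genuine line bundles). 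Two $\RR$-linear functionals that agree on such a set agree everywhere.

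Next I would treat the case $[\wom] = [\wom_\lb]$ for $\lb \in \Pic_G(X)$ ample (after replacing $\lb$ by a positive power, which only scales both sides). Here $(X,\lb)$ is a polarized spherical $G$-variety. Pick a nonzero $B$-semiinvariant rational section $s$ of $\lb$; then $\chi_0([\wom_\lb])=\chi(s)$ and $\chi_D([\wom_\lb]) = v_D(s)$ for all $D \in \divB(X)$, by the very definition of $\overline\chi$. So the asserted identity becomes
\[
\varepsilon_\alpha\langle \alpha^\vee, \chi(s)\rangle = \sum_{D \text{ moved by }\alpha} v_D(s),
\]
which is exactly the content of \cite[Lemma 30.24]{timashev} (a restatement, in Timashev's notation, of Foschi's computation of the order of vanishing of a $B$-semiinvariant section along the colors moved by a simple root $\alpha$, combined with the fact that $\varepsilon_\alpha = \tfrac12$ precisely when $2\alpha \in \Sigma(X)$, i.e.\ when the color moved by $\alpha$ has $\rho$-value $\tfrac12\alpha^\vee|_{\lat(X)}$). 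The normalization conventions relating $\chi_D$ to $v_D$ and $\chi_0$ to $\chi(s)$ are fixed precisely so that this matches; one should double-check the sign/scaling coming from the identification $\wl \into \ft_\RR^*$ and from the embedding $\lat(X) \into \wl \times \ZZ^{\divB(X)}$ used just before \eqref{eq:chiH2KX}.

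Finally, I would note that the independence of the right-hand side from the choice of lifting $(\chi_0, \sum \chi_D D)$ is automatic, since changing the lifting alters $(\chi_0([\wom]), \sum_D \chi_D([\wom])D)$ by an element of $\lat(X)_\RR$ (embedded as $\lambda \mapsto (\lambda, \sum_D \langle\rho(D),\lambda\rangle D)$), and the relation $\varepsilon_\alpha\langle\alpha^\vee,\lambda\rangle = \sum_{D \text{ moved by }\alpha}\langle\rho(D),\lambda\rangle$ holds identically on $\lat(X)_\RR$ by \eqref{eq:color_functional} together with \eqref{eq:color_moved_by_two}: each color $D$ moved by $\alpha$ contributes $\langle\rho(D),\lambda\rangle$, and the number of such colors together with whether $\rho(D) = \alpha^\vee|_{\lat(X)}$ or $\tfrac12\alpha^\vee|_{\lat(X)}$ is governed by whether $2\alpha \in \Sigma(X)$ and whether some $\alpha+\beta \in \Sigma(X)$. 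The main obstacle I expect is purely bookkeeping: keeping the three normalizations (the cohomological one defining $\chi_0,\chi_D$; Timashev's convention in \cite[Lemma 30.24]{timashev}; and the one in \eqref{eq:color_functional}) consistent, so that the factor $\varepsilon_\alpha$ lands on the correct side with the correct value. No deep new input is needed beyond reducing to line bundles and citing the cited lemma.
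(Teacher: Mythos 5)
Your proposal follows essentially the same route as the paper: reduce by linearity of $\chi_0$ and the $\chi_D$ to the case of a class coming from a $G$-linearized line bundle, and then invoke the relation between the $B$-weight of a semiinvariant section and its orders of vanishing along the colors moved by $\alpha$. The linearity reduction is fine (the paper reduces to all of $\Pic_G(X)$ rather than only to ample classes, but since the ample $G$-linearized classes span $H^2_K(X,\RR)$ your density argument is equivalent), and your closing remark on independence of the lift, via $\varepsilon_\alpha\alpha^\vee|_{\lat(X)}=\sum_{D}\rho(D)$ over the colors moved by $\alpha$, is exactly the observation the paper records in Corollary~\ref{cor:cancellation}.

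The one place where you compress too much is the line-bundle case itself. The identity $\varepsilon_\alpha\langle\alpha^\vee,\chi(s)\rangle=\sum_{D}v_D(s)$ is not literally the statement of \cite[Lemma~30.24]{timashev}: as used in the paper, that lemma concerns the $B$-eigenvalues of global equations $f_D\in\CC[G]$ of the pullbacks of the colors to $G$ (namely $\langle\alpha^\vee,\chi_{f_D}\rangle=\varepsilon_\alpha^{-1}$ if $\alpha$ moves $D$, and $0$ otherwise), not sections of line bundles on $X$. The paper's proof therefore still has to pull $s$ back to $G$ (where, after replacing $G$ by a factorial cover, the bundle trivializes), pass to the corresponding rational $B$-eigenfunction $f$ -- whose eigenvalue differs from that of the pulled-back section only by a character of $G$, hence pairs equally with simple coroots -- and factor $f=a\prod_D f_D^{\operatorname{ord}_D(f)}$ with $a$ an invertible eigenfunction whose weight vanishes on coroots; comparing weights then yields the identity. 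This deduction is the bulk of the paper's argument, and your proposal outsources it entirely to the citation. It is a standard argument and the target identity is indeed Foschi's, so nothing in your plan would fail, but as written the key step is asserted rather than proved.
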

\begin{proof}

First, we may assume that $[\wom]$ is in $\Pic_G(X)$, indeed then the general case follows by linearity. Let $X_0\subseteq X$ be the open $G$ orbit, and identify $X_0$ with a homogeneous space $G/H$. Let $\pi\colon G\to G/H$ the quotient map, and consider the pull-back $\mathcal L$ of $[\wom]$ on $G$. We may assume that $G$ is factorial so that this pull-back is trivial as a line bundle (possibly it has a different linearization than the standard one on $\mathcal O_G$).

Recall that the definitions of $\chi_0$ and of $\chi_D$ for all colors $D$ of $X$ depend on the choice of a lift of $\overline\chi$. Again by linearity, we may assume that the image of $[\wom]$ via this lift is $(\chi(s),\sum_{D\in \divB(X)}v_D(s)D)$ for some rational section $s$. Then $\chi_0([\wom])$ is the $B$-weight of the pull-back $\widetilde s$ of $s|_{X_0}$ to $G$, and $\chi_D([\wom])$ is the order of vanishing of $\widetilde s$ along the pull-back $\widetilde D$ of $D\cap X_0$ to $G$.

Fix an isomorphism of invertible sheaves $\varphi\colon \mathcal L\to \mathcal O_G$, and consider the rational function $f=\varphi(\widetilde s)$. It is a $B$-eigenvector under the action of left translation of $G$, and its $B$-eigenvalue differs from that of $\widetilde s$ by a character of $G$, so the two pair equally on any simple coroot. On the other hand $\widetilde s$ and $f$ have the same order of vanishing on all divisors of $G$.

For any color $D$ of $X$, fix a global equation $f_D\in\CC[G]$ of the divisor $\widetilde D$. It is a $B$-eigenvector. Since $\widetilde s$ is pulled-back from $X_0$, we can write
\[
f = a\prod_{D\in\Delta(X)} f_D^{\textrm{ord}_D(f)}
\]
where $a\in\CC[G]$ is invertible and a $B$-eigenvector. Notice that the $B$-eigenvalue of $a$ vanishes on all simple coroots of $G$. So, to finish the proof, it is enough to apply the following fact from \cite[Lemma 30.24]{timashev}: the $B$-eigenvalue of $f_D$ paired with $\alpha^\vee$ is equal to $\varepsilon_\alpha\inv$ if $\alpha$ moves $D$, and $0$ otherwise.
\end{proof}

\begin{corollary}\label{cor:cancellation}
Let $X$, $\wom$ be as in Proposition~\ref{prop:Kaehler-polytope} and set $\Pc = \Pc(X,[\wom])$. Let $\alpha\in S$ be a simple root and $p\in \Pc$, and define $\varepsilon_\alpha$ as in Lemma~\ref{lemma:foschi}. Then
\[
\sum_{D} (\<\rho(D),p-\chi_0([\wom])\> + \chi_D([\wom])) = \varepsilon_\alpha\<\alpha^\vee,p\>,
\]
where the sum is taken over all colors $D$ moved by $\alpha$.
\end{corollary}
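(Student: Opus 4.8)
The plan is to reduce this identity to Lemma~\ref{lemma:foschi} together with the standard description, recorded in \eqref{eq:color_functional}, of the functionals $\rho(D)$ attached to the colors moved by a fixed simple root. First I would set $q := p - \chi_0([\wom])$; by Proposition~\ref{prop:Kaehler-polytope} we have $q \in \lat(X)_\RR$, so the left-hand side of the asserted equality splits as
\[
\Big\langle \sum_{D}\rho(D),\ q \Big\rangle \ +\ \sum_{D}\chi_D([\wom]),
\]
both sums being taken over the colors $D$ of $X$ moved by $\alpha$.

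For the second summand I would simply quote Lemma~\ref{lemma:foschi}, which gives $\sum_{D}\chi_D([\wom]) = \varepsilon_\alpha\langle\alpha^\vee,\chi_0([\wom])\rangle$. For the first summand the key step is to check the lattice identity $\sum_{D}\rho(D) = \varepsilon_\alpha\,\alpha^\vee|_{\lat(X)}$ in $N(X)_\RR$, which I would do by cases on $\alpha$. If $\alpha \in \Spp(X)$, then no color is moved by $\alpha$, and $\alpha^\vee$ vanishes on $\lat(X)$ (the weight lattice consists of weights of $B$-, hence $P_\alpha$-, eigenfunctions), so both sides are $0$. If $\alpha \in \Sigma(X)$, then $\alpha$ moves exactly the two colors $D_\alpha^{\pm}$, whose values under $\rho$ sum to $\alpha^\vee|_{\lat(X)}$, while $\varepsilon_\alpha = 1$. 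If $\alpha \notin \Sigma(X)\cup\Spp(X)$, then $\alpha$ moves a single color $D$, and \eqref{eq:color_functional} gives $\rho(D) = \varepsilon_\alpha\,\alpha^\vee|_{\lat(X)}$; these are all facts from Luna's combinatorial theory of colors, cf.\ \cite[\S1.4]{luna:typeA}. Hence $\langle \sum_D \rho(D), q \rangle = \varepsilon_\alpha\langle \alpha^\vee, q\rangle$.

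Adding the two contributions then yields $\varepsilon_\alpha\langle\alpha^\vee,q\rangle + \varepsilon_\alpha\langle\alpha^\vee,\chi_0([\wom])\rangle = \varepsilon_\alpha\langle\alpha^\vee,p\rangle$, which is the assertion. I do not expect a genuine obstacle here: the argument is essentially bookkeeping. The only points that need a little care are to make sure the index set of colors matches the one appearing in Lemma~\ref{lemma:foschi}, and that the degenerate case $\alpha\in\Spp(X)$ (empty sum on both sides) is correctly subsumed in the case analysis above.
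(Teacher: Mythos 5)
Your proposal is correct and follows essentially the same route as the paper: the paper's proof consists precisely of the observation that $\sum_D \rho(D) = \varepsilon_\alpha\,\alpha^\vee|_{\lat(X)}$ (which you justify in more detail via the standard case analysis from Luna's theory of colors) followed by an application of Lemma~\ref{lemma:foschi}. Your bookkeeping, including the degenerate case $\alpha\in\Spp(X)$, is accurate.
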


\begin{proof}
We observe that $\varepsilon_\alpha \alpha^\vee|_{\lat(X)}$ is equal to
the sum of the $\rho(D)$'s taken over all colors $D$ moved by $\alpha$.
The corollary is now a direct consequence of Lemma~\ref{lemma:foschi}.
\end{proof}

We end this subsection with a generalization of Theorem~\ref{thm:Brion-Woodward}, after recalling an application of
Sjamaar's Local Convexity Theorem to compact K\"ahler $G$-manifolds and their compact $G$-orbits (hence K\"ahler $K$-orbits); see Theorem 6.5 and Equation (6.9) in~\cite{sjamaar}.

\begin{lemma}\label{lemma:slice}
Let $X$ be a smooth projective spherical $G$-variety and let $Z$ be a closed $G$-orbit of $X$. Given $[\wom]\in H^2_K(X,\RR)^+$, set $\Pc = \Pc(X,[\wom])$.
Let $x$ be a point on $Z$ such that its stabilizer $G_x$ contains $B$ and let $\msv$ be the unique point in $\ft^*_+$ contained in $K\cdot \Phi(x)$. Let $L$ be the Levi subgroup of the stabilizer $G_x$ such that $L\supseteq T$, and $W=T_xX/T_xZ$. Then $K_x=K_\msv$, the action of $K_x$ on $W$ extends to an action of $L$, and the convex cone generated by $\Pc(X,[\widetilde \omega])-\msv$ is the convex cone generated in $\wl_\RR$ by the weight monoid of $W$ as an $L$-variety.
\end{lemma}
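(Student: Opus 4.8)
The plan is to reduce the statement to Sjamaar's Local Convexity Theorem applied at the point $x \in Z$ and then to identify the local symplectic model with the representation-theoretic data of the spherical variety. First I would set up the slice: since $Z$ is a closed $G$-orbit it is a compact $K$-orbit in the K\"ahler manifold $(M,\omega,J) = (X,\omega,J)$, and $x$ is a point on $Z$ whose isotropy $G_x$ is a parabolic subgroup containing $B$. Writing $L$ for its Levi factor with $L \supseteq T$ and $K_x = L \cap K$ for the corresponding compact subgroup, the tangent space decomposes $K_x$-equivariantly as $T_xX = T_xZ \oplus W$ with $W = T_xX/T_xZ$, and the $K_x$-action on $W$ extends to an $L$-action because $W$ is (the restriction to $K_x$ of) a complex $L$-module — indeed the normal bundle of $Z$ in $X$ is $G$-homogeneous, so its fiber at $x$ is an $L$-module. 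The equality $K_x = K_\msv$ is immediate: $\msv = K\cdot\Phi(x)\cap\ft^*_+$ and $\Phi(x)$ is fixed exactly by $K_x$ (the momentum image of a point on a coadjoint-type orbit through $x$), while $K_\msv$ is the stabilizer of $\msv$, conjugate to $K_x$; after moving $x$ within $Z$ so that $\Phi(x) = \msv$ (possible since $\Phi(Z) = K\cdot\msv$) one gets $K_x = K_\msv$ on the nose.

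Next I would invoke Sjamaar's theorem. By Theorem~6.5 and Equation~(6.9) of \cite{sjamaar}, near the orbit $K\cdot x$ the Kirwan polytope $\Pc(X,[\wom])$ agrees, in a neighborhood of $\msv$, with the local model $\msv + (\text{Kirwan set of the symplectic slice } W)$ intersected with $\ft^*_+$; more precisely the tangent cone to $\Pc(X,[\wom])$ at $\msv$ is the cone $\msv + \C$ where $\C$ is the Kirwan cone of the linear Hamiltonian $K_x$-space $W$ (equivalently of $W$ as an $L$-variety, since the cone only depends on the complexified action). Thus the convex cone generated by $\Pc(X,[\wom]) - \msv$ equals the Kirwan cone of $W$.

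The last step is to identify this Kirwan cone with the cone generated by the weight monoid of $W$ as an $L$-variety. Here I would use that $W$, being the normal slice at a closed orbit of the spherical $G$-variety $X$, is a spherical $L$-module (this is essentially the local structure theorem for spherical varieties, and is built into Camus' smoothness analysis recalled in Section~\ref{sec:smoothness} — cf.\ the socle of a spherical module); for a spherical $L$-module the Kirwan cone is, by Brion's result \cite{brion:image} specialized to linear actions (or by the equality \eqref{eq:conepolytope} applied to the affine spherical variety $W$), exactly $\QQ_{\ge 0}\wm(W)$, the cone generated by the weight monoid. Putting the three steps together yields the asserted equality of cones. The main obstacle I anticipate is the bookkeeping in the first and second steps: matching Sjamaar's normalizations (his symplectic slice, his shift by $\msv$, and the passage between the compact group $K_x$ and its complexification $L$) with the conventions used here for momentum maps and weight monoids, and in particular making sure the cone one extracts from the local convexity theorem is the full-dimensional cone generated by $\Pc - \msv$ in $\wl_\RR$ rather than something living only in a subspace; this requires knowing that $\msv$ is an orbit vertex, i.e.\ that the cone $\C(\{\msv\})$ of the normal fan meets the valuation cone, which holds by Theorem~\ref{thm:Brion-Woodward} since $Z$ is an actual $G$-orbit of $X$.
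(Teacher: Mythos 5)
Your proposal is correct and follows essentially the same route as the paper, which gives no separate proof of Lemma~\ref{lemma:slice} but derives it directly from Sjamaar's Local Convexity Theorem (Theorem~6.5 and Equation~(6.9) of \cite{sjamaar}) applied at the $B$-fixed point of the closed orbit, identifying the symplectic slice with the $L$-module $W=T_xX/T_xZ$ and its local momentum cone with $\RR_{\ge 0}\wm(W)$. The only minor imprecision is that no ``moving of $x$'' is needed (the condition $G_x\supseteq B$ already forces $\Phi(x)=\msv$), and the final identification of cones does not require knowing in advance that $\msv$ is an orbit vertex (which would anyway be circular here, since Theorem~\ref{thm:Brion-Woodward} is stated only for algebraic polarizations and its K\"ahler generalization is what this lemma is used to prove).
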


Since every facet of the polytope $\Pc$ (as in Lemma~\ref{lemma:slice}) admits a normal vector in $N(X)=\Hom_{\ZZ}(\lat(X),\QQ)$, the normal fan of $\Pc$ is defined in the rational vector space $N(X)$ 
and its elements are finitely generated convex cones.
 
Our generalization of Theorem~\ref{thm:Brion-Woodward} is the following Proposition.

\begin{proposition}\label{thm:Brion-Woodward-gen}
Let $X$ be a smooth projective spherical $G$-variety 
and let $[\widetilde\omega]\in H^2_K(X,\RR)^+$. 
Set $\Pc=\Pc(X,[\widetilde\omega])$. 
The colored fan $\F(X)$ of $X$ is the set of pairs $(\C(F),\D(F))$ where $F$ varies over the orbit faces of $\Pc$ and for each such face $F$
\begin{equation}
\D(F) := \{D \in \col(X) \;|\;  \<\rho(D), p-\chi_0([\widetilde\omega])\> + \chi_D([\widetilde\omega]) =0 \text{ for all $p \in F$}\}. 
\end{equation}
\end{proposition}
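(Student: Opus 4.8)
The plan is to deduce Proposition~\ref{thm:Brion-Woodward-gen} from Theorem~\ref{thm:Brion-Woodward} by approximation, with the symplectic slice statement of Lemma~\ref{lemma:slice} supplying the rigidity needed to pass to the limit. First I would invoke Lemma~\ref{lemma:amplesequence} to fix a sequence of equivariant K\"ahler forms $\widetilde\omega_n=\omega_n+\Phi_n$ whose classes converge to $[\widetilde\omega]$ in $H^2_K(X,\RR)$ and such that, for each $n$, a positive integral multiple of $\varphi^{-1}([\widetilde\omega_n])$ is an ample $G$-linearized line bundle $\lb_n$. The polytope $Q(X,\lb_n)$ is then a positive rational multiple of $\Pc(X,[\widetilde\omega_n])$, so the two have the same normal fan; moreover by Propositions~\ref{prop:Pbrion} and~\ref{prop:Kaehler-polytope} both are cut out by half-spaces with the same normals $\rho(D)$, $D\in\divB(X)$, whose offsets depend $\RR$-linearly on the cohomology class through the lift of the map $\overline\chi$ of~\eqref{eq:chiH2KX}. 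Consequently the set $\D(F)$ attached to a face agrees, whether computed from $Q(X,\lb_n)$ via Theorem~\ref{thm:Brion-Woodward} or from the formula of Proposition~\ref{thm:Brion-Woodward-gen} evaluated at $[\widetilde\omega_n]$. Thus, for every $n$, one has $\F(X)=\{(\C(F),\D(F)) : F \text{ is an orbit face of } \Pc(X,[\widetilde\omega_n])\}$, while by Lemma~\ref{lemma:Losev-cohomology} the polytopes $\Pc(X,[\widetilde\omega_n])$ converge to $\Pc$ in Hausdorff distance and the vectors $\chi_0([\widetilde\omega_n])$, $\chi_D([\widetilde\omega_n])$ converge to $\chi_0([\widetilde\omega])$, $\chi_D([\widetilde\omega])$.

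Next I would analyse the orbit vertices directly. For a closed $G$-orbit $Z$, choose $x\in Z$ with $B\subseteq G_x$, let $\msv_Z$ be the point of $\ft^*_+$ lying in $K\cdot\Phi(x)$, and let $W_Z=T_xX/T_xZ$, an $L$-module. Lemma~\ref{lemma:slice} identifies $\QQ_{\ge 0}(\Pc-\msv_Z)$ with the cone generated by the weight monoid of $W_Z$ as an $L$-variety; since $Z$ is a closed orbit of a spherical variety this cone is strictly convex and full-dimensional in $\lat(X)_\RR$, and it does not depend on the K\"ahler class. Hence $\msv_Z$ is a vertex of $\Pc$ and the normal cone $\C(\{\msv_Z\})$ is intrinsic to the pair $(X,Z)$. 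Applying the same computation to an ample class $[\wom_{\lb_n}]$ and comparing with Theorem~\ref{thm:Brion-Woodward}, which there identifies this normal cone with the colored cone $\C_Z$ and the corresponding colors with $\D_Z$, I get $\C(\{\msv_Z\})=\C_Z$ for every $[\widetilde\omega]$; that $\D(\{\msv_Z\})=\D_Z$ follows from the linearity of the offsets together with Lemma~\ref{lemma:foschi} and Corollary~\ref{cor:cancellation}. Since $Z\mapsto(\C_Z,\D_Z)$ is a bijection onto the maximal colored cones of $\F(X)$, the $\msv_Z$ are distinct orbit vertices of $\Pc$ realizing all maximal colored cones.

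To reach all orbit faces, I would use that every $G$-orbit of $X$ has a closed $G$-orbit in its closure, so by Proposition~\ref{thm:orbits-vs-faces} every colored cone of $\F(X)$ is a face of some maximal colored cone $(\C_Z,\D_Z)$, and dually every orbit face $F$ of $\Pc$ contains some $\msv_Z$, so $\C(F)$ is a face of $\C(\{\msv_Z\})=\C_Z$. Combined with the approximation of the first step --- each $\Pc(X,[\widetilde\omega_n])$ has precisely the orbit faces prescribed by $\F(X)$, and their normal cones stabilise because they are faces of the intrinsic cones $\C_Z$ --- this yields that $F\mapsto(\C(F),\D(F))$ is a bijection from the orbit faces of $\Pc$ onto $\F(X)$: no orbit vertex besides the $\msv_Z$ can emerge in the limit, and no orbit face can degenerate by acquiring a strictly larger normal cone, since its normal cone is forced to be a face of some $\C_Z$. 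Finally, evaluating the defining condition of $\D(F)$ at a vertex $\msv_Z$ contained in $F$ reduces the computation of $\D(F)$ to the corresponding face of the colored cone $(\C_Z,\D_Z)$, matching the value given by Theorem~\ref{thm:Brion-Woodward}.

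The step I expect to be the main obstacle is precisely this control of the limit: faces of a Hausdorff-convergent family of polytopes can jump, so a priori $\Pc$ might have fewer orbit vertices than the $\Pc(X,[\widetilde\omega_n])$, or an orbit face of $\Pc$ might have a normal cone strictly larger than any colored cone. The role of Lemma~\ref{lemma:slice} is exactly to exclude this: it pins the normal cone of $\Pc$ at each vertex attached to a closed orbit to the intrinsic cone $\C_Z$, independently of $[\widetilde\omega]$, and this rigidity then propagates to all orbit faces via the compatibility axioms (CF1)--(CF2) for $\F(X)$ and Proposition~\ref{thm:orbits-vs-faces}.
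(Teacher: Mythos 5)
Your overall strategy coincides with the paper's: approximate $[\widetilde\omega]$ by classes $[\widetilde\omega_n]$ admitting ample integral multiples (Lemma~\ref{lemma:amplesequence}), apply Theorem~\ref{thm:Brion-Woodward} to each $\Pc_n=\Pc(X,[\widetilde\omega_n])$, use Lemma~\ref{lemma:slice} to see that $\RR_{\geq0}(\Pc-\msv)=\RR_{\geq0}(\Pc_n-\msv_n)$ at the vertex attached to each closed orbit $Z$, and then compare maximal elements, using the completeness of $X$ (the maximal cones of $\F(X)$ cover $\V(X)$) to exclude extra orbit vertices of $\Pc$. As far as the cones $\C(F)$ are concerned, this is correct and is essentially the paper's argument.

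The gap is in the identification of the colors. You assert that $\D(\{\msv_Z\})=\D_Z$ ``follows from the linearity of the offsets together with Lemma~\ref{lemma:foschi} and Corollary~\ref{cor:cancellation}'', but linearity/continuity only yields one inclusion: if $D\in\D(\msv_n)$ then $\<\rho(D),\msv_n-\chi_0([\widetilde\omega_n])\>+\chi_D([\widetilde\omega_n])=0$ for all $n$, and passing to the limit gives $D\in\D(\{\msv_Z\})$. In the other direction, $D\notin\D(\msv_n)$ only gives a strict inequality for each $n$, which degenerates to ``$\geq0$'' in the limit and does not exclude $D\in\D(\{\msv_Z\})$. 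Corollary~\ref{cor:cancellation} does settle the case of a color moved only by simple roots $\alpha\notin\Sigma(X)$, since there the vanishing condition is equivalent to $\<\alpha^\vee,\msv\>=0$, which is intrinsic to $Z$ by the equality $K_x=K_\msv$ of Lemma~\ref{lemma:slice}. But for $\alpha\in S\cap\Sigma(X)$ with its two colors $D_\alpha^+,D_\alpha^-$, the corollary only controls the \emph{sum} of the two offsets, and it remains possible a priori that exactly one color, say $D_\alpha^+\notin\D(\msv_n)$, acquires a vanishing offset in the limit. The paper closes precisely this case with a separate argument: if neither $D_\alpha^+$ nor $D_\alpha^-$ lies in $\D(\msv_n)$, then the cone $C=\RR_{\geq0}(\Pc_n-\msv_n)=\RR_{\geq0}(\Pc-\msv)$ is cut out by functionals $\rho_E$ with $\<\rho_E,\alpha\>\leq0$, hence $-\alpha\in C$; if in addition $D_\alpha^+$ belonged to $\D(\{\msv_Z\})$, then $\rho(D_\alpha^+)$ would be nonnegative on $C$ and in particular on $-\alpha$, contradicting $\<\rho(D_\alpha^+),\alpha\>=1$. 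Your proposal needs this (or an equivalent) step to be complete.
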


\begin{proof}
To avoid confusion, let us call for the moment $(\C'(F), \D'(F))$ instead of $(\C(F),\D(F))$ the couple defined in the theorem, for $F$ an orbit face of $\Pc$. If $F$ is an orbit face of the momentum polytope of an ample $\QQ$-line bundle on $X$, we denote as usual by $(\C(F), \D(F))$ the colored cone of $X$ corresponding to $F$.

Let $\widetilde\omega_n=\omega_n+\Phi_n$ be a sequence of equivariant K\"ahler forms as in Lemma~\ref{lemma:amplesequence}, and recall the polytopes $\Pc_n=\Pc(X,[\widetilde\omega_n])$ which converge to $\Pc$. We notice that the theorem holds for $\widetilde\omega_n$ and $\Pc_n$, because in this case it is just Theorem~\ref{thm:Brion-Woodward}.

To show the theorem for $\widetilde\omega$, we denote by $\F$ the set of the couples $(\C'(F),\D'(F))$ with $F$ as above. We observe that $\F$ satisfies properties (CC1), (CF1), (CF2) of colored fans, by construction, and also property (CC2), by the definition of orbit faces. It follows that it is enough to prove that $\F$ and $\F(X)$ have the same elements that are maximal with respect to taking faces.

Let $Z$ be a closed $G$-orbit of $X$. For every $n$, let $\msv_n$ be the unique point in $\Phi_n(Z)\cap \ft^*_+$, and let $\msv$ be the unique point in $\Phi(Z)\cap \ft^*_+$. We know that $\msv_n$ is an orbit vertex of $\Pc_n$. By Lemma~\ref{lemma:slice}, the convex cones generated by $\Pc_n-\msv_n$ and generated by $\Pc-\msv$ are equal. We deduce that $\C(\msv_n)=\C'(\msv)$ and that $\msv$ is an orbit vertex of $\Pc$ (of course neither $\C(\msv_n)$ nor $\D(\msv_n)$ depend on $n$).

This argument, applied to all closed $G$-orbits of $X$, shows that the first component of any maximal colored cone of $\F(X)$ appears as the first component of some maximal element in $\F$. Since $X$ is complete, the maximal cones in $\F(X)$ cover the entire valuation cone of $X$, which implies that all maximal elements of $\F$ must appear in this way. In other words $(\C(\msv_n),\D(\msv_n))\mapsto (\C'(\msv),\D'(\msv))$, for $Z$ varying in the set of closed $G$-orbits, is a bijection between the maximal colored cones of $\F(X)$ and the maximal elements of $\F$, preserving the first component. It remains to show that it preserves the second component too.

Fix a closed $G$-orbit $Z$ in $X$ and let $\msv$ and $\msv_n$ be as above. Let $D\in\Delta(X)$, and suppose $D$ is not moved by any simple root in $\Sigma(X)$. Let $\alpha$ be a simple root moving $D$. Then the equality $\<\rho(D), \msv-\chi_0([\widetilde\omega])\> + \chi_D([\widetilde\omega]) =0$ is equivalent to $\<\alpha^\vee, \msv\> =0$, by Corollary~\ref{cor:cancellation}. Thanks to the equality $K_x=K_\msv$ in Lemma~\ref{lemma:slice}, those $\alpha$ satisfying this equality are the same as the simple roots associated with the parabolic subgroup $P$ that is the stabilizer of the point in $Z$ fixed by $B$. The same holds for the equality $\<\alpha^\vee, \msv_n\> =0$ and it follows that $D\in\D'(\msv)$ if and only if $D\in\D(\msv_n)$.

Finally, let us consider a simple root $\alpha\in S\cap\Sigma(X)$, and the two colors $D^+,D^-$ moved by $\alpha$. For a point $p$ in $\Pc$, the two equalities $\<\rho_{D^\pm}, p-\chi_0([\widetilde\omega])\> + \chi_{D^\pm}([\widetilde\omega]) =0$ are simultaneously satisfied if and only if $\<\alpha^\vee,p\>=0$, again by Corollary~\ref{cor:cancellation}. So both $D^+$ and $D^-$ are in $\D(\msv_n)$ if and only if they are both in $\D'(\msv)$.

On the other hand, if a color in $D\in\{D^+,D^-\}$ satisfies $D\in\D(\msv_n)$ then $\<\rho_{D}, \msv_n-\chi_0([\widetilde\omega_n])\> + \chi_{D}([\widetilde\omega_n]) =0$, and by continuity we deduce $\<\rho_{D}, \msv-\chi_0([\widetilde\omega])\> + \chi_{D}([\widetilde\omega]) =0$, hence $D\in\D'(\msv)$.

We are left with the case where $D\notin\D(\msv_n)$, and we must show that $D\notin\D'(\msv)$. Up to switching $D^+$ and $D^-$, we may assume $D=D^+$. Suppose for sake of contradiction that $D^+\in\D'(\msv)$. If $D^-\in \D(\msv_n)$ then $D^-\in\D'(\msv)$ as we have seen, so both $D^+$ and $D^-$ must be in $\D(\msv_n)$: contradiction. 
Hence we may assume that $D^-\notin \D(\msv_n)$. As a consequence, the cone $C=\RR_{\geq0}(\Pc_n-\msv_n)$ is defined in $\lat(X)_\RR$ by inequalities of the form $\<\rho_E,-\> \geq 0$ for $B$-stable prime divisors $E$ of $X$ such that $\<\rho_E,\alpha\> \leq 0$. Therefore $-\alpha\in C$. Since $D^+\in\D'(\msv)$, for all $p\in \Pc-\msv$ we have $\<\rho_{D^+},p\> \geq 0$. This holds then for all $p\in C=\RR_{\geq0} (\Pc-\msv)$, in particular for $-\alpha$. This contradicts $\<\rho_{D^+},\alpha\> = 1$, and the proof is complete.
\end{proof}

\begin{remark}
Proposition~\ref{thm:Brion-Woodward-gen} shows that the features of the Kirwan polytope $\Pc(X,[\wom])$ that are relevant in the computation of the colored fan $\F(X)$ do not change when we vary $[\wom]$. In particular, the number of orbit vertices is preserved, as well as the poset structure of the orbit faces and their inclusions.   We underline, however, that this is not true for other faces. For example, \cite[Example 3.1]{foschi} exhibits a projective spherical $\SL(3)$-variety $Y$ and ample $G$-linearized bundles $\lb_1, \lb_2$ and $\lb_3$ on $Y$ such that $Q(Y,\lb_1)$ is a regular hexagon, $Q(Y,\lb_2)$ is a pentagon and $Q(Y,\lb_3)$ is a trapezoid. Only near their unique orbit vertex do these three polytopes have the same shape.
\end{remark}

\begin{lemma}\label{lemma:Sp}
Let $X$ and $\Pc$ be as in Proposition~\ref{thm:Brion-Woodward-gen}. Then $S^\perp(\Pc) = S^\perp(X)$.
\end{lemma}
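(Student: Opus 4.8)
The plan is to establish the two inclusions $\Spp(X)\inn\Spp(\Pc)$ and $\Spp(\Pc)\inn\Spp(X)$ separately, the common tool being Corollary~\ref{cor:cancellation}, which for a simple root $\alpha$ expresses $\varepsilon_\alpha\<\alpha^\vee,p\>$ (for $p\in\Pc$) as a sum of the ``color contributions'' $\<\rho(D),p-\chi_0([\wom])\>+\chi_D([\wom])$ over the colors $D$ of $X$ moved by $\alpha$. Throughout I would fix a lifting $(\chi_0,\sum_D\chi_D D)$ of $\overline\chi$ as in Proposition~\ref{prop:Kaehler-polytope} and, for $\alpha\in S$, let $\varepsilon_\alpha\in\{\tfrac12,1\}$ be as in Lemma~\ref{lemma:foschi}; since $\varepsilon_\alpha>0$, one has $\<\alpha^\vee,p\>=0$ if and only if $\varepsilon_\alpha\<\alpha^\vee,p\>=0$.

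The preliminary step is to show that the set
\[
\D(\Pc)=\{D\in\col(X)\;:\;\<\rho(D),p-\chi_0([\wom])\>+\chi_D([\wom])=0\ \text{ for all }p\in\Pc\}
\]
is empty. This I would deduce from Proposition~\ref{thm:Brion-Woodward-gen}: the dense $G$-orbit $X_0$ of $X$ is contained in no $B$-stable prime divisor, so its colored cone in $\F(X)$ is $(\{0\},\emptyset)$, and hence by Proposition~\ref{thm:Brion-Woodward-gen} there is an orbit face $F_0$ of $\Pc$ with $\C(F_0)=\{0\}$ and $\D(F_0)=\emptyset$. By Definition~\ref{def:normal_fan}, $\C(F_0)$ is the dual cone of the cone generated by $\Pc-p$ for $p$ in the relative interior of $F_0$; the equality $\C(F_0)=\{0\}$ forces this cone to be all of $\lat(X)_\RR$, which is possible only when $p$ lies in the interior of $\Pc$. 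Thus $F_0=\Pc$ and $\D(\Pc)=\D(F_0)=\emptyset$ (incidentally, this also shows that $\Pc$ is full dimensional in $\lat(X)_\RR$).

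Now for $\Spp(X)\inn\Spp(\Pc)$: if $\alpha\in\Spp(X)$, then, since $\Spp(X)$ is the set of simple roots of the parabolic subgroup of $G$ fixing every color of $X$ pointwise --- equivalently, $\alpha$ moves no color of $X$ (see the discussion preceding Proposition~\ref{prop:canonicalsheaf}) --- the sum in Corollary~\ref{cor:cancellation} over the colors moved by $\alpha$ is empty, so $\<\alpha^\vee,p\>=0$ for every $p\in\Pc$, i.e.\ $\alpha\in\Spp(\Pc)$. For the reverse inclusion, suppose $\alpha\in S\setminus\Spp(X)$, so that $\alpha$ moves some color $D_0$. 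If $\alpha$ were in $\Spp(\Pc)$, Corollary~\ref{cor:cancellation} would give
\[
0=\varepsilon_\alpha\<\alpha^\vee,p\>=\sum_{D}\bigl(\<\rho(D),p-\chi_0([\wom])\>+\chi_D([\wom])\bigr)\qquad\text{for all }p\in\Pc,
\]
the sum over the colors $D$ moved by $\alpha$. Each summand is $\ge0$ on $\Pc$ by Proposition~\ref{prop:Kaehler-polytope}, hence every summand vanishes identically on $\Pc$; in particular $D_0\in\D(\Pc)$, contradicting the preliminary step. So $\alpha\notin\Spp(\Pc)$, and the two inclusions give $\Spp(\Pc)=\Spp(X)$.

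I expect the only genuinely delicate point to be the preliminary step, that is, extracting from Proposition~\ref{thm:Brion-Woodward-gen} that the improper face of $\Pc$ is the orbit face attached to the open $G$-orbit (hence $\D(\Pc)=\emptyset$ and $\Pc$ is full dimensional); once this is in place, the rest is a direct application of Corollary~\ref{cor:cancellation} and the non-negativity built into Proposition~\ref{prop:Kaehler-polytope}.
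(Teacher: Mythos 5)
Your proof is correct, but it takes a genuinely different route from the paper's. The paper argues by reduction to the polarized case: in the proof of Proposition~\ref{thm:Brion-Woodward-gen} it was shown (via the approximating ample classes of Lemma~\ref{lemma:amplesequence} and the local model of Lemma~\ref{lemma:slice}) that $\Pc$ and $Q(X,\lb)$, for a suitable ample $\lb$, possess vertices $\msv$ and $\msv'$ with $\Spp(\msv)=\Spp(\msv')$; since both polytopes are full dimensional in translates of $\lat(X)_\RR$, this yields $\Spp(\Pc)=\Spp(Q(X,\lb))$, and the conclusion follows from the equality $\Spp(X)=\Spp(Q(X,\lb))$ already recorded in Lemma~\ref{lemma:compare-spherical-roots}. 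You instead work directly with the possibly irrational class, combining Losev's half-space description (Proposition~\ref{prop:Kaehler-polytope}) with the color-sum identity of Corollary~\ref{cor:cancellation}, the only structural input being $\D(\Pc)=\emptyset$, which you correctly extract from Proposition~\ref{thm:Brion-Woodward-gen} by matching the colored cone $(\{0\},\emptyset)$ of the open $G$-orbit with the improper face of $\Pc$ (one could also get this from axioms (CC1) and (SCC) applied to the colored cone attached to $\Pc$). The easy inclusion $\Spp(X)\inn\Spp(\Pc)$ via an empty sum in Corollary~\ref{cor:cancellation} is legitimate as stated, though it amounts to the standard facts that $\alpha^\vee|_{\lat(X)}=0$ for $\alpha\in\Spp(X)$ and $\<\alpha^\vee,\chi_0([\wom])\>=0$ from Lemma~\ref{lemma:foschi}. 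Your approach buys independence from Lemma~\ref{lemma:compare-spherical-roots} and makes the mechanism transparent (non-negative color contributions summing to $\varepsilon_\alpha\<\alpha^\vee,\cdot\>$ must all vanish); the paper's version is shorter because it recycles the vertex comparison already carried out in the proof of Proposition~\ref{thm:Brion-Woodward-gen}.
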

\begin{proof}
In the proof of Proposition~\ref{thm:Brion-Woodward-gen} we have shown that there exist an ample line bundle $\lb$ on $X$ and vertices $\msv\in \Pc$, $\msv'\in Q(X,\lb)$ such that $S^\perp(\msv)=S^\perp(\msv')$. This implies $S^\perp(\Pc)=S^\perp(Q(X,\lb))$, because both polytopes are full-dimensional in the affine space spanned by $\msv+\Xi(X)$ resp.\ $\msv'+\Xi(X)$. The assertion now follows from Lemma~\ref{lemma:compare-spherical-roots}.
\end{proof}

\subsection{Existence of K\texorpdfstring{\"a}{ae}hler structures} \label{sec:kaehler_existence}
In this section, we state our criterion, in Theorem~\ref{thm:criterion-Kaehler}, for a given multiplicity free compact Hamiltonian manifold to be 
K\"ahlerizable. Together with Losev's uniqueness result \cite[Theorem 8.3]{losev:knopconj}, it yields a combinatorial classification of compatible complex structures on a given multiplicity free manifold, cf.\ Corollary~\ref{cor:classif_of_complex}.

We first introduce the necessary combinatorial notions. 
Throughout this section, $\lat$ denotes a sublattice of $\wl$ and $\Pc$ a convex polytope in $\ft^*_+$
satisfying the following properties:
\smallbreak
\noindent\enspace (Q1)
$\Pc-\mathsf{w}$ is a full dimensional polytope in  $\lat_{\RR}$ for some (equivalently, any) $\mathsf{w}\in \Pc$ and
\smallbreak  \noindent
\enspace (Q2) the inward-pointing facet normals of $\Pc$ may be chosen to be  rational, i.e. they can be chosen in $\Hom_\ZZ(\lat, \QQ)$. 

\begin{remark} \label{rem:KPsatisfiesQ1Q2}
The Kirwan polytope of any compact connected Hamiltonian manifold 
satisfies (Q1) and (Q2) by \cite[Theorem 6.5]{sjamaar}.
\end{remark}

In the following, when $F$ is a facet of $\Pc$, we denote by $\rho_F$ the unique primitive element in $\Hom_{\ZZ}(\lat,\ZZ)$ which is an inward-pointing facet normal to $F$. 

Fixing $\mathsf{w} \in \Pc$, we thus have,
\begin{equation}
\Pc=\mathsf{w}+\left\{\xi\in\lat_{\RR}: \left\langle\rho_F,\xi\right\rangle + m_{F,\mathsf{w}} \geq0 \mbox{ for all facets $F$ of $\Pc$}\right\}, \label{eq:Pc_and_mFw}
\end{equation}
where, for every facet $F$ of $\Pc$, $m_{F,\mathsf{w}}$ is a real (not necessarily rational) number. For each such $F$, we also let
\begin{equation}
H_F^{\RR}:= \mathsf{w} + \{\xi \in \lat_{\RR} \colon \<\rho_F,\xi\>+m_{F,\mathsf{w}}=0\}
\end{equation}
be the affine subspace of $\wl_{\RR}$ it generates.

Our K\"ahlerizability criterion involves the notion of a \emph{smooth $\RR$-momentum triple}, which extends the notion of smooth momentum triple of Definition~\ref{def:smooth_Q_momentum_triple}.
We first extend the notions of $\QQ$-compatibility and $\QQ$-admissibility from Definitions \ref{def:Q-compatible} and \ref{def:Q-admissibleset}.
As these notions do not involve the vertices of the polytope under consideration, 
we can  extend their definitions verbatim to any real polytope as above in order to obtain the notions of  $\RR$-compatibility and $\RR$-admissibility.

\begin{definition}\label{def:R-compatible}
A spherical root $\sigma\in\Sigma(G)$ is \textbf{$\RR$-compatible with $(\lat,\Pc)$} if $\sigma$ is compatible with $\lat$, the couple $(\Spp(\Pc),\sigma)$ satisfies Luna's axiom (S), and $\sigma$ satisfies the following properties:
\begin{enumerate}
\item\label{R-compatible-b} if $\sigma\notin S$ and a facet $F\subseteq \Pc$ satisfies $\left\langle\rho_F,\sigma\right\rangle>0$, then there exists $\alpha\in S\smallsetminus \Spp(\Pc)$ 
such that $\<\alpha^\vee, F\> =0$.
\item \label{R-compatible-a} if $\sigma=\alpha\in S$ then there exists a facet $F\subseteq \Pc$ such that
\begin{enumerate}
\item $\left\langle\rho_F,\alpha \right\rangle =1$;
\item if $F'\subseteq \Pc$ is a facet such that $\left\langle\rho_{F'},\alpha \right\rangle>0$ then $H_{F'}^\RR=H_{F}^{\RR}$ or $H_{F'}^{\RR}=s_\alpha (H_{F}^{\RR})$.
\end{enumerate}
\item \label{R-compatible-d2} if $\sigma = \alpha + \beta$ or $\sigma = \frac{1}{2}(\alpha + \beta)$ for two orthogonal simple roots $\alpha$ and $\beta$, then $\<\alpha^{\vee},q\> = \<\beta^{\vee},q\>$ for all $q \in \Pc$.
\end{enumerate}
By $\Sigma_\RR(\lat, \Pc)$, we denote the set of spherical roots that are $\RR$-compatible with $(\lat, \Pc)$.

For a simple root $\alpha\in \Sigma_\RR(\lat,\Pc)$, we denote by $\A(\alpha)$ an ``abstract'' set with two elements $D_\alpha^+$ and $D_\alpha^-$, 
and we define a map $\rho\colon \A(\alpha)\to \Hom_\ZZ(\lat,\ZZ)$ 
by setting $\rho(D_\alpha^+)=\rho_F$ and $\rho(D_\alpha^-)=\alpha^\vee|_{\lat}-\rho_F$, where we choose\footnote{From now on we implicitly fix a choice of such a face for all $\alpha\in\Sigma_\RR(\lat,\Pc)$.}
a facet $F\subseteq \Pc$ as above.
\end{definition}

\begin{definition}\label{def:R-admissibleset}
A subset $\Sigma\subseteq \Sigma_\RR(\lat, \Pc)$ is $\RR$-\textbf{admissible} (for $(\lat,\Pc)$) if it satisfies condition (\ref{A1polytope})  of Definition~\ref{def:Q-admissibleset}. In that case, $(\lat,\Pc,\Sigma)$ is called an \textbf{$\RR$-momentum triple.} 
\end{definition}

Suppose that $\Pc$ is a rational polytope in $\RR_{\ge 0} \dw$, that is, suppose that all its vertices belong to $\QQp \dw$. If $\sigma$ is an element of $\Sigma(G)$ and  $\Sigma$ is a subset of $\Sigma(G)$, then it follows immediately from the definitions that
\begin{itemize}
\item $\sigma$ is $\RR$-compatible with $(\Pc,\lat)$ if and only if $\sigma$ is $\QQ$-compatible with $(\Pc \cap \QQp \dw, \lat)$;
\item $\Sigma$ is $\RR$-admissible for $(\Pc,\lat)$ if and only if $\Sigma$ is $\QQ$-admissible with $(\Pc \cap \QQp \dw, \lat)$.
\end{itemize}

Since the rationality of vertices plays no role in Definition~\ref{def:smooth_Q_momentum_triple}, we can also extend it to our current setting. As in Section~\ref{sec:smoothness}, it follows from Remark~\ref{rem:combinatorialversions} that, given an $\RR$-momentum triple $(\lat,\Pc,\Sigma)$ and an orbit vertex $\mathsf{v}$ of $\Pc$, one can combinatorially define an associated socle $\overline{\soc}(\mathsf{v})$ and map 
$\rho_{\mathsf{v}}\colon \mathcal{D}(\mathsf{v}) \cup \mathcal{B}(\mathsf{v}) \to \lat^*$. 

\begin{definition} \label{def:smooth_R_momentum_triple}
We will call an $\RR$-momentum triple $(\lat,\Pc,\Sigma)$ \textbf{smooth} if for every orbit vertex $\mathsf{v}$ of $\Pc$, the socle $\overline{\soc}(\mathsf{v})$ and the pair  $( \mathcal{D}(\mathsf{v}) \cup \mathcal{B}(\mathsf{v}), \rho_{\mathsf{v}})$ satisfy conditions \ref{def:smooth_Q_momentum_triple:item1} and \ref{def:smooth_Q_momentum_triple:item2} of Definition~\ref{def:smooth_Q_momentum_triple}. 
\end{definition}

We can now give our criterion for a multiplicity free manifold to be K\"ahlerizable. The proof relies on Theorems \ref{thm:existence-Kaehler-converse} and \ref{thm:existence-Kaehler}, which will be proved in Sections \ref{sec:kaehler-necessity} and \ref{sec:kaehler_sufficient}, respectively.  

\begin{theorem}\label{thm:criterion-Kaehler}
Suppose $M$ is a multiplicity free $K$-manifold with Kirwan polytope $\Pc$ and weight lattice $\lat$. Then $M$ has a compatible complex structure if and only if there exists a subset $\Sigma$ of $\Sigma(K^{\CC})$ such that $(\lat,\Pc,\Sigma)$ is a smooth $\RR$-momentum triple. 
\end{theorem}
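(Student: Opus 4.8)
The plan is to split the equivalence into its two directions and to reduce each to a statement about polarized spherical varieties, exploiting Knop's uniqueness theorem (Theorem~\ref{thm:knop-uniqueness}) to identify $M$ with $X(J)$ as soon as a compatible complex structure exists.

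For the ``only if'' direction, suppose $M$ has a compatible complex structure $J$. By Theorem~\ref{thm:huck-wurz} the complex $G$-manifold $(M,J)$ is $G$-equivariantly biholomorphic to a (necessarily smooth, since $M$ is a manifold) projective spherical $G$-variety $X=X(J)$, and by \eqref{eq:lattice_equality} we have $\lat=\lat(M)=\lat(X)$. Viewing $\wom=\omega+\Phi$ as an equivariant K\"ahler form on $X$, Proposition~\ref{prop:Kaehler-polytope} realizes $\Pc=\Pc(X,[\wom])$ as an intersection of half-spaces indexed by $\divB(X)$. I would then set $\Sigma:=\Sigma(X)$ and argue, \emph{mutatis mutandis} as in the proof of Proposition~\ref{prop:realized_then_admissible} but using Proposition~\ref{thm:Brion-Woodward-gen} and Corollary~\ref{cor:cancellation} in place of Theorem~\ref{thm:Brion-Woodward} and the analogous rational statements, that $\Sigma\inn\Sigma_\RR(\lat,\Pc)$, that $\Sigma$ is $\RR$-admissible, and that the resulting $\RR$-momentum triple $(\lat,\Pc,\Sigma)$ is smooth; smoothness is exactly the content of Camus' criterion (Proposition~\ref{prop:smoothness}) applied to the smooth variety $X$, reinterpreted combinatorially via Remark~\ref{rem:combinatorialversions} and Proposition~\ref{thm:Brion-Woodward-gen}. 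This is essentially ``Theorem~\ref{thm:existence-Kaehler-converse}'' alluded to in the excerpt.

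For the ``if'' direction, suppose $(\lat,\Pc,\Sigma)$ is a smooth $\RR$-momentum triple. The strategy is to produce a smooth projective spherical $G$-variety $X$ and an equivariant K\"ahler class on it with Kirwan polytope $\Pc$, and then invoke Theorem~\ref{thm:knop-uniqueness} to conclude that $M\cong X$ as Hamiltonian $K$-manifolds, so that $X$'s complex structure transports to $M$. To build $X$: the combinatorial data $(\Spp(\Pc),\Sigma,\A,\lat,\rho)$ extracted from the triple (Remark~\ref{rem:combinatorialversions}) form a homogeneous spherical datum, hence by Losev's theorem (via Luna's classification, as used repeatedly in Section~\ref{sec:characterizations_mp}) there is a spherical homogeneous space $G/H$; the orbit faces of $\Pc$ and their combinatorial colored cones $(\C(\msv),\D(\msv))$ assemble into a colored fan satisfying (CC1)--(CF2), giving a spherical embedding $X$ of $G/H$, which is smooth precisely because the triple is smooth (Proposition~\ref{prop:smoothness}). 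One then picks, for a chosen orbit vertex, an ample (or at first $\QQ$-ample, then rational-approximated as in Lemma~\ref{lemma:amplesequence}) $G$-linearized line bundle whose momentum polytope has the same normal fan and facet data as $\Pc$; since $\Pc$ need not be rational, the honest output is an equivariant K\"ahler \emph{class} $[\wom]$ on $X$ with $\Pc(X,[\wom])=\Pc$, obtained by deforming within $H^2_K(X,\RR)^+$ using the continuity in Lemma~\ref{lemma:Losev-cohomology} and Proposition~\ref{prop:Kaehler-polytope}. Matching the half-space presentations of $\Pc$ and of $\Pc(X,[\wom])$ facet by facet, with the cancellation identity of Corollary~\ref{cor:cancellation} handling the colors moved by simple roots in $\Sigma$, shows the two polytopes coincide. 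This is ``Theorem~\ref{thm:existence-Kaehler}''.

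The main obstacle is the ``if'' direction, and specifically the passage from the purely combinatorial, possibly \emph{irrational} polytope $\Pc$ to an actual equivariant K\"ahler class on the constructed variety $X$ realizing $\Pc$ as its Kirwan polytope: one must simultaneously control the normal fan (to get the right colored fan, hence the right $X$), the facet ``heights'' $m_{F,\mathsf{w}}$ (to match Proposition~\ref{prop:Kaehler-polytope}), and the integrality/positivity needed to land in $H^2_K(X,\RR)^+$, and then verify that the smoothness of $X$ guaranteed by the triple is compatible with the chosen class. Rational approximation (Lemma~\ref{lemma:amplesequence}) plus the continuity results of Lemma~\ref{lemma:Losev-cohomology} and the stability of the colored fan under deformation (Proposition~\ref{thm:Brion-Woodward-gen}) are the tools that make this work, but assembling them carefully — and checking that the orbit-vertex integrality conditions hidden in ``smooth $\RR$-momentum triple'' are exactly what is needed for $\varphi^{-1}([\wom])$ to be (a multiple of) an honest line bundle when $\Pc$ happens to be rational, and are otherwise vacuous — is the delicate part. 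The reduction of uniqueness to Knop's theorem, once existence is in hand, is then immediate.
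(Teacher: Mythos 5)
Your plan matches the paper's proof essentially step for step: the ``only if'' direction is exactly Theorem~\ref{thm:existence-Kaehler-converse} (Huckleberry--Wurzbacher, Proposition~\ref{thm:Brion-Woodward-gen}, Corollary~\ref{cor:cancellation}, and Camus' criterion via Remark~\ref{rem:combinatorialversions}), and the ``if'' direction is Theorem~\ref{thm:existence-Kaehler} (homogeneous spherical datum $\to G/H$, colored fan $\F_\Sigma(\Pc)$, smoothness from the triple, a real divisor class shown to be K\"ahler by approximation with rational ample classes) followed by Knop's uniqueness theorem. The only quibble is your remark about ``orbit-vertex integrality conditions hidden in smooth $\RR$-momentum triple'' --- the $\RR$-version deliberately contains no such conditions, which is precisely why the output is a real K\"ahler class rather than a line bundle --- but this does not affect the correctness of the plan.
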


\begin{proof}
We first show the ``if'' statement. Let $(\lat,\Pc,\Sigma)$ be a smooth $\RR$-momentum triple.
By Theorem~\ref{thm:existence-Kaehler} below, there exists a smooth projective spherical $G$-variety $X$ with weight lattice $\lat$ as well as an equivariant K\"ahler form $\wom_X$ on the real manifold $X$ such that $\Pc(X,[\wom_X])=\Pc$. 
By Knop's Theorem~\ref{thm:knop-uniqueness}, the equality between the two polytopes and the two lattices (by \eqref{eq:lattice_equality}) implies that the multiplicity free $K$-manifolds $(X,\wom_X)$ and $(M,\wom)$ are isomorphic. It follows that $(M,\wom)$ has a compatible complex structure.

We turn to the ``only if'' assertion. 
Suppose that $(M,\omega,J)$ is K\"ahler and let $X=X(J)$ be the underlying spherical $K^{\CC}$-variety (see Theorem~\ref{thm:huck-wurz}).
Using the equivariant biholomorphism $(M,J) \to X$, we view $\wom$ as an equivariant K\"ahler form on $X$. Then $(\lat(X),\Pc(X,[\wom]),\Sigma(X))$ is a smooth $\RR$-momentum triple by Theorem~\ref{thm:existence-Kaehler-converse}. Since $\lat(X)=\lat(M)$ and $\Pc(X,[\wom]) = \Pc(M,[\wom])$, the assertion follows. 
\end{proof}

When $(M,\omega,J)$ is K\"ahler,  let $\Sigma(X(J))$ denote the set of
spherical roots of the projective spherical $G$-variety $X(J)$ given in Theorem~\ref{thm:huck-wurz}.

\begin{corollary} \label{cor:classif_of_complex}
Suppose $(M,\omega,\Phi)$ is a multiplicity free $K$-manifold with Kirwan polytope $\Pc$ and weight lattice $\lat$. The map $J \mapsto \Sigma(X(J))$ defines a one-to-one correspondence between the set of isomorphism classes of $K$-invariant complex structures $J$ on $M$ compatible with $\omega$ and the set $\{\Sigma \inn \Sigma(G) \colon \text{$(\lat,\Pc,\Sigma)$ is a smooth $\RR$-momentum triple}\}.$
\end{corollary}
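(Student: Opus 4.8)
The plan is to combine Theorem~\ref{thm:criterion-Kaehler} with Losev's uniqueness theorem \cite[Theorem 8.3]{losev:knopconj} for K\"ahler structures on multiplicity free manifolds. Fix once and for all a multiplicity free $K$-manifold $(M,\omega,\Phi)$ with Kirwan polytope $\Pc$ and weight lattice $\lat$, and write $\mathcal{J}$ for the set of isomorphism classes of $K$-invariant complex structures $J$ on $M$ compatible with $\omega$. First I would check that the map $J \mapsto \Sigma(X(J))$ is well-defined on $\mathcal{J}$: an isomorphism of pairs $(M,J) \to (M,J')$ of K\"ahler manifolds that intertwines the $K$-actions complexifies to a $G$-equivariant biholomorphism $X(J) \to X(J')$, and hence $\Sigma(X(J)) = \Sigma(X(J'))$ since the set of spherical roots is a $G$-isomorphism invariant. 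Moreover, for each $J \in \mathcal{J}$, identifying $(M,J)$ with $X(J)$ turns $\wom = \omega + \Phi$ into an equivariant K\"ahler form on $X(J)$, so Theorem~\ref{thm:existence-Kaehler-converse} gives that $(\lat(X(J)),\Pc(X(J),[\wom]),\Sigma(X(J)))$ is a smooth $\RR$-momentum triple; combined with $\lat(X(J)) = \lat(M) = \lat$ (by \eqref{eq:lattice_equality}) and $\Pc(X(J),[\wom]) = \Pc(M,[\wom]) = \Pc$, this shows the image of $J \mapsto \Sigma(X(J))$ lies in the target set.

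Next I would prove surjectivity. Let $\Sigma \inn \Sigma(G)$ be such that $(\lat,\Pc,\Sigma)$ is a smooth $\RR$-momentum triple. By Theorem~\ref{thm:existence-Kaehler}, there is a smooth projective spherical $G$-variety $X$ with $\lat(X) = \lat$, $\Sigma(X) = \Sigma$, and an equivariant K\"ahler form $\wom_X$ on the real manifold $X$ with $\Pc(X,[\wom_X]) = \Pc$. The underlying Hamiltonian $K$-manifold of $(X,\wom_X)$ is multiplicity free with Kirwan polytope $\Pc$ and, by \eqref{eq:lattice_equality}, weight lattice $\lat$; hence by Knop's Theorem~\ref{thm:knop-uniqueness} it is isomorphic as a Hamiltonian $K$-manifold to $(M,\omega,\Phi)$. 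Transporting the complex structure of $X$ along this isomorphism yields some $J \in \mathcal{J}$ with $X(J) \cong X$ as $G$-varieties, so $\Sigma(X(J)) = \Sigma(X) = \Sigma$. This also re-establishes that $\Sigma \mapsto$ (such a $J$) is a section, so the map is surjective.

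Finally, injectivity is where the real input is needed, and it is the main obstacle: I would invoke Losev's uniqueness theorem \cite[Theorem 8.3]{losev:knopconj}, which asserts that a $K$-invariant compatible complex structure on a multiplicity free $K$-manifold is determined up to isomorphism by the set of spherical roots of the associated spherical variety (equivalently, by the homogeneous spherical datum of the open orbit, the remaining combinatorial invariants being fixed by $\lat$ and $\Pc$ via Theorem~\ref{thm:Brion-Woodward-gen}). Concretely, if $J, J' \in \mathcal{J}$ satisfy $\Sigma(X(J)) = \Sigma(X(J'))$, then $X(J)$ and $X(J')$ are polarized spherical $G$-varieties with the same weight lattice, the same set of spherical roots, and the same momentum polytope (up to a common integral rescaling, coming from scaling $[\wom]$ to an integral class as in Lemma~\ref{lemma:amplesequence}), so by Corollary~\ref{cor:PSunique} they are $G$-equivariantly isomorphic; hence the isomorphism $X(J) \to M \to X(J')$ of complex $K$-manifolds shows $J = J'$ in $\mathcal{J}$. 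The delicate point to get right is matching up the polarizations and the normalization of the equivariant K\"ahler class so that Corollary~\ref{cor:PSunique} (an algebraic statement about polarized varieties) can be applied to the a priori only-K\"ahler data; this is exactly the content that Losev's theorem packages, and citing it cleanly is what makes the argument go through.
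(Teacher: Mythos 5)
Your proposal is correct and takes essentially the same route as the paper: well-definedness and surjectivity follow from Theorem~\ref{thm:criterion-Kaehler} (which you unpack into its ingredients, Theorems~\ref{thm:existence-Kaehler-converse} and~\ref{thm:existence-Kaehler} together with Knop's uniqueness), and injectivity is delegated to Losev's \cite[Theorem 8.3]{losev:knopconj}, exactly as in the paper's one-line proof. Your side attempt to re-derive injectivity via Corollary~\ref{cor:PSunique} does not quite close on its own (the K\"ahler classes need not be rational, so the two varieties are not a priori polarized), but since you ultimately rest that step on Losev's theorem, as the paper does, there is no gap.
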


\begin{proof}
The well-definedness and the surjectivity of the map follow from Theorem~\ref{thm:criterion-Kaehler}. The injectivity was proved by Losev in \cite[Theorem 8.3]{losev:knopconj}
\end{proof}

\begin{example}[Woodward] \label{ex:woodward}
Let $G=\GL(2)$. Denote the highest weight of $\bigwedge^i \CC^2$ by $\varpi_i$ for $i \in \{1,2\}$. 
Let $\lat=\wl$ and $\Pc= \Conv(0,\varpi_1,\varpi_1-\varpi_2,4\varpi_1-\varpi_2)$. The polytope $\Pc$ can be realized as the Kirwan polytope of a symplectic cut $X$ of a coadjoint orbit of $U(3)$ equipped with the natural Hamiltonian action of  $U(2)$ defined by the embedding of $U(2)$ in $U(3)$ given by $A\mapsto \mathrm{diag}(A,1)$. 
It is shown in~\cite{woodward-notkaehler} that $X$ equipped with the momentum map given by the projection of $X$ onto $\mathfrak u(2)^*$ is a 
non-K\"ahlerizable multiplicity free $U(2)$-manifold.

Note that $\lat(X)=\wl$. We can recover that $X$ is not K\"ahlerizable by showing that there is no smooth $\RR$-momentum triple $(\lat, \Pc,\Sigma)$ and applying Theorem~\ref{thm:criterion-Kaehler}.
For this, we first note that $\Sigma(G) = \{\alpha, 2\alpha\}$ with $\alpha$ being the simple root of $G$.
One checks that 
the empty set is the only subset of $\Sigma(G)$ that is $\RR$-admissible for $(\lat,\Pc)$ by simply checking the axioms of Definition~\ref{def:R-compatible}. Finally, we prove that the $\RR$-momentum triple $(\lat,\Pc,\emptyset)$ is not smooth: $0$ is an orbit vertex of $\Pc$ and  $\D(0) \cup \B(0)$ contains three elements, hence does not satisfy condition \ref{def:smooth_Q_momentum_triple:item1} of Definition~\ref{def:smooth_Q_momentum_triple} as required by Definition~\ref{def:smooth_R_momentum_triple}.
\end{example}

\subsection{K\texorpdfstring{\"a}{ae}hlerizability results of Delzant and Woodward}
\label{subsec:Kaehler_Delzant_Woodward}

In this section, we explain how earlier K\"ahlerizability results from \cite{delzant} and \cite{woodward} can be deduced from Theorem~\ref{thm:criterion-Kaehler}, and we slightly generalize some of these results from \cite{woodward}. 

We recall that under the mild additional assumption that the action of $K$ is effective (i.e., that  $\lat(M) = \wl$), manifolds $M$ as in the following theorem are known as symplectic toric manifolds. 

\begin{theorem}[{\cite[Appendice]{delzant}}] \label{thm:Delzant}
If $K$ is abelian, then every multiplicity free $K$-manifold $M$ admits a compatible complex structure, and any two such compatible complex structures are $K$-equivariantly biholomorphic.
\end{theorem}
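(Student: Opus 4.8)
The plan is to deduce Theorem~\ref{thm:Delzant} from Theorem~\ref{thm:criterion-Kaehler} by exhibiting, for abelian $K$, a smooth $\RR$-momentum triple $(\lat,\Pc,\Sigma)$, and then to get the uniqueness statement from Corollary~\ref{cor:classif_of_complex}. First I would note that when $K$ is abelian (so $G=K^\CC$ is a torus), the set $\Sigma(G)$ is empty, since every spherical root has nonempty support in the set of simple roots and there are no roots at all. Hence the only candidate subset is $\Sigma=\emptyset$, and all of Definitions~\ref{def:R-compatible} and~\ref{def:R-admissibleset} are vacuous: $\emptyset$ is automatically $\RR$-admissible for $(\lat,\Pc)$, so $(\lat,\Pc,\emptyset)$ is an $\RR$-momentum triple. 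Here $\lat=\lat(M)$ and $\Pc=\Pc(M)$ satisfy (Q1) and (Q2) by Remark~\ref{rem:KPsatisfiesQ1Q2}, and $\Spp(\Pc)=\emptyset$ since $S=\emptyset$.

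Next I would check smoothness of $(\lat,\Pc,\emptyset)$ in the sense of Definition~\ref{def:smooth_R_momentum_triple}, i.e.\ verify conditions \ref{def:smooth_Q_momentum_triple:item1} and \ref{def:smooth_Q_momentum_triple:item2} of Definition~\ref{def:smooth_Q_momentum_triple} at every orbit vertex $\msv$ of $\Pc$. Since $\Sigma(G)=\emptyset$, the cone $\V=(-\QQp\Sigma)^\vee=N(X)$ is the whole space, so every face of $\Pc$ is an orbit face; in particular every vertex of $\Pc$ is an orbit vertex. With $\Sigma=\emptyset$ and $S=\emptyset$, the combinatorial set of colors $\col(\lat,\Pc,\emptyset)$ is empty, hence $\D(\msv)=\emptyset$ and $\C(\msv)$ is generated by $\B(\msv)$, which by Remark following Definition~\ref{def:locsoc} is the set of primitive facet normals $\rho_F$ of the facets $F$ of $\Pc$ containing $\msv$. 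Thus condition \ref{def:smooth_Q_momentum_triple:item1} becomes exactly the classical statement that the primitive inward-pointing normals of the facets of $\Pc$ meeting at $\msv$ form a basis of $\lat^*$ --- this is the Delzant (smoothness/simplicity) condition, and it holds because $\Pc=\Pc(M)$ is the Kirwan polytope of the multiplicity free torus manifold $M$: one can argue via Proposition~\ref{prop:smoothness} and Remark~\ref{rem:SvinSKv}\ref{rem:SvinSkv:local_toric_smoothness} applied to the smooth toric variety that realizes $M$ (alternatively, this is precisely the content of Delzant's original analysis, but since we want a self-contained deduction, the cleanest route is to observe that $M$ is determined by $(\lat,\Pc)$ by Knop's Theorem~\ref{thm:knop-uniqueness}, realize $(\lat,\Pc)$ by a smooth projective toric variety --- which exists by the classification of Delzant polytopes --- and conclude). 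Condition \ref{def:smooth_Q_momentum_triple:item2} is automatic: with $S=\emptyset$ the localized socle $\overline{\soc}(\msv)$ reduces to $(\emptyset,\emptyset,\emptyset,\emptyset,\emptyset,\B(\msv),0)$, which is the socle of the spherical module $\CC^{|\B(\msv)|}$ under the standard torus action (cf.\ Remark~\ref{rem:SvinSKv}\ref{rem:SvinSkv:local_toric_smoothness}).

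Having produced a smooth $\RR$-momentum triple $(\lat(M),\Pc(M),\emptyset)$, Theorem~\ref{thm:criterion-Kaehler} gives that $M$ admits a compatible complex structure. For uniqueness, Corollary~\ref{cor:classif_of_complex} identifies the set of isomorphism classes of $K$-invariant compatible complex structures $J$ on $M$ with $\{\Sigma\inn\Sigma(G)\colon (\lat,\Pc,\Sigma)\text{ is a smooth }\RR\text{-momentum triple}\}$; since $\Sigma(G)=\emptyset$, this set is the singleton $\{\emptyset\}$, so there is exactly one such $J$ up to $K$-equivariant biholomorphism.

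The main obstacle is the verification of condition \ref{def:smooth_Q_momentum_triple:item1} at every orbit vertex, i.e.\ that the Kirwan polytope of a multiplicity free compact connected torus manifold is a Delzant polytope with respect to its weight lattice. Rather than reprove this from scratch, I expect the slickest argument is the circular-looking but valid one: by Knop's uniqueness theorem $M$ is the unique multiplicity free $K$-manifold with data $(\lat(M),\Pc(M))$; the polytope $\Pc(M)$ is rational with respect to $\lat(M)$ by Remark~\ref{rem:KPsatisfiesQ1Q2}, and one must invoke the local structure of multiplicity free torus Hamiltonian manifolds near a fixed point (the equivariant Darboux/slice theorem, already packaged in Lemma~\ref{lemma:slice}) to see that the facet normals at each vertex form a lattice basis of $\lat(M)^*$ --- this is where the genuine symplectic input enters, and it is exactly the same computation as in Delzant's original proof. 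Once this is granted, everything else is formal bookkeeping with the (vacuous) combinatorial definitions.
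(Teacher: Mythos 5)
Your proposal follows essentially the same route as the paper's proof: existence via Theorem~\ref{thm:criterion-Kaehler} applied to the triple $(\lat(M),\Pc(M),\emptyset)$, smoothness of the triple reduced via Remark~\ref{rem:SvinSKv}\ref{rem:SvinSkv:local_toric_smoothness} to the Delzant basis condition at each vertex, and uniqueness from Corollary~\ref{cor:classif_of_complex} together with $\Sigma(K^\CC)=\emptyset$. One caveat on the single non-formal step: the paper settles the basis condition \eqref{eq:DZ_condition} simply by citing Delzant's own analysis \cite[Section 2]{delzant}, whereas your first suggested shortcut (realize $(\lat,\Pc)$ by a smooth toric variety ``by the classification of Delzant polytopes'' and conclude) is circular, since that classification takes the Delzant condition as input; likewise Lemma~\ref{lemma:slice} is stated only for smooth projective spherical varieties and so cannot be invoked before K\"ahlerizability is known. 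Your fallback --- the equivariant local normal form at a fixed point, i.e.\ Delzant's original computation --- is exactly the input the paper cites, so with that choice the argument is correct.
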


\begin{proof} The uniqueness of the compatible complex structure follows immediately from  Corollary~\ref{cor:classif_of_complex} since $\Sigma(K^{\CC}) = \emptyset$. We show how to deduce the  existence of such a structure from Theorem \ref{thm:criterion-Kaehler}.  Let $\Pc$ be the Kirwan polytope and $\lat$ the weight lattice of $M$. We have to check that $(\lat,\Pc,\emptyset)$ is a smooth $\RR$-momentum triple. That it is an $\RR$-momentum triple is trivial. Delzant proved in \cite[Section 2]{delzant}, that the pair $(\lat,\Pc)$ satisfies the following property at every vertex $\msv$ of $Q$:
\begin{equation} \label{eq:DZ_condition}
\text{the collection $\{\rho_F \colon F \text{ is a facet of $\Pc$ containing $\mathsf{v}$}\}$ is a basis of $\Hom_{\ZZ}(\lat,\ZZ)$}.
\end{equation}
These are exactly the conditions \eqref{eq:local_toric_smoothness} and it follows by Remark~\ref{rem:SvinSKv}\ref{rem:SvinSkv:local_toric_smoothness} that $(\lat,\Pc,\emptyset)$ is a smooth $\RR$-momentum triple.
\end{proof}

We now turn to Woodward's results; we first recall the
definition of `reflective' polytopes. 

\begin{definition}[{\cite[Definition 1.1]{woodward-transversal}}]\label{def:reflective_polytope}
A convex polytope $\Pc$ in $\ft^*_+$ is called \textbf{reflective} if the following conditions are fulfilled:
\begin{enumerate}[(a)]
  \item $\Pc$ is of maximal dimension, i.e. $\dim \Pc = \rk K$;
  \item for all $a\in \Pc$, the set of hyperplanes generated by the facets of $\Pc$ containing $a$ is stable under the stabilizer of $a$ in the Weyl group $W$;
  \item any facet of $\Pc$ meets the relative interior of $\ft^*_+$. \label{def:reflective_polytope:c}
 \end{enumerate}
\end{definition}

For $\alpha \in \sr$, set $H_{\alpha} := \{\nu \in \ft^*\colon \<\alpha^{\vee},\nu\>=0\}$. The following lemma, due to Woodward, follows from Definition~\ref{def:reflective_polytope} with elementary arguments, and gives some combinatorial properties of reflective polytopes. 
We recall that a polytope is called {\bf simple} if the facets containing any given face have linearly independent facet normals.

\begin{lemma}\label{lem:woodward_refl}
Let $\Pc$ be a reflective polytope in $\ft^*_+$ and let $\alpha \in \sr$
be such that $\Pc \cap H_{\alpha} \neq \emptyset$.
\begin{enumerate}[(1)]
\item If $F$ is a facet of $\Pc$ meeting $H_{\alpha}$ then either $F$ is orthogonal to $H_{\alpha}$; or there exist a facet $\overline{F}\neq F$ of $\Pc$ and inward pointing facet normals $\rho$ and $\overline{\rho}$ to $F$ and $\overline{F}$, respectively, such that
\begin{align}
& \<\rho, \alpha\> = \<\overline{\rho}, \alpha\> = 1 \label{lem:woodward_refl:item1} \\
& \rho + \overline{\rho} = \alpha^{\vee} \text{ in } \Hom_{\ZZ}(\wl, \RR); \text{ and } \label{lem:woodward_refl:item2} \\ 
& F \cap \overline{F} = H_{\alpha} \cap \Pc. \label{lem:woodward_refl:item3}
\end{align} \label{lem:woodward_refl:part1}
\item There exist distinct facets $F$ and $\overline{F}$ of $\Pc$ with inward pointing facet normals $\rho$ and $\overline{\rho}$, respectively, such that
\eqref{lem:woodward_refl:item1}, \eqref{lem:woodward_refl:item2} and \eqref{lem:woodward_refl:item3} hold.  \label{lem:woodward_refl:part2}
\item If $\Pc$ is simple then it has exactly two facets $F,\overline F$ as in part~\ref{lem:woodward_refl:part2}, and they are the only facets of $\Pc$ that contain $\Pc\cap H_\alpha$.  \label{lem:woodward_refl:part3}
\end{enumerate}
\end{lemma}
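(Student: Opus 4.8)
The plan is to prove Lemma~\ref{lem:woodward_refl} by unwinding the definition of a reflective polytope, using the action of the simple reflection $s_\alpha$ on the hyperplanes through a point of $\Pc\cap H_\alpha$. First I would establish part~\ref{lem:woodward_refl:part1}. Pick any $a$ in the relative interior of $\Pc\cap H_\alpha$, so that $s_\alpha$ fixes $a$. By condition~(b) of Definition~\ref{def:reflective_polytope}, the set of hyperplanes spanned by the facets of $\Pc$ through $a$ is permuted by $s_\alpha$. Let $F$ be a facet through $a$ with spanned hyperplane $H_F^\RR$ and inward-pointing normal $\rho$. Then $s_\alpha(H_F^\RR)$ is again the hyperplane of a facet $\overline F$ through $a$. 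Either $s_\alpha(H_F^\RR)=H_F^\RR$, which (since $s_\alpha$ is a reflection fixing $a\in H_F^\RR$) means either $H_\alpha\supseteq H_F^\RR$ — impossible, as $H_F^\RR$ has codimension $1$ and would force $\Pc\subseteq H_\alpha$, contradicting maximal dimension — or $H_\alpha\perp H_F^\RR$, giving the first alternative; or $\overline F\neq F$. In the latter case $s_\alpha$ sends $\rho$ to $\pm\overline\rho$ for a suitable choice of inward-pointing normal $\overline\rho$; the standard formula $s_\alpha(\rho)=\rho-\<\rho,\alpha\>\alpha^\vee$ together with the requirement that both be inward-pointing at the common wall forces $\<\rho,\alpha\>=\<\overline\rho,\alpha\>=1$ and $\rho+\overline\rho=\alpha^\vee$, which is \eqref{lem:woodward_refl:item1} and \eqref{lem:woodward_refl:item2}. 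Finally $F\cap\overline F=F\cap s_\alpha(F)$ is $s_\alpha$-stable and lies in the fixed hyperplane $H_\alpha$; comparing dimensions (it is a face of codimension $2$ meeting the fixed locus $H_\alpha\cap\Pc$, itself of codimension $1$ in a neighborhood of $a$) yields $F\cap\overline F=H_\alpha\cap\Pc$, which is \eqref{lem:woodward_refl:item3}.

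Next I would deduce part~\ref{lem:woodward_refl:part2} from part~\ref{lem:woodward_refl:part1}. Since $\Pc\cap H_\alpha\neq\emptyset$ and $\Pc$ is full-dimensional, $\Pc\cap H_\alpha$ is a nonempty face of $\Pc$ of codimension at least one; it cannot be a facet, because by condition~\ref{def:reflective_polytope:c} every facet of $\Pc$ meets the relative interior of $\ft^*_+$ whereas $H_\alpha$ is a wall of $\ft^*_+$, so $\Pc\cap H_\alpha$ has codimension at least two in $\Pc$, hence is contained in at least two facets. If \emph{every} facet $F$ containing $\Pc\cap H_\alpha$ were orthogonal to $H_\alpha$, then all their normals would lie in $H_\alpha^\perp\cap(\text{span of }\ft^*)$, a line, so there could be at most two such facets with opposite normals — but then $\Pc$ near a point of $\Pc\cap H_\alpha$ would be cut out in the $H_\alpha$-direction by two opposite inequalities, forcing $\Pc\subseteq H_\alpha$, again a contradiction. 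Hence at least one facet $F$ through $\Pc\cap H_\alpha$ is not orthogonal to $H_\alpha$, and the first alternative of part~\ref{lem:woodward_refl:part1} fails for it, so the second holds, producing the desired pair $(F,\overline F)$.

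Finally, for part~\ref{lem:woodward_refl:part3}, assume $\Pc$ is simple. Take $F,\overline F$ as in part~\ref{lem:woodward_refl:part2}, and let $G'$ be any facet of $\Pc$ containing $\Pc\cap H_\alpha$. Then $G'\supseteq F\cap\overline F$ by \eqref{lem:woodward_refl:item3}, so $F\cap\overline F$ is a face of $\Pc$ of codimension two contained in the three facets $F,\overline F,G'$; by simplicity the corresponding three facet normals are linearly independent. On the other hand, applying part~\ref{lem:woodward_refl:part1} to $G'$: if $G'\perp H_\alpha$, its normal lies in the line $H_\alpha^\perp$, while $\rho+\overline\rho=\alpha^\vee$ also spans, together with a vector of $H_\alpha^\perp$, only a two-dimensional space containing all three normals — contradiction with independence; if instead $G'$ pairs to $1$ with $\alpha$, there is a partner $G''$ with $\rho_{G'}+\rho_{G''}=\alpha^\vee$, whence $\rho_{G'}\in\Span\{\rho,\overline\rho\}$ since $\alpha^\vee=\rho+\overline\rho$, again contradicting independence unless $G'\in\{F,\overline F\}$. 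Therefore $F$ and $\overline F$ are the only facets containing $\Pc\cap H_\alpha$, and in particular the only pair as in part~\ref{lem:woodward_refl:part2}. I expect the main obstacle to be the careful bookkeeping of signs and dimensions in the case analysis of part~\ref{lem:woodward_refl:part1} — in particular ensuring that "inward-pointing" is compatible with $s_\alpha$ at the shared wall so that the coefficients come out to $1$ rather than some other value, and handling degenerate configurations (facets orthogonal to $H_\alpha$) uniformly; everything else is a routine consequence of Definition~\ref{def:reflective_polytope}.
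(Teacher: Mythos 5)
Your overall strategy --- unwinding condition (b) of Definition~\ref{def:reflective_polytope} via the action of $s_\alpha$ at a relative interior point $a$ of $\Pc\cap H_\alpha$ --- is a legitimate route (the paper itself simply cites \cite{woodward-transversal} for parts \ref{lem:woodward_refl:part1} and \ref{lem:woodward_refl:part2} and only proves part \ref{lem:woodward_refl:part3}, by a cleaner argument: simplicity forces $F\cap\overline F=\Pc\cap H_\alpha$ to have codimension $2$, so the normal of any third facet containing it vanishes on $\ker\rho\cap\ker\overline\rho$ and hence lies in $\Span\{\rho,\overline\rho\}$, contradicting the linear independence of normals of facets through a common vertex). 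However, your write-up has a recurring error that breaks parts \ref{lem:woodward_refl:part2} and \ref{lem:woodward_refl:part3}: you treat ``$F$ orthogonal to $H_\alpha$'' as meaning that $\rho_F$ lies in the \emph{line} spanned by $\alpha^\vee$. That is the condition for $H_F^{\RR}$ to be \emph{parallel} to $H_\alpha$; orthogonality of the two hyperplanes means $\<\rho_F,\alpha\>=0$, i.e.\ $\rho_F$ lies in the \emph{hyperplane} $\{\xi\colon \<\xi,\alpha\>=0\}$ of the dual space, so arbitrarily many facets can be orthogonal to $H_\alpha$ and your ``at most two such facets with opposite normals'' conclusion in part \ref{lem:woodward_refl:part2} does not follow. (A correct argument: if every facet through $a$ had normal killing $\alpha$, then $a-\eps\alpha\in\Pc$ for small $\eps>0$, while $\<\alpha^\vee,a-\eps\alpha\>=-2\eps<0$ contradicts $\Pc\subseteq\ft^*_+$.) The same confusion invalidates the first branch of your case analysis in part \ref{lem:woodward_refl:part3}, and in the second branch the step ``$\rho_{G'}+\rho_{G''}=\alpha^\vee=\rho+\overline\rho$, whence $\rho_{G'}\in\Span\{\rho,\overline\rho\}$'' is a non sequitur.

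Part \ref{lem:woodward_refl:part1} also has defects. Your argument only treats facets containing a relative-interior point of $\Pc\cap H_\alpha$, whereas the statement concerns all facets merely \emph{meeting} $H_\alpha$. The subcase $s_\alpha(H_F^{\RR})=H_F^{\RR}$ with $H_F^{\RR}=H_\alpha$ is excluded for the wrong reason: $F\subseteq H_\alpha$ does not force $\Pc\subseteq H_\alpha$; what it contradicts is condition~\ref{def:reflective_polytope:c} of Definition~\ref{def:reflective_polytope}, since such a facet would miss the relative interior of $\ft^*_+$. The identity $F\cap\overline F=F\cap s_\alpha(F)$ is false in general because $\Pc$ need not be $s_\alpha$-stable; the correct route to \eqref{lem:woodward_refl:item3} is $F\cap\overline F\subseteq H_F^{\RR}\cap s_\alpha(H_F^{\RR})\subseteq H_\alpha$ (the last inclusion because $\alpha\notin\ker\rho$) together with the fact that a face containing a relative-interior point of the face $\Pc\cap H_\alpha$ must contain all of it --- no dimension count is needed, and the one you sketch is incorrect since $\Pc\cap H_\alpha$ has codimension at least $2$, not $1$. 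Finally, the sign determination needed for $\rho+\overline\rho=\alpha^\vee$ (evaluate the candidate $\pm(\alpha^\vee-\rho)$ on a point of $F$ not lying in $H_\alpha$) is flagged as the main obstacle but never carried out.
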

\begin{proof}
Parts \ref{lem:woodward_refl:part1} and \ref{lem:woodward_refl:part2} are proved in \cite[Corollary 5.2]{woodward-transversal}. Part~\ref{lem:woodward_refl:part3} is in \cite[Proposition 4.7]{woodward-notkaehler}, let us provide a proof for convenience. We assume $\Pc$ is simple and let $F,\overline F$ be as in part~\ref{lem:woodward_refl:part2}, with facet normals $\rho,\overline \rho$.
Suppose, for the sake of contradiction, that there exists a facet $F_1$ of $\Pc$ containing $\Pc\cap H_\alpha$ and with $F_1 \notin \{F,\overline{F}\}$. Choose an inward pointing facet normal $\rho_1$ to $F_1$.
Since $\Pc$ is simple, $F\cap \overline{F}$ has codimension $2$ in $\Pc$.
We deduce that $V:=\ker(\rho)\cap\ker(\overline\rho)$ is the underlying vector subspace of the affine subspace spanned by $\Pc\cap H_\alpha$.  Therefore $\rho_1$ vanishes on $V$. But this implies that $\rho_1$ is a linear combination of $\rho$ and $\overline\rho$, which is in contradiction with the simplicity of $\Pc$.
\end{proof}

Lemma~\ref{lem:spherical_roots_reflective} below is a combinatorial version of \cite[Corollary 3.5]{woodward}. In its proof, we will make use of the following convex-geometric fact. Recall the notion of orbit face of a polytope from Definition~\ref{def:orbitface}.
 
\begin{lemma} \label{lem:orbitface_contains_orbitvertex}
If $\V$ in Definition~\ref{def:orbitface} is a full-dimensional polyhedral convex cone, any orbit face of $Q$ contains an orbit vertex.
\end{lemma}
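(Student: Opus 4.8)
The plan is to argue purely in convex geometry inside the complete fan $\F(Q)$ in $N=\Hom_\QQ(\lat_\QQ,\QQ)$. Recall that since $Q$ is full dimensional, $\F(Q)$ is a complete fan whose maximal cones are exactly the cones $\C(\msv)$ attached to the vertices $\msv$ of $Q$; each such $\C(\msv)$ is full dimensional, and for a face $F$ of $Q$ one has $\C(F)\inn\C(\msv)$ (and then $\C(F)$ is a face of $\C(\msv)$) if and only if $\msv\in F$. I will use these standard facts about normal fans, together with the hypothesis that $\V$ is a full-dimensional convex cone.

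First I would fix an orbit face $F$ of $Q$ for $\V$ and pick a point $x$ in the relative interior of $\C(F)$ with $x\in\V$; since $\V$ is full dimensional I would also pick a point $y$ in its interior. As $\V$ is convex, the half-open segment $\{(1-\eps)x+\eps y : 0<\eps\le 1\}$ then lies in the interior of $\V$. Next, since $\F(Q)$ is a finite fan there is an open neighbourhood $U$ of $x$ meeting only those cones of $\F(Q)$ that contain $x$; and any cone of $\F(Q)$ containing the point $x$, which lies in the relative interior of $\C(F)$, necessarily contains all of $\C(F)$. Choosing $\eps>0$ small enough, the point $z:=(1-\eps)x+\eps y$ lies in the open set $U\cap\mathrm{int}(\V)$, so some ball $B$ around $z$ is contained in $U\cap\mathrm{int}(\V)$.

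Finally, because the union of the interiors of the maximal cones of the complete fan $\F(Q)$ is dense in $N$, the ball $B$ meets the relative interior (equivalently the interior) of $\C(\msv)$ for some vertex $\msv$ of $Q$; the point of intersection lies in $\V$, so $\msv$ is an orbit vertex of $Q$. Moreover $\C(\msv)$ meets $B\inn U$, hence it contains $x$, hence it contains $\C(F)$, which by the correspondence above forces $\msv\in F$. Thus the orbit face $F$ contains the orbit vertex $\msv$, as desired. The only points requiring care are the two cited facts about normal fans — the order-reversing correspondence ``$\C(F)\inn\C(\msv)\iff\msv\in F$'' and the density in $N$ of the union of the interiors of the maximal cones of a complete fan — both of which are elementary and can, if preferred, be replaced by short self-contained verifications; I do not anticipate a genuine obstacle.
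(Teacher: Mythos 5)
Your proof is correct and follows essentially the same route as the paper's: both arguments use that a point in the relative interior of $\C(F)$ has a neighbourhood meeting only cones of $\F(Q)$ containing that point, perturb into the interior of $\V$ (which exists since $\V$ is full dimensional), and then invoke the density of the union of the (relative) interiors of the maximal cones of the complete fan $\F(Q)$ to land in some $\C(\msv)$ with $\msv$ an orbit vertex contained in $F$. The only cosmetic difference is that you make the perturbation explicit via a segment argument where the paper simply notes that $\V$ is the closure of its interior.
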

\begin{proof}
For any $n\in N$, denote by $\C_n$ the element of $\F(Q)$ such that $n$ is in the relative interior of $\C_n$, and denote by $F_n$ the corresponding face of $Q$. Denote by $K_n\subseteq \F(Q)$ the subset of elements that contain $n$, and by $U_n$ the union of their relative interiors. This union is a neighborhood of $n$, since $N\setminus U_n$ is the union of all cones in $\F(Q)\setminus K_n$. For all $m\in U_n$ we have $\C_{m}\supseteq\C_n$, therefore $F_{m}\subseteq F_{n}$. Finally, denote by $\Omega$ the set of all $n\in N$ such that $F_{n}$ is a vertex of $Q$. It is dense in $N$.

Let $F$ be an orbit face of $Q$, and let $n\in\V$ be in the relative interior of $\C(F)$. Under our assumptions, $\V$ is the closure of its interior, therefore $U_n\cap\V\cap \Omega$ is non-empty. For any $m$ in this intersection, the face $F_m$ is an orbit vertex contained in $F_{n}=F$.
\end{proof} 
 
\begin{lemma} \label{lem:spherical_roots_reflective}
Let $\Pc$ be a simple reflective polytope in $\ft^*_+$ and $\lat$ a sublattice of $\wl$. Assume that $\Pc$ meets every face of codimension $1$ of $\ft^*_+$. If $\Sigma$ is a subset of $\Sigma(K^{\CC})$ such that $(\lat,\Pc,\Sigma)$ is a smooth $\RR$-momentum triple, then $\Sigma = \sr$. 
\end{lemma}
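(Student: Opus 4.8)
The plan is to show that every simple root $\alpha\in\sr$ lies in $\Sigma$, by exploiting the strong local structure forced by simplicity and reflectivity. First I would fix $\alpha\in\sr$. Since $\Pc$ meets every codimension-$1$ face of $\ft^*_+$, in particular $\Pc\cap H_\alpha\neq\emptyset$, so Lemma~\ref{lem:woodward_refl}\ref{lem:woodward_refl:part3} applies: there are exactly two facets $F,\overline F$ of $\Pc$ containing $\Pc\cap H_\alpha$, with primitive inward-pointing normals $\rho,\overline\rho$ (in $\Hom_\ZZ(\wl,\RR)$) satisfying $\<\rho,\alpha\>=\<\overline\rho,\alpha\>=1$, $\rho+\overline\rho=\alpha^\vee$, and $F\cap\overline F=\Pc\cap H_\alpha$. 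The first thing to check is that $\rho$ and $\overline\rho$ actually lie in $\Hom_\ZZ(\lat,\RR)$, more precisely that $\rho|_\lat,\overline\rho|_\lat$ are (up to the usual identification) the primitive inward-pointing facet normals $\rho_F,\rho_{F'}$ of Definition~\ref{def:R-compatible}; this uses (Q2) together with $\alpha\in\lat$ (which holds because $\Pc-\mathsf w$ is full-dimensional in $\lat_\RR$ and $\alpha^\vee$ is not constant on $\Pc$, forcing $\alpha\in\lat$ — here I would be careful, as this may instead require $\alpha\in\Spp(\Pc)$ to be excluded first; if $\alpha\in\Spp(\Pc)$ then $\alpha^\vee$ vanishes on $\Pc$, contradicting that $\Pc$ meets the interior of the face of $\ft^*_+$ cut out by $H_\alpha$ but not lying in $H_\alpha$).

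Next I would argue that $\alpha\in\Sigma_\RR(\lat,\Pc)$. Conditions on compatibility with $\lat$ are the delicate point and I would handle them as follows: the two facets $F,\overline F$ provide exactly the configuration required by Definition~\ref{def:R-compatible}\ref{R-compatible-a}, since any facet $F'$ with $\<\rho_{F'},\alpha\>>0$ must contain $\Pc\cap H_\alpha$ (by the same simplicity argument as in the proof of Lemma~\ref{lem:woodward_refl}\ref{lem:woodward_refl:part3}: its normal vanishes on the span of $\Pc\cap H_\alpha$, hence is a combination of $\rho,\overline\rho$, hence by primitivity equals one of them), so $H^\RR_{F'}\in\{H^\RR_F,s_\alpha(H^\RR_F)\}$. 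The remaining conditions for $\alpha$ to be a spherical root compatible with $\lat$ — that $\alpha$ is primitive in $\lat$ (clear, as $\alpha\in\lat$ is a simple root), Luna's axiom (S) for $(\Spp(\Pc),\alpha)$, and the parity condition (\ref{CL4}) if $2\alpha\in\Sigma(G)$ is relevant — I would verify by reducing to the localized socle at an orbit vertex: by Lemma~\ref{lem:orbitface_contains_orbitvertex} (applicable because the cone dual to $-\Sigma$ is full-dimensional — this needs $\Sigma$ to span a proper subspace, which I should check, or argue via the structure directly) every orbit face contains an orbit vertex, and the smoothness hypothesis then pins down $\overline\soc(\mathsf v)$ as the socle of a spherical module.

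The crux — and what I expect to be the main obstacle — is to rule out the alternative $\alpha\notin\Sigma$. If $\alpha\notin\Sigma$, then in the combinatorial set of colors $\col(\lat,\Pc,\Sigma)$ there is a single color $D$ moved by $\alpha$, with $\rho(D)=\alpha^\vee|_\lat$ or $\tfrac12\alpha^\vee|_\lat$. I would locate an orbit vertex $\mathsf v$ of $\Pc$ lying on $\Pc\cap H_\alpha$ (produced by Lemma~\ref{lem:orbitface_contains_orbitvertex} applied to the orbit face $F\cap\overline F$) and examine the colored cone $(\C(\mathsf v),\D(\mathsf v))$. Because $\Pc$ is simple, $\C(\mathsf v)$ is a simplicial cone whose ray generators are among $\{\rho_F,\rho_{\overline F}\}\cup\{\text{other facet normals at }\mathsf v\}$; both $\rho_F$ and $\rho_{\overline F}$ are among them, and $\rho_F+\rho_{\overline F}=\alpha^\vee|_\lat$. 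This means $\alpha^\vee|_\lat$ lies in the interior of a $2$-face of $\C(\mathsf v)$, so it is \emph{not} a ray generator of $\C(\mathsf v)$; hence the color $D$ (whose $\rho$-image is a positive multiple of $\alpha^\vee|_\lat$) cannot be in $\D(\mathsf v)$, yet $\alpha\in S(\mathsf v)$ would force $D\in\D(\mathsf v)$ — the contradiction I want is that $\alpha\notin S(\mathsf v)$ combined with $\<\alpha^\vee,\mathsf v\>=0$ violates the inclusion \eqref{eq:SvinSKv} of Remark~\ref{rem:SvinSKv}\ref{rem:SvinSkv:inclusion}, or more directly that $\overline\soc(X(\mathsf v))$ then fails to be the socle of a spherical module because of a forbidden normal value (compare the mechanism in Example~\ref{ex:bizarre_smooth}). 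I would make this precise by a direct inspection: the localized socle at $\mathsf v$ would contain $\alpha$ as a simple root moving exactly one color with $\rho$-value a proper rational multiple of $\alpha^\vee$, which is excluded by the classification of socles of spherical modules in \cite[Theorem 3.22]{PVS}. Assembling these: every $\alpha\in\sr$ must lie in $\Sigma$, and since $\Sigma\subseteq\Sigma_\RR(\lat,\Pc)\subseteq\Sigma(G)$ and $\Sigma(G)\cap\sr=\sr$ trivially, we conclude $\sr\subseteq\Sigma$; the reverse inclusion $\Sigma\cap\sr\subseteq\sr$ is automatic, and one checks no non-simple spherical root can belong to $\Sigma$ once all simple roots do (they would violate $\QQ$-admissibility, e.g.\ via Definition~\ref{def:Q-admissibleset}), giving $\Sigma=\sr$.
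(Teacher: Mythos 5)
Your overall shape for proving $\sr\inn\Sigma$ (assume $\alpha\notin\Sigma$, use the two facets $F,\overline F$ of Lemma~\ref{lem:woodward_refl} through $\Pc\cap H_\alpha$, find an orbit vertex $\msv$ there, and contradict smoothness) matches the paper's, but two steps do not go through as written. First, you never actually establish $\Sigma\inn\sr$, and you cannot postpone it to the end: before invoking Lemma~\ref{lem:orbitface_contains_orbitvertex} you must show that $\Pc\cap H_\alpha$ is an \emph{orbit} face, i.e.\ that the relative interior of $\C(\Pc\cap H_\alpha)=\QQp\rho_F+\QQp\rho_{\overline F}$ meets $\V=(-\QQp\Sigma)^\vee$. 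The natural witness is $\alpha^\vee=\rho_F+\rho_{\overline F}$, and $\alpha^\vee\in\V$ only once you know every element of $\Sigma$ is a simple root different from $\alpha$. The paper therefore proves $\Sigma\inn\sr$ \emph{first}, by a separate argument: a non-simple $\sigma\in\Sigma$ satisfies $\<\beta^\vee,\sigma\> > 0$ for some $\beta\in\sr$, hence some ray generator of $\C(\Pc\cap H_\beta)$ is positive on $\sigma$, and condition~(\ref{R-compatible-b}) of Definition~\ref{def:R-compatible} then forces a facet of $\Pc$ to lie inside a wall of $\ft^*_+$, contradicting Definition~\ref{def:reflective_polytope}-\ref{def:reflective_polytope:c}. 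Your substitute --- ruling out non-simple roots \emph{after} $\sr\inn\Sigma$ via $\QQ$-admissibility --- is both circular for the reason above and not substantiated (condition~(\ref{A1polytope}) does not visibly exclude, say, $\tfrac12(\alpha+\beta)$ without further work). Also, the paragraph verifying that $\alpha$ is $\RR$-compatible with $(\lat,\Pc)$ is not needed: the lemma already assumes $\Sigma$ is $\RR$-admissible, and $\RR$-compatibility of $\alpha$ would not by itself place $\alpha$ in $\Sigma$.

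Second, the contradiction you extract at $\msv$ is misidentified. Since $\<\alpha^\vee,\msv\>=0$, the unique color $D$ moved by $\alpha$ \emph{does} lie in $\D(\msv)$: membership in $\D(\msv)$ is the vanishing condition of Theorem~\ref{thm:Brion-Woodward} (or Lemma~\ref{lemma:fan}), not the condition of lying on an extremal ray of $\C(\msv)$, so your claim that ``$D$ cannot be in $\D(\msv)$'' is false; moreover $\alpha\in S(\msv)$ together with $\<\alpha^\vee,\msv\>=0$ is consistent with \eqref{eq:SvinSKv}, whose inclusion goes the other way. The socle condition \ref{def:smooth_Q_momentum_triple:item2} is also the wrong tool here: $\overline{\soc}(\msv)$ only records $\rho$ restricted to $\ZZ(\Sigma^{sc}\cap\ZZ S(\msv))$, which does not see the ``forbidden normal value'' you describe when $\alpha\notin\Sigma$. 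The contradiction is with local factoriality, condition \ref{def:smooth_Q_momentum_triple:item1}: the extremal rays of $\C(\msv)$ through $\rho_F$ and $\rho_{\overline F}$ each contribute an element $D_1$, resp.\ $D_2$, of $\D(\msv)\cup\B(\msv)$, and together with $D$ (whose image is $\alpha^\vee|_\lat$ or $\tfrac12\alpha^\vee|_\lat$, hence a positive multiple of $\rho_F+\rho_{\overline F}$ and distinct from both) one obtains three members of the tuple $(\rho_\msv(E))_{E\in\D(\msv)\cup\B(\msv)}$ whose images are linearly dependent, so this tuple cannot be a basis of $\lat^*$.
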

\begin{proof}
Observe that the assumption that $(\lat,\Pc,\Sigma)$ is an $\RR$-momentum triple includes that  $\rk \lat = \dim \Pc = \rk \wl$.
Recall that when $F$ is a face of $\Pc$, we put
\[
\C(F) = \text{dual cone in $\Hom_{\ZZ}(\lat,\QQ)$ to $\RR_{\ge 0}(\Pc-p) \inn \ft^* = \lat_{\RR}$},
\]
where $p$ is a point in the relative interior of $F$. 

We first show that $\Sigma \inn \sr$. Suppose, for the sake of contradiction, that $\sigma \in \Sigma \setminus \sr$. Then the set $\sr \cup \{\sigma\}$ is linearly dependent and lies in an open half-space, and so must contain two elements that form an acute angle. Since one of the two must be $\sigma$, this means there exists $\beta \in \sr$ such that $\<\beta^{\vee}, \sigma\> > 0$. 
Set $F = \Pc\cap H_{\beta}$, which is nonempty by assumption. Then $\beta^{\vee} \in \C(F)$ since $\Pc \inn \ft^*_+$. This means there is a ray generator $\rho$ of $\C(F)$ for which $\<\rho,\sigma\> > 0$. Because $\sigma \notin \sr$, it follows from condition (\ref{R-compatible-b}) of Definition~\ref{def:R-compatible} that the facet of $\Pc$ defined by $\rho$ is included in $H_{\alpha}$ for some $\alpha \in \sr$. This contradicts the fact that $\Pc$ is reflective (and   Definition~\ref{def:reflective_polytope}-\ref{def:reflective_polytope:c} in particular).

We now show the reverse inclusion, namely $\sr \inn \Sigma$. Let $\alpha \in \sr$ and put $F = \Pc \cap H_{\alpha}$. 
Suppose that $\alpha \notin \Sigma$. We claim that then $F$ is an orbit face of $\Pc$. Indeed, it follows from parts \ref{lem:woodward_refl:part1}, \ref{lem:woodward_refl:part2} and \ref{lem:woodward_refl:part3} of Lemma~\ref{lem:woodward_refl} that there are only two facets of $\Pc$ containing $F$. Also, we can choose corresponding inward pointing facet normals $\rho, \overline{\rho}$ which satisfy $\rho + \overline{\rho}= \alpha^{\vee}$.

This implies that $\alpha^{\vee}$ lies in the relative interior of $\C(F)$ since the extremal rays of $\C(F)$ are exactly the half lines spanned by the facet normals $\rho, \overline{\rho}$. On the other hand, $\<\alpha^{\vee}, \beta\> \leq 0$ for every $\beta \in \sr \setminus \{\alpha\}$. Since $\Sigma \inn \sr$, this proves the claim. 

To conclude the proof, we now show that $F$ being an orbit face contradicts the smoothness of the $\RR$-momentum triple $(\lat,\Pc,\Sigma)$ and in particular local factoriality (i.e.\ condition \ref{def:smooth_Q_momentum_triple:item1} of Definition~\ref{def:smooth_Q_momentum_triple}). By Lemma~\ref{lem:orbitface_contains_orbitvertex}, there exists an orbit vertex of $\Pc$ contained in $F$ (note that because $\Sigma \inn \sr$, it is a linearly independent subset of $\lat_{\QQ}$ and so $\V = (\QQp(-\Sigma))^{\vee} \inn  \Hom_{\ZZ}(\lat,\QQ)$ is full-dimensional\footnote{As recalled in Section~\ref{section:coloredfan}, it is a general fact, due to Brion \cite{brion90}, that the set of spherical roots of a spherical variety is linearly independent.}). Then $\C(F)$ is a face of $\C(\msv)$. Recall the notations $\rho_{\msv}$, $\D(\msv)$ and $\B(\msv)$ from Section~\ref{sec:smoothness}. Since $\Pc$ has maximal dimension, $\Spp(\Pc)= \emptyset$. Using that $\alpha^{\vee} \in  \C(\msv)$ and that $\alpha \notin \Sigma$, it follows from the combinatorial description of the colors associated to $(\lat,\Pc,\Sigma)$ recalled in Remark~\ref{rem:combinatorialversions} that there is a $D \in \D(\msv)$ such that $\rho(D) = \alpha^{\vee}|_{\lat}$. On the other hand, since the functionals $\rho, \overline{\rho}$ also lie on extremal rays of $\C(\msv)$, there exist $D_1$ and $D_2$ in $\D(\msv) \cup \B(\msv)$ such that $\rho_{\msv}(D), \rho_{\msv}(D_1)$ and $\rho_{\msv}(D_2)$ are linearly dependent. This is the promised contradiction and finishes the proof. 
\end{proof}

\begin{remark} \label{rem:complex_structure_refl}
It follows from Corollary~\ref{cor:classif_of_complex} and Lemma~\ref{lem:spherical_roots_reflective} that a multiplicity free $K$-manifold with a Kirwan polytope $\Pc$ that is reflective and simple and meets every wall of $\ft^*_+$ can have at most one compatible complex structure (up to equivariant biholomorphism).
\end{remark}

The following necessary condition for the K\"ahlerizability of certain multiplicity free manifolds is a generalization of Example~\ref{ex:woodward}. 

\begin{corollary}[{\cite[Theorem 6.2]{woodward}}] \label{cor:WoodwardThm62}
Let $M$ be a multiplicity free $K$-manifold with a simple reflective Kirwan polytope $\Pc$. Assume that $\Pc$  meets every wall of $\ft^*_+$. If $M$ admits a compatible complex structure, then for every facet $F$ of $\Pc$ (with inward pointing normal $\rho$) and every $\alpha \in \sr$ we have that 
\begin{equation}
\<\rho,\alpha\> >0 \text{ if and only if }F \supseteq \Pc \cap H_{\alpha}. 
\end{equation}
\end{corollary}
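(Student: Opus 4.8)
The strategy is to apply Theorem~\ref{thm:criterion-Kaehler} and Lemma~\ref{lem:spherical_roots_reflective} together. Assume $M$ admits a compatible complex structure. Then by Theorem~\ref{thm:criterion-Kaehler} there is a subset $\Sigma \inn \Sigma(K^{\CC})$ such that $(\lat,\Pc,\Sigma)$ is a smooth $\RR$-momentum triple, where $\lat = \lat(M)$. Since $\Pc$ is simple and reflective and meets every wall of $\ft^*_+$, Lemma~\ref{lem:spherical_roots_reflective} forces $\Sigma = \sr$. So it suffices to extract the desired description of facet normals from the fact that $(\lat,\Pc,\sr)$ is a (smooth) $\RR$-momentum triple; in particular, every simple root is $\RR$-compatible with $(\lat,\Pc)$, which is exactly condition (\ref{R-compatible-a}) of Definition~\ref{def:R-compatible} for each $\alpha \in \sr$.

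First I would prove the implication ``$F \supseteq \Pc\cap H_\alpha \Rightarrow \<\rho,\alpha\> > 0$''. Fix a facet $F$ with inward pointing primitive normal $\rho = \rho_F$, and suppose $F \supseteq \Pc \cap H_\alpha$. Since $\Pc$ has maximal dimension, $\Pc \cap H_\alpha$ has codimension one in $\Pc$, hence it equals $F$, the affine hyperplane $H_F^\RR$ it spans equals $H_\alpha \cap \lat_\RR$, and so $\rho_F$ is a nonzero multiple of $\alpha^\vee|_\lat$. To see the multiple is positive (so that $\<\rho_F,\alpha\> > 0$), use that $\rho_F$ is inward pointing and that $\Pc \inn \ft^*_+$ means $\<\alpha^\vee,q\> \ge 0$ for all $q\in\Pc$: points of $\Pc$ just off $F$ lie strictly inside $\ft^*_+$, so $\alpha^\vee$ takes positive values on $\Pc - (F-\msw)$ on the same side as $\rho_F$. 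This direction uses only that $\Pc$ is full-dimensional and sits in the dominant chamber, not $\RR$-compatibility.

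Next I would prove the converse, ``$\<\rho_F,\alpha\> > 0 \Rightarrow F \supseteq \Pc\cap H_\alpha$'', and this is where $\alpha \in \sr = \Sigma$ enters. By part~(\ref{R-compatible-a}) of Definition~\ref{def:R-compatible} applied to the spherical root $\alpha$, there is a facet $F_0$ of $\Pc$ with $\<\rho_{F_0},\alpha\>=1$ such that any facet $F'$ with $\<\rho_{F'},\alpha\> > 0$ satisfies $H^\RR_{F'} = H^\RR_{F_0}$ or $H^\RR_{F'} = s_\alpha(H^\RR_{F_0})$. On the other hand, Lemma~\ref{lem:woodward_refl}, parts~\ref{lem:woodward_refl:part2} and~\ref{lem:woodward_refl:part3}, tells us (using that $\Pc$ is simple, reflective and $\Pc \cap H_\alpha \neq \emptyset$) that there are exactly two facets $F,\overline F$ containing $\Pc\cap H_\alpha$, with inward pointing normals $\rho,\overline\rho$ satisfying $\<\rho,\alpha\>=\<\overline\rho,\alpha\>=1$, $\rho+\overline\rho=\alpha^\vee$, and $F\cap\overline F=\Pc\cap H_\alpha$; moreover these are the only facets containing $\Pc\cap H_\alpha$. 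So $\{F,\overline F\}$ are facets with the required positivity, their hyperplanes are related by $s_\alpha$ (since $\rho+\overline\rho=\alpha^\vee$ forces $\overline\rho = \alpha^\vee - \rho$, which, as in Lemma~\ref{lemma:sum}, means $H^\RR_{\overline F}=s_\alpha(H^\RR_F)$), hence the set $\{H^\RR_F, H^\RR_{\overline F}\}$ coincides with the pair $\{H^\RR_{F_0}, s_\alpha(H^\RR_{F_0})\}$ from Definition~\ref{def:R-compatible}. Therefore \emph{every} facet $F'$ of $\Pc$ with $\<\rho_{F'},\alpha\> > 0$ has $H^\RR_{F'}\in\{H^\RR_F,H^\RR_{\overline F}\}$, and being a facet it then equals $F$ or $\overline F$, so it contains $F \cap \overline F = \Pc\cap H_\alpha$.

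**Main obstacle.** The routine part is the convex geometry bookkeeping identifying $\Pc\cap H_\alpha$ with the codimension-one face $F\cap\overline F$ and matching hyperplanes up to $s_\alpha$; Lemma~\ref{lem:woodward_refl} already packages most of this. The step that requires genuine care is verifying that all hypotheses of Lemma~\ref{lem:spherical_roots_reflective} are met — in particular that an $\RR$-momentum triple with $\Pc$ of maximal dimension forces $\rk\lat = \rk\wl$ and $\Spp(\Pc) = \emptyset$ — and, once $\Sigma = \sr$ is established, that $\alpha \in \sr$ is genuinely $\RR$-compatible so that part~(\ref{R-compatible-a}) of Definition~\ref{def:R-compatible} is available for each $\alpha$ with $\Pc\cap H_\alpha\neq\emptyset$. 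After that, the argument is essentially a translation between the two combinatorial languages.
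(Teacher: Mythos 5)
Your treatment of the main implication (the one that uses the K\"ahler hypothesis) is correct and follows the paper's own route exactly: Theorem~\ref{thm:criterion-Kaehler} produces a smooth $\RR$-momentum triple $(\lat(M),\Pc,\Sigma)$, Lemma~\ref{lem:spherical_roots_reflective} forces $\Sigma=\sr$, and then Definition~\ref{def:R-compatible}-(\ref{R-compatible-a}) combined with Lemma~\ref{lem:woodward_refl}-\ref{lem:woodward_refl:part3} pins down the facets with $\<\rho_{F'},\alpha\>>0$ as the two facets containing $\Pc\cap H_\alpha$. Your checks of the hypotheses of Lemma~\ref{lem:spherical_roots_reflective} (maximal rank of $\lat$, $\Spp(\Pc)=\emptyset$) are also fine.

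However, your argument for the implication ``$F\supseteq\Pc\cap H_\alpha\Rightarrow\<\rho_F,\alpha\>>0$'' is wrong. You claim that $\Pc\cap H_\alpha$ has codimension one in $\Pc$, hence equals $F$, hence $\rho_F$ is a positive multiple of $\alpha^\vee|_\lat$. This cannot happen: since $\Pc\subseteq\ft^*_+$, the set $\Pc\cap H_\alpha$ is the face of $\Pc$ on which $\alpha^\vee$ attains its minimum value $0$ (it is not a cross-section of $\Pc$ by $H_\alpha$), and condition~\ref{def:reflective_polytope:c} of Definition~\ref{def:reflective_polytope} forbids any facet of $\Pc$ from lying inside a wall $H_\alpha$. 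In fact, by Lemma~\ref{lem:woodward_refl} and simplicity, $\Pc\cap H_\alpha=F_0\cap\overline{F_0}$ has codimension two, and the two facet normals satisfy $\rho_{F_0}+\rho_{\overline{F_0}}=\alpha^\vee$ with $\<\rho_{F_0},\alpha\>=\<\rho_{\overline{F_0}},\alpha\>=1$; neither normal is proportional to $\alpha^\vee$ (two distinct facets with parallel spanning hyperplanes cannot share the nonempty set $\Pc\cap H_\alpha$). The correct and much shorter argument, which is the one the paper uses, is precisely parts~\ref{lem:woodward_refl:part1} and~\ref{lem:woodward_refl:part3} of Lemma~\ref{lem:woodward_refl}: any facet containing $\Pc\cap H_\alpha$ must be one of $F_0,\overline{F_0}$, and both of these pair to $1>0$ with $\alpha$. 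Note also that this direction, while independent of the K\"ahler hypothesis as you observe, does use reflectivity and simplicity of $\Pc$ through that lemma, not merely full-dimensionality and dominance.
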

\begin{proof}
Observe that $\lat(M)$ has maximal rank since $\Pc$ has maximal dimension. Let $F$ be a facet of $\Pc$, with inward pointing normal $\rho$, and $\alpha \in \sr$. The fact that $\<\rho,\alpha\> >0$ when $\Pc \cap H_{\alpha} \inn F$ follows from parts \ref{lem:woodward_refl:part1} and \ref{lem:woodward_refl:part3} of Lemma~\ref{lem:woodward_refl}. 

Now suppose that $M$ admits a compatible complex structure. It follows from Theorem~\ref{thm:criterion-Kaehler} that there exists a subset $\Sigma \inn \Sigma(K^{\CC})$ such that $(\lat(M),\Pc,\Sigma)$ is a smooth momentum triple, and from Lemma~\ref{lem:spherical_roots_reflective} that $\alpha \in \Sigma$.  Assume that $\<\rho,\alpha\> > 0$. Then it follows from Definition~\ref{def:R-compatible}-(\ref{R-compatible-a}) that $F$ has to be one of the two facets of $\Pc$ that contain $\Pc \cap H_{\alpha}$ (see Lemma~\ref{lem:woodward_refl}-\ref{lem:woodward_refl:part3}.) This completes the proof. 
\end{proof}

\begin{remark}
In Theorem 6.2 of \cite{woodward}, $M$ is characterized geometrically as a ``transversal'' multiplicity free $K$-manifold with a discrete principal isotropy group, rather than combinatorially as having a simple reflective Kirwan polytope. Nevertheless, as stated in \cite[Theorem 4.6]{woodward} it follows from \cite{woodward-transversal} that these two geometric conditions imply that the Kirwan polytope of $M$ is reflective and simple.
\end{remark}

Our next aim is to  give a converse to Corollary~\ref{cor:WoodwardThm62} in Corollary~\ref{cor:converseWoodward}.
We first give a combinatorial result that we will need.

\begin{proposition} \label{prop:smoothreflective}
Let $\Pc$ be a simple reflective polytope in $\ft^*_+$ and $\lat$ a sublattice of $\wl$ such that $\dim \Pc = \rk \lat = \rk \wl$, that (Q2) holds and that $\Pc$ meets every face of codimension $1$ of $\ft^*_+$. Assume furthermore that the following hold:
\begin{enumerate}[(a)]
\item $\sr \inn \lat$. \label{prop:smoothreflective:rootsinlat}
\item For every $\alpha \in \sr$, the inward pointing facet normals $\rho,\overline{\rho}$ as in Lemma~\ref{lem:woodward_refl}-\ref{lem:woodward_refl:part2} belong to the lattice $\Hom_{\ZZ}(\lat,\ZZ)$. \label{prop:smoothreflective:normalinlat}
\item $\Pc$ satisfies condition \eqref{eq:DZ_condition} at every vertex $\msv$ that lies in the relative interior of $\ft^*_+$. \label{prop:smoothreflective:itemDelzant} 
\end{enumerate} 
 Then the following are equivalent:
\begin{enumerate}[(1)]
\item $(\lat,\Pc,\sr)$ is a smooth $\RR$-momentum triple. \label{prop:smoothreflective:part1}
\item For every facet $F$ of $\Pc$ and every $\alpha \in \sr$ we have that  \label{prop:smoothreflective:part2}
\begin{equation}
\<\rho_F,\alpha\> >0 \Leftrightarrow F \supseteq \Pc \cap H_{\alpha}. \label{eq:facetrefl}
\end{equation}
\end{enumerate}
\end{proposition}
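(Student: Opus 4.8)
The equivalence to be established compares, on one hand, the smoothness of the $\RR$-momentum triple $(\lat,\Pc,\sr)$ — which by Definition~\ref{def:smooth_R_momentum_triple} means conditions \ref{def:smooth_Q_momentum_triple:item1} and \ref{def:smooth_Q_momentum_triple:item2} of Definition~\ref{def:smooth_Q_momentum_triple} hold at every orbit vertex $\msv$ of $\Pc$ — and, on the other, the purely geometric condition \eqref{eq:facetrefl} relating facet normals of $\Pc$ to the walls $H_\alpha$. The plan is to first record that, under hypothesis \eqref{prop:smoothreflective:itemDelzant} together with hypotheses \eqref{prop:smoothreflective:rootsinlat} and \eqref{prop:smoothreflective:normalinlat}, the triple $(\lat,\Pc,\sr)$ is indeed an $\RR$-momentum triple: each $\alpha\in\sr$ is $\RR$-compatible with $(\lat,\Pc)$ by hypothesis \eqref{prop:smoothreflective:rootsinlat}, Lemma~\ref{lem:woodward_refl}, and the maximality of $\dim\Pc$ (so that $\Spp(\Pc)=\emptyset$ and Luna's axiom (S) is automatic); and $\RR$-admissibility of $\sr$ follows from \eqref{prop:smoothreflective:normalinlat} and a direct check of condition~(\ref{A1polytope}) of Definition~\ref{def:Q-admissibleset} using $\rho+\overline\rho=\alpha^\vee$ and $\<\rho,\beta\>\le 0$ for $\beta\ne\alpha$. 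Once this is in place, one can speak of the combinatorially defined colored cones $(\C(\msv),\D(\msv))$, the sets $\B(\msv)$, the localized socles $\overline{\soc}(\msv)$, and the maps $\rho_\msv$ attached to $(\lat,\Pc,\sr)$ via Remark~\ref{rem:combinatorialversions}.

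For the implication \eqref{prop:smoothreflective:part2}$\Rightarrow$\eqref{prop:smoothreflective:part1}, fix an orbit vertex $\msv$ of $\Pc$ (for the full-dimensional cone $\V=(\QQp(-\sr))^\vee$, which is full-dimensional by Brion's linear independence of spherical roots). I would split on whether $\msv$ lies on a wall $H_\alpha$ or in the relative interior of $\ft^*_+$. If $\msv$ is interior, then by Remark~\ref{rem:SvinSKv}\ref{rem:SvinSkv:inclusion} we get $S(\msv)=\emptyset$, so $\overline{\soc}(\msv)$ is the socle of a torus-module $\CC^k$ (hence condition \ref{def:smooth_Q_momentum_triple:item2} holds), and by Remark~\ref{rem:SvinSKv}\ref{rem:SvinSkv:local_toric_smoothness} condition \ref{def:smooth_Q_momentum_triple:item1} holds iff the primitive inward-pointing facet normals at $\msv$ form a basis of $\lat^*$ — which is exactly hypothesis \eqref{prop:smoothreflective:itemDelzant}. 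If instead $\msv$ lies on some walls $H_{\alpha_1},\dots,H_{\alpha_r}$ (a linearly independent collection, by simplicity of $\Pc$ and Lemma~\ref{lem:woodward_refl}\ref{lem:woodward_refl:part3}), condition \eqref{eq:facetrefl} pins down precisely which facets through $\msv$ carry normals positive on these simple roots and forces the remaining facets through $\msv$ to be orthogonal to each $H_{\alpha_i}$; combining this with simplicity of $\Pc$ and \eqref{prop:smoothreflective:itemDelzant}-type factoriality on the ``interior directions'' at $\msv$, one checks that $(\rho_\msv(D))_{D\in\D(\msv)\cup\B(\msv)}$ is a basis of $\lat^*$ and that $\overline{\soc}(\msv)$ matches a socle from \cite[Table 2]{PVS} — concretely, a product of $\SL_2$-socles indexed by $\{\alpha_1,\dots,\alpha_r\}=S(\msv)$ times a torus factor, each $\SL_2$-piece being exactly the socle forced by \eqref{lem:woodward_refl:item1}--\eqref{lem:woodward_refl:item3}.

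For the converse \eqref{prop:smoothreflective:part1}$\Rightarrow$\eqref{prop:smoothreflective:part2}, I argue contrapositively along the lines of the proof of Lemma~\ref{lem:spherical_roots_reflective}: one direction of \eqref{eq:facetrefl} (``$\Pc\cap H_\alpha\subseteq F\Rightarrow\<\rho_F,\alpha\>>0$'') is unconditional, coming from Lemma~\ref{lem:woodward_refl}\ref{lem:woodward_refl:part1}\ref{lem:woodward_refl:part3}. For the other direction, suppose $\<\rho_F,\alpha\>>0$ but $F\not\supseteq\Pc\cap H_\alpha$. Since $\alpha\in\Sigma=\sr$, condition~(\ref{R-compatible-a}) of Definition~\ref{def:R-compatible} says there is a facet $F_0$ with $\<\rho_{F_0},\alpha\>=1$ and every facet $F'$ with $\<\rho_{F'},\alpha\>>0$ has $H^\RR_{F'}\in\{H^\RR_{F_0},s_\alpha H^\RR_{F_0}\}$; by Lemma~\ref{lem:woodward_refl}\ref{lem:woodward_refl:part3} the two facets containing $\Pc\cap H_\alpha$ are $F_0$ and its mirror, so $F$ must be one of them — contradiction. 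This already gives \eqref{eq:facetrefl}; the smoothness hypothesis is not even needed for this direction, only $\RR$-compatibility, so the only real content beyond Lemma~\ref{lem:spherical_roots_reflective} is the forward direction. The main obstacle I anticipate is the bookkeeping in the ``$\msv$ on a wall'' case of \eqref{prop:smoothreflective:part2}$\Rightarrow$\eqref{prop:smoothreflective:part1}: one must correctly identify $S(\msv)$, sort the facet normals through $\msv$ into the pairs $\{\rho,\overline\rho\}$ coming from each $H_{\alpha_i}$ versus the genuinely ``toric'' normals, verify these together span $\lat^*$ freely (using simplicity plus \eqref{prop:smoothreflective:normalinlat} plus an inductive/Delzant-type argument on the transverse slice), and then recognize the resulting localized socle in the classification of \cite[Table 2]{PVS} — i.e.\ matching it with a product of rank-one $\SL_2$-socles and a torus socle. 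This is exactly where the full strength of Camus' criterion, rather than just local factoriality, is being invoked.
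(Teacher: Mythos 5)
Your direction \ref{prop:smoothreflective:part1}$\Rightarrow$\ref{prop:smoothreflective:part2} matches the paper's argument, but your plan for \ref{prop:smoothreflective:part2}$\Rightarrow$\ref{prop:smoothreflective:part1} has a genuine gap: you are missing the one observation that makes the smoothness verification go through, namely that \emph{no vertex of $\Pc$ lying on a wall $H_\alpha$ is an orbit vertex}. Indeed, by Lemma~\ref{lem:woodward_refl}-\ref{lem:woodward_refl:part1} every facet $E$ of $\Pc$ through such a vertex $\mathsf{w}$ satisfies $\<\rho_E,\alpha\>\ge 0$, and at least one (a facet containing $\Pc\cap H_\alpha$) satisfies $\<\rho_E,\alpha\>>0$; hence the relative interior of $\C(\mathsf{w})$ takes strictly positive values on $\alpha$ and misses the valuation cone $(\QQp(-\sr))^\vee$. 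Consequently the second branch of your dichotomy is vacuous, every orbit vertex lies in the relative interior of $\ft^*_+$, and smoothness follows directly from Remark~\ref{rem:SvinSKv}\ref{rem:SvinSkv:local_toric_smoothness} together with hypothesis \ref{prop:smoothreflective:itemDelzant}. The elaborate verification you sketch for a wall orbit vertex (sorting normals into mirror pairs, matching $\overline{\soc}(\msv)$ with products of $\SL_2$-socles) cannot be carried out as described: hypothesis \ref{prop:smoothreflective:itemDelzant} gives condition \eqref{eq:DZ_condition} only at \emph{interior} vertices, so at a wall vertex you would have no factoriality input at all, and nothing in the hypotheses forces the conditions of Definition~\ref{def:smooth_Q_momentum_triple} to hold there. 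Contrary to your closing remark, the full strength of Camus' criterion is never invoked; the socle condition is automatic at interior vertices.

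A second, related error is your opening claim that $(\lat,\Pc,\sr)$ is an $\RR$-momentum triple as a consequence of hypotheses (a)--(c) alone. Condition (\ref{R-compatible-a}) of Definition~\ref{def:R-compatible} requires that \emph{every} facet $F'$ with $\<\rho_{F'},\alpha\>>0$ have $H^{\RR}_{F'}\in\{H^{\RR}_F, s_\alpha(H^{\RR}_F)\}$; reflectivity and Lemma~\ref{lem:woodward_refl} control only the facets meeting $H_\alpha$, and do not exclude a third facet, disjoint from $\Pc\cap H_\alpha$, whose normal is positive on $\alpha$. Ruling that out is precisely the content of \eqref{eq:facetrefl}, so $\RR$-compatibility of $\sr$ must be \emph{derived from} statement \ref{prop:smoothreflective:part2} (as the paper does), not recorded upfront. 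Were it automatic, the implication \ref{prop:smoothreflective:part1}$\Rightarrow$\ref{prop:smoothreflective:part2} --- which you correctly observe uses only $\RR$-compatibility --- would render the whole equivalence trivial.
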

\begin{proof}
We first show that \ref{prop:smoothreflective:part1} implies \ref{prop:smoothreflective:part2}. The implication ``$\Leftarrow$'' in \eqref{eq:facetrefl} follows from Lemma~\ref{lem:woodward_refl}-\ref{lem:woodward_refl:part3}. To show the implication ``$\Rightarrow$'' in \eqref{eq:facetrefl}, observe that since $\alpha$ is $\RR$-compatible with $(\lat,\Pc)$, there are at most two facets of $\Pc$ for which the inward pointing facet normal takes a positive value on $\alpha$, see Definition~\ref{def:R-compatible}-(\ref{R-compatible-a}). By Lemma~\ref{lem:woodward_refl}-\ref{lem:woodward_refl:part3} there are exactly two such facets, and they are those that contain $\Pc \cap H_{\alpha}$.

We now prove that \ref{prop:smoothreflective:part2} implies \ref{prop:smoothreflective:part1}. First note that (Q1) holds because the dimension of $\Pc$ and the rank of $\lat$ are maximal. We begin by verifying that every $\alpha \in \sr$ is $\RR$-compatible with $(\lat,\Pc)$. Note that from $\Spp(\Pc)=\Spp(\lat)=\emptyset$ and the fact that $\alpha$ is a primitive element of $\lat$ (by assumptions \ref{prop:smoothreflective:rootsinlat}  and \ref{prop:smoothreflective:normalinlat}), it follows that $\alpha$ is compatible with $\lat$ and that $(\Spp(\Pc),\alpha)$ satisfies Luna's axiom (S). To see that condition (\ref{R-compatible-a}) of Definition~\ref{def:R-compatible} holds, one observes that by our assumption \ref{prop:smoothreflective:normalinlat} and by \eqref{eq:facetrefl}, $\Pc$ has exactly two facets $F$ and $\overline{F}$ for which an inward pointing facet normal takes a positive value on $\alpha$, and that for these two facets we have
\begin{equation*}
\<\rho_F,\alpha\> = \<\rho_{\overline{F}},\alpha\>=1 \quad \text{and} \quad
H^{\RR}_{\overline{F}} = s_{\alpha}(H^{\RR}_F).
\end{equation*}
This also implies that 
\begin{equation} \label{eq:rhoAalpha}
\rho(\A(\alpha))=\{\rho_F,\rho_{\overline{F}}\}.
\end{equation}

To check that $\sr$ is $\RR$-admissible for $(\lat,\Pc)$, let $\beta \in \sr \setminus\{\alpha\}$ and $D \in \A(\alpha)$ with $\<\rho(D),\beta\> > 0$. Then it follows from \eqref{eq:rhoAalpha} that $\rho(D)=\rho_E$ for  some facet $E$ of $\Pc$, and from ``$\Rightarrow$'' in \eqref{eq:facetrefl} that $E \supseteq \Pc \cap H_{\beta}$. Then it follows from assumption \ref{prop:smoothreflective:normalinlat} that $\<\rho_E,\beta\>=1$ and from \eqref{eq:rhoAalpha} (with $\beta$ substituted for $\alpha$) that there exists $D' \in \A(\beta)$ such that $\rho(D') = \rho_E=\rho(D)$. We have shown that $(\lat,\Pc,\sr)$ is an $\RR$-momentum triple. 

What remains is to show that $(\lat,\Pc,\sr)$ is smooth. We first claim that if $\mathsf{w}$ is a vertex of $\Pc$ lying on a wall $H_{\alpha}$ of $\ft^*_+$ (where $\alpha \in \sr$), then $\mathsf{w}$ is not an orbit vertex. Indeed, it follows from Lemma~\ref{lem:woodward_refl}-\ref{lem:woodward_refl:part1} that if $E$ is a facet of $\Pc$ containing $\mathsf{w}$, then $\<\rho_E,\alpha\> \geq 0$. Since there is a facet $F$ of $\Pc$ containing $\mathsf{w}$ such that $\<\rho_F,\alpha\>>0$, the relative interior of 
\[\C(\mathsf{w}) = \QQp\{\rho_E\colon \text{$E$ is a facet of $\Pc$ containing $\mathsf{w}$}\}\]
does not meet the cone $\{\nu \in \Hom_{\ZZ}(\lat,\QQ) \colon \<\nu,\alpha\>\leq 0\}$ and therefore also not the valuation cone
$\{\nu \in \Hom_{\ZZ}(\lat,\QQ) \colon \<\nu,\beta\>\leq 0 \text{ for all } \beta \in \sr\}$. This proves the claim. 

The claim implies that every orbit vertex $\msv$ of $\Pc$  lies in the relative interior of $\ft^*_+$. The smoothness of the 
$\RR$-momentum triple $(\lat,\Pc,\sr)$ now follows from Remark~\ref{rem:SvinSKv}\ref{rem:SvinSkv:local_toric_smoothness},
since the condition \eqref{eq:local_toric_smoothness} is exactly our assumption \ref{prop:smoothreflective:itemDelzant}.
\end{proof}

Thanks to Proposition~\ref{prop:smoothreflective}, we can strengthen the necessary conditions for K\"ahlerizability in Corollary~\ref{cor:WoodwardThm62} to necessary and sufficient conditions, assuming that the weight lattice of the multiplicity free Hamiltonian manifold satisfies  \ref{prop:smoothreflective:rootsinlat}, \ref{prop:smoothreflective:normalinlat}
and \ref {prop:smoothreflective:itemDelzant} of Proposition~\ref{prop:smoothreflective}.

\begin{corollary} \label{cor:converseWoodward}
Let $M$ be a multiplicity free $K$-manifold with a simple reflective Kirwan polytope $\Pc$. 
Assume that $\Pc$  meets every wall of $\ft^*_+$. Assume furthermore that the following hold:
\begin{enumerate}[(a)]
\item $\sr \inn \lat(M)$. \label{cor:converseWoodward:rootsinlat}
\item For every $\alpha \in \sr$, the inward pointing facet normals $\rho,\overline{\rho}$ as in Lemma~\ref{lem:woodward_refl}-\ref{lem:woodward_refl:part2} belong to the lattice $\Hom_{\ZZ}(\lat(M),\ZZ)$. \label{cor:converseWoodward:normalinlat}
\item $\Pc$ is Delzant at every vertex $\msv$ that lies in the relative interior of $\ft^*_+$.\label{cor:converseWoodward:itemDelzant} 
\end{enumerate}
Then the following are equivalent:
\begin{enumerate}[(1)]
\item $M$ has a compatible complex structure. \label{cor:converseWoodward:part1}
\item For every facet $F$ of $\Pc$ and every $\alpha \in \sr$ we have that  \label{cor:converseWoodward:part2}
\begin{equation}
\<\rho_F,\alpha\> >0 \Leftrightarrow F \supseteq \Pc \cap H_{\alpha}. 
\end{equation}
\item $(\lat(M),\Pc,\sr)$ is a smooth $\RR$-momentum triple. \label{cor:converseWoodward:part3}
\end{enumerate} 
\end{corollary}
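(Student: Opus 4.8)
The plan is to deduce Corollary~\ref{cor:converseWoodward} by combining Theorem~\ref{thm:criterion-Kaehler}, Corollary~\ref{cor:WoodwardThm62}, and Proposition~\ref{prop:smoothreflective}, once we observe that the hypotheses \ref{cor:converseWoodward:rootsinlat}--\ref{cor:converseWoodward:itemDelzant} on $M$ are precisely the hypotheses \ref{prop:smoothreflective:rootsinlat}--\ref{prop:smoothreflective:itemDelzant} of Proposition~\ref{prop:smoothreflective} applied to the pair $(\lat(M),\Pc)$. First I would note that, since $\Pc$ is reflective, it has maximal dimension, so $\rk \lat(M) = \dim \Pc = \rk \wl$; moreover (Q2) holds because $\Pc$ is the Kirwan polytope of a compact connected Hamiltonian manifold, by Remark~\ref{rem:KPsatisfiesQ1Q2}. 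Thus all the standing assumptions of Proposition~\ref{prop:smoothreflective} on $(\lat(M),\Pc)$ are met, and \ref{cor:converseWoodward:rootsinlat}, \ref{cor:converseWoodward:normalinlat}, \ref{cor:converseWoodward:itemDelzant} are literally \ref{prop:smoothreflective:rootsinlat}, \ref{prop:smoothreflective:normalinlat}, \ref{prop:smoothreflective:itemDelzant}.

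The three-way equivalence would then be established as a cycle. For \ref{cor:converseWoodward:part1} $\Rightarrow$ \ref{cor:converseWoodward:part2}: this is exactly the content of Corollary~\ref{cor:WoodwardThm62}, whose hypotheses ($\Pc$ simple reflective, meeting every wall of $\ft^*_+$) are part of our assumptions. For \ref{cor:converseWoodward:part2} $\Leftrightarrow$ \ref{cor:converseWoodward:part3}: this is precisely the equivalence \ref{prop:smoothreflective:part1} $\Leftrightarrow$ \ref{prop:smoothreflective:part2} of Proposition~\ref{prop:smoothreflective}, applied with $\lat = \lat(M)$. For \ref{cor:converseWoodward:part3} $\Rightarrow$ \ref{cor:converseWoodward:part1}: if $(\lat(M),\Pc,\sr)$ is a smooth $\RR$-momentum triple, then taking $\Sigma = \sr \inn \Sigma(K^{\CC})$ in Theorem~\ref{thm:criterion-Kaehler} shows that $M$ has a compatible complex structure. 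Chaining these gives \ref{cor:converseWoodward:part1} $\Rightarrow$ \ref{cor:converseWoodward:part2} $\Rightarrow$ \ref{cor:converseWoodward:part3} $\Rightarrow$ \ref{cor:converseWoodward:part1}, so all three are equivalent.

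The proof is therefore essentially bookkeeping: the one point requiring a sentence of care is checking that the ambient hypotheses of Proposition~\ref{prop:smoothreflective} (namely $\dim \Pc = \rk \lat = \rk \wl$ and (Q2)) are automatic here, which as noted follows from reflectivity and Remark~\ref{rem:KPsatisfiesQ1Q2}. There is no genuine obstacle; the substantive work has already been done in Corollary~\ref{cor:WoodwardThm62}, Proposition~\ref{prop:smoothreflective} and Theorem~\ref{thm:criterion-Kaehler}. I would write it as follows.

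\begin{proof}
Since $\Pc$ is reflective, $\dim \Pc = \rk K = \rk \wl$, and since $\Pc$ is the Kirwan polytope of the compact connected Hamiltonian manifold $M$, it satisfies (Q1) and (Q2) by Remark~\ref{rem:KPsatisfiesQ1Q2}. In particular $\rk \lat(M) = \dim \Pc = \rk \wl$, so $(\lat(M),\Pc)$ satisfies all the standing hypotheses of Proposition~\ref{prop:smoothreflective}, and the assumptions \ref{cor:converseWoodward:rootsinlat}, \ref{cor:converseWoodward:normalinlat} and \ref{cor:converseWoodward:itemDelzant} are exactly the assumptions \ref{prop:smoothreflective:rootsinlat}, \ref{prop:smoothreflective:normalinlat} and \ref{prop:smoothreflective:itemDelzant} of that proposition.

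We prove the equivalences in the cyclic order \ref{cor:converseWoodward:part1} $\Rightarrow$ \ref{cor:converseWoodward:part2} $\Rightarrow$ \ref{cor:converseWoodward:part3} $\Rightarrow$ \ref{cor:converseWoodward:part1}.

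\ref{cor:converseWoodward:part1} $\Rightarrow$ \ref{cor:converseWoodward:part2}: This is precisely Corollary~\ref{cor:WoodwardThm62}, whose hypotheses are satisfied since $\Pc$ is simple and reflective and meets every wall of $\ft^*_+$.

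\ref{cor:converseWoodward:part2} $\Leftrightarrow$ \ref{cor:converseWoodward:part3}: This is the equivalence \ref{prop:smoothreflective:part1} $\Leftrightarrow$ \ref{prop:smoothreflective:part2} of Proposition~\ref{prop:smoothreflective}, applied to $(\lat(M),\Pc)$.

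\ref{cor:converseWoodward:part3} $\Rightarrow$ \ref{cor:converseWoodward:part1}: If $(\lat(M),\Pc,\sr)$ is a smooth $\RR$-momentum triple, then $\Sigma := \sr$ is a subset of $\Sigma(K^{\CC})$ for which $(\lat(M),\Pc,\Sigma)$ is a smooth $\RR$-momentum triple, so $M$ admits a compatible complex structure by Theorem~\ref{thm:criterion-Kaehler}.

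Chaining these implications shows that \ref{cor:converseWoodward:part1}, \ref{cor:converseWoodward:part2} and \ref{cor:converseWoodward:part3} are equivalent.
\end{proof}
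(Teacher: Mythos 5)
Your proof is correct and follows essentially the same route as the paper's: both reduce everything to Proposition~\ref{prop:smoothreflective} for \ref{cor:converseWoodward:part2}$\Leftrightarrow$\ref{cor:converseWoodward:part3} and to Theorem~\ref{thm:criterion-Kaehler} for \ref{cor:converseWoodward:part3}$\Rightarrow$\ref{cor:converseWoodward:part1}. The only (immaterial) difference is that you close the cycle with \ref{cor:converseWoodward:part1}$\Rightarrow$\ref{cor:converseWoodward:part2} via Corollary~\ref{cor:WoodwardThm62}, whereas the paper proves \ref{cor:converseWoodward:part1}$\Rightarrow$\ref{cor:converseWoodward:part3} directly from Theorem~\ref{thm:criterion-Kaehler} and Lemma~\ref{lem:spherical_roots_reflective} --- the very ingredients underlying Corollary~\ref{cor:WoodwardThm62}.
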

\begin{proof}
The equivalence of \ref{cor:converseWoodward:part2} and \ref{cor:converseWoodward:part3} is exactly Proposition~\ref{prop:smoothreflective}. The implication ``\ref{cor:converseWoodward:part1} $\Rightarrow$ \ref{cor:converseWoodward:part3}'' follows from Theorem~\ref{thm:criterion-Kaehler} and Lemma~\ref{lem:spherical_roots_reflective}, while the reverse implication follows from Theorem~\ref{thm:criterion-Kaehler}.
\end{proof}

\begin{remark}
Condition \ref{cor:converseWoodward:itemDelzant} in Corollary~\ref{cor:converseWoodward} is in fact superflous. Indeed,  \cite[Theorem 11.2]{knop:hamilton} implies that it is a consequence of the fact that $\Pc$ is the Kirwan polytope of the multiplicity free Hamiltonian manifold $M$. 
\end{remark}

\subsection{Existence of K\texorpdfstring{\"a}{ae}hler structures: necessity} \label{sec:kaehler-necessity} 

In this section we will prove the following theorem.

\begin{theorem} \label{thm:existence-Kaehler-converse}
Let $X$ be a smooth projective spherical $G$-variety and let $[\wom]\in H^2_K(X,\RR)$. 
Then $(\lat(X),\Pc(X,[\wom]),\Sigma(X))$ is a smooth $\RR$-momentum triple. 
\end{theorem}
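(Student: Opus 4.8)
The strategy is to reduce the statement to the analogous algebraic result (Proposition~\ref{prop:realized_then_admissible} and Corollary~\ref{cor:classif_smooth_polarized}/Proposition~\ref{prop:smoothness}) by showing that the combinatorial data attached to the pair $(\lat(X),\Pc(X,[\wom]))$ coincides with the data attached to $(\lat(X),Q(X,\lb))$ for a suitable ample $G$-linearized line bundle $\lb$ on $X$, and that the latter data is the genuine combinatorial invariants of $X$ itself. The key point, already isolated in the previous subsection, is Proposition~\ref{thm:Brion-Woodward-gen} together with Lemma~\ref{lemma:amplesequence}, Lemma~\ref{lemma:slice} and Corollary~\ref{cor:cancellation}: these say that the orbit faces of $\Pc(X,[\wom])$, their poset structure, the associated colored cones $(\C(\msv),\D(\msv))$, and hence the sets $S(\msv)$, $\B(\msv)$, $\D(\msv)$ and the map $\rho_\msv$, do not change as $[\wom]$ varies within $H^2_K(X,\RR)^+$, and in particular agree with those obtained from an ample bundle $\lb$.

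First I would set $\lat = \lat(X)$, $\Pc = \Pc(X,[\wom])$ and $\Sigma = \Sigma(X)$, and check that $(\lat,\Pc)$ satisfies (Q1) and (Q2): (Q1) holds because $\Pc-\mathsf w$ spans $\lat(X)_\RR$ (this follows from Lemma~\ref{lemma:slice} and the fact that $\lat(X)$ is spanned by the weight monoid of the slice representation $W$, exactly as in \eqref{eq:QXLminusp}), and (Q2) holds by Sjamaar's theorem (Remark~\ref{rem:KPsatisfiesQ1Q2}), the facet normals being visibly in $\Hom_\ZZ(\lat(X),\QQ)$ as noted right after Lemma~\ref{lemma:slice}. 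Next I would verify $\Sigma \inn \Sigma_\RR(\lat,\Pc)$ and $\RR$-admissibility: this is the verbatim real analogue of the argument in the proof of Proposition~\ref{prop:realized_then_admissible}. Concretely, for $\sigma = \alpha \in \sr \cap \Sigma(X)$ one uses Proposition~\ref{thm:Brion-Woodward-gen} to identify $\D(\msv)$ for an orbit vertex $\msv$, and Corollary~\ref{cor:cancellation} to control which facets of $\Pc$ have normal positive on $\alpha$ (precisely the hyperplanes containing the two colors moved by $\alpha$, as in the last paragraphs of the proof of Proposition~\ref{thm:Brion-Woodward-gen}); this yields condition~(\ref{R-compatible-a}) of Definition~\ref{def:R-compatible}. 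For $\sigma \notin \sr$ one argues, exactly as Woodward does and as in the cited proof, that a facet $F$ with $\<\rho_F,\sigma\> > 0$ must satisfy $\<\alpha^\vee,F\>=0$ for some $\alpha\in \sr\setminus\Spp(\Pc)$, giving condition~(\ref{R-compatible-b}); and for $\sigma = \alpha+\beta$ or $\frac12(\alpha+\beta)$ one gets condition~(\ref{R-compatible-d2}) from the fact that $\alpha^\vee-\beta^\vee$ vanishes on $\lat(X)$ (compatibility of $\sigma$ with $\lat(X)$, Definition~\ref{def_SR_comp_with_lattice}(\ref{CL3})) and that $\Pc \inn \ft^*_+$ so that $\alpha^\vee,\beta^\vee$ are nonnegative on $\Pc$. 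Condition~(\ref{A1polytope}) of Definition~\ref{def:Q-admissibleset}, i.e.\ $\RR$-admissibility of $\Sigma$, then follows from the fact that $X$ (equivalently its affine cone) exists, via Theorem~\ref{Theorem-AS}, identifying $\A(\alpha)$ with $\mathcal S(\alpha)$ through $\widetilde\rho|_{\lat}=\rho$ as in the proof of Proposition~\ref{prop:realized_then_admissible}.

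Finally, to establish smoothness of the $\RR$-momentum triple I would invoke Proposition~\ref{prop:smoothness}: since $X$ is a smooth projective spherical $G$-variety, for each closed $G$-orbit --- equivalently, by Proposition~\ref{thm:Brion-Woodward-gen}, for each orbit vertex $\msv$ of $\Pc$ --- the tuple $(\rho_X(D))_{D\in\D(\msv)\cup\B(\msv)}$ is a basis of $\lat(X)^*$ and $\overline{\soc}(X(\msv))$ is the socle of a spherical module. It then remains only to check that the combinatorially defined objects $(\D(\msv)\cup\B(\msv),\rho_\msv)$ and $\overline{\soc}(\msv)$ attached to the $\RR$-momentum triple $(\lat,\Pc,\Sigma)$ (as in Remark~\ref{rem:combinatorialversions}, extended to the $\RR$-setting as explained before Definition~\ref{def:smooth_R_momentum_triple}) are naturally identified with $(\D(\msv)\cup\B(\msv),\rho_X|_{\D(\msv)\cup\B(\msv)})$ and $\overline{\soc}(X(\msv))$; this identification is precisely the content of the displayed bullet points at the end of Remark~\ref{rem:combinatorialversions}, now applied with $\Pc$ in place of $Q(X,\lb)$ and using Proposition~\ref{thm:Brion-Woodward-gen} and Corollary~\ref{cor:cancellation} in place of Theorem~\ref{thm:Brion-Woodward}. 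Conditions \ref{def:smooth_Q_momentum_triple:item1} and \ref{def:smooth_Q_momentum_triple:item2} of Definition~\ref{def:smooth_Q_momentum_triple}, hence Definition~\ref{def:smooth_R_momentum_triple}, follow at once. The main obstacle, and the place where care is genuinely needed, is the bookkeeping in this last identification step: one must be sure that the passage from the ample case to an arbitrary equivariant K\"ahler class does not alter any of $S(\msv)$, $\B(\msv)$, $\D(\msv)$, $\overline{\A(\msv)}$ or the restricted functionals $\overline\rho$ --- all of which is guaranteed by the invariance established in Proposition~\ref{thm:Brion-Woodward-gen} and its proof, but which needs to be spelled out orbit vertex by orbit vertex.
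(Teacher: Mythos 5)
Your overall strategy coincides with the paper's: both reduce everything to Proposition~\ref{thm:Brion-Woodward-gen}, Corollary~\ref{cor:cancellation} and the identifications of Remark~\ref{rem:combinatorialversions}, then conclude smoothness from Camus' criterion; the paper just packages the facet analysis for conditions~(\ref{R-compatible-b}) and~(\ref{R-compatible-a}) of Definition~\ref{def:R-compatible} into Proposition~\ref{proposition:face}, and gets $\Spp(\Pc)=\Spp(X)$ from Lemma~\ref{lemma:Sp}. However, your verification of condition~(\ref{R-compatible-d2}) has a genuine gap. You argue that for $\sigma=\alpha+\beta$ or $\frac12(\alpha+\beta)$ the equality $\<\alpha^\vee,q\>=\<\beta^\vee,q\>$ for all $q\in\Pc$ follows from $\alpha^\vee-\beta^\vee$ vanishing on $\lat(X)$ together with $\Pc\inn\ft^*_+$. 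The first fact only shows that $q\mapsto\<\alpha^\vee-\beta^\vee,q\>$ is \emph{constant} on $\Pc$ (since $\Pc$ lies in a single coset of $\lat(X)_\RR$), and nonnegativity of both $\alpha^\vee$ and $\beta^\vee$ on $\Pc$ is perfectly compatible with that constant being nonzero (e.g.\ $\<\beta^\vee,q\>=\<\alpha^\vee,q\>-c$ with $\<\alpha^\vee,q\>\geq c>0$ throughout $\Pc$). So the step as written does not establish the claim.

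The missing ingredient is exactly what the paper uses here, namely Lemma~\ref{lemma:foschi}: when $\alpha+\beta$ or $\frac12(\alpha+\beta)$ is a spherical root of $X$ with $\alpha\perp\beta$, the colors moved by $\alpha$ are precisely the colors moved by $\beta$ (cf.\ \eqref{eq:color_moved_by_two}), so the lemma gives $\<\alpha^\vee,\chi_0([\wom])\>=\<\beta^\vee,\chi_0([\wom])\>$; combined with the vanishing of $\alpha^\vee-\beta^\vee$ on $\lat(X)_\RR$ and $\Pc\subseteq\chi_0([\wom])+\lat(X)_\RR$, this pins the constant to zero. Since you already invoke Lemma~\ref{lemma:foschi} elsewhere, the fix is immediate, but as stated this sub-argument would not survive scrutiny. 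The rest of the proposal (the checks of (Q1)--(Q2), the treatment of $\sigma\in\sr$ and $\sigma\notin\sr$, the $\RR$-admissibility via the existence of the affine cone, and the smoothness bookkeeping at each orbit vertex) is sound and matches the paper's proof.
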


Before proving Theorem~\ref{thm:existence-Kaehler-converse}, we prepare with some auxiliary results. 
Let $X$ and $\wom$ be as in the theorem, and $\Pc=\Pc(X,[\wom])$.  
For $D\in \divB(X)$, we set
\[
F_D=\Pc\cap \left(\chi_0([\widetilde\omega])+\{ \xi\in \lat(X)_\RR \;|\; \<\rho(D),\xi\> + \chi_D([\widetilde\omega]) =0\}\right).
\]

\begin{proposition}\label{proposition:face}
Let $X$, $\wom$ be as in Theorem~\ref{thm:existence-Kaehler-converse} and $\Pc = \Pc(X,[\wom])$, and $\sigma\in\Sigma(X)$. Then 
\begin{enumerate}
\item\label{lemma:face:exist} There exists a facet $F\subseteq \Pc$ and a color $D\in\Delta(X)$ such that $F=F_D$ and $\<\rho(D),\sigma\> >0$.
\item\label{lemma:face:atmost} Suppose $\sigma\in S\cap \Sigma(X)$, and that there exists a facet $E\subseteq \Pc$ such that $\<\rho_E,\sigma\> > 0$. Then $\<\rho_E,\sigma\> = 1$ and there exists a color $D$ of $X$ moved by $\sigma$ and such that $E=F_D$.
\end{enumerate} 
\end{proposition}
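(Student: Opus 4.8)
The statement is really about transferring the combinatorial description of the colored fan given in Proposition~\ref{thm:Brion-Woodward-gen} (the generalization of Brion-Woodward to arbitrary equivariant Kähler classes) into statements about facets of $\Pc$, in complete analogy with how the half-space description $Q = \chi(s) + \{\xi : \<\rho(D),\xi\>+v_D(s)\ge 0\}$ of Proposition~\ref{prop:Pbrion} gives the corresponding facts for genuine polarizations. So my first move is to write $\Pc = \chi_0([\wom]) + \{\xi \in \lat(X)_\RR : \<\rho(D),\xi\> + \chi_D([\wom]) \ge 0 \text{ for all } D \in \divB(X)\}$ using Proposition~\ref{prop:Kaehler-polytope}, and observe that a facet of $\Pc$ is, up to translation by $\chi_0([\wom])$, cut out by exactly one of these inequalities becoming an equality on a codimension-one subset. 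The key nontrivial input, which replaces the elementary argument available for true polarizations, is that $\Sigma(X)$ is a linearly independent set (Brion, cited in Section~\ref{section:coloredfan}), so that the valuation cone $\V(X) = (\QQp(-\Sigma(X)))^\vee$ is full-dimensional in $N(X)_\RR$; hence I can apply Lemma~\ref{lem:orbitface_contains_orbitvertex} and Proposition~\ref{thm:Brion-Woodward-gen} freely.

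For part (\ref{lemma:face:exist}): given $\sigma \in \Sigma(X)$, since $\sigma$ is a spherical root and $\V(X) = \{v : \<v,\sigma'\>\le 0 \ \forall \sigma'\in\Sigma(X)\}$ with $\Sigma(X)$ linearly independent, the functional $\sigma$ is not constantly $\le 0$ on the whole of $N(X)_\RR$; more to the point, I want a $B$-stable prime divisor $D$ with $\<\rho(D),\sigma\> > 0$ such that $F_D$ is a facet. I would argue as follows: $\Pc - \chi_0([\wom])$ is full-dimensional in $\lat(X)_\RR$ by (Q1) (Remark~\ref{rem:KPsatisfiesQ1Q2}), so there is a point $q\in\Pc$ with $\<\rho,q-\chi_0([\wom])\>$ achieving a strict minimum over $\Pc$ in some direction; pushing $q$ in the $-\sigma$ direction as far as staying in $\Pc$ (here I use that $\sigma \in \lat(X)$, being a spherical root) lands on a facet $F$ whose primitive inward-pointing normal $\rho_F$ satisfies $\<\rho_F,\sigma\> > 0$ — otherwise $q - \QQp\sigma \subseteq \Pc$, contradicting compactness. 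Then I must identify $F$ with some $F_D$ for a \emph{color} $D$: by Proposition~\ref{thm:Brion-Woodward-gen} the facet $F$, being an orbit face (it contains an orbit vertex by Lemma~\ref{lem:orbitface_contains_orbitvertex}, applied to the maximal-dimensional cone $\V(X)$), corresponds to a colored cone, and the fact that $\<\rho_F,\sigma\>>0$ forces $\rho_F$ to be (a positive multiple of) $\rho(D)$ for a color $D$ rather than the valuation of a $G$-stable divisor — because $G$-stable divisors give valuations in $\V(X)$, on which $\sigma$ is $\le 0$. This uses \eqref{eq:color_functional} to pin down $\rho(D)$ and Corollary~\ref{cor:cancellation} to relate sums of $\rho(D)$'s over colors moved by a given simple root to $\varepsilon_\alpha\alpha^\vee$.

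For part (\ref{lemma:face:atmost}): now $\sigma = \alpha \in S$ is a simple spherical root and $E$ is a facet with $\<\rho_E,\alpha\> > 0$. The point is that $\alpha$ moves exactly two colors $D_\alpha^+, D_\alpha^-$ with $\rho(D_\alpha^+)+\rho(D_\alpha^-) = \alpha^\vee|_{\lat(X)}$ and each $\<\rho(D_\alpha^\pm),\alpha\> = 1$ (this is \eqref{eq:color_functional} combined with Corollary~\ref{cor:cancellation}, since $2\alpha\notin\Sigma(X)$ when $\alpha\in\Sigma(X)$). By Proposition~\ref{thm:Brion-Woodward-gen}, $E$ (an orbit face, again containing an orbit vertex) corresponds to a colored cone; the ray generator $\rho_E$ of that cone, taking a positive value on $\alpha$, cannot be the image of a $G$-stable valuation (those are $\le 0$ on $\alpha \in \Sigma(X)$), so $\rho_E$ must be a positive multiple of $\rho(D)$ for some color $D$ moved by $\alpha$, i.e.\ $D \in \{D_\alpha^+, D_\alpha^-\}$; since $\rho_E$ is primitive in $\Hom_\ZZ(\lat(X),\ZZ)$ and $\rho(D_\alpha^\pm)$ is already primitive (as $\<\rho(D_\alpha^\pm),\alpha\>=1$ with $\alpha\in\lat(X)$), we get $\rho_E = \rho(D)$, hence $\<\rho_E,\alpha\> = 1$ and $E = F_D$.

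\textbf{Main obstacle.} The subtle point throughout is the passage from "extremal rays of colored cones" to "facets of $\Pc$" and back — in Proposition~\ref{thm:Brion-Woodward-gen} the colored fan is recovered only from the \emph{orbit} faces of $\Pc$, and a priori a facet of $\Pc$ need not be an orbit face, nor need every ray generator of a colored cone $\C(\msv)$ be an inward-pointing facet normal of $\Pc$. What rescues this is the interplay between $\Sigma(X)$ being linearly independent (so $\V(X)$ is full-dimensional and Lemma~\ref{lem:orbitface_contains_orbitvertex} applies, guaranteeing facets on which $\sigma$ is positive \emph{are} orbit faces) and Corollary~\ref{cor:cancellation} (which controls exactly which simple-root directions the colors move in). Getting this bookkeeping right — in particular ruling out the contribution of $G$-stable divisors at every step by invoking $\<v,\sigma\>\le 0$ for $v\in\V(X)$ — is where the real care is needed; the rest is the same soft convex geometry already used for Proposition~\ref{prop:realized_then_admissible}.
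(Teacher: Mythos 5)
Your overall strategy and most of the ingredients are right, but the step you use to identify the facet with an $F_D$ for a color $D$ is broken, in both parts. You assert that the facet $F$ (resp.\ $E$) with $\<\rho_F,\sigma\>>0$ is an orbit face of $\Pc$, citing Lemma~\ref{lem:orbitface_contains_orbitvertex}. This fails on two counts. First, that lemma goes in the opposite direction: it says an orbit face contains an orbit vertex; containing an orbit vertex does not make a face an orbit face. Second, and more seriously, such a facet is \emph{never} an orbit face: its normal cone $\C(F)$ is the single ray $\RR_{\geq 0}\rho_F$, and the relative interior of that ray misses $\V(X)=\{v:\<v,\sigma'\>\leq 0 \text{ for all } \sigma'\in\Sigma(X)\}$ precisely because $\<\rho_F,\sigma\>>0$. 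So Proposition~\ref{thm:Brion-Woodward-gen} cannot be applied to $F$ at all, and the argument collapses at exactly the point you flag as the ``main obstacle'' --- your proposed rescue via linear independence of $\Sigma(X)$ does not rescue it.

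The repair is the elementary observation you state in your opening paragraph but then abandon: by Proposition~\ref{prop:Kaehler-polytope}, $\Pc-\chi_0([\wom])$ is the intersection of the finitely many half-spaces $\<\rho(D),\cdot\>+\chi_D([\wom])\geq 0$ with $D\in\divB(X)$, and it is full dimensional in $\lat(X)_\RR$; hence \emph{every} facet of $\Pc$ equals $F_D$ for some $D\in\divB(X)$, with $\rho(D)$ a positive rational multiple of $\rho_F$. No orbit faces or colored cones are needed. Once $F=F_D$, the inequality $\<\rho(D),\sigma\>>0$ rules out $D$ being $G$-stable, since a $G$-stable prime divisor gives a $G$-invariant valuation, so $\rho(D)\in\V(X)$, on which every spherical root is nonpositive. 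This is exactly how the paper argues; combined with your (correct) ray argument for part (1) and your primitivity argument via $\<\rho(D),\alpha\>=1$ and $\alpha\in\lat(X)$ for part (2), it yields the full proof.
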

\begin{proof}
Let $\mathcal D$ be the subset of $\divB(X)$ such that $F_D$ is a facet of $\Pc$, and consider the ray $r=q + \RR_{\geq0}(-\sigma)$ for some point $q\in \Pc$. Since $q\in \Pc$ but $r$ is not contained in $\Pc$, there exists some $D\in \mathcal D$ such that $\<\rho(D),\sigma) > 0$. This implies that $D$ is not $G$-stable, because $\sigma$ is a spherical root, thus $D$ is a color: statement~(\ref{lemma:face:exist}) of the proposition follows by setting $F=F_D$.

Let now $\sigma$ and $E$ be as in~(\ref{lemma:face:atmost}). There exists $D\in\mathcal D$ such that $E=F_D$. It follows that $\rho_E$ and $\rho(D)$ are positive on the same half space of $\lat(X)_\RR$, in other words $\rho(D)$ is a positive rational multiple of $\rho_E$. This yields $\<\rho(D),\sigma\> >0$, which implies that $D$ is a color and that it is moved by $\sigma$. In particular $\<\rho(D),\sigma\>=1$, thus $\rho(D)$ is primitive in the dual lattice of $\lat$, whence $\rho(D)=\rho_E$.
\end{proof}

We are ready to give the proof of Theorem~\ref{thm:existence-Kaehler-converse}.

\begin{proof}[Proof of Theorem~\ref{thm:existence-Kaehler-converse}] \label{proof_of_thm:existence-Kaehler-converse}
Let $X$ and $\wom$ be as in the theorem, and let $\sigma\in\Sigma(X)$.
Then  $\sigma$ is compatible with $\lat(X)$ and since $\Spp(\Pc) = \Spp(X)$ by Lemma~\ref{lemma:Sp}, the couple $(\Spp(\Pc),\sigma)$ satisfies Luna's axiom (S). 
Conditions~(\ref{R-compatible-b}) and (\ref{R-compatible-a}) of Definition~\ref{def:R-compatible} hold by Proposition~\ref{proposition:face} and Corollary~\ref{cor:cancellation}, whereas condition~(\ref{R-compatible-d2}) of Definition~\ref{def:R-compatible} holds by Lemma~\ref{lemma:foschi}. The set $\Sigma(X)$ is  $\RR$-admissible, because condition~(\ref{A1polytope})  of Definition~\ref{def:R-admissibleset} is a well-known property of spherical varieties discovered by Luna \cite{luna:typeA}.

It remains to prove that 
the $\RR$-momentum triple $(\lat(X),\Pc,\Sigma(X))$ is smooth. For this, we observe that the (combinatorially defined) data involved in Definition~\ref{def:smooth_Q_momentum_triple} of a smooth $\RR$-momentum triple coincide with that of $X$ as in Definition~\ref{def:locsoc}, thanks to Proposition~\ref{thm:Brion-Woodward-gen} and the arguments in Remark~\ref{rem:combinatorialversions}. We conclude that the triple is smooth, because $X$ is smooth, and the conditions in Definition~\ref{def:smooth_Q_momentum_triple} are exactly the conditions in Camus' smoothness criterion.
\end{proof}

\subsection{Existence of K\texorpdfstring{\"a}{ae}hler structures: sufficiency}  \label{sec:kaehler_sufficient}

In this section we prove the converse to Theorem~\ref{thm:existence-Kaehler-converse}.

\begin{theorem}\label{thm:existence-Kaehler}
Let $\lat\subseteq\wl$ be a sublattice, $\Sigma\subseteq\Sigma(G)$ and $\Pc\subseteq\RR_{\geq 0}\dw$ be a convex polytope satisfying properties $(Q1)$ and $(Q2)$.
Let $(\lat,\Pc,\Sigma)$ be a smooth $\RR$-momentum triple.
Then there exists a smooth projective spherical $G$-variety $X$ with weight lattice $\lat$, set of spherical roots $\Sigma$, and an equivariant K\"ahler form $\wom$ on the compact real $K$-manifold $X$ such that the Kirwan polytope $\Pc(X,[\wom])$ is the polytope $\Pc$.
\end{theorem}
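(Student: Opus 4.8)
The plan is to reduce the statement to the purely algebraic situation handled in Section~\ref{sec:characterizations_mp} (and Section~\ref{sec:smoothness}) by a rationality-and-approximation argument. A smooth $\RR$-momentum triple $(\lat,\Pc,\Sigma)$ is defined by conditions on the facet normals and the combinatorial socles at orbit vertices, none of which involves the vertices of $\Pc$ themselves beyond their position relative to the Weyl chamber walls (the $\Spp$-data) and the orbit-vertex integrality — which is precisely the content we want to produce by choosing $\Pc$ rationally. So the first step is: perturb $\Pc$ to a rational polytope with the same normal fan, same simple-root data, and same orbit-face combinatorics, so that $(\lat,\Pc',\Sigma)$ becomes a genuine momentum triple after scaling. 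Concretely, I would move each facet hyperplane $H_F^\RR$ by a small rational amount (keeping $\rho_F$ fixed, which is legitimate by (Q2)) in such a way that (i) the combinatorial type of $\Pc$ is preserved, (ii) the new polytope still meets the same faces of $\ft^*_+$, and (iii) the orbit vertices become rational. The key point, which I expect to be the main technical obstacle, is that the rational approximation must be done compatibly with the integrality constraints in Definition~\ref{def:admissible}: one needs an orbit vertex $\msv\in\wl$ with $m^\Sigma_{F,\msv}\in\ZZ$ for the relevant facets, and the differences of orbit vertices must lie in $\lat$. For this I would first fix one orbit vertex, declare it to be a chosen lattice point (shifting coordinates so $\msv=0\in\lat$), and then adjust the $m_{F,\msv}$'s to integers; Definition~\ref{def:smooth_R_momentum_triple} being preserved under such adjustments because the localized socles only see $\rho_F$ and the combinatorics, not the $m$'s.

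Once a rational polytope $\Pc'$ is in hand with $(\lat,\Pc',\Sigma)$ a smooth momentum triple (possibly after multiplying by a positive integer $m$), the second step is to invoke Corollary~\ref{cor:classif_smooth_polarized}: there exists a smooth polarized spherical $G$-variety $(X,\lb)$ with $\lat(X)=\lat$, $\Sigma(X)=\Sigma$ and $Q(X,\lb)=m\Pc'$. Then $X$ is the underlying smooth projective spherical variety, and $\frac1m\varphi(\lb)$ (with $\varphi$ the isomorphism of Lemma~\ref{lemma:Losev-cohomology}) is an equivariant K\"ahler class $[\wom']$ with $\Pc(X,[\wom'])=\Pc'$. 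The third and final step is to deform the K\"ahler class back: by Lemma~\ref{lemma:Losev-cohomology}\ref{lemma:Losev-cohomology-depcont} the Kirwan polytope depends continuously on $[\wom]\in H^2_K(X,\RR)^+$, and by Lemma~\ref{lemma:Losev-cohomology}\ref{lemma:Losev-cohomology-isom} we can realize any nearby rational polytope with the same normal fan as $\Pc(X,[\wom])$ for a suitable class. Combining this with Proposition~\ref{thm:Brion-Woodward-gen} — which guarantees that the colored fan, hence the orbit-face structure, is unchanged as $[\wom]$ varies in $H^2_K(X,\RR)^+$ — I can choose the class so that its Kirwan polytope is exactly the original $\Pc$: one only needs to move the facet hyperplanes to their prescribed (possibly irrational) positions, which is possible since $H^2_K(X,\RR)^+$ is open and the map $[\wom]\mapsto(\chi_0([\wom]),(\chi_D([\wom]))_D)$ of Proposition~\ref{prop:Kaehler-polytope} is affine-linear and surjective onto the relevant space of translates.

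The delicate verification threading through all three steps is that the normal fan, the set of facets meeting each wall $H_\alpha$, the $\Spp$-data, and the combinatorial colored fan (orbit vertices and their cones) are genuinely rigid under the perturbations I perform — this is where Proposition~\ref{thm:Brion-Woodward-gen} and Lemma~\ref{lemma:Sp} do the heavy lifting on the geometric side, and an elementary but somewhat fiddly convex-geometry argument does it on the combinatorial side. I expect the bookkeeping of the integrality of orbit vertices to be the single hardest point: one must show that a rational polytope $\Pc'$ with the right normal fan can be found whose orbit vertices are honest weights with the right $m^\Sigma_{F,\msv}\in\ZZ$, and then that the subsequent continuous deformation back to $\Pc$ does not destroy smoothness — which is fine because smoothness of the variety $X$ is a property of $X$ alone, independent of the polarization or K\"ahler class, once the combinatorial colored fan is fixed. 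So in outline: rationalize $\Pc$ preserving all combinatorics and integrality; apply Corollary~\ref{cor:classif_smooth_polarized} to get a smooth $X$; use Lemmas~\ref{lemma:amplesequence}, \ref{lemma:Losev-cohomology} and Proposition~\ref{thm:Brion-Woodward-gen} to deform the equivariant K\"ahler class so that its Kirwan polytope equals $\Pc$; conclude that $(X,\wom)$ is the desired manifold, noting $\lat(X)=\lat$ and $\Sigma(X)=\Sigma$ by construction.
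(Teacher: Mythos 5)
Your route is genuinely different from the paper's. The paper does \emph{not} pass through the algebraic classification of Section~\ref{sec:characterizations_mp} at all: it builds the homogeneous space $G/H$ directly from the $\RR$-momentum triple via Luna's homogeneous spherical data (Lemma~\ref{lemma:homogeneous}), assembles a complete colored fan out of the orbit faces of $\Pc$ (Lemmas~\ref{lemma:face-has-divisor}--\ref{lemma:fan}), checks smoothness and projectivity (Proposition~\ref{prop:XLPS_smooth}), and then produces the K\"ahler class as the class of an explicit \emph{real} $B$-stable divisor $\delta=\sum n_D D+\sum n_Y Y$, whose positivity is obtained by writing the support function $l_{\Pc-\msw}$ as a convex combination of rational strictly convex piecewise linear functions (Lemma~\ref{lemma:delta}). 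You instead rationalize $\Pc$, feed the resulting smooth momentum triple into Corollary~\ref{cor:classif_smooth_polarized}, and deform the K\"ahler class back. What your approach buys is heavy reuse of the machinery already established; what it gives up is the independence from Section~\ref{sec:characterizations_mp} that the authors explicitly advertise (their proof is meant to give an alternative derivation of Theorem~\ref{thm:compatible-polytope}). Your step 1 is workable: the constraints imposed by Definition~\ref{def:R-compatible} (facet pairs exchanged by $s_\alpha$, facets lying in walls, $\<\alpha^\vee,q\>=\<\beta^\vee,q\>$) and by the combinatorial type of $\Pc$ are rational linear equalities and open rational conditions in the parameters $(\msw,(m_{F,\msw})_F)$, since the $\rho_F$ are rational by (Q2); so rational points are dense in the stratum containing $\Pc$, and the $\RR$-admissibility and socle conditions only see the normals.

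The one step that does not close as written is the final realization of the exact (possibly irrational) polytope $\Pc$ on the fixed variety $X$. You assert that $[\wom]\mapsto(\chi_0([\wom]),(\chi_D([\wom]))_D)$ is ``surjective onto the relevant space of translates,'' but that is precisely the point at issue, and it is not a formal consequence of linearity: the image of $\overline{\chi}$ in $(\wl_\RR\times\RR^{\divB(X)})/\lat(X)_\RR$ is a proper subspace, cut out by the relations $\varepsilon_\alpha\<\alpha^\vee,\chi_0\>=\sum_{D}\chi_D$ of Lemma~\ref{lemma:foschi}. To close the gap you must (i) identify the image of $\overline{\chi}$ using $\Pic(X)_\RR=\RR^{\divB(X)}/\lat(X)_\RR$ for the smooth complete spherical $X$ together with the exact sequence $0\to(\fk/[\fk,\fk])^*\to\Pic_G(X)_\RR\to\Pic(X)_\RR\to 0$ of \cite[Lemma~8.8]{losev:knopconj}, and (ii) verify that the facet data of $\Pc$ (base point $\msw$ and the constants $m_{F,\msw}$, distributed over colors and $G$-stable divisors as in Lemma~\ref{lemma:nD}) actually satisfies those relations --- which holds, but only because of the $\RR$-compatibility conditions in Definition~\ref{def:R-compatible}. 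This is exactly the computation carried out in the paper's proof (its equation $\<\alpha^\vee,\chi_0([\delta])\>=\<\alpha^\vee,\msw\>$ and the subsequent adjustment of $\chi_0([\delta])$ to $\msw$), so your plan is correct in substance, but the crux has been asserted rather than proved. Once (i) and (ii) are in place, your openness argument does finish the job, since $\Pc'$ can be taken arbitrarily close to $\Pc$ and a positive rational multiple of $\varphi(\lb)$ realizing $\Pc'$ is K\"ahler.
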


After some preparations, we give the proof of Theorem~\ref{thm:existence-Kaehler} after Lemma~\ref{lemma:delta}.  We first construct a spherical variety for any prescribed $\RR$-momentum triple by a different method as the one pursued in Section~\ref{sec:characterizations_mp}. The different steps of this construction are traceable in the following lemmas.

\begin{lemma}\label{lemma:face-has-divisor}
Suppose $(\lat,\Pc,\Sigma)$ is an $\RR$-momentum triple. 
Let $F$ be a facet of $\Pc$. Then at least one of the following statements holds.
\begin{enumerate}
\item\label{lemma:face-has-divisor:A} There exist $\alpha\in S\cap \Sigma$
and $D\in\A(\alpha)$ such that $\rho_F=\rho(D)$.
\item\label{lemma:face-has-divisor:b} 
There exists $\alpha\in S\smallsetminus (\Sigma\cup \Spp(\Pc))$ such that $\<\alpha^\vee,F\>=0$.
\item\label{lemma:face-has-divisor:V} The inequality $\<\rho_F,\sigma\> \leq0$ holds for all $\sigma\in\Sigma$.
\end{enumerate}
\end{lemma}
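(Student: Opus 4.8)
The statement is a trichotomy: for each facet $F$ of $\Pc$ we must show that either $\rho_F$ arises as $\rho(D)$ for a color $D\in\A(\alpha)$ with $\alpha\in S\cap\Sigma$, or $\alpha^\vee$ vanishes on $F$ for some $\alpha\in S\smallsetminus(\Sigma\cup\Spp(\Pc))$, or $\rho_F$ is non-positive on all of $\Sigma$. The plan is to argue by contradiction: assume that \ref{lemma:face-has-divisor:V} fails, i.e.\ there exists $\sigma\in\Sigma$ with $\langle\rho_F,\sigma\rangle>0$, and derive either \ref{lemma:face-has-divisor:A} or \ref{lemma:face-has-divisor:b}. So fix such a $\sigma$.

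\textbf{Main case split on $\sigma$.} Since $\sigma$ is $\RR$-compatible with $(\lat,\Pc)$ (because $\Sigma\subseteq\Sigma_\RR(\lat,\Pc)$ by the definition of an $\RR$-momentum triple), we apply Definition~\ref{def:R-compatible} directly to the facet $F$. First suppose $\sigma\notin S$. Then condition~(\ref{R-compatible-b}) of Definition~\ref{def:R-compatible} applies precisely because $\langle\rho_F,\sigma\rangle>0$, and it yields an $\alpha\in S\smallsetminus\Spp(\Pc)$ with $\langle\alpha^\vee,F\rangle=0$. To conclude \ref{lemma:face-has-divisor:b} I still need $\alpha\notin\Sigma$: this follows because if $\alpha$ were in $\Sigma\cap S$ then $\alpha$ would be $\RR$-compatible, and by Remark~\ref{rem:multiple_of_coroot}(a) the condition $\langle\alpha^\vee,F\rangle=0$ together with $\alpha^\vee$ not being constantly $0$ on $\Pc$ (which holds since $\alpha\in\lat$ and $\Pc-\omega$ is full-dimensional) would force $\rho_F$ to be a positive multiple of $\alpha^\vee|_\lat$; but then condition~(\ref{R-compatible-a}) would give $\langle\rho_F,\alpha\rangle=1$, contradicting $\langle\alpha^\vee,F\rangle=0$. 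Hence $\alpha\in S\smallsetminus(\Sigma\cup\Spp(\Pc))$ and we are in case~\ref{lemma:face-has-divisor:b}.

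\textbf{The case $\sigma=\alpha\in S\cap\Sigma$.} Now $\sigma=\alpha$ is a simple root in $\Sigma$ with $\langle\rho_F,\alpha\rangle>0$. By part~(\ref{lemma:sum:both}) of Lemma~\ref{lemma:sum} (if there is a second facet normal positive on $\alpha$) or directly by Definition~\ref{def:Q-compatible}/Definition~\ref{def:R-compatible}(\ref{R-compatible-a}), $\rho_F$ is primitive and takes the value $1$ on $\alpha$ — indeed $\langle\rho_F,\alpha\rangle$ is a positive integer since $\alpha\in\lat$, and if it were $\geq 2$ then $\rho_F$ could not be the primitive inward-pointing facet normal while also being, up to scaling, one of the two lattice functionals $\rho(D_\alpha^\pm)$ summing to $\alpha^\vee|_\lat$; more carefully, letting $F_0$ be the facet chosen in Definition~\ref{def:R-compatible}(\ref{R-compatible-a}) for $\alpha$, condition~(\ref{R-compatible-a})(b) forces $H_F^\RR=H_{F_0}^\RR$ or $H_F^\RR=s_\alpha(H_{F_0}^\RR)$, so $\rho_F\in\{\rho_{F_0},\,\alpha^\vee|_\lat-\rho_{F_0}\}=\{\rho(D_\alpha^+),\rho(D_\alpha^-)\}$. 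Thus $\rho_F=\rho(D)$ for some $D\in\A(\alpha)$, which is exactly case~\ref{lemma:face-has-divisor:A}.

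\textbf{Expected obstacle.} The only delicate point is the bookkeeping in the $\sigma\notin S$ branch: ruling out $\alpha\in\Sigma$ so that case~\ref{lemma:face-has-divisor:b} genuinely applies, and making sure the dichotomy ``$\alpha^\vee$ constantly zero on $\Pc$ versus not'' is handled — but since $\alpha\in S\smallsetminus\Spp(\Pc)$ implies (by definition of $\Spp(\Pc)$) that $\alpha^\vee$ is not identically zero on $\Pc$, and Remark~\ref{rem:multiple_of_coroot}(a) pins down $\rho_F$ as a positive multiple of $\alpha^\vee|_\lat$, the argument closes cleanly. I do not anticipate any deep difficulty; this lemma is essentially an unwinding of the $\RR$-compatibility axioms facet by facet, and the real content has already been front-loaded into Definition~\ref{def:R-compatible} and Lemma~\ref{lemma:sum}.
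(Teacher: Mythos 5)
Your overall route coincides with the paper's: assume statement~(\ref{lemma:face-has-divisor:V}) fails, pick $\sigma\in\Sigma$ with $\<\rho_F,\sigma\>>0$, and unwind Definition~\ref{def:R-compatible} according to whether $\sigma$ is a simple root. Your treatment of the case $\sigma=\alpha\in S\cap\Sigma$ is correct and is essentially the paper's argument (which likewise identifies $\rho_F$ with $\rho(D_\alpha^+)$ or $\rho(D_\alpha^-)$ by the reasoning of Lemma~\ref{lemma:sum}).

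The problem is in the branch $\sigma\notin S$, in your attempt to show that the simple root $\alpha$ produced by condition~(\ref{R-compatible-b}) cannot lie in $\Sigma$. You assert that $\<\rho_F,\alpha\>=1$ (coming from condition~(\ref{R-compatible-a})) contradicts $\<\alpha^\vee,F\>=0$; it does not. Remark~\ref{rem:multiple_of_coroot} only gives $\rho_F=c\,\alpha^\vee|_\lat$ with $c>0$, and $\<\rho_F,\alpha\>=1$ then merely forces $c=\tfrac12$; the configuration $\rho_F=\tfrac12\alpha^\vee|_\lat$ with $F\subseteq\{\<\alpha^\vee,\cdot\>=0\}$ is perfectly consistent (for $G=\SL(2)$, $\lat=\ZZ\alpha$, $\Pc=[0,\alpha]$ and $F=\{0\}$ one has exactly $\rho_F=\tfrac12\alpha^\vee|_\lat$, $\<\rho_F,\alpha\>=1$ and $\<\alpha^\vee,F\>=0$). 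So this step fails as written. The lemma survives for either of two reasons, and you should make one explicit. First, the conclusion is a disjunction: if $\alpha\in\Sigma\cap S$, then the same argument you gave in the $\sigma\in S$ case yields $\rho_F\in\{\rho(D_\alpha^+),\rho(D_\alpha^-)\}$, so statement~(\ref{lemma:face-has-divisor:A}) holds and nothing further is required --- this is in effect what the paper does, as its proof of this branch does not address $\alpha\notin\Sigma$ at all. Second, if you insist on excluding $\alpha\in\Sigma$, the right tool is not $\RR$-compatibility but $\RR$-admissibility (condition~(\ref{A1polytope}) of Definition~\ref{def:Q-admissibleset}, imposed by Definition~\ref{def:R-admissibleset}): if $\alpha\in\Sigma\cap S$ then $\rho_F=\rho(D)$ for some $D\in\A(\alpha)$, and since $\sigma\ne\alpha$ and $\sigma\notin S$, admissibility forces $\<\rho(D),\sigma\><1$; as $\rho_F\in\Hom_\ZZ(\lat,\ZZ)$ and $\sigma\in\lat$, this gives $\<\rho_F,\sigma\>\le 0$, contradicting $\<\rho_F,\sigma\>>0$.
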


\begin{proof} 
Suppose that statement~(\ref{lemma:face-has-divisor:V}) fails. Then there exists $\sigma\in\Sigma$ such that $\<\rho_F,\sigma\> >0$.

If $\sigma\in S$, then by $\RR$-compatibility of $\sigma$ there exists a facet $E\subseteq \Pc$ such that $\<\rho_E,\sigma\>=1$ and $\rho_E=\rho(D_\sigma^+)$. Moreover $H^{\RR}_{F}=H_{E}$ or $H^{\RR}_{E}=s_\sigma(H^{\RR}_{F})$, since $\<\rho_F,\sigma\>\geq0$. By definition, the set $\A(\sigma)$ contains two elements $D^+_\sigma$ and $D^-_\sigma$ with $\rho(D^+_\sigma)=\rho_E$ and $\rho(D^-_\sigma)=\sigma^\vee|_\lat - \rho_E$. If now $F=E$ then $\rho_F=\rho(D_\sigma^+)$. If $H_{E}=s_\sigma(H_{F})$ then, reasoning as in the proof of Lemma~\ref{lemma:sum}, we obtain $\rho_F=\rho(D_\sigma^-)$, and in both cases statement~(\ref{lemma:face-has-divisor:A}) holds.

If $\sigma\notin S$, then by $\RR$-compatibility of $\sigma$ there exists $\alpha\in S\smallsetminus \Spp(\Pc)$ such that $\<\alpha^\vee,F\> =0$, which is statement~(\ref{lemma:face-has-divisor:b}).
\end{proof}

\begin{remark}
In general, the three statements of the above lemma are not mutually exclusive. For example, assume that statement~(\ref{lemma:face-has-divisor:A}) holds for some facet $F\subseteq \Pc$. 
Then condition~(\ref{lemma:face-has-divisor:V}) fails, but condition~(\ref{lemma:face-has-divisor:b}) may hold when $\rho(E)=\frac12\alpha^\vee|_\lat$ for both $E\in\A(\alpha)$.
\end{remark}

\begin{lemma}\label{lemma:homogeneous}
Suppose $(\lat,\Pc,\Sigma)$ is an $\RR$-momentum triple.  
Then there exists a unique spherical homogeneous space $G/H$ with weight lattice $\lat$, set of spherical roots $\Sigma$ and such that $\Spp(G/H)=\Spp(\Pc)$ and for all $\alpha\in S\cap \Sigma$ the set $\A(G/H,\alpha)$  can be identified with $\A(\alpha)$ of Definition~\ref{def:R-compatible} in such a way that $\rho_{G/H}|_{\A(G/H,\alpha)}$ coincides with $\rho$ of Definition~\ref{def:R-compatible}.
\end{lemma}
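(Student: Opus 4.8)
The plan is to invoke Luna's classification of spherical homogeneous spaces by means of homogeneous spherical data (wonderful varieties), reducing the statement to checking that the combinatorial quintuple attached to $(\lat,\Pc,\Sigma)$ is a homogeneous spherical datum in the sense of \cite[\S 2.2]{luna:typeA}. First I would recall the definition of a homogeneous spherical datum: it is a tuple $(\Spp, \Sigma, \A, \lat, \rho)$ consisting of a subset $\Spp \inn S$, a linearly independent set $\Sigma$ of spherical roots, a finite set $\A$ with a map $\rho \colon \A \to \lat^*$, together with a multiplicity function, subject to Luna's axioms (A1), (A2), (A3), ($\Sigma$1), ($\Sigma$2), (S). The uniqueness assertion in the lemma is then immediate from Losev's/Bravi--Pezzini's proof of the Luna conjecture (the bijection between homogeneous spherical data and spherical homogeneous spaces), so the real content is existence, i.e.\ verifying the axioms.

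The construction of the candidate datum proceeds as follows. We take $\Spp := \Spp(\Pc)$, the set of spherical roots is our given $\Sigma$, the lattice is $\lat$, and the set $\A$ together with $\rho \colon \A \to \lat^*$ is obtained exactly as in \eqref{eq:AfromAalpha} of Remark~\ref{rem:combinatorialversions}, by gluing the two-element sets $\A(\alpha)$ of Definition~\ref{def:R-compatible} for $\alpha \in \Sigma \cap S$ along the equivalence relation identifying $D \in \A(\alpha)$ and $E \in \A(\beta)$ (for $\alpha \ne \beta$) whenever $\rho(D) = \rho(E)$. The multiplicity is the obvious one ($2$ on each $D_\alpha^\pm$, $1$ elsewhere). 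Now I would go through Luna's axioms one by one. Axiom ($\Sigma$1), which says each $\sigma \in \Sigma$ satisfies Luna's axiom (S) with respect to $\Spp$, is precisely the requirement built into $\RR$-compatibility (Definition~\ref{def:R-compatible}): $(\Spp(\Pc),\sigma)$ satisfies axiom (S), plus $\sigma$ compatible with $\lat$ gives conditions (CL3)--(CL4) which feed into ($\Sigma$2). The linear independence of $\Sigma$ is automatic since $\Sigma \inn \Sigma(G)$ consists of spherical roots and any subset of a set of spherical roots of a single spherical variety is linearly independent (Brion \cite{brion90}); here one uses that the pairwise axiom-(S) condition guarantees $\Sigma$ arises inside an actual spherical variety. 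Axiom (A1) — that $\<\rho(D),\sigma\> \le 1$ for $D \in \A(\alpha)$ and $\sigma \in \Sigma \setminus \{\alpha\}$, with equality characterized — is exactly condition (1') of Definition~\ref{def:Q-admissibleset}, i.e.\ $\RR$-admissibility of $\Sigma$. Axiom (A2), relating $\rho(D_\alpha^+) + \rho(D_\alpha^-)$ to $\alpha^\vee|_\lat$ and the multiplicity, holds by the very definition of $\rho$ on $\A(\alpha)$ in Definition~\ref{def:R-compatible} together with $\alpha \in \lat$ and $\langle \rho(D_\alpha^\pm),\alpha\rangle = 1$. Axiom (A3) and the axioms governing which colors are moved by simple roots in $\Spp$ or in $S \setminus \Spp$ follow from the structure of $\Spp(\Pc)$ and the definitions, using that elements of $\Spp(\Pc)$ are orthogonal, via $\alpha^\vee$, to everything relevant. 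Lemma~\ref{lemma:face-has-divisor} is what I would use to control the colors attached to facets, guaranteeing that the combinatorial colors $\col(\lat,Q,\Sigma)$ behave consistently with the facet normals $\rho_F$.

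The main obstacle, I expect, will be verifying axiom (S) and axiom ($\Sigma$2) globally — that is, passing from the pairwise/individual conditions encoded in $\RR$-compatibility and $\RR$-admissibility to the statement that the whole quintuple $(\Spp(\Pc),\Sigma,\A,\lat,\rho)$ simultaneously satisfies \emph{all} of Luna's axioms, including the compatibility conditions that couple different spherical roots (such as the condition, in Luna's list, constraining $\langle \rho(D), \sigma + \sigma' \rangle$ or the behaviour of colors moved by two simple roots as in \eqref{eq:color_moved_by_two}). Concretely the subtlety is that Definition~\ref{def:R-compatible} and Definition~\ref{def:R-admissibleset} were engineered so that each spherical root and each pair of spherical roots behaves as it would inside some spherical variety, but Luna's axioms for a spherical datum are a global package; one must check there is no obstruction to realizing all of $\Sigma$ at once. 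I would handle this by citing that $\RR$-admissibility is equivalent to $\QQ$-admissibility for $(\lat, n\Pc)$ after scaling (Remark~\ref{rem:combinatorialversions}-style reductions do not apply verbatim since $\Pc$ need not be rational, but the combinatorial content of the axioms does not see the vertices), hence reducing to the case already handled in Section~\ref{sec:characterizations_mp}: by Proposition~\ref{prop:admissible_then_realized} there is an honest affine spherical $\tG$-variety with weight monoid $\wm(nQ)$ and spherical roots $\Sigma$ for a rational approximant, whose open orbit supplies a spherical homogeneous space with the required $(\Spp, \Sigma, \A, \rho)$ — and the latter data depend only on $(\lat, \Sigma)$ and the \emph{support} structure of $\Pc$, not on its rationality. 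Once the homogeneous spherical datum is exhibited, Luna's classification theorem yields the unique $G/H$, and the asserted identifications of $\A(G/H,\alpha)$ with $\A(\alpha)$ and of $\rho_{G/H}$ with $\rho$ are part of that correspondence by construction.
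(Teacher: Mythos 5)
Your proposal follows essentially the same route as the paper: the paper's proof likewise invokes Luna's classification of spherical homogeneous spaces via homogeneous spherical data, builds the quintuple $(\Spp(\Pc),\Sigma,\A,\lat,\rho)$ by gluing the sets $\A(\alpha)$ as in \eqref{eq:AfromAalpha}, and then simply asserts that verifying Luna's axioms is ``somewhat lengthy but straightforward.'' Your axiom-by-axiom matching fills in that verification in the intended way; the extra detour through rational approximation and Proposition~\ref{prop:admissible_then_realized} is not needed (and would be delicate for irrational $\Pc$), since the axioms of a homogeneous spherical datum concern only the quintuple itself and are exactly what Definitions~\ref{def:R-compatible} and~\ref{def:R-admissibleset} encode.
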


\begin{proof}
We use the classification of spherical homogeneous spaces \cite{luna:typeA, losev:uniqueness, bp, cf} via homogeneous spherical data, which were introduced by Luna in \cite[\S 2.2]{luna:typeA} (see \cite[30.21]{timashev} for a compact definition).  
From the sets $\A(\alpha)$, where $\alpha \in \Sigma \cap \sr$, and the maps $\rho \colon \A(\alpha) \to \Hom_{\ZZ}(\lat,\ZZ)$ in Definition~\ref{def:R-compatible}, equation~\eqref{eq:AfromAalpha} defines a set $\A$ and a map $\rho\colon \A \to \Hom_{\ZZ}(\lat,\ZZ)$. 
It is somewhat lengthy but straightforward to verify that the axioms of a spherical homogeneous datum hold for $(\Spp(\Pc),\Sigma,\A,\lat,\rho)$.	
\end{proof}

Let us fix a point $\mathsf{w}\in \Pc$. Then the numbers $m_{F,\mathsf{w}}$ are defined by \eqref{eq:Pc_and_mFw} for all facets $F$ of $\Pc$,  as is the support function $l_{\Pc-\mathsf{w}}$ of $\Pc-\mathsf{w}$:
\[l_{\Pc-\mathsf{w}}\colon N \to \RR, x\mapsto \max\{\<x,-q\> : q\in \Pc-\mathsf{w}\},\]
where $N=\Hom_{\ZZ}(\lat,Q)$.

\begin{lemma}\label{lemma:nD}
Suppose $\Sigma\subseteq \Sigma_\RR(\lat,\Pc)$ is $\RR$-admissible, and let $G/H$ be as in Lemma~\ref{lemma:homogeneous}. For all colors $D$ of $G/H$ we define a real number $n_D$ as follows.

\begin{enumerate}
\item 
For $\alpha\in S\cap \Sigma$, we choose a facet $F_\alpha^+$ of $\Pc$ such that $\rho_{F_\alpha^+}=\rho(D^+_\alpha)$. We set $n_{D_\alpha^+}=m_{F_\alpha^+,\mathsf{w}}$. 
For the other element $D_\alpha^-\in \A(\alpha)$ we set $n_{D_\alpha^-}=\<\alpha^\vee,\mathsf{w}\> - n_{D_\alpha^+}$.

\item 
For $D\in \Delta(G/H)\smallsetminus\A(G/H)$, let $\alpha$ be a simple root that moves $D$.
If $2\alpha\notin\Sigma$ we set $n_D=\<\alpha^\vee,\mathsf{w}\>$, otherwise we set $n_D=\frac12\<\alpha^\vee,\mathsf{w}\>$.
\end{enumerate}
Then, for all $q\in \Pc$ and all $D\in\Delta(G/H)$, we have the following inequality
\begin{equation}\label{eq:contained}
\<\rho(D), q-\mathsf{w}\> +n_{D} \geq 0.
\end{equation}

\end{lemma}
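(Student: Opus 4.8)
The inequality \eqref{eq:contained} is really two separate statements according to the type of color $D$, and in each case the key is to relate the number $n_D$ to a facet-defining inequality of $\Pc$, via the combinatorics of the $\RR$-momentum triple. First I would treat the colors $D$ moved by a simple root $\alpha\notin\Sigma$ (case~(2) of Lemma~\ref{lemma:nD}). By Remark~\ref{rem:multiple_of_coroot} and $\RR$-compatibility, $\rho(D)$ is a positive rational multiple of $\alpha^\vee|_\lat$, and in fact (after the rescaling encoded in $\varepsilon_\alpha$) one has $\rho(D)=\varepsilon_\alpha\alpha^\vee|_\lat$; meanwhile $n_D=\varepsilon_\alpha\langle\alpha^\vee,\mathsf{w}\rangle$. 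So \eqref{eq:contained} reduces to $\varepsilon_\alpha\langle\alpha^\vee,q\rangle\ge 0$ for all $q\in\Pc$, which holds because $\Pc\subseteq\RR_{\ge0}\dw=\ft^*_+$ and $\varepsilon_\alpha>0$.

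Next I would handle case~(1), the colors $D_\alpha^\pm$ with $\alpha\in S\cap\Sigma$. For $D=D_\alpha^+$ we have $\rho(D_\alpha^+)=\rho_{F_\alpha^+}$ and $n_{D_\alpha^+}=m_{F_\alpha^+,\mathsf{w}}$ by construction, so \eqref{eq:contained} is precisely the facet inequality $\langle\rho_{F_\alpha^+},q-\mathsf{w}\rangle+m_{F_\alpha^+,\mathsf{w}}\ge 0$ from \eqref{eq:Pc_and_mFw}, valid for all $q\in\Pc$. For $D=D_\alpha^-$, by Definition~\ref{def:R-compatible} we have $\rho(D_\alpha^-)=\alpha^\vee|_\lat-\rho_{F_\alpha^+}$ and by definition $n_{D_\alpha^-}=\langle\alpha^\vee,\mathsf{w}\rangle-m_{F_\alpha^+,\mathsf{w}}$. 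Thus the left-hand side of \eqref{eq:contained} equals
\[
\langle\alpha^\vee,q-\mathsf{w}\rangle-\langle\rho_{F_\alpha^+},q-\mathsf{w}\rangle+\langle\alpha^\vee,\mathsf{w}\rangle-m_{F_\alpha^+,\mathsf{w}}
=\langle\alpha^\vee,q\rangle-\bigl(\langle\rho_{F_\alpha^+},q-\mathsf{w}\rangle+m_{F_\alpha^+,\mathsf{w}}\bigr).
\]
This is exactly the content of Lemma~\ref{lemma:sum}(\ref{lemma:sum:general}) (with $\mathsf{w}$ in the role of $\omega$ there, $F=F_\alpha^+$, and using that $\rho(D_\alpha^+)$ and $\rho_{F_\alpha^+}$ both pair to $1$ with $\alpha$, so $m^\Sigma$ and $m$ agree up to the rescaling which does not affect the sign): it asserts precisely that this quantity is $\ge 0$ for all $q\in\Pc$. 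Strictly speaking Lemma~\ref{lemma:sum} is stated for $Q$-compatibility, but its proof uses only $\RR$-compatibility axiom (\ref{R-compatible-a}) and the containment $\Pc\subseteq\ft^*_+$, so it applies verbatim here; alternatively I would re-run that short geometric argument ("slide $q$ in the $-\alpha$ direction until it hits a facet $E$ with $\langle\rho_E,\alpha\rangle>0$, conclude $E=F_\alpha^+$ since the other possibility $H^\RR_E=s_\alpha(H^\RR_{F_\alpha^+})$ is excluded, then contradict $\Pc\subseteq\ft^*_+$").

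The main (and only genuine) obstacle is making sure the bookkeeping around the rescaled functionals $\rho^\Sigma$, $m^\Sigma$ versus $\rho$, $m$ is consistent — in particular that for $\alpha\in S\cap\Sigma$ the chosen facet normal $\rho_{F_\alpha^+}$ really equals $\rho(D_\alpha^+)$ on the nose (not merely up to a positive multiple), which is guaranteed by $\langle\rho_{F_\alpha^+},\alpha\rangle=1$ and primitivity; and that the well-definedness of $n_{D_\alpha^-}$ is independent of the choice of $F_\alpha^+$, which follows from Lemma~\ref{lemma:sum}(\ref{lemma:sum:both}) exactly as in the remark preceding Lemma~\ref{lemma:sum}. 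Once those identifications are in place, the inequality in every case is either a defining facet inequality of $\Pc$, the inequality of Lemma~\ref{lemma:sum}(\ref{lemma:sum:general}), or the trivial consequence $\langle\alpha^\vee,q\rangle\ge0$ of $\Pc\subseteq\ft^*_+$, and the proof concludes.
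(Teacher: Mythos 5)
Your proposal is correct and follows essentially the same route as the paper's proof: the facet-defining inequality for $D_\alpha^+$, the argument of Lemma~\ref{lemma:sum}(\ref{lemma:sum:general}) (re-run in the real setting) for $D_\alpha^-$, and the reduction to $\varepsilon_\alpha\<\alpha^\vee,q\>\ge 0$ via $\rho(D)=\varepsilon_\alpha\alpha^\vee|_\lat$ for the colors outside $\A(G/H)$. Your explicit remarks on the $\QQ$-versus-$\RR$ bookkeeping only make precise what the paper leaves implicit.
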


\begin{proof}
Let $D\in\Delta(G/H)$ and $\alpha\in S$ that moves $D$. 
Suppose $\alpha\in \Sigma$, then $D=D_\alpha^\pm$. But $\<\rho(D_\alpha^+), q-\mathsf{w}\> +n_{D_\alpha^+} \geq 0$ 
because $n_{D_\alpha^+}=m_{F,\mathsf{w}}$ by definition, and $\<\rho(D_\alpha^-), q-\mathsf{w}\> +n_{D_\alpha^-} \geq 0$ thanks to the argument given for Lemma~\ref{lemma:sum}-(\ref{lemma:sum:general}).
	
Let now $D\in \Delta(G/H)\smallsetminus\A(G/H)$, then $\rho(D)=\varepsilon\alpha^\vee|_\lat$ for some
$\varepsilon\in\{1,\frac12\}$. Then we have
\[
\<\rho(D), q-\mathsf{w}\> +n_{D} = \<\varepsilon \alpha^\vee,q-\mathsf{w}\> +\<\varepsilon\alpha^\vee,\mathsf{w}\> \geq 0.
\]
\end{proof}

\begin{lemma}\label{lemma:fan}
Under the assumptions of Lemma~\ref{lemma:nD}, let $\F_\Sigma(\Pc)$ denote the set of couples $(\C(F),\D(F))$ where $F$ varies over the set of orbit faces of $\Pc$ and for each such face $F$ we set $D\in \D(F)$ if and only if $\<\rho(D),F-\mathsf{w}\> + n_D = 0$. Then $\F_\Sigma(\Pc)$ is a colored fan, and the corresponding embedding of $G/H$ is complete.
\end{lemma}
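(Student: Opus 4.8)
\textbf{Plan of proof for Lemma~\ref{lemma:fan}.}
The strategy is to verify, one at a time, the defining properties (CC1), (CC2), (SCC), (CF1), (CF2) of a colored fan from Section~\ref{section:coloredfan} for the collection $\F_\Sigma(\Pc)$, and then to invoke the Luna--Vust correspondence \cite[Theorem 3.3]{knop:LV} to obtain the embedding of $G/H$, whose completeness will follow from the fact that the orbit faces of $\Pc$ cover the valuation cone $\V = (\QQ_{\geq 0}(-\Sigma))^\vee$. Throughout, I would use that $G/H$ is the spherical homogeneous space produced by Lemma~\ref{lemma:homogeneous}, so that $N(G/H) = \Hom_\ZZ(\lat,\QQ)$, its valuation cone is $\V$, and its set of colors $\Delta(G/H)$ together with the map $\rho$ is the combinatorially-defined set $\col(\lat,\Pc,\Sigma)$ of Remark~\ref{rem:combinatorialversions}. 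The real numbers $n_D$ and the inequality \eqref{eq:contained} from Lemma~\ref{lemma:nD} are what make the half-space description match the cones $\C(F)$.

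First I would fix an orbit face $F$ of $\Pc$ and set $p$ to be a point in its relative interior, so that $\C(F)$ is the dual cone of $\QQ_{\geq 0}(\Pc - p)$ in $N(G/H)$. Because $F$ is an orbit face, the relative interior of $\C(F)$ meets $\V$; this is exactly (CC2). For (SCC): $\C(F)$ is strictly convex since $\Pc - p$ is full-dimensional in $\lat_\RR$ by (Q1), and $\rho(D) \neq 0$ for all $D \in \col(\lat,\Pc,\Sigma)$ because each $\rho(D)$ takes value $1$ or $\frac12$ on some simple root in $\lat$ (by the construction recalled in Remark~\ref{rem:combinatorialversions}). The heart of the argument is (CC1): I must show $\C(F)$ is generated by $\{\rho(D) : D \in \D(F)\}$ together with finitely many elements of $\V$. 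The extremal rays of $\C(F)$ are the $\rho_E$ for $E$ a facet of $\Pc$ containing $F$; Lemma~\ref{lemma:face-has-divisor} classifies each such $\rho_E$ into three (possibly overlapping) cases --- either $\rho_E = \rho(D)$ for some $D \in \A(\alpha)$ with $\alpha \in S \cap \Sigma$, or $\rho_E$ is a positive multiple of $\alpha^\vee|_\lat$ for some $\alpha \in S \smallsetminus(\Sigma \cup \Spp(\Pc))$ hence equals $\rho(D)$ for the color $D$ moved by $\alpha$ (again by Remark~\ref{rem:combinatorialversions}), or $\<\rho_E,\sigma\>\leq 0$ for all $\sigma \in \Sigma$, i.e.\ $\rho_E \in \V$. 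In the first two cases I must check $D \in \D(F)$, which amounts to the equality $\<\rho(D), p - \mathsf{w}\> + n_D = 0$: this follows from $F \subseteq E$, the definition of $n_D$ in Lemma~\ref{lemma:nD}, and — in the subtle subcase where $\rho_E = \rho(D_\alpha^-)$ — from equations \eqref{eq:sum1}–\eqref{eq:sum2} of Lemma~\ref{lemma:sum} relating $m_{F_\alpha^+,\mathsf w}$ and $m_{E,\mathsf w}$. This step is where I expect the main obstacle to lie, because one has to be careful about which facet $F_\alpha^+$ was chosen in Lemma~\ref{lemma:nD} versus the facet $E$ at hand, and ensure the bookkeeping of $n_D$ is consistent; Lemma~\ref{lemma:sum}-(\ref{lemma:sum:both}) is precisely the tool that makes it work.

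Having established (CC1), (CC2), (SCC), the face axiom (CF1) is formal: a face $\C'$ of $\C(F)$ is $\C(F')$ for the face $F'$ of $\Pc$ spanned by the facets whose normals lie in $\C'$, and $\D(F') = \{D \in \D(F) : \rho(D) \in \C'\}$ by the vanishing characterisation of $\D(\cdot)$; and $F'$ is again an orbit face since $\C(F') \supseteq \C(F)$ so its relative interior still meets $\V$ (here I would cite Lemma~\ref{lem:orbitface_contains_orbitvertex} or argue directly). Property (CF2), uniqueness of the cone containing a given $v \in \V$ in its relative interior, follows from the fact that the cones $\C(F)$ form the normal fan restricted to orbit faces, so their relative interiors are disjoint. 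Then \cite[Theorem 3.3]{knop:LV} yields a spherical $G$-variety $X$ birational to $G/H$ with $\F(X) = \F_\Sigma(\Pc)$. Finally, completeness: by \cite[Theorem 3.3]{knop:LV} (or the completeness criterion in \cite{knop:LV}) a spherical embedding is complete iff its colored cones cover $\V$; every $v$ in the interior of $\V$ lies in $\C(F)$ for the orbit face $F$ whose normal fan cone contains $v$, and such $F$ is an orbit face precisely because $v \in \V$ meets the relative interior of $\C(F)$, so $\F_\Sigma(\Pc)$ covers $\V$ and $X$ is complete. This completes the proof sketch.
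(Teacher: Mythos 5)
Your overall strategy coincides with the paper's proof: verify (CC2), (SCC) and (CC1) by classifying the facet normals $\rho_E$, for facets $E\supseteq F$, via Lemma~\ref{lemma:face-has-divisor} and the bookkeeping of the numbers $n_D$ from Lemma~\ref{lemma:nD}; treat (CF1)--(CF2) as formal; and deduce completeness from the fact that the cones attached to orbit faces cover $\V$. Your handling of (CC1), including the delicate subcase $\rho_E=\rho(D_\alpha^-)$ via Lemma~\ref{lemma:sum}, is exactly the paper's argument.

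There is, however, a genuine gap in your verification of (SCC). You claim $\rho(D)\neq 0$ for \emph{every} color $D$, on the grounds that ``each $\rho(D)$ takes value $1$ or $\frac12$ on some simple root in $\lat$.'' This is false for a color $D\in\col(G/H)\smallsetminus\A(G/H)$ moved by a simple root $\alpha$ with $\alpha,2\alpha\notin\Sigma$: there $\rho(D)$ equals $\alpha^\vee|_\lat$ or $\frac12\alpha^\vee|_\lat$, and $\alpha^\vee$ may vanish identically on $\lat$ even though $\alpha\notin\Spp(\Pc)$ (this occurs whenever $\alpha^\vee$ kills $\lat$ but $\<\alpha^\vee,\mathsf{w}\>>0$; recall that $\Spp(\Pc)$ is defined by vanishing on $\Pc$, not on $\lat$). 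Axiom (SCC) only requires $\rho(D)\neq 0$ for $D\in\D(F)$, and what must actually be proved --- and what the paper proves --- is that a color with $\rho(D)=0$ never enters any $\D(F)$: since $\alpha\notin\Spp(\Pc)$ forces $\<\alpha^\vee,\mathsf{w}\>>0$, the definition of $n_D$ gives $n_D>0$, hence $\<\rho(D),q-\mathsf{w}\>+n_D=n_D>0$ for all $q\in\Pc$, so $D\notin\D(F)$. Without this your (SCC) step does not close. A smaller slip occurs in (CF1): for $F'\supseteq F$ one has $\C(F')\subseteq\C(F)$, not the reverse, and the relative interior of a face of $\C(F)$ meeting $\V$ is not a consequence of the relative interior of $\C(F)$ meeting $\V$; the point is rather that the faces entering axiom (CF1) are by definition colored cones, hence already subject to (CC2), so one only needs to identify such a face of $\C(F)$ with $\C(F')$ for an orbit face $F'\supseteq F$ and match the sets of colors via the vanishing characterization of $\D(\cdot)$.
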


\begin{proof}
For all $(\C(F),\D(F))\in\F_\Sigma(\Pc)$, property~(CC2) is clear. For property~(SCC), we observe that $\C(F)$ is strictly convex because $\Pc$ has dimension equal to the rank of $\lat$. 
We must also show that $0\notin\rho(\D(F))$, so assume $\rho(D)=0$ for some $D\in \Delta(G/H)$. Then $D\notin\A(G/H)$, and $\alpha^\vee|_\lat=0$, where $\alpha\in S\smallsetminus \Spp(\Pc)$ moves $D$, i.e.\ $\<\alpha^\vee,\Pc\> = \<\alpha^\vee,\mathsf{w}\>$. 
But $\alpha\notin \Spp(\Pc)$, so $\<\alpha^\vee,\mathsf{w}\> >0$, 
therefore there does not exist any facet $F'$ of $\Pc$ such that $\<\alpha^\vee,F'\>=0$. It follows that $\<\rho(D), \Pc-\mathsf{w}\> + n_D \subseteq \RR_{>0}$ by the definition of $n_D$, hence $D\notin\D(F)$.
	
We check property~(CC1). 
Recall that $\C(F)$ is generated by $\rho_{F'}$, for all facets $F'$ of $\Pc$ containing $F$.
By Lemma~\ref{lemma:face-has-divisor}, any such $\rho_{F'}$ is in $\V(G/H)$, or it is equal to $\rho(D)$ for some $D\in \A(G/H)$, or (assuming the two previous conditions fail) a positive multiple of $\alpha^\vee|_{\lat_\RR}$ for some $\alpha\in S\smallsetminus (\Sigma\cup \Spp(\Pc))$ 
such that $\<\alpha^\vee,F'\>=0$. 
In the third case, the color $D\in \Delta(G/H)\smallsetminus \A(G/H)$ moved by $\alpha$ satisfies
\begin{equation}\label{eq:DinD}
\<\rho(D),F'-\mathsf{w}\>+n_D=0
\end{equation}
by the definition of $n_D$, so $D\in \D(F)$. In the second case, suppose $D=D_\alpha^+$ for $\alpha\in \Sigma\cap S$, or 
there exist two distinct facets of $\Pc$ whose normals against $\alpha$ is positive.
Then $n_D=l_{\Pc-\mathsf{w}}(\rho(D))$ and the equality~(\ref{eq:DinD}) holds again, hence $D\in\D(F)$. If $D=D_\alpha^-$ and 
there exists a unique facet of $\Pc$ whose normal against $\alpha$ is positive,
 we have $\rho(D)=\rho(D_\alpha^-)=\rho(D_\alpha^+)$,
because anyway $\rho(D) = \rho_{F'}$ is positive on $\alpha$. 
In this case it is enough to set $D=D_\alpha^+$ instead of $D_\alpha^-$ to obtain once again $n_D=l_{\Pc-\mathsf{w}}(\rho(D))$ and~(\ref{eq:DinD}), yielding $D\in \D(F)$. 
Property~(CC1) for $(\C(F),\D(F))$ follows. Properties~(CF1) and~(CF2) are obvious, hence $\F_\Sigma(\Pc)$ is a colored fan.
	
Thanks to the definition of $\F_\Sigma(\Pc)$, the union of all cones in the fan contains $\V(G/H)$, hence the embedding of $G/H$ corresponding to $\F_\Sigma(\Pc)$ is complete.
\end{proof}

As announced, the above lemmas enable us to associate to any $\RR$-momentum triple $(\lat,\Pc,\Sigma)$ a spherical $G$-variety, denoted by $X(\lat,\Pc,\Sigma)$ below: it is the embedding determined by the colored fan in Lemma~\ref{lemma:fan} of the homogeneous space $G/H$ as in Lemma~\ref{lemma:homogeneous}.

\begin{proposition} \label{prop:XLPS_smooth}
Suppose $(\lat,\Pc,\Sigma)$ is a smooth $\RR$-momentum triple.
Then the variety $X(\lat,\Pc,\Sigma)$ is smooth and projective.
\end{proposition}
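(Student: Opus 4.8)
The plan is to prove smoothness and projectivity separately, since they are somewhat independent. For projectivity, the point is that the colored fan $\F_\Sigma(\Pc)$ constructed in Lemma~\ref{lemma:fan} arises from the polytope $\Pc$ via the orbit-face correspondence, so it is the normal fan data of a polytope; more precisely, I would invoke the standard fact (from the Luna-Vust / Brion theory of projective embeddings of spherical homogeneous spaces, cf.\ \cite{brion:picard}) that a complete spherical embedding is projective if and only if its colored fan is the colored fan associated to a polytope in $\Pc(G/H)$-space, equivalently if the support function obtained by gluing the linear pieces $\<-, q - \mathsf{w}\>$ on the cones $\C(F)$ is convex. Here that support function is exactly $l_{\Pc - \mathsf{w}}$ restricted to $\V(G/H)$ and extended using the colors; convexity is automatic because $l_{\Pc-\mathsf{w}}$ is the support function of a genuine convex polytope, and the values $n_D$ attached to colors were chosen precisely so that the piecewise-linear function they define on all of $N$ agrees with $l_{\Pc-\mathsf{w}}$ where it matters (see the computation of $n_D = l_{\Pc-\mathsf{w}}(\rho(D))$ inside the proof of Lemma~\ref{lemma:fan}). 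So projectivity follows once this convexity/ampleness criterion is quoted and the elementary verification is done.

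For smoothness, I would use the standard reduction that $X = X(\lat,\Pc,\Sigma)$ is smooth if and only if it is smooth along each of its closed $G$-orbits, and that the closed $G$-orbits correspond bijectively to the maximal colored cones of $\F_\Sigma(\Pc)$, which by construction correspond to the orbit vertices $\mathsf{v}$ of $\Pc$. So I would fix an orbit vertex $\mathsf{v}$ and verify Camus' criterion (Proposition~\ref{prop:smoothness}) at the corresponding closed orbit. The key observation, which is exactly the content of Remark~\ref{rem:combinatorialversions}, is that all the combinatorial data entering Camus' criterion for $X$ at that closed orbit --- the colored cone $(\C(\mathsf{v}),\D(\mathsf{v}))$, the set $\B(\mathsf{v})$, the localized socle $\overline{\soc}(X(\mathsf{v}))$, and the restricted functional $\rho_X|_{\D(\mathsf{v}) \cup \B(\mathsf{v})}$ --- are naturally identified with the purely combinatorial data $(\C(\mathsf{v}),\D(\mathsf{v}))$, $\B(\mathsf{v})$, $\overline{\soc}(\mathsf{v})$, $\rho_{\mathsf{v}}$ attached to the $\RR$-momentum triple $(\lat,\Pc,\Sigma)$ in Definition~\ref{def:smooth_R_momentum_triple}. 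Granting that identification, conditions \ref{prop:smoothness:item1} and \ref{prop:smoothness:item2} of Proposition~\ref{prop:smoothness} for $X$ at the closed orbit over $\mathsf{v}$ become verbatim conditions \ref{def:smooth_Q_momentum_triple:item1} and \ref{def:smooth_Q_momentum_triple:item2} of Definition~\ref{def:smooth_Q_momentum_triple}, which hold for every orbit vertex $\mathsf{v}$ by the hypothesis that the triple is smooth. Hence $X$ is smooth along each closed orbit, and therefore smooth.

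The main obstacle is the bookkeeping behind the phrase "are naturally identified" in the previous paragraph: one must check that the homogeneous space $G/H$ produced by Lemma~\ref{lemma:homogeneous} really has $\Sigma(G/H) = \Sigma$, $\lat(G/H) = \lat$, $\Spp(G/H) = \Spp(\Pc)$, the correct set of colors with the correct $\rho_{G/H}$ (which is Lemma~\ref{lemma:homogeneous} together with Luna's reconstruction of $\col$ from the homogeneous spherical datum, as recalled in Remark~\ref{rem:combinatorialversions}), and then that the colored cone over the closed orbit corresponding to $\mathsf{v}$ in $\F_\Sigma(\Pc)$ is precisely the combinatorially defined $(\C(\mathsf{v}),\D(\mathsf{v}))$. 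The latter is immediate from the definition of $\F_\Sigma(\Pc)$ in Lemma~\ref{lemma:fan}, and the former identifications are exactly the bulleted list at the end of Remark~\ref{rem:combinatorialversions}, applied with $m = 1$ once we know $X(\lat,\Pc,\Sigma)$ is a polarized spherical $G$-variety realizing $(\lat,\Pc,\Sigma)$ --- which in turn follows from projectivity (just shown) plus Proposition~\ref{thm:Brion-Woodward-gen}. Thus the proof I would write is: establish projectivity first via the convexity criterion; deduce via Proposition~\ref{thm:Brion-Woodward-gen} and Remark~\ref{rem:combinatorialversions} that all the combinatorial invariants of $X$ match those of the triple at every orbit vertex; then apply Camus' criterion orbit-vertex by orbit-vertex, each instance being precisely the smoothness hypothesis on the triple.
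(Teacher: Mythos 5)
Your strategy is essentially the paper's: check Camus' criterion (Proposition~\ref{prop:smoothness}) at each closed orbit, using that the combinatorial data of $X(\lat,\Pc,\Sigma)$ at the closed orbit over an orbit vertex $\msv$ coincide with the data $(\C(\msv),\D(\msv))$, $\B(\msv)$, $\overline{\soc}(\msv)$, $\rho_\msv$ attached to the triple, so that the hypotheses \ref{def:smooth_Q_momentum_triple:item1} and \ref{def:smooth_Q_momentum_triple:item2} of Definition~\ref{def:smooth_Q_momentum_triple} are verbatim Camus' conditions; and get projectivity from Brion's ampleness criterion \cite[Theorem 3.3]{brion:picard} applied to a strictly convex piecewise linear function supported on (part of) the normal fan of $\Pc$. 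The paper proves smoothness \emph{first} so that the ample Weil divisor it exhibits is automatically Cartier; your reversed order also works, but since $\Pc$ (hence $l_{\Pc-\mathsf{w}}$) need not be rational and $X$ is not yet known to be smooth, you must explicitly pass to a rational strictly convex piecewise linear function linear on the colored cones, obtaining an ample $\QQ$-Cartier divisor as in Lemma~\ref{lemma:delta}; this is the ``elementary verification'' you allude to and should be spelled out.

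The one step that must be repaired is your justification of the ``natural identification'' of the combinatorial data. You propose to obtain it from Proposition~\ref{thm:Brion-Woodward-gen} and the bulleted list of Remark~\ref{rem:combinatorialversions}, ``once we know $X(\lat,\Pc,\Sigma)$ is a polarized spherical $G$-variety realizing $(\lat,\Pc,\Sigma)$.'' This is circular and, separately, unavailable: Proposition~\ref{thm:Brion-Woodward-gen} has \emph{smooth} $X$ as a hypothesis, which is exactly what you are proving; and since the vertices of $\Pc$ may be irrational, $X$ cannot be a polarized variety with momentum polytope $\Pc$ --- the equality $\Pc(X,[\delta])=\Pc$ is only established afterwards, in the proof of Theorem~\ref{thm:existence-Kaehler}, and that proof uses the present proposition. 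Fortunately no such input is needed: the equalities $\lat(G/H)=\lat$, $\Sigma(G/H)=\Sigma$, $\Spp(G/H)=\Spp(\Pc)$ and the identification of $\A(G/H,\alpha)$ with $\A(\alpha)$ compatibly with $\rho$ are the \emph{output} of Lemma~\ref{lemma:homogeneous}; the remaining colors and their functionals are determined by Luna's reconstruction from the homogeneous spherical datum (as recalled in Remark~\ref{rem:combinatorialversions}); and the colored cone over the closed orbit corresponding to $\msv$ is $(\C(\msv),\D(\msv))$ by the very definition of $\F_\Sigma(\Pc)$ in Lemma~\ref{lemma:fan}. With the identification justified this way --- by construction rather than by appeal to results downstream --- your argument coincides with the paper's.
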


\begin{proof}
We first prove that $X(\Pc,\lat,\Sigma)$ is smooth. The colored fan of $X(\Pc,\lat,\Sigma)$ is given by Lemma~\ref{lemma:fan} and the conditions in Definition~\ref{def:smooth_R_momentum_triple} of a smooth $\RR$-momentum triple are exactly the conditions in Camus' smoothness criterion \cite[\S 6.3]{camus} (see also \cite[Theorem 3.16]{PVS}) for the spherical embedding $X(\Pc,\lat,\Sigma)$ to be smooth (along each of its closed orbits).  

The support of the colored fan of $X$ consisting of some of the maximal cones of the normal fan of $\Pc$, it is straightforward to  exhibit, using \cite[Theorem 3.3]{brion:picard}, a Cartier divisor (i.e.\ a Weil divisor, since $X$ is smooth) which is ample. Consequently, $X$ is projective.
\end{proof}

Let $n_D$ be the real numbers given in Lemma~\ref{lemma:nD}. 
For all $G$-stable prime divisors $Y$ of $X(\lat,\Pc,\Sigma)$, let $n_Y = m_{F,\mathsf{w}}$, where $F$ is the facet of $\Pc$ such that $\QQ_{\geq0}\rho_F$ is the cone of the colored fan of $X(\lat,\Pc,\Sigma)$ corresponding to $Y$.
Set
\begin{equation}\label{eq:delta}
\delta =\delta(\lat,\Pc,\Sigma)= \sum_{D\in\Delta(X)} n_D D + \sum_Y n_Y Y,
\end{equation}
where $Y$ varies in the set of $G$-stable prime divisors of $X(\lat,\Pc,\Sigma)$.

Let $[\delta]$ be an element of $\Pic_G(X(\lat,\Pc,\Sigma))_\RR$ mapped to the class of $\delta$ in $\Pic(X(\lat,\Pc,\Sigma))_\RR$, and recall Lemma~\ref{lemma:Losev-cohomology}.

\begin{lemma}\label{lemma:delta}
Regarded as a $K$-invariant real $2$-class on $X(\lat,\Pc,\Sigma)$, $[\delta]$ is K\"ahler.
\end{lemma}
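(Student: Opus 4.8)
\textbf{Proof plan for Lemma~\ref{lemma:delta}.} The strategy is to use the isomorphism $\varphi\colon \Pic_G(X)_\RR \to H^2_K(X,\RR)$ of Lemma~\ref{lemma:Losev-cohomology} together with Losev's formula for the Kirwan polytope (Proposition~\ref{prop:Kaehler-polytope}) to identify $\Pc(X,[\delta])$, and then invoke openness of the K\"ahler cone. More precisely, write $X = X(\lat,\Pc,\Sigma)$ and let $[\wom]=\varphi([\delta]) \in H^2_K(X,\RR)$. The $B$-weight data encoded by $\delta$ have been rigged so that a lift of $\overline{\chi}([\wom])$ as in \eqref{eq:chiH2KX} is given by $\chi_0([\wom]) = \mathsf{w}$ and $\chi_D([\wom]) = n_D$ for all $D \in \divB(X)$ (colors contribute the $n_D$ of Lemma~\ref{lemma:nD}, $G$-stable divisors the $n_Y = m_{F,\mathsf{w}}$). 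Indeed, by construction $\delta$ is the $B$-stable divisor $\sum_D n_D D + \sum_Y n_Y Y$, which is the divisor of the canonical $B$-semi-invariant rational section of $[\delta]$ twisted appropriately; its $B$-weight pairs correctly on simple coroots by Corollary~\ref{cor:cancellation} and the way the $n_D$ were defined, so the lift is legitimate. Plugging this lift into Proposition~\ref{prop:Kaehler-polytope} gives
\[
\Pc(X,[\wom]) = \mathsf{w} + \{\xi \in \lat_\RR \colon \<\rho(D),\xi\> + n_D \ge 0 \text{ for all } D \in \divB(X)\}.
\]

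The next step is to check that the right-hand side equals $\Pc$. One inclusion is Lemma~\ref{lemma:nD}: inequality \eqref{eq:contained} says every $q \in \Pc$ satisfies $\<\rho(D),q-\mathsf{w}\>+n_D\ge 0$ for all colors $D$, and for the $G$-stable divisors $Y$ the inequality $\<\rho_F,q-\mathsf{w}\>+m_{F,\mathsf{w}}\ge 0$ is exactly the defining inequality \eqref{eq:Pc_and_mFw} of $\Pc$. For the reverse inclusion, one observes that among the half-spaces indexed by $\divB(X)$, those coming from the $G$-stable divisors $Y$ already cut out $\Pc$ by \eqref{eq:Pc_and_mFw} (the colored fan of $X$ was built from a subset of the maximal cones of the normal fan of $\Pc$, so every facet of $\Pc$ whose normal lies in $\V(G/H)$ corresponds to some $Y$, with $n_Y = m_{F,\mathsf w}$), and adding the further half-spaces from the colors can only shrink the region, which by the first inclusion it does not; hence equality. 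Therefore $\Pc(X,[\wom]) = \Pc$, a full-dimensional polytope in $\lat_\RR$.

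Finally, to conclude that $[\wom]$ — equivalently $[\delta]$ — is K\"ahler, I would argue by deformation: by Proposition~\ref{prop:XLPS_smooth}, $X$ is smooth and projective, so it carries \emph{some} equivariant K\"ahler class $[\wom_0]$, and by Lemma~\ref{lemma:amplesequence} (or directly Lemma~\ref{lemma:Losev-cohomology}) the subset $H^2_K(X,\RR)^+$ of K\"ahler classes is open and non-empty in $H^2_K(X,\RR)$. The support of the colored fan of $X$ consists of some of the maximal cones of the normal fan of $\Pc$; by \cite[Theorem 3.3]{brion:picard} the class $[\delta]$ corresponds under $f$ in diagram \eqref{eq:H2} to the divisor whose associated piecewise-linear function is the (concave) support function $-l_{\Pc-\mathsf w}$ restricted to the fan — this is the standard dictionary between polytopes and ample (or at least nef, and here ample since $\Pc$ is full-dimensional) equivariant line bundles on the toroidal-type part, upgraded to the spherical setting exactly as in the proof of Proposition~\ref{prop:XLPS_smooth}. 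Thus $[\delta]$ lies in $H^2_K(X,\RR)^+$, i.e.\ it is represented by an equivariant K\"ahler form. The main obstacle I anticipate is the bookkeeping in the first step: verifying carefully that $(\mathsf{w}, \sum_D n_D D + \sum_Y n_Y Y)$ is a genuine lift of $\overline{\chi}([\delta])$, i.e.\ that the $n_D$ assigned in Lemma~\ref{lemma:nD} are precisely the orders of vanishing $v_D(s)$ of a $B$-semi-invariant section $s$ of $[\delta]$ of weight $\mathsf{w}$, which amounts to tracing the identification in diagram \eqref{eq:H2} and using Corollary~\ref{cor:cancellation} to match $B$-weights on each simple coroot; everything after that is convex geometry already done in Lemmas \ref{lemma:nD} and \ref{lemma:fan}.
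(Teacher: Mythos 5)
Your final paragraph contains the idea the paper actually uses (the piecewise linear function attached to $\delta$ is the support function $l_{\Pc-\mathsf w}$ of $\Pc-\mathsf w$, which is strictly convex on the colored fan, so Brion's ampleness criterion \cite[Theorem 3.3]{brion:picard} is the right tool), but there are two problems with the proposal as written. First, the opening two paragraphs are circular in this context: Proposition~\ref{prop:Kaehler-polytope} is a statement about an equivariant \emph{K\"ahler} form, so you cannot invoke it to compute $\Pc(X,[\delta])$ before you know that $[\delta]$ is K\"ahler --- indeed the Kirwan polytope of $[\delta]$ is not even defined at that stage. That computation (which is precisely the content of the paper's subsequent proof of Theorem~\ref{thm:existence-Kaehler}) plays no role in Lemma~\ref{lemma:delta} and should be excised from its proof.

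Second, and more substantively for the lemma itself, the coefficients $n_D$ and $n_Y=m_{F,\mathsf w}$ of $\delta$ are genuinely \emph{real} numbers ($\Pc$ need not have rational vertices; condition (Q2) only makes the facet normals rational), so $[\delta]$ is merely an element of $\Pic_G(X)_\RR$, and Brion's criterion --- a statement about ($\QQ$-)Cartier divisors --- does not apply to it directly. The step ``thus $[\delta]$ lies in $H^2_K(X,\RR)^+$'' is exactly what needs proof: openness and non-emptiness of the K\"ahler cone do not by themselves place a given real class inside it, and a limit of K\"ahler classes need not be K\"ahler. The paper closes this gap with an explicit approximation: since the ampleness conditions of \cite[Theorem 3.3]{brion:picard} are \emph{open} conditions on piecewise linear functions that are linear on each colored cone, $l_{\Pc-\mathsf w}$ lies in the convex hull of finitely many \emph{rational} strictly convex such functions $l_i$; each $l_i$ defines an ample $\QQ$-Cartier divisor, hence a K\"ahler class, and the convexity of the K\"ahler cone then forces $[\delta]$ to be K\"ahler. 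You need to supply this (or an equivalent) argument to complete the last step.
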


\begin{proof}
Let $l_{\Pc-\mathsf{w}}$ be the support function of the polytope $\Pc-\mathsf{w}$.
Then $l_{\Pc-\mathsf{w}}$ is linear on each colored cone of $X(\lat,\Pc,\Sigma)$ and obviously satisfies the ampleness conditions of \cite[Theorem 3.3]{brion:picard}.
These conditions being open conditions for piecewise linear functions on $N_\RR$ which are linear on each colored cone of $X(\lat,\Pc,\Sigma)$, there exist finitely many rational strictly convex piecewise linear functions $l_i$, linear on each colored cone of $X(\lat,\Pc,\Sigma)$, such that $l_{\Pc-\mathsf{w}}$ belongs to their convex hull.
By \cite[Theorem 3.3]{brion:picard}, every function $l_i$ defines an ample $\QQ$-Cartier divisor on $X(\lat,\Pc,\Sigma)$ hence a K\"ahler metric on the real manifold $X(\lat,\Pc,\Sigma)$. By the convexity of the K\"ahler cone, it follows that $[\delta]$ itself is a K\"ahler class on the real manifold $X(\lat,\Pc,\Sigma)$.
\end{proof}

\begin{proof}[Proof of Theorem~\ref{thm:existence-Kaehler}] \label{proof_thm_existence-Kaehler}
Take $(\lat,\Pc,\Sigma)$ and let $X=X(\lat,\Pc,\Sigma)$ and $\delta=\delta(\lat,\Pc,\Sigma)$ be as above.
Thanks to the previous lemmas, we are left with showing that $\Pc(X,[\delta])=\Pc$, for some choice of the class $[\delta]\in \Pic_G(X)_\RR$.

For all $D\in\divB(X)$, choose an element $[D]\in \Pic_G(X)_\RR$ mapped to the class of $D$ in $\Pic(X)_\RR$, in such a way that~(\ref{eq:delta}) holds in $\Pic(X(\lat,\Pc,\Sigma))_\RR$.

Recall that the definition of $\chi_0$ and $\chi_D$ involved choosing a lift of the map $\overline{\chi}$. By linearity, as in the proof of Lemma~\ref{lemma:foschi}, we may assume that this lift is given by a rational section of the line bundle. Moreover, one can make the relationship between section and line bundle explicit, by fixing a non-zero rational function $f\in\CC(X)$, and defining
\[
\begin{array}{lcl}
\chi_0([E]) &=& \chi(s(f,E)), \\
\chi_D([E]) &=& v_D(s(f,E)) \quad \forall E\in \divB(X)
\end{array}
\]
where $s(f,E)$ is the rational section of $[E]$ corresponding to $f$ given by the choice of $E$ as a representative of $[E]$. Let us choose the constant function $f=1$, then $s(f,E)$ is the canonical section, yielding $\chi_D([E])=1$ if $D=E$ and $\chi_D([E])=0$ otherwise.

Putting this together with Lemma~\ref{lemma:foschi}, we obtain
\[
\varepsilon_\alpha\<\alpha^\vee, \chi_0([\delta])\>=\sum_{D} \<\alpha^\vee,\chi_D([\delta])\> = \sum_D n_D
\]
where the sum is over all colors $D$ moved by $\alpha$. The definition of $n_D$ then gives
\begin{equation}\label{eq:chi}
\<\alpha^\vee, \chi_0([\delta])\> =\<\alpha^\vee,\mathsf w\>
\end{equation}
for all $\alpha\in S$. Recall now the exact sequence
\[
0\to (\fk/[\fk,\fk])^* \to\Pic_G (X)_\RR \to \Pic(X)_\RR\to 0
\]
from~\cite[Lemma~8.8]{losev:knopconj}. Together with~(\ref{eq:chi}), it implies that we can choose $[\delta]\in\Pic_G (X)_\RR$ in such a way that $\chi_0([\delta])=\mathsf w$.

Now, by Proposition~\ref{prop:Kaehler-polytope}, the polytope $P(X,[\delta])-\chi_0([\delta])$ is defined by the inequalities
\begin{equation}\label{eq:complete}
\<\rho(Z),\xi\> + n_Z \geq 0
\end{equation}
for $\xi\in \lat_\RR$, with $Z$ varying in the set of $B$-stable prime divisors of $X$. These inequalities hold for all $\xi$ of the form $\xi=q-\mathsf w$ with $q\in \Pc$, 
thanks to formula~(\ref{eq:contained}) of Lemma~\ref{lemma:nD} if $Z$ is a color, and thanks to the definition of $n_Z$ if $Z$ is a $G$-stable prime divisor. It follows that $\Pc(X,[\delta])\supseteq \Pc$.
On the other hand, the set of inequalities of the form~(\ref{eq:complete}) contains a set of inequalities that define $\Pc$, thanks to Lemma~\ref{lemma:face-has-divisor}, the definition of $\F_\Sigma(Q)$ and the numbers $n_Z$. 
The inclusion $\Pc(X,[\delta])\subseteq \Pc$ follows, hence $\Pc(X,[\delta])= \Pc$.
\end{proof}

\subsection*{Acknowledgements} The authors thank the referee for several useful suggestions, which improved the paper.
S.~C.-F. is supported by the SFB/TRR 191 ``Symplectic Structures in Geometry, Algebra and Dynamics.''  B.~V.~S.\ received support from the (US) N.S.F. through grant DMS 1407394 and from the City University of New York PSC-CUNY Research Award Program. He thanks Friedrich Knop and the Department of Mathematics at the FAU for hosting him in 2016-17 and Medgar Evers College for his 2016-17 Fellowship Award.

\end{document}